\numberwithin{equation}{section}
\theoremstyle{plain}
\newtheorem{thm}{Theorem}
\newtheorem{prop}{Proposition}
\newtheorem{lemm}{Lemma}
\newtheorem{coro}{Corollary}
\theoremstyle{remark}
\newtheorem{rem}{Remark}
\newtheorem{ex}{Example}
\newtheorem{assum}{Assumption}
\let\hat\widehat
\let\tilde\widetilde
\def\given{\mid\,}
\newcommand{\bx}{\bm{x}}
\newcommand{\by}{\bm{y}}
\newcommand{\bU}{\bm{U}}
\newcommand{\bW}{\bm{W}}
\newcommand{\bX}{\bm{X}}
\newcommand{\bY}{\bm{Y}}
\newcommand{\bone}{\bm{1}}
\newcommand{\cA}{\mathcal{A}}
\newcommand{\cF}{\mathcal{F}}
\newcommand{\cG}{\mathcal{G}}
\newcommand{\cI}{\mathcal{I}}
\newcommand{\cL}{\mathcal{L}}
\newcommand{\cS}{{\mathcal{S}}}
\newcommand{\EE}{\mathbb{E}}
\newcommand{\PP}{\mathbb{P}}
\newcommand{\BM}{\mathrm{BM}}
\newcommand{\sign}{\mathop{\mathrm{sign}}}
\newcommand{\tr}{\mathop{\mathrm{tr}}}
\newcommand{\diag}{{\rm diag}}
\newcommand{\nei}{{\rm ne}}
\newcommand{\opnorm}[2]{| \! | \! | #1 | \! | \! |_{{#2}}}
\newcommand{\normop}[1]{{\left\vert\kern-0.25ex\left\vert\kern-0.25ex\left\vert #1 
		\right\vert\kern-0.25ex\right\vert\kern-0.25ex\right\vert}}
\newcommand{\Prob}{\PP}
\newcommand{\E}{\EE}
\newcommand{\var}{\mathbb{V}\text{ar}}
\newcommand{\Op}[1]{O_{\mathbb P}  \left( #1 \right)}
\newcommand{\One}{\mathds{1}}
\newcommand{\Ind}[1]{ \mathds{1} \left\{ #1 \right\} }
\newcommand{\argmin}{\text{arg}\min}
\newcommand{\wc}{\rightsquigarrow}
\newcommand{\too}{\longrightarrow}
\renewcommand{\d}{d}
\newcommand{\R}{\mathbb{R}}
\newcommand{\ns}{\mathrm{ns}}
\newcommand{\gl}{\mathrm{gl}}
\newcommand{\eps}{\varepsilon}
\newcommand{\comments}[1]{}
\newcommand{\Gam}{\hat\Gamma}
\def\g{\boldsymbol}
\newcommand{\indep}{\perp \!\!\! \perp}
\DeclareMathOperator{\MST}{mst}
\newcommand{\eglearn}{\texttt{EGlearn}}
\crefname{thm}{Theorem}{Theorems}
\crefname{lemm}{Lemma}{Lemmas}
\crefname{prop}{Proposition}{Propositions}
\crefname{coro}{Corollary}{Corollaries}
\crefname{ex}{Example}{Examples}
\crefname{con}{Condition}{Conditions}
\crefname{assum}{Assumption}{Assumptions}
\crefname{algorithm}{Algorithm}{Algorithms}
\crefname{equation}{}{}
\begin{document}

\begin{frontmatter}

\title{Learning extremal graphical structures\\in high dimensions}
\runtitle{Learning extremal graphical structures}

\begin{aug} 

\author[A]{\fnms{Sebastian} \snm{Engelke}},
\author[B]{\fnms{Micha\"el} \snm{Lalancette}}
\and
\author[C]{\fnms{Stanislav} \snm{Volgushev}}

		\address[A]{
			Research Center for Statistics, University of Geneva
			(\href{mailto:sebastian.engelke@unige.ch}{sebastian.engelke@unige.ch})
		}
		
		\address[B]{
			D\'epartement de math\'ematiques, Universit\'e du Qu\'ebec \`a Montr\'eal
			(\href{mailto:lalancette.michael@uqam.ca}{lalancette.michael@uqam.ca})
		}
		
		\address[C]{
			Department of Statistical Sciences, University of Toronto
			(\href{mailto:stanislav.volgushev@utoronto.ca}{stanislav.volgushev@utoronto.ca})
		}

\end{aug}

\begin{abstract} 
	Extremal graphical models encode the conditional independence structure of multivariate extremes. Key statistics for learning extremal graphical structures are empirical extremal variograms, for which we prove non-asymptotic concentration bounds that hold under general domain of attraction conditions. For the popular class of H\"usler--Reiss models, we propose a majority voting algorithm for learning the underlying graph from data through $L^1$ regularized optimization. Our concentration bounds are used to derive explicit conditions that ensure the consistent recovery of any connected graph. The methodology is illustrated through a simulation study as well as the analysis of river discharge and currency exchange data.
\end{abstract}

\begin{keyword}[class=MSC]
	\kwd[Primary ]{60G70}
	\kwd{62H22}
	\kwd[; secondary ]{62G30  }
\end{keyword}

\begin{keyword}
	\kwd{Multivariate Pareto distribution}
	\kwd{Peaks over threshold modeling}
	\kwd{H\"usler--Reiss distribution}
	\kwd{Graphical lasso}
	\kwd{Neighborhood selection}
\end{keyword}

\end{frontmatter}

\section{Introduction}

Motivated by extreme value theory, the univariate tail of a random variable can be well described by
parametric generalized extreme value and Pareto distributions.
For effective quantification of the risk related to floods \citep{kee2013}, heatwaves \citep{eng2017a} or financial crises \citep{zhou2009banks}, understanding the interplay between many different risk factors becomes crucial. 
Mathematically this amounts to an analysis of the tail dependence between components of a random vector $\g X=(X_1,\ldots,X_d)$, which can become arbitrarily complex in larger dimensions  $d$. Sparsity or dimension reduction are then required to obtain statistically sound and practically feasible methods; see \cite{eng2021} for a review of recent developments.  

One popular approach to obtaining interpretable dependence models for a random vector $\g X$ in high dimensions relies on graphical modeling \citep{L96}. Conditional independence relations between pairs of variables are imposed by the absence of edges in a graph $G = (V,E)$ with node set $V = \{1,\dots,d\}$ and edges $E\subset V \times V$. Graphical models are particularly well studied for multivariate normal distributions where conditional independence relations are encoded as zeroes in the precision matrix \citep[Chapter 5]{L96}.   

For distributions arising in the setting of extreme value analysis, graphical modeling is more challenging. Broadly speaking, there are two main approaches to modeling asymptotically dependent extremes. The first approach considers component-wise maxima of blocks of random vectors and leads to the notion of max-stable distributions; we refer the reader to \citet[][Chapter 8]{beirlant2004statistics} and \citet[][Chapter 6]{DF2006}. 
In this distribution class, \cite{gis2018,klu2019,AKLT22}, among others, study max-linear models on directed acyclic graphs and their conditional independence structures.
The distributions considered in this line of work do not have densities, and a general result by \cite{papastathopoulos2016conditional} shows that there exist no non-trivial density factorization of max-stable distributions on graphical structures.

The second approach relies on multivariate Pareto distributions, which arise as limits of conditional distributions of a random vector $\g X$ given that at least one of its components is large \citep{roo2006,roo2018}. While multivariate Pareto distributions inherit certain structural properties such as homogeneity from their definition through limits, this class is still very flexible and too large to allow for efficient and interpretable inference in high dimensions. Classical conditional independence is moreover not suited for these distributions since they are not supported on a product space. \cite{EH2020} overcome this challenge by introducing new notions of conditional independence and extremal graphical models for multivariate Pareto distributions. They show that these definitions naturally link to density factorizations and enable efficient inference in high dimensions.
Furthermore, conditional independence in the sense of \cite{EH2020} was later shown to be equivalent to classical conditional independence in the restricted multivariate Pareto distribution over any rectangular domain \citep{eng_iva_kir}.
A different but closely related approach is to study the tails of classical graphical models. Under various structural and modeling assumptions, the conditional distribution of such random vectors given that a fixed variable exceeds a high threshold inherits the graphical structure of the original graphical model, even in the limit as the threshold increases. Markov trees \citep{segers2020,asenova2021inference}, random fields on block graphs \citep{AS21} and decomposable graphical models \citep{casey2023decomposable} have been studied from this angle; see \cite{L96} for background on graphs.

The underlying graph $G$ plays a key role in graphical modeling. It is typically unknown and needs to be estimated in a data driven way. One may consider different levels of generality for the structure of the graph to be estimated. Figure~\ref{fig:intro} showcases four different classes of graphs from simple tree structures, over decomposable graphs, and up to the most general case of possibly non-decomposable graphs. In general, estimating more complex graphs requires more assumptions on the underlying distribution. In the non-extreme world, two important cases correspond to graphical models on trees, which can be estimated non-parametrically \citep{LXGGLW2011}, and multivariate normal distributions, for which general graphs can be estimated through the corresponding precision matrix; see \cite{MB06}, \cite{YL2007} and \cite{FHT2008} among many others.

\begin{figure}[h]
	\includegraphics[clip,height=.2 \textwidth, page=1]{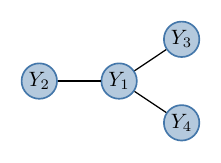}
	\includegraphics[clip,height=.2 \textwidth, page=2]{img/plots.pdf}
	\includegraphics[clip,height=.2 \textwidth, page=3]{img/plots.pdf}
	\includegraphics[clip,height=.2 \textwidth, page=4]{img/plots.pdf}
	\caption{Four graph structures on the node set $V=\{1,\dots, 4\}$. From left to right: tree graph, block graph, decomposable graph, non-decomposable graph.}
	\label{fig:intro}
\end{figure}

Learning non-extremal graphical models relies heavily on the covariance matrix of the data and its inverse, the precision matrix.
For extremal graphical models, it turns out that a more suitable statistic is the extremal variogram $\Gamma$, a $d\times d$-dimensional matrix that summarizes the extremal dependence of a multivariate Pareto distribution~\citep{EV22}. 
Indeed, this quantity plays a central role in describing and learning extremal graphical models on various structures such as trees \citep{EV22,hu2023modeling} and block graphs \citep{asenova2021inference}, or under positivity constraints \citep{rottger2023total,rottger2022local}.
The main theoretical contribution of this paper is to derive finite-sample concentration bounds for the empirical version $\hat \Gamma$ of the extremal variogram. For suitable $\delta > 0$, we provide upper bounds on the probability that $\| \hat \Gamma - \Gamma\|_\infty > \delta$ as a function of $\delta$, the effective sample size, the dimension $d$, and other model parameters. A key difficulty of our theoretical analysis is the fact that the empirical variogram $\hat \Gamma$ uses the largest observations in the sample, which are only approximate realizations of the limiting multivariate Pareto distribution and whose margins are normalized empirically, destroying independence between observations. Moreover, the special structure of the extremal variogram renders standard concentration results in the literature \citep{GSC15,CJLSS2021,LSS2021} inapplicable.
The theory for non-parametric tree recovery in growing dimensions in \cite{EV22} crucially relies on our concentration bounds. Similarly, the proofs of consistent graph recovery in the recent papers \cite{wan2023graphical} and \cite{engelke2024extremal} directly apply our results.

The second main contribution of our paper is methodological.  
While the method in \cite{EV22} consistently learns tree structures, for more general graphs such as the three graphs on the right of Figure~\ref{fig:intro}, no methods that guarantee consistent graph recovery existed in the world of extremes before the methods introduced in this paper; see Section~\ref{sec:lit} and Remark~\ref{rem:wanzhou} for a discussion of recent contributions in this direction. In the present paper, we propose a general methodology to learn arbitrary graphs for the class of H\"usler--Reiss distributions \citep{HR1989}. Those distributions share many attractive properties of multivariate normal distributions and have been widely used in modeling multivariate and spatial extremes \citep[e.g.,][]{dav2012b, EMKS2015}. They are parametrized by a $d\times d$-dimensional variogram matrix $\Gamma$, which happens to coincide with the extremal variogram.
In this distribution class, the positive semi-definite H\"usler--Reiss precision matrix $\Theta$ \citep{H21}, obtained by a transformation of $\Gamma$, contains the extremal graph structure as its zero pattern. We propose \eglearn{}, a majority voting algorithm that uses  lasso-type penalties to estimate the sparsity structure of this matrix. Using our concentration bounds for the empirical variogram $\hat \Gamma$, we prove universal high-dimensional structure recovery guarantees for \eglearn{} that hold for arbitrary graph structures.

The rest of the paper is organized as follows. Section~\ref{sec:back} contains necessary background information on multivariate extreme value theory, H\"usler--Reiss distributions and extremal graphical models. Empirical extremal variograms and their statistical properties, in particular our main concentration result, are studied in Section~\ref{sec:empvariogen}. Our estimation methodology for H\"usler--Reiss graphical models is described in detail and finite-sample performance guarantees are stated in Section~\ref{sec:metho}. The proposed methods are illustrated through a simulation study in Section~\ref{sec:sims} and the analysis of a real data set in Section~\ref{sec:appl}. Finally, potential extensions and directions for future work are explored in Section~\ref{sec:future}. Additional numerical results, a second data application and all proofs are delegated to the Supplementary Material. All references to sections, results, equations, etc.~starting with the letter ``S'' are pointing to this supplement.

The methods of this paper are implemented in the R package \texttt{graphicalExtremes} and all numerical results and figures can be reproduced using the code on \url{https://github.com/sebastian-engelke/extremal_graph_learning}.

\subsection{Related work on learning extremal graphical structures}
\label{sec:lit}

The probabilistic foundations and conditional independence structures of max-linear models as introduced by \cite{gis2018} are thoroughly studied in \cite{klu2019,gissibl2021identifiability,AHST21,AKLT22}. \cite{kluppelberg2021estimating} develop estimation procedures for model coefficients over a known underlying graph. If the graph is unknown, they propose an algorithm to learn the causal order, hence gaining partial information on the graph structure; see also \citet[Section 5]{gissibl2021identifiability}. However, they do not provide theoretical recovery guarantees. Recently, \cite{krali2023heavy} consider identification and estimation of causal relationships in max-linear models with hidden nodes, but again do not guarantee consistent structure recovery. \cite{tra2021} propose an algorithm to consistently recover root-directed tree structured max-linear models in fixed dimensions.
In contrast, our approach works with a completely different class of models and allows to estimate arbitrary (undirected) connected graphs. We moreover provide guarantees on graph recovery in increasing dimensions.

At the time of the first submission of the present paper, structure learning for undirected extremal graphical models in the sense of \cite{EH2020} had only been developed for simple tree structures. Indeed, the first procedure enjoying provably consistent graph recovery of extremal graphical structure is the minimum spanning tree algorithm based on the extremal variogram $\Gamma$ in \cite{EV22}.
Their approach is tailored to tree estimation and cannot be generalized to produce other types of estimated graphs. 
\cite{EV22} show consistency of the empirical variogram in fixed dimensions. In the present paper, we prove a much stronger result: non-asymptotic concentration bounds, which imply consistency of the empirical variogram $\hat \Gamma$ in exponentially growing dimensions with explicit convergence rates. The theory for graph recovery in growing dimensions in \cite{EV22} crucially relies on our concentration bounds. \cite{hu2023modeling} provide an algorithm for learning tree structures of a closely related model class and provide guarantees on their consistent recovery in fixed dimensions. Similarly to \cite{EV22}, their approach does not extend to estimation of graph structures beyond trees.

In the present paper, we propose the first methodology to learn the structure of arbitrary extremal graphical models in the parametric family of H\"usler--Reiss distributions \citep{HR1989}.
Since the first submission of this work, a number of papers have appeared that present other structure learning algorithms. 
\cite{wan2023graphical} propose an alternative method to learn the structure of H\"usler--Reiss graphical models, and \cite{engelke2024extremal} consider the problem of learning latent variable extremal graphical models in the same family of distributions. Both works use our concentration bounds to establish high-dimensional graph recovery guarantees; see Remarks~\ref{rem:norms} and~\ref{rem:wanzhou} for details.
Estimation of high-dimensional H\"usler--Reiss distributions by score matching was recently proposed by \cite{lederer2023extremes}. While the estimated model may exhibit some conditional independencies, this sparsity is a byproduct of their approach, and no guarantees are provided on graph recovery.

A somewhat different approach to graphical modeling of extreme values of functional neuronal connectivity is presented in \cite{chang2023subbotin}.
The authors construct a new family of multivariate Subbotin distributions and argue that dependencies in this model are mostly due to extreme observations. They derive an algorithm that provably learns the graph structure for this model class. Unlike H\"usler--Reiss and other multivariate Pareto distributions studied in the present work, the Subbotin model is not an extreme value distribution and therefore does not aim to study the tails of random vectors. Moreover, their learning theory is based on independent observations from this exact model, rather than the pseudo-observations from the domain of attraction with which we work in this paper.

\subsection*{Notation}
For a vector $\bx$ and matrix $A$, $\|\bx\|_\infty$ and $\|A\|_\infty$ is used to denote the element-wise sup-norm, whereas $\opnorm{A}{p}$ denotes the $L^p/L^p$ operator norm. If $A$ is square with real eigenvalues, let $\lambda_{\min}(A)$ denote its smallest eigenvalue. For a natural number $d \geq 1$ let $[d] := \{1,\dots,d\}$. For a vector $\g x = (x_1,\dots,x_d)^\top \in \R^d$ (or similarly a random vector $\bX = (X_1, \dots, X_d)^\top$) and subset $J = \{j_1,\dots,j_k\} \subseteq [d]$, define the vector $\g x_J := (x_{j_1},\dots,x_{j_k})^\top$ where $j_1 < j_2 < \dots < j_k$. Vectors with all entries $1$ and $0$ are denoted with $\g 1$ and $\g 0$, respectively; the dimension is clear form the context. Inequalities between vectors are understood component-wise. The notation $\g x_{\setminus J}$ is used to denote the vector $\g x_{V\backslash J}$, and if $J= \{j\}$ we write $\g x_{\backslash j}$. We use similar notations to index rows and columns of a matrix.

\section{Background} \label{sec:back}

\subsection{Multivariate Pareto distributions and domains of attraction}

Let $\g Y = (Y_j: j \in V)$ be a $d$-dimensional random vector indexed by $V=\{1,\dots, d\}$ with support contained in the set $\mathcal L = \{ \g y \geq 0: \|\g y \|_\infty >1\}$. The random vector $\g Y$ is said to have a multivariate Pareto distribution if $\mathbb P(Y_1 > 1)= \dots = \mathbb P(Y_d > 1)$ and if it satisfies the homogeneity property
\begin{align}\label{MPD}
\mathbb P( \g Y \in t A) = t^{-1}  \mathbb P( \g Y \in A), \qquad t \geq 1,
\end{align}
where for any Borel subset $A \subset \mathcal L$ we define $t A = \{t\g y : \g y\in A\}$ \citep{roo2006}.
Homogeneity implies that for any $i \in V$ the univariate conditional margin satisfies $\mathbb P(Y_i \leq  x \given Y_i > 1) = 1- 1 /x$ for $x\geq 1$, that is, $Y_i\given Y_i > 1$ follows a standard Pareto distribution.
If $\bY$ has a density $f$, homogeneity further implies that $f$ is homogeneous of order $-(d+1)$, i.e., 
\begin{equation}\label{eq:fhom}
f(t\by) = t^{-(d+1)} f(\by), \quad t \geq 0, \by, t\by \in \cL.
\end{equation}

Multivariate Pareto distributions arise as natural models in the study of multivariate extreme events, since they are the only possible limits of so-called threshold exceedances. To formalize this, consider a random vector $\g X = (X_j: j \in V)$ in $\R^d$ and define $F(\g x) = (F_1(x_1),\dots,F_d(x_d))$. Suppose that $\g X$ has eventually continuous marginal distributions $F_i$, in the sense that there exists $x_0$ such that $F_i(x_0) < 1$ and $F_i$ is continuous on $(x_0, \infty)$. If for some random vector $\g Y$ the limit relation
\begin{equation} \label{eq:mPdlimit}
  \lim_{q \downarrow 0} \Prob(F(\g X) \leq 1 - q/\g x \given F(\g X) \not\leq 1-q) = \Prob(\g Y \leq \g x)
\end{equation}
holds at all continuity points $\g x \in \mathcal L$ of $\Prob(\g Y \leq \cdot)$, we say that $\g X$ is in the domain of attraction of ${\g Y}.$
In this case, the limit $\g Y$ is necessarily a multivariate Pareto distribution, and conversely any multivariate Pareto distribution appears as such limit; see \citet[][Proposition 6]{EV22} for a more formal statement.
Since $F$ standardizes the margins of $\g X$, this is an assumption on the extremal dependence structure of $\g X$. The notation $ F(\g X) \not\leq 1-q$ means that at least one component of the vector $F(\g X)$ exceeds $1-q$, or equivalently at least one component $X_j$, $j \in V$, of $\g X$ exceeds its high quantile $F_j^{-1}(1-q)$. The limit~\eqref{eq:mPdlimit} can thus be interpreted as a model for realizations of $\g X$ with at least one extreme component.

The class of multivariate Pareto distributions is very rich and contains many parametric sub-families that are used for statistical modeling. In fact, it is equivalent to the class of extreme value copulas \citep{seg2010}. We discuss two examples here and refer to \cite{seg2010} for a more extensive list. A classical model with only one parameter is the extremal logistic model \citep{T1990}.
\begin{ex}[Extremal logistic model] \label{ex:logistic}
	The extremal logistic distribution with parameter $\theta\in(0,1)$ is a multivariate Pareto distribution with density
	\begin{align}\label{dens_log}
	  f_{\g Y}(\g y; \theta) \propto \prod_{i=1}^d y_i^{{{-1-1/\theta}}} \left(y_1^{-1/\theta}+\dots+y_d^{-1/\theta}\right)^{\theta-d}, \quad \g y \in \mathcal L.
	\end{align}
\end{ex}
One of the most popular models of multivariate Pareto distributions is the H\"usler--Reiss family \citep{HR1989}. 
\begin{ex}[H\"usler--Reiss model] \label{ex:HR}
	The family of H\"usler--Reiss distributions is parametrized by a symmetric, strictly conditionally negative definite matrix $\Gamma$ in the cone
	\begin{align}
		\mathcal C^d
		=
		\left\{
			\Gamma \in [0, \infty)^{d\times d}:		
			\Gamma = \Gamma^\top
			,\;\;
			\diag{(\Gamma)} = \mathbf{0}
			,\;\;
			v^\top \Gamma v < 0
			\;\forall\,
			\mathbf{0} \neq v \perp \mathbf{1}
		\right\}		.
	\end{align}	
	Equivalently, the model can be parametrized by the positive semi-definite precision matrix $\Theta = (P (-\Gamma/2) P)^+$, where $P = I_d - \mathbf{1}\mathbf{1}^\top /d$ is the projection matrix onto the orthogonal complement of $\mathbf{1}$, and $A^+$ is the Moore--Penrose pseudo-inverse of a matrix $A$; see \cite{H21} for details. The H\"usler--Reiss distribution is then defined through its density
	\begin{align}\label{HR_density}
        f_{\g Y}(\g y; \Theta)
        &\propto
	\prod_{i=1}^d y_i^{-1-1/d} \exp\left\{ -\frac12 (\log \by - \mu_\Theta)^\top \Theta (\log \by - \mu_\Theta) \right\}, \quad \g y \in \mathcal L,
    \end{align}
	where $\mu_\Theta = - P \Gamma \g 1/(2d)$. The normalizing constant is provided in \cite{H21}.
	
\end{ex}

The importance of H\"usler--Reiss distributions comes from both theory and applications. From a theoretical perspective, they are the only possible limits of rescaled and normalized componentwise maxima of Gaussian random vectors \citep{HR1989, KSH2009}, and thus a natural object to study.
H\"usler--Reiss distributions are often considered to be the analogue of Gaussian distributions in the study of extremes. One reason for this is that they are the only continuous multivariate Pareto distributions that exhibit the structure of a pairwise interaction model \citep{lalancette2023pairwise}. Moreover, as we will see later, they are natural tools for graphical modeling since conditional independencies can be identified from zeros in the precision matrix $\Theta$ \citep{EH2020}.  
From a practical perspective, the popularity of this model class is due to its flexibility in terms of statistical modeling \citep[e.g.,][]{deFondeville2018, thibaud_davison}. Their spatial extensions, Brown--Resnick processes, are standard models for spatial extreme events \citep{dav2012b, EMKS2015}.

\subsection{Extremal graphical models}

Since the support space $\mathcal L$ of a multivariate Pareto distribution $\g Y$ is not a product space, the classical notion of conditional independence fails to hold in general between components of $\bY$. However, the auxiliary conditional laws $\g Y^{(m)} = (\g Y \given Y_m > 1)$, $m\in V$, are supported on a product space. They form the basis for the definition of conditional independence for the random vector $\g Y$ in \cite{EH2020}. For disjoint sets $A,B,C\subset V$, we say that $\bY_A$ is conditionally independent of $\bY_C$ given $\bY_B$ in the extremal sense if the usual conditional independence holds for all auxiliary vectors:
\begin{equation}\label{ext_CI}
  \forall m \in V: \bY_A^{(m)} \indep \bY_C^{(m)} \given \bY_B^{(m)}.
\end{equation}
We denote this extremal conditional independence by $\bY_A \perp_e \bY_C \given \bY_B$. It turns out that~\eqref{ext_CI} holds for every $m \in V$ as long as it holds for a single $m\in V$ \citep{eng_iva_kir}. Additional comments on the connection of~\eqref{ext_CI} to classical conditional independence and on the utility of the resulting graphical models are provided in Section~\ref{sec:connectionscalssicalCI} of the Supplementary Material.

In graphical modeling, conditional independence is connected to graph structures to define sparse probabilistic models \citep{L96}. An undirected graph $G = (V,E)$ is a set of nodes $V = \{1,\dots, d\}$ and a collection of edges $E \subset V \times V$ of unordered pairs of distinct nodes.
With the notion of extremal conditional independence~\eqref{ext_CI}, we define an extremal graphical model as a multivariate Pareto distribution $\g Y$ that satisfies the extremal pairwise Markov property
\begin{equation*}
	Y_i \perp_e Y_j \given \bY_{\backslash\{i, j\}}, \quad (i, j) \notin E.
\end{equation*}
When $\g Y$ has a positive continuous Lebesgue density $f$, the graph $G$ is necessarily connected.
In this case, the extremal pairwise Markov property defined above is equivalent to a stronger extremal global Markov property. If $G$ is in addition decomposable, then $f$ factorizes on the graph into lower-dimensional densities, and inference is considerably more efficient thanks to the sparsity; see \cite{EH2020,AS21} for details. This is analogous to the standard Markov properties and undirected graphical models defined with respect to the usual notion of conditional independence \citep[][Chapter 3]{L96}.

\begin{ex}[Extremal logistic model, continued]\label{ex:logisticGM}
An extremal logistic distributed $\bY$ (Example~\ref{ex:logistic}) contains no extremal conditional independence, regardless of the dependence parameter $\theta$ \citep{EH2020}, that is, $Y_i \not\perp_e Y_j \given \bY_{\backslash \{i, j\}}$ for any $i, j \in V$. Hence $\bY$ can only be an extremal graphical model on the fully connected graph.
\end{ex}

\begin{ex}[H\"usler--Reiss model, continued] \label{ex:HRGM}
For a H\"usler--Reiss distributed $\bY$ (Example~\ref{ex:HR}), its precision matrix $\Theta$ encodes the graphical structure through the simple relation
\begin{equation} \label{eq:HRtheta}
Y_i \perp_e Y_j \given \bY_{\backslash \{i, j\}}  \quad  \Longleftrightarrow \quad  \Theta_{ij} = 0,
\end{equation}
for $i,j\in V$; see \cite{H21} for details. Therefore, $\bY$ is an extremal (or H\"usler--Reiss) graphical model on $G := (V, E)$ if and only if $\Theta_{ij} = 0$ whenever $(i, j) \notin E$. Note that such models exist for any connected graph $G$.
\end{ex}

Extremal graphical models allow the construction of sparse models from existing low-dimensional multivariate Pareto distributions. An example is given by models on tree structures.

\begin{ex}[Extremal tree models] \label{ex:treeGM}
	A tree $T = (V,E)$ is a connected graph without cycles; see the graph on the left of Figure~\ref{fig:intro} for an example.
	The density of an extremal graphical model $\g Y$ on tree $T$, also called an extremal tree model, factorizes as 
	\begin{align}\label{tree_fact}
	f_{\g Y}(\g y) \propto \Big( \prod_{\{i,j\}\in E} \frac{f_{\{i,j\}}(y_i, y_j)}{y_i^{-2} y_j^{-2}} \Big) \prod_{i \in V} y_i^{-2}, \quad \g y \in [1, \infty)^d,
	\end{align}
	where for $i,j\in E$, $f_{\{i,j\}}$ is the density of the bivariate Pareto distribution arising as the limit of $(X_i,X_j)$ in~\eqref{eq:mPdlimit}. This factorization can be extended to $\cL$ by homogeneity of $f_{\g Y}$.

	Conversely, given a tree $T$ and a collection of densities of bivariate Pareto distributions $f_{\{i,j\}}, i,j \in E$, \eqref{tree_fact} defines a valid $d$-dimensional density of an extremal tree model on $T$ \citep{EH2020}. This provides a nonparametric construction principle of sparse extreme value models in higher dimensions. We can also use parametric models for $f_{\{i,j\}}$: for instance, an extremal tree model with H\"usler--Reiss bivariate margins is itself a H\"usler--Reiss graphical model as in \cref{ex:HRGM}. In contrast, using bivariate extremal logistic densities $f_{\{i,j\}}$ leads to a new model that is not a $d$-variate extremal logistic distribution, since the latter necessarily corresponds to a fully connected graph; see Example~\ref{ex:logisticGM}.
\end{ex}

\subsection{Extremal variograms}

In the setting of Examples~\ref{ex:HRGM} and~\ref{ex:treeGM}, a natural question is whether the underlying graph can be estimated from data. Here, the extremal variogram \citep{EV22} turns out to play a crucial role. The extremal variogram rooted at node $m\in V$ is defined as the matrix $\Gamma^{(m)}$ with entries
\begin{align}\label{vario}
\Gamma_{ij}^{(m)} &= \var \left\{ \log Y^{(m)}_{i} - \log Y^{(m)}_j \right\}, \quad i,j\in V,
\end{align}
whenever the right-hand side exists and is finite.

For extremal tree models $\g Y$ on $T$, \cite{EV22} show that the extremal variograms form a tree metric, i.e.,
\begin{align}\label{tree_metric}	
    \Gamma_{ij}^{(m)}
    =
    \sum_{(s,t) \in \text{path}(i,j)} \Gamma_{st}^{(m)}
    ,\quad i,j\in V,
\end{align} 
where $\text{path}(i,j)$ is the set of edges that form the unique path between $i$ and $j$ in the tree. They leverage this property to provide an algorithm for learning the underlying tree structure. Deriving theoretical guarantees for their proposal crucially relies on concentration bounds for estimators of the extremal variogram that we provide in the present paper (Theorem~\ref{thm:concentration}). This tree metric property continues to hold for the combined extremal variogram $\Gamma$ defined as
\begin{equation}\label{eq:avevario}
	\Gamma_{ij} := \frac{1}{d}\sum_{m=1}^d \Gamma_{ij}^{(m)},\quad i,j\in V.
\end{equation}

For H\"usler--Reiss distributions, estimating the underlying graph structure can be reduced to finding the zero entries of the precision matrix $\Theta$. It turns out that for any $m \in V$, the extremal variogram matrix $\Gamma^{(m)}$ (and hence the combined version $\Gamma$) is equal to the matrix $\Gamma$ from Example~\ref{ex:HR} \citep{EV22}. Thus estimating $\Gamma^{(m)}$ provides a stepping stone for learning the sparsity pattern of $\Theta$.
While this resembles Gaussian graphical modeling, we note that $\Theta$ is not invertible, which makes it impossible to directly apply classical algorithms from Gaussian graphical modeling. Those challenges are addressed in more detail in Section~\ref{sec:metho}.

Due to the central role that the variogram plays in parametrizing H\"usler--Reiss distributions, it has multiple applications ranging from inference via matrix completion problems \citep{H21}, estimation of H\"usler--Reiss distributions under total positivity constraints \citep{rottger2023total}, and other special types of graphical models for H\"usler--Reiss distributions \citep{rottger2023parametric,rottger2022local}; we refer to \cite{engelke2024review} for a recent review. The concentration bounds that we establish in the next section are also directly used for an alternative method for extremal graph learning in~\cite{wan2023graphical}, and for learning latent variable extremal graphs in~\cite{engelke2024extremal}.

\section{Estimation of extremal variograms and concentration bounds} \label{sec:empvariogen}

\subsection{The empirical extremal variogram}
\label{sec:empvario}

The extremal variogram matrix $\Gamma^{(m)}$ rooted at node $m$ in \eqref{vario} is defined through the multivariate Pareto distribution $\bY$. In applications, we do not observe data from $\g Y$ but rather from $\g X$ in the domain of attraction of $\g Y$ in the sense of \cref{eq:mPdlimit}. It then follows that as $q \downarrow 0$, the conditional law
\[
	\frac{q}{1-F(\g X)} \, \Big | \, \{F_m(X_m) > 1-q\}
\]
converges in distribution to $\bY^{(m)}$, where we recall that $F(\bx)$ denotes $(F_1(x_1), \dots, F_d(x_d))$; see \cref{prop:doa} in the Supplementary Material. Let $(\bX_t := (X_{t1}, \dots, X_{td}): t \in \{1,\dots, n\})$ be a sample of independent copies of $\bX$, and let $k < n$ be a positive integer. Denoting by $\tilde F_i$ the empirical distribution function of $X_{1i}, \dots, X_{ni}$ and by $\tilde F(\g x)$ the vector $(\tilde F_1(x_1), \dots, \tilde F_{d}(x_d))$,
\[
	\bigg\{ \frac{k/n}{1 - \tilde F(\g X_t) + 1/n} : \tilde F_m(X_{tm}) > 1 - k/n \bigg\}
\]
therefore forms an approximate sample of size $k$ from $\g Y^{(m)}$, as long as $k/n$ is sufficiently small. After applying logarithms, this motivates the empirical extremal variogram $\hat\Gamma^{(m)}$ rooted at node $m$ \citep{EV22}, defined by
\[
	\hat \Gamma_{ij}^{(m)} := \widehat{\var}\Big(\log (\tfrac{n+1}{n} - \tilde F_i(X_{ti})) - \log(\tfrac{n+1}{n} - \tilde F_j(X_{tj})) : \tilde F_m(X_{tm}) > 1 - k/n  \Big),
\]
where $\widehat{\var}$ denotes the empirical variance with scaling equal to $(k+1)^{-1}$, the inverse of the effective sample size. A natural estimator for the combined extremal variogram in~\eqref{eq:avevario} is
\begin{equation} \label{eq:defempvario}
	\Gam := \frac{1}{d} \sum_{m \in V} \Gam^{(m)}.
\end{equation}
For the sake of brevity, the empirical extremal variograms $\hat\Gamma^{(m)}$ and $\hat\Gamma$ will simply be referred to as empirical variograms.

\begin{rem}[On the role of $k$]
For a given $m\in V$, by definition of the estimator $\widehat \Gamma^{(m)}$, $k$ is the number of marginal exceedances in the $m$th component that are used for estimation. It is therefore the effective sample size for this estimator. Choosing a different $m' \in V$, there will again be $k$ effective extreme samples for $\widehat \Gamma^{(m')}$. If there is strong dependence between the extremes of the $m$th and $m'$th variable, then these exceedances will have a large overlap. Consequently, the effective sample size that goes into the combined extremal variogram estimator $\widehat \Gamma$ is between $k$ and $dk$, depending on the strength of extremal dependence.

Choosing $k$ is an important question in extremes.
Indeed, a larger $k$ leads to a smaller variance. However, for a large $k$, $k/n$ can be far from zero and the observations entering $\widehat \Gamma$ can have a distribution that is far from $\bY$ leading to large bias. This bias-variance tradeoff is discussed more formally after \cref{thm:concentration}.
\end{rem}

\subsection{Concentration of empirical variograms} \label{sec:varioconc}

One of the most important contributions of this paper is a finite sample concentration bound that holds simultaneously for all the empirical variograms $\hat\Gamma^{(m)}$ under a general domain of attraction condition. To state the conditions for this result in a form that resembles typical assumptions made in the literature on asymptotics of extremes, we introduce additional notation. Define the tail copula function, or $R$-function
\begin{equation} \label{eq:Rlimit}
	R(\g x) := \lim_{q \downarrow 0} q^{-1} \Prob(F(\g X) > 1 - q\g x), \quad \g x \in [0,\infty)^d.
\end{equation}

The tail copula is a popular object in describing multivariate extreme value distributions and turns out to be convenient for our theoretical analysis. In what follows, for ordered sets $J \subset V$, let $R_J$ denote the tail copula corresponding to $\g X_J$. When $J$ is a pair or a triple, we write $R_{ij}$ and $R_{ijm}$ for $R_{(i, j)}$ and $R_{(i, j, m)}$.

The limit in \cref{eq:Rlimit} can be shown to exist under equation~\eqref{eq:mPdlimit}, and the function $R$ is related to $\g Y$ through the identity $R(\g x) = \Prob(\g Y > 1/\g x)/ \Prob(Y_1 > 1)$ whenever $1/\bx \in \cL$.
Provided that $\Prob(Y_j = 0) = 0$, $j \in V$, \eqref{eq:mPdlimit} further implies convergence of rescaled probabilities of joint extremes of every subset $J$ of the variables $X_1, \dots, X_d$ to the corresponding tail copula function $R_J$, which in turn can be interpreted as the limit of $R$ when components in $J^c$ are taken to infinity; see \cref{prop:subR} in the Supplementary Material.
Our main assumption is that this convergence holds for the subsets $J$ of size 3, uniformly and at a polynomial rate.

\begin{assum}[Extended second order condition] \label{assum:tail}
The marginal distribution functions $F_1,\dots,F_d$ of $\bX$ are continuous and there exist constants $\xi > 0$ and $K < \infty$ independent of $d$ such that for all triples of distinct indices $J = (i,j,m)$ and $q \in (0, 1]$,
\begin{equation} \label{eq:tail}
\sup_{\g x \in [0, q^{-1}]^2 \times [0, 1]} \Big| q^{-1} \Prob\big( F_J(\g X_J) > 1 - q\g x\big) - R_J(\bx) \Big| \leq K q^\xi,
\end{equation}
where $F_J(\g x_J) = (F_{i}(x_{i}),F_j(x_j),F_{m}(x_{m}))$.
\end{assum}

This formulation differs from classical second order conditions in that the supremum is taken over sets that grow with $q$. For extremal logistic and H\"usler--Reiss distributions, it is implied by a standard second order condition on bounded sets (Assumption~\ref{assum:RJstand} in the Supplementary Material) which is routinely imposed in theoretical developments for multivariate extreme value theory \citep{EKS2012, fougeres2015, EV22}; see Proposition~\ref{prop:equiv} in the Supplementary Material and the subsequent discussion for details.

\Cref{assum:tail} is sufficient to derive concentration bounds for the empirical extremal variogram, but a sharper result is obtained if $R$ has pairwise densities satisfying a certain~bound.

\begin{assum}[Bounds on densities]
\label{assum:r}
There exists an $\eps >0$ such that for every $\beta \in [-\eps, 1+\eps]$ there is a constant $K(\beta)$ with the following property: for each $i, j \in V$, $i \neq j$, the function $R_{ij}$ has a mixed partial derivative $r_{ij}$ satisfying
	\[
		r_{ij}(x, y) := \frac{\partial^2}{\partial x \partial y} R_{ij}(x, y) \leq \frac{K(\beta)}{x^\beta y^{1-\beta}}, \quad (x, y) \in (0, \infty)^2.
	\]
\end{assum}

\begin{rem}
\Cref{lemm:logisticdensity,lemm:HRdensity} in the Supplementary Material show that \cref{assum:r} is satisfied by any extremal logistic and H\"usler--Reiss distribution, with $\eps = \theta^{-1} - 1$ in the logistic model and with arbitrarily large $\eps$ in the H\"usler--Reiss model. See the lemmas for explicit constants $K(\beta)$. In addition, it can be checked that \cref{assum:r} holds if
\[
	r_{ij}(x, 1-x) \leq K_r (x(1-x))^\eps, \quad x \in (0, 1),
\]
for some positive constants $K_r$ and $\eps$.
\end{rem}

We are now ready to state the main theoretical result of this paper.
\begin{thm} \label{thm:concentration}
	
	Let \cref{assum:tail} hold and $\zeta \in (0, 1)$ be arbitrary. There exist positive constants $C$, $c$ and $M$ only depending on $K$, $\xi$ and $\zeta$ such that for any $n^\zeta \leq k \leq n/2$ and $\lambda \leq \sqrt{k}/(\log n)^4$,
	\[
		\Prob\bigg( \max_{m \in V} \big\| \Gam^{(m)} - \Gamma^{(m)} \big\|_\infty > C \Big\{ \Big(\frac{k}{n}\Big)^\xi (\log(n/k))^2 + \frac{(\log(n/k))^2(1 + \lambda)}{\sqrt{k}} \Big\} \bigg) \leq M d^3 e^{-c\lambda^2}.
	\]
	
	If in addition \cref{assum:r} holds, there exists a positive constant $\bar C$ only depending on $K$, $\xi$, $\zeta$, $\eps$ and $K(\beta)$ such that for any $k$ and $\lambda$ as above,
	\[
		\Prob\bigg( \max_{m \in V} \big\| \Gam^{(m)} - \Gamma^{(m)} \big\|_\infty > \bar C \Big\{ \Big(\frac{k}{n}\Big)^\xi (\log(n/k))^2 + \frac{1 + \lambda}{\sqrt{k}} \Big\} \bigg) \leq M d^3 e^{-c\lambda^2}.
	\]
The above statements continue to hold with $\max_{m \in V} \big\| \Gam^{(m)} - \Gamma^{(m)} \big\|_\infty$ replaced by $\big\| \Gam - \Gamma \big\|_\infty$.
\end{thm}

This theorem is the main technical result of our paper, and the proof turns out to be surprisingly involved; especially when establishing the sharper bound under \cref{assum:r}. A major difficulty stems from the use of empirical distribution functions to normalize the margins and the fact that some components of the vectors entering the estimation are not necessarily large. This is in contrast to most estimators in the extremes literature.

As mentioned previously, this result is of general interest in structure learning for extremes. It provides a crucial ingredient in the analysis of tree learning in \cite{EV22}. Specifically, the proof of consistent tree recovery based on extremal variograms in high dimensions (Theorem 4 in the latter paper) heavily utilizes our Theorem~\ref{thm:concentration}. Similarly, Proposition 4.1 in \cite{wan2023graphical} is essentially an adaptation of our concentration bounds to their setting and the proof mainly relies on our Theorem~\ref{thm:concentration}. Furthermore, the estimation procedures for H\"usler--Reiss distributions in \cite{rottger2023total} (Section~5), \cite{rottger2022local} (Section~4) and in \cite{rottger2023parametric} (Section~5) rely on a surrogate likelihood which only depends on the data through an empirical variogram. Theorem~\ref{thm:concentration} could therefore provide an important ingredient for theoretical guarantees associated to these methods, and for extending the consistency result Theorem 5.7 in \cite{rottger2023total} to increasing dimensions. Extending the consistency results in Section~5 of \cite{H21} from fixed to increasing dimensions is likely to utilize our Theorem~\ref{thm:concentration} as well.

\begin{rem}\label{rem:asympk}
The term $(k/n)^\xi (\log(n/k))^2$ appearing in the upper bound above results from an upper bound on the bias in estimating $\Gamma^{(m)}$ which is due to the fact that we only observe data from the domain of attraction of $\g Y$, rather than from the distribution of $\bY$ directly. This term is increasing in $k$ (for sufficiently small $k$) since larger values of $k$ mean more data that are far from the limiting model are used. The second term involving $\lambda$ in both cases arises from the stochastic error. We note that $k$ corresponds to the effective number of observations used for estimation of $\Gamma^{(m)}$, and in that sense the term $\lambda/\sqrt{k}$ appearing in the second tail bound corresponds to the typical $\sqrt{k}$ convergence rates in extreme value theory. This second term is decreasing in $k$ since using more data means less stochastic error.
To obtain estimators with good convergence rates, the bias and variance need to be balanced. For instance, assuming $\log d = o(k/(\log n)^8)$, the second bound in Theorem~\ref{thm:concentration} yields 
\[
	\max_{m \in V} \big\| \Gam^{(m)} - \Gamma^{(m)} \big\|_\infty = O_\Prob\Big( \Big(\frac{k}{n}\Big)^\xi (\log(n/k))^2 + (\log d)^{1/2}k^{-1/2}\Big),
\]
by choosing $\lambda = \sqrt{C\log d}$ for a suitable constant $C$. Then, provided that $\log d = O(n^\alpha)$ for some $\alpha < 1$, the optimal convergence rate is obtained with the choice $k \sim n^{2\xi/(2 \xi +1)}[\log n]^{-4/(2\xi+1)}[\log d]^{1/(2 \xi + 1)}$,
for which we obtain
\[
\max_{m \in V} \big\| \Gam^{(m)} - \Gamma^{(m)} \big\|_\infty = O_\Prob\Big( \Big( \frac{\log d}{n} \Big)^{\xi/(2\xi+1)} (\log n)^{2/(2\xi+1)} \Big).
\]
This choice depends on the unknown parameter $\xi$ from Assumption~\ref{assum:tail} quantifying the population level bias, which is typical in extreme value theory.
In particular, in a situation where the bias is nonexistent, $\xi$ could be taken as infinite in \cref{assum:tail}. Consequently, as $\xi \to \infty$ the above rate of convergence approaches $\sqrt{(\log d)/n}$, the typical uniform entry-wise rate of convergence of covariance estimators in high-dimensional statistics. 
\end{rem}
   
\begin{rem}\label{rem:norms}
As pointed out by the Associate Editor, convergence rates in norms other than the entry-wise max norm can also be of interest. Such bounds can be obtained from our results by utilizing general bounds on norm equivalence. For instance, the spectral norm admits the universal bound $\opnorm{\Gam - \Gamma}{2} \leq d \|\Gam - \Gamma\|_\infty$, which, together with our Theorem~\ref{thm:concentration}, is used in the theory of latent variable extremal graphical models in \cite{engelke2024extremal}. 
This approach ignores the dependence structure between entries of the estimated matrix.  Sharper results might hold under additional assumptions, but a challenge is that, due to marginal normalization, our estimators are not based on sums of independent random matrices. Hence none of the classical concentration results for sums and quadratic forms of independent random vectors are applicable in our setting. We leave this interesting question for future research.
\end{rem}

\section{Learning H\"usler--Reiss graphical models}
\label{sec:metho}

\subsection{Characterization of H\"usler--Reiss graphical models}

Recall the H\"usler--Reiss distribution from Example~\ref{ex:HR} and its parametrization by the variogram and precision matrices $\Gamma$ and $\Theta$. In what follows, suppose that $\bY$ is such a model and define its conditional independence graph $G = (V,E)$ by
\begin{align}\label{CI_graph}
	Y_i \perp_e Y_j \given \bY_{\backslash\{i, j\}} \quad \Longleftrightarrow \quad (i, j) \notin E.
\end{align}
In other words, $G$ is the smallest graph over which $\bY$ is an extremal graphical model. In view of \cref{ex:HRGM}, this means that for $i \neq j$, $\Theta_{ij} = 0$ if and only if $(i, j) \notin E$. Estimating $G$ is thus equivalent to estimating the sparsity pattern of $\Theta$. Note that, in contrast to the setting of Gaussian graphical models, the matrix $\Theta$ is not invertible and its sparsity pattern can therefore not be directly estimated by naively applying algorithms designed for those models.

The key observation to overcome this problem is that for any $m\in V$, the submatrix $\Theta_{\setminus m, \setminus m} \in \R^{(d-1) \times (d-1)}$ is invertible, and that $(\Theta_{\setminus m, \setminus m})^{-1} = \Sigma^{(m)}_{\setminus m, \setminus m}$ where
\begin{equation}\label{eq:GammatoSigma}
\Sigma_{ij}^{(m)} := \frac{1}{2} (\Gamma_{im} + \Gamma_{jm} - \Gamma_{ij}); \quad i, j \in V,
\end{equation}
see~\citet{H21} for details. In Algebraic Statistics, this set of matrices is also known as the Farris transform of $\Gamma$ \citep{FKE70}. Estimators of $\Sigma_{ij}^{(m)}$ can be obtained from $\hat \Gamma$ by simple plug-in. The sparsity pattern of $\Theta_{\setminus m, \setminus m}$ can in turn be estimated by leveraging algorithms for estimation of sparse inverse matrices that have been developed in the context of Gaussian graphical models. Note that for any single $m \in V$, this does not provide information on the complete graph, since by estimating the sparsity pattern of $\Theta_{\setminus m, \setminus m}$ we gain no information about potential edges that are connected to the $m$th node. At the same time, information about a potential edge $(i, j)$ is contained in the $d-2$ estimators corresponding to the sparsity pattern of $\Theta_{\setminus m, \setminus m}$ for $m \not= i,j$. This observation motivates the majority voting algorithm that we describe next.

\subsection{\eglearn{}: a majority voting algorithm}\label{sec:majority}

For a given $m \in V$ and estimator $\hat \Gamma$ of the variogram matrix $\Gamma$ (for instance the empirical variogram from~\eqref{eq:defempvario}), define the plug-in estimator $\hat\Sigma^{(m)}$ obtained by replacing $\Gamma$ by $\hat\Gamma$ in~\eqref{eq:GammatoSigma}. Consider an arbitrary algorithm $\cA$, called a base learner in what follows, that takes an estimated covariance matrix as input and returns an estimator of the sparsity pattern of its inverse. When given the sub-matrix $\hat\Sigma^{(m)}_{\setminus m, \setminus m}$ as input, the output $\hat Z^{(m)}$ of $\cA$ takes the form of a $(d-1) \times (d-1)$ symmetric adjacency matrix with entries $1$ in positions where $\Theta_{\setminus m, \setminus m}$ is estimated to be non-zero, and entries $0$ elsewhere.

Two examples of possible base learner algorithms are neighborhood selection \citep{MB06} and graphical lasso \citep{YL2007, FHT2008}; they are formally introduced in Section~\ref{sec:sparse} below. The base learner $\cA$ may require the choice of tuning parameters, as is the case for neighborhood selection and the graphical lasso, which can be fixed or data-dependent.

We note that neighborhood selection and graphical lasso are just two possible base learners and that a variety of alternative estimation methods exists. To keep the discussion concise, we focus on these two procedures since they are the easiest to describe, are simpler computationally due to the convexity of the penalty, and only require a single tuning parameter.

Augmenting the matrix $\hat Z^{(m)}$ with a row and column of zeros in the $m$th position, we obtain a $d \times d$ matrix $\tilde Z^{(m)}$. The entries of $\tilde Z^{(m)}$ outside its $m$th row and column are now considered as votes in favor or against certain edges in the graph $G$. Running the algorithm for each $m \in V$ results in $d$ such matrices. Those are then combined into a final graph estimator $\hat G = (V, \hat E)$ using majority voting: an edge $(i,j)$, $i\neq j$, is included in the final graph if and only if a $1$ appears in position $(i, j)$ of more than half of the $d-2$ matrices $\tilde Z^{(m)}$, $m \notin \{i, j\}$. The steps described above are formalized in the Algorithm~\ref{algo:majority}.

Figure~\ref{fig:majority} shows an illustration of the majority voting algorithm where the true underlying graph $G = (V,E)$ is the graph on the right-hand side of Figure~\ref{fig:intro}. In this example, the algorithm would output the true graph $G$ since exactly the true edges appear in the majority of the cases.

\begin{algorithm2e}[H]
	\KwIn{variogram estimate $\hat\Gamma$, base learner algorithm $\mathcal{A}$}
	\KwOut{extremal graph estimate $\hat G = (V, \hat E)$}
	initialize $\hat G := (V, \emptyset)$\;
	\For{$m \in V$}{
		obtain a $(d-1)\times(d-1)$ matrix $\hat Z^{(m)}$ from algorithm $\mathcal{A}$ with $\hat \Sigma^{(m)}_{\setminus m, \setminus m}$ as input\;
		obtain $\tilde Z^{(m)}$ by augmenting $\hat Z^{(m)}$ with a row and column of zeros in the $m$th position
	}
	\For{$i,j \in V$, $i < j$}{
		\If{$\tfrac{1}{d-2} \sum_{m=1}^d \tilde Z_{ij}^{(m)} > \tfrac{1}{2}$}{
			add an edge in $\hat E$ between nodes $i$ and $j$\;
		}
	}
	\caption{\eglearn{}: algorithm for learning general extremal graphical models.}
	\label{algo:majority}
\end{algorithm2e}

\begin{rem}\label{rem:wanzhou}
After the first version of the present paper appeared, an alternative proposal to estimating H\"usler--Reiss graphical models was put forward by \cite{wan2023graphical}. Specifically, those authors observe that the matrix $\Theta + c \bone\bone^\top$ is in fact invertible for any $c>0$ with inverse given by $\Sigma + \frac{1}{c d^2} \bone\bone^\top$ for a matrix $\Sigma$ that is a simple linear function of the extremal variogram matrix $\Gamma$. They then propose to estimate a sparse version of $\Theta$ by solving
\[
\mathop{\argmin}_{\Theta} \Big\{ - \log\det\Theta + \tr(\hat \Sigma 
\Theta) + \lambda \|\Theta - c\bone\bone^\top\|_{1,\mbox{off}} \Big\}
\]   
where $\|A\|_{1,\mbox{off}}$ is the sum of the off-diagonal entries of the matrix $A$. Utilizing our concentration results from Theorem~\ref{thm:concentration}, \cite{wan2023graphical} establish consistency of graph recovery and convergence rates for the resulting parameter estimates in their proposal.
\end{rem}

\begin{figure}[h]
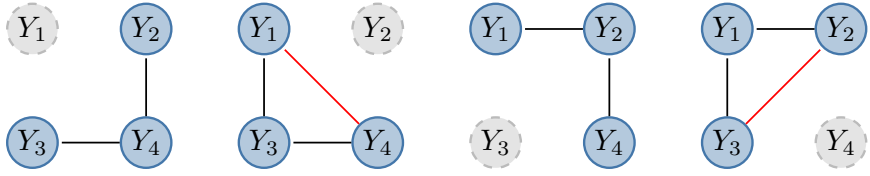

  \centering
  \includegraphics[clip,height=3cm, page=5]{img/plots.pdf}
  \includegraphics[clip,height=3cm, page=6]{img/plots.pdf}
  \includegraphics[clip,height=3cm, page=7]{img/plots.pdf}
  \includegraphics[clip,height=3cm, page=8]{img/plots.pdf}
  \caption{Illustration of the majority voting algorithm when the true underlying graph is the non-decomposable graph on the right-hand side of Figure~\ref{fig:intro}. Left to right: graphical representation of the estimated matrices $\tilde Z^{(m)}$, where the gray node $Y_m$ is not considered in the $m$th step, $m=1,\dots, 4$; black and red edges indicate correctly and incorrectly estimated edges, respectively.}
  \label{fig:majority}
\end{figure}

\subsection{Base learners for sparsity estimation} \label{sec:sparse}

Two classical methods from Gaussian graphical modeling to obtain sparse estimators of precision matrices are neighborhood selection \citep{MB06} and the graphical lasso \citep{YL2007}. The original theoretical guarantees for consistent structure recovery rely on Gaussian data and empirical covariances as input. Since the input estimator $\hat \Sigma^{(m)}_{\setminus m, \setminus m}$ for the base learner $\mathcal A$ in our \eglearn{} in Algorithm~\ref{algo:majority} uses neither Gaussian data nor the empirical covariance, we discuss in this section how the assumptions of sparse estimators for Gaussian distributions can be relaxed.  Related observations were made in \cite{liu2012high} for data that have a Gaussian copula but are not marginally Gaussian and \cite{loh2013} for discrete graphical models.

Throughout this section, we let $A \in \R^{p \times p}$ denote a symmetric, positive definite matrix and we are interested in the sparsity pattern of its inverse $B = A^{-1}$. We aim to use neighborhood selection and graphical lasso as the base learner algorithm $\mathcal{A}$ in the framework of our \eglearn{} in Algorithm~\ref{algo:majority}. As such, in the $m$th step of the algorithm, the matrix $A$ equals $\Sigma^{(m)}_{\setminus m, \setminus m}$ with $p=d-1$, and the interest is in the sparsity pattern of $B = \Theta_{\setminus m, \setminus m}$.

\subsubsection{Neighborhood selection}

Neighborhood selection was originally proposed by \cite{MB06} for estimating Gaussian graphical models. Although the motivation in \cite{MB06} relies on properties of multivariate normal distributions and their conditional independence, the underlying principle can be used to estimate the sparsity pattern of the inverse of a general symmetric matrix. Indeed, for the positive definite matrix $A \in \R^{p \times p}$  with inverse $B$, we have the representation
\[
\frac{-B_{\setminus\ell, \ell}}{B_{\ell\ell}} = (A_{\setminus\ell, \setminus\ell})^{-1}A_{\setminus\ell, \ell} =  \mathop{\arg\min}_{\beta \in \R^{p-1}} \big\{ -2A_{\ell, \setminus\ell} \beta + \beta^\top A_{\setminus\ell, \setminus\ell} \beta \big\}, \quad \ell = 1,\dots, p.
\]
The first equation follows from matrix computations using block inversion formulae \citep[see, for instance,][Equation~(C.4)]{L96} and the second from computing the gradient of the minimization problem. Hence, given access to an estimator $\hat A$, the sparsity pattern in $B_{\setminus\ell, \ell}$ can be estimated through the zero entries of
\begin{equation} \label{eq:deflasso}
	\hat\theta := \mathop{\arg\min}_{\theta \in \R^{p-1}} \big\{ -2\hat A_{\ell, \setminus\ell} \theta + \theta^\top \hat A_{\setminus\ell, \setminus\ell} \theta + \rho_\ell \|\theta\|_1 \big\},
\end{equation}
where $\rho_\ell$ denotes a penalty parameter and the $L^1$ penalty is used for enforcing sparse solutions. The set $\hat\nei(\ell)$ of indices of non-zero entries in $\hat\theta$ is then taken as an estimate of the non-zero pattern in the $\ell$th row of $B$. The procedure is repeated for each variable $\ell$. Since the matrix $B$ is symmetric, pairs $(i, j), (j,i)$ are added to the estimated set of non-zero entries if and only if $i \in \hat\nei(j)$ or $j \in \hat\nei(i)$; see Algorithm~\ref{algo:NS} below.

To link the above approach to neighborhood selection as proposed in \cite{MB06}, assume that $\hat \Sigma$ is the sample covariance matrix of $\g W_1,\dots, \g W_n$, a sample from a $p$-dimensional Gaussian distribution. \cite{MB06} regress $W_{1,\ell}, \dots, W_{n, \ell}$ on $\g W_{1, \setminus\ell}, \dots, \g W_{n, \setminus\ell}$ via the lasso, which amounts to solving the problem
\begin{align*}
\mathop{\arg\min}_{\beta \in \R^{p-1}} \Big\{ \frac{1}{n}\sum_{i=1}^n (W_{i,\ell} - \beta^\top \g W_{i, \setminus\ell})^2 + \rho_\ell \|\beta\|_1 \Big\}
= &\mathop{\arg\min}_{\beta \in \R^{p-1}} \big\{ -2\hat\Sigma_{\ell, \setminus\ell} \beta + \beta^\top \hat\Sigma_{\setminus\ell, \setminus\ell} \beta + \rho_\ell \|\beta\|_1 \big\}.
\end{align*}

\begin{algorithm2e}[H]
	\KwIn{Matrix $\hat A \in \R^{p \times p}$, penalty parameters $(\rho_\ell)_{\ell = 1,\dots,p}$}
	\KwOut{Estimate $\hat Z$ of sparsity pattern of $A^{-1}$}
initialize $\hat Z$ as a matrix of zeros\;
	\For{$\ell =1,\dots,p$}{
		$\hat\theta := \mathop{\arg\min}_{\theta \in \R^{p-1}} \big\{ -2\hat  A_{\ell, \setminus\ell} \theta + \theta^\top \hat A_{\setminus\ell, \setminus\ell} \theta + \rho_\ell \|\theta\|_1 \big\}$\;
		$\hat\nei(\ell) := \{j = 1,\dots,p: \theta_j \neq 0\}$\;
	}
	\For{$i = 1,\dots, p$, $j \neq i$}{
		\If{$i \in \hat\nei(j)$ or $j \in \hat\nei(i)$}{
			set $\hat Z_{ij} = \hat Z_{ji} = 1$\;
		}
	}
	\caption{Sparsity pattern estimation through neighborhood selection.}
	\label{algo:NS}
\end{algorithm2e}

\subsubsection{Graphical lasso}

As an alternative to running node-wise regressions as required for neighborhood selection, \cite{YL2007} suggested to estimate the precision matrix $B = A^{-1}$ via penalized maximum likelihood, with a penalty on the off-diagonal, element-wise $L^1$ norm of the matrix:
\begin{equation}\label{eq:GL}
\mathop{\arg\min}_{Q \in \mathcal{P}^{p-1}} \big\{ -\log\det Q + \tr(\hat A Q ) + \rho \| Q \|_{1, \rm off} \big\}, \quad \| Q \|_{1, \rm off} := \mathop{\sum\sum}_{i \neq j} |Q_{ij}|,
\end{equation}
where $\hat A$ denotes an estimator of the matrix $A$. The name ``graphical lasso'' was subsequently given to the algorithm that efficiently solves this problem \citep{FHT2008}.

In the original procedure of \cite{YL2007}, $\hat A$ denotes the sample covariance matrix of an i.i.d.~sample of $N(0,A)$ random vectors. Subsequently, \cite{RWRY11} proved that Gaussianity is not needed and the sparsity pattern of precision matrices can be recovered consistently as long as the data used to compute the empirical covariance matrix satisfy certain tail properties. A close look at the analysis of \cite{RWRY11} further reveals that there is nothing special about empirical covariance matrices. More precisely, under certain technical conditions, the optimization in~\eqref{eq:GL} yields an estimator of $A^{-1}$ with the correct sparsity pattern, provided that $\hat A$ is any estimator of $A$ that is close to $A$ in element-wise sup-norm. These claims will be made precise in Proposition~\ref{prop:GL} of the Supplementary Material.

\subsection{Consistency results} \label{sec:graphrecovery}

In this section, we provide theoretical guarantees for Algorithm~\ref{algo:majority} to correctly recover the true extremal graph $G$ when either neighborhood selection (Theorem~\ref{th:NS}) or graphical lasso (Theorem~\ref{th:GL}) are used as base learners. Complete proofs of the main results, Theorems~\ref{th:NS} and \ref{th:GL}, are given in the Supplementary Material.

The most important tool is \cref{thm:concentration}, which controls the probabilistic error of the input to \cref{algo:majority}, the empirical variogram. As a second ingredient, we establish sparsistency guarantees on the estimators provided by neighborhood selection and graphical lasso when the input matrices are not empirical covariances of i.i.d.~Gaussian data, but rather arbitrary estimators that are sufficiently close to their population versions in element-wise sup-norm. This is a  straightforward consequence of the analysis in \cite{RWRY11} for the graphical lasso but requires more work in the case of neighborhood selection, since classical arguments in~\cite{MB06} explicitly rely on properties of the multivariate normal distribution. These guarantees are the object of Propositions~\ref{prop:NS} and \ref{prop:GL}. They are stated and proved in the Supplementary Material.

Depending on the base learner used in Algorithm~\ref{algo:majority}, additional assumptions are needed on the matrices $\Sigma^{(m)}$ that parametrize the distribution of $\bY$.

\subsubsection{Neighborhood selection as base learner}

Let $s$ denote the maximal degree, that is, the largest number of edges connected to any node, of the graph $G$. For $m, \ell \in V$, $m \neq \ell$, define the set of all nodes, except for node $m$, that are connected to node $\ell$ as
\[
	S_{m, \ell} := \{i \in V \setminus \{m,\ell\} :  \Theta_{i\ell} \neq 0\} \subset V \setminus \{m\}
\]
and its complement $S_{m, \ell}^c$ taken in $V \setminus \{m\}$. Define the quantities $\theta_{\min}^\ns := \min_{i, \ell: \Theta_{i\ell} \neq 0} |\Theta_{i\ell}|/\Theta_{\ell\ell}$,
\begin{align*}
\mu &:= \min_{m,\ell \in V, m \neq \ell} \lambda_{\min} ( \Sigma^{(m)}_{S_{m,\ell}, S_{m,\ell}}),
\\
\kappa &:= \max_{m,\ell \in V, m \neq \ell} \normop{ \Sigma^{(m)}_{S_{m,\ell}^c, S_{m,\ell}}}_\infty,
\\
\vartheta & := \max_{m,\ell \in V, m \neq \ell} \normop{ ( \Sigma^{(m)}_{S_{m,\ell}, S_{m,\ell}})^{-1}}_\infty.
\end{align*}
Additionally, consider the neighborhood selection incoherence parameter
\begin{equation}
	\eta^{\text{ns}} := \min_{m,\ell \in V, m \neq \ell}\eta^{\text{ns}}_{m,\ell}, \quad \eta^{\text{ns}}_{m,\ell} := 1 - \normop{\Sigma^{(m)}_{S_{m,\ell}^c, S_{m,\ell}} (\Sigma^{(m)}_{S_{m,\ell}, S_{m,\ell}})^{-1}}_{\infty}.
\end{equation}
Incoherence parameters of this sort are known to be a crucial ingredient for support recovery via the lasso \citep{ZY06, MB06}. In our theory below we will assume that $\eta^{\text{ns}}$ is strictly positive; a technical relaxation to requiring sufficiently many $\eta^{\text{ns}}_{m,\ell}$ being positive is shown in the proof of \cref{th:NS}.

The majority voting in \Cref{algo:majority} applies the base learner algorithm to $d$ distinct problems, namely for every $m \in V$. Using neighborhood selection as the base learner in turn requires the choice of $d-1$ tuning parameters, resulting in a total of $d(d-1)$ tuning parameters $\rho^\ns_{m,\ell}, m \in V, \ell \in [d-1]$, where $\rho^\ns_{m,1},\dots,\rho^\ns_{m,d-1}$ correspond to the tuning parameters in the $m$th step of \cref{algo:majority}. Define
$
\rho^\ns_{\rm min} := \min_{m, \ell} \rho^\ns_{m,\ell}
$
and
$
\rho^\ns_{\rm max} := \max_{m, \ell} \rho^\ns_{m,\ell}
$.

\begin{thm} \label{th:NS}
Assume that $\bY$ is a H\"usler--Reiss distribution with parameter matrix $\Gamma$ and let $G$ be its conditional independence graph. Let $G^\ns(\hat\Gamma)$ denote the estimated graph obtained through neighborhood selection in Algorithm \ref{algo:majority} with penalty parameters $\rho_{m, \ell}^\ns$ and arbitrary input matrix $\Gam$. Then, as soon as $\eta^{\text{ns}}> 0$ and
\[
	\big\| \Gam - \Gamma \big\|_\infty < C^\ns := \frac{2}{3} \min\bigg\{
	\frac{\mu}{2s},
	\frac{\eta^{\text{\rm ns}}}{4\vartheta (1 + \kappa \vartheta) s},
	\frac{\theta_{\min}^{\text{\rm ns}} - \vartheta \rho^\ns_{\rm max}}{2 \vartheta (1 + \kappa \vartheta)},
	\frac{\rho^\ns_{\rm min} \eta^\text{\rm ns}}{8(1 + \kappa \vartheta)^2}
	\bigg\},
\]	
we have $G^\ns(\hat\Gamma) = G$.

Now suppose Assumption~\ref{assum:tail} holds with $R_J$ in there corresponding to $\bY$. Recall the estimator $\Gam$ in~\eqref{eq:defempvario} and the role of the parameter $k$ in this estimator.
Assuming that $n^\zeta \leq k \leq n/2$ for some $\zeta > 0$, $\log d = o(k/(\log k)^8)$ and the quantities $\mu, \kappa, \vartheta, \eta^{\text{\rm ns}}$ are bounded away from zero and infinity,
\[
	\Prob\big(G^\ns(\hat\Gamma) = G\big) \to 1, \quad n \to \infty,
\]
as long as $\rho^\ns_{\rm max} < \theta_{\min}^{\text{\rm ns}}/(2\vartheta)$ and
$
(k/n)^{\xi} (\log(n/k))^2 + \sqrt{\log d}/\sqrt{k} = o(\min(\rho^\ns_{\rm min}, s^{-1})).
$
\end{thm}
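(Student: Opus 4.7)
My plan is to prove the conclusion first on the deterministic event $\{\|\hat\Gamma - \Gamma\|_\infty < C^{\ns}\}$, then control its probability via concentration of the empirical variogram. For the first step, observe that the Farris transform \eqref{eq:gamma2sigma} is linear, with each entry of $\Sigma^{(m)}$ a signed half-sum of three entries of $\Gamma$; consequently $\|\hat\Sigma^{(m)} - \Sigma^{(m)}\|_\infty \leq \tfrac{3}{2}\|\hat\Gamma - \Gamma\|_\infty$ for every $m \in V$, which is precisely the origin of the $2/3$ prefactor in $C^{\ns}$. I would then invoke the node-wise sparsistency guarantee for neighborhood selection --- Proposition~\ref{prop:NS} in the supplement, which adapts the primal-dual witness argument of \cite{MB06} and \cite{ZY06} to a generic sup-norm perturbation of a fixed positive definite matrix --- applied for each $m \in V$ with input $\hat\Sigma^{(m)}_{\setminus m, \setminus m}$ and target $\Theta^{(m)}_{\setminus m, \setminus m}$. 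The four terms in the min defining $C^{\ns}$ correspond exactly to the four hypotheses of that proposition (minimum-eigenvalue bound, incoherence margin, signal-strength condition, and penalty-scale condition), uniformized in $m$ through $\lambda, \eta^{\ns}, \vartheta, \kappa$ and $\theta_{\min}^{\ns}$, with column sparsity controlled by the maximum degree $s$ of $G$.

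\textbf{From node-wise to graph recovery, and concentration.} By \eqref{eq:HRcondind}, for any $m \notin \{i,j\}$ the entry $\Theta^{(m)}_{ij}$ is nonzero iff $(i,j) \in E$. Combined with the previous step this gives $\tilde Z^{(m)}_{ij} = \Ind{(i,j)\in E}$ for every $m \notin \{i,j\}$, so the $d-2$ relevant votes are unanimous and $\hat G = G$ follows. The same argument isolates the relaxation alluded to in the paper: the majority vote correctly decides edge $(i,j)$ provided strictly more than $(d-2)/2$ base learners succeed, so a vanishing fraction of failed $\eta^{\ns}_{m,\ell}$ is tolerated. For the stochastic step, Theorem~\ref{thm:concentration} applied under Assumption~\ref{assum:tail} together with $k \geq n^\zeta$ and $\log d = o(k/(\log k)^8)$ yields $\|\hat\Gamma - \Gamma\|_\infty = \Op{(k/n)^\xi(\log(n/k))^2 + \sqrt{\log d/k}}$. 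The boundedness hypotheses on $\lambda, \kappa, \vartheta, \eta^{\ns}$ together with $\rho^{\ns}_{\max} < \theta_{\min}^{\ns}/(2\vartheta)$ imply $C^{\ns} \asymp \min(\rho^{\ns}_{\min}, s^{-1})$, so the stated rate condition places $\|\hat\Gamma - \Gamma\|_\infty < C^{\ns}$ with probability tending to one, and the deterministic step closes the argument.

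\textbf{Main obstacle.} The substantive technical work lies in the proof of Proposition~\ref{prop:NS}: a sparsistency guarantee for the lasso subproblem~\eqref{eq:deflasso} driven purely by a deterministic sup-norm perturbation of a fixed positive definite matrix, rather than by i.i.d.\ Gaussian samples. The classical Meinshausen--B\"uhlmann proof leans on Gaussian empirical process machinery and $\ell^2$-type concentration that is simply not available in our setting, so one has to carry out the primal-dual witness construction entirely through deterministic matrix perturbation inequalities, tracking how sup-norm errors propagate through the inverse of $A_{SS}$ and how they feed into the dual-feasibility KKT check on $S^c$. A secondary but more manageable concern is the uniformity in $m \in V$ needed to union-bound the sparsistency event across the $d$ neighborhood selection calls; this costs only a $\log d$ factor that is absorbed by the rate conditions. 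Once these ingredients are in place, the transition from the deterministic statement to the probabilistic one via Theorem~\ref{thm:concentration} is routine.
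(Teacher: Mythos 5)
Your proposal matches the paper's proof in all essentials: the $3/2$ Lipschitz bound from the Farris transform giving the $2/3$ prefactor, invocation of Proposition~\ref{prop:NS} node-wise with $C^{\ns}$ uniformizing the four hypotheses over $m,\ell$, the observation that $\rho^{\ns}_{\max} < \theta_{\min}^{\ns}/(2\vartheta)$ keeps the signal-strength term from being the bottleneck so that $C^{\ns} \asymp \min(\rho^{\ns}_{\min}, s^{-1})$, and Theorem~\ref{thm:concentration} (in its sharper form, which applies automatically via Lemma~\ref{lemm:HRdensity}) with $\lambda \asymp \sqrt{\log d}$ to close the probabilistic step. The only stylistic difference is that you argue unanimity of the $d-2$ votes on the event $\|\hat\Gamma-\Gamma\|_\infty < C^{\ns}$, while the paper tracks that only $\lfloor d/2\rfloor+2$ successful learners are actually needed; since $C^{\ns}$ lower-bounds every local $C^{\ns}_{m,\ell}$, both arguments are correct and the latter merely makes explicit the relaxation you also flag.
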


In the statement of the second part of the above theorem, we sacrifice some generality for the sake of simplicity. Combining the first statement with the general concentration bounds on $\|\Gam - \Gamma\|_\infty$ in \cref{thm:concentration}, one can obtain lower bounds on the probability of correct graph recovery that are explicit in all constants appearing above. We opt against providing such explicit expressions because the resulting terms are lengthy and do not add much in terms of interpretability. In the same vein, the quantities $\mu, \kappa, \vartheta, s, \eta^{\text{ns}}$ are for simplicity taken as the worst case over $m, \ell$. It is possible to introduce versions of $\mu, \kappa, \vartheta, s, \eta^{\text{ns}}$ that depend on $m,\ell$. This would allow for sharper but more complex results; in particular, the incoherence parameters $\eta^\ns_{m, 1}, \dots, \eta^\ns_{m, d-1}$ would need to be non-negative for only half of the values $m \in V$. The precise form of this statement is immediate from a close look at the proof in \cref{sec:proofeglearn} of the Supplementary Material.

For interpreting the second part of the above theorem, note that $r_{k,n} := (k/n)^{\xi} (\log(n/k))^2 + (k^{-1}\log d)^{1/2}$ is the order at which $\| \Gam - \Gamma \|_\infty$ concentrates with $(k/n)^{\xi} (\log(n/k))^2$ corresponding to the bias and $(k^{-1}\log d)^{1/2}$ to the stochastic part; see Theorem~\ref{thm:concentration} and the discussion right after for additional details. In particular, with the optimal choice of $k$ derived therein, we recall that $r_{k,n}$ is of the order of $(n^{-1}\log d)^{\xi/2\xi+1}$ up to a logarithmic factor in $n$. The quantity $\theta_{\min}^{\text{\rm ns}}$ can be interpreted as minimal signal strength among edges that are present in the graph. In order for such edges to be recovered with high probability, we need a minimal signal condition $\theta_{\min}^\ns \gg r_{k,n}$. Similarly, the maximal edge degree $s$ must satisfy $s^{-1} \gg r_{k,n}$. Some conditions on minimal signal strength and maximal node degree are imposed in all of the existing literature on sparsistency of estimators for graphical models.

Finally, \cref{prop:equiv} in the Supplementary Material and the discussion that follows it provide a way to verify \cref{assum:tail} from \cref{assum:RJstand} for data in the domain of attraction of H\"usler--Reiss distributions.

\subsubsection{Graphical lasso as base learner}

Define the set of edges in the graph excluding edges containing node $m$, and augmented by self loops, by
\[
	S_m := \{(i, j): i, j \in V \setminus \{m\}, \Theta_{ij} \neq 0\},
\]
as well as its complement $S_m^c$ taken in $(V \setminus \{m\})^2$. Let $\Omega^{(m)} := \Sigma^{(m)} \otimes \Sigma^{(m)}$ and define the quantities $\theta_{\min}^\gl :=  \min_{i \neq j: \Theta_{ij} \neq 0} |\Theta_{ij}|$, $\kappa_\Sigma := \max_m \opnorm{\Sigma^{(m)}}{\infty}$ and $\kappa_\Omega := \max_m \opnorm{(\Omega^{(m)}_{S_m, S_m})^{-1}}{\infty}$. The incoherence parameter for graphical lasso is defined as 
\begin{align}\label{ext_vario}
	\eta^\gl := \min_{m \in V} \eta_m^\gl, \quad \eta_m^\gl := 1 - \normop{\Omega^{(m)}_{S_m^c, S_m} (\Omega^{(m)}_{S_m, S_m})^{-1}}_{\infty}.
\end{align}
Similarly to neighborhood selection, such incoherence parameters play a crucial role in support recovery guarantees for the Gaussian graphical lasso \citep{RWRY11}.

Using graphical lasso as the base learner in \cref{algo:majority} requires the choice of $d$ tuning parameters $\rho^\gl_1, \dots, \rho^\gl_d$, one for each graphical lasso that is applied in the $m$th step of the loop. Define
$\rho^\gl_{\rm min} := \min_{m \in V} \rho^\gl_{m}$ and $\rho^\gl_{\rm max} := \max_{m \in V} \rho^\gl_{m}.$ 

\begin{thm} \label{th:GL}
Assume that $\bY$ is a H\"usler--Reiss distribution with parameter matrix $\Gamma$ and let $G$ be its conditional independence graph. Let $G^\gl(\hat\Gamma)$ denote the estimated graph obtained through the graphical lasso as base learner in Algorithm \ref{algo:majority} with penalty parameters $\rho_1^\gl, \dots, \rho_d^\gl$ and arbitrary input matrix $\Gam$. Then, as soon as $\eta^{\text{\rm gl}} > 0$ and
\[
\big\| \Gam - \Gamma \big\|_\infty < C^\gl := \frac{2}{3}\min\bigg\{\min_{i, m \in V, i \neq m} \Sigma_{ii}^{(m)},\frac{\eta^{\text{\rm gl}}\rho^\gl_{\rm min}}{8}, \frac{1}{\chi_0 s} - \rho^\gl_{\rm max}, \frac{\theta_{\min}^{\text{\rm gl}}}{4\kappa_\Omega} - \rho^\gl_{\rm max} \bigg\},
\]
for $\chi_0 := 6\kappa_\Sigma\kappa_\Omega(1\vee (9\kappa_\Sigma^2\kappa_\Omega/\eta^{\text{\rm gl}}))$, we have $G^\gl(\hat\Gamma) = G$.

Now suppose Assumption~\ref{assum:tail} holds with $R_J$ in there corresponding to $\bY$. Recall the estimator $\Gam$ in~\eqref{eq:defempvario} and the role of the parameter $k$ in this estimator. Assuming that $n^\zeta \leq k \leq n/2$ for some $\zeta > 0$, $\log d = o(k/(\log k)^8)$ and that the quantities $\min_{i \neq m} \Sigma_{ii}^{(m)}, \kappa_\Sigma, \kappa_\Omega, \eta^\gl$ are bounded away from zero and infinity,
\[
\Prob\big(G^\gl(\hat\Gamma) = G\big) \to 1, \quad n \to \infty,
\]
as long as $\rho^\gl_{\rm max} < (2\chi_0 s)^{-1} \wedge (\theta_{\min}^\gl/8\kappa_\Omega)$ and
$
(k/n)^{\xi} (\log(n/k))^2 + \sqrt{\log d}/\sqrt{k} = o(\rho^\gl_{\rm min}).
$  
\end{thm}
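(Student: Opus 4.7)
The plan is to reduce the statement to a pointwise consistency property of graphical lasso applied to each of the $d$ submatrices $\hat\Sigma^{(m)}_{\setminus m,\setminus m}$. The key bridge is Proposition~\ref{prop:GL} of the Supplementary Material, which is the version of the graphical lasso guarantee of \cite{RWRY11} that we need: for a positive definite matrix $A$ with inverse $B = A^{-1}$ whose incoherence parameter is positive, if the input $\hat A$ is close enough to $A$ in element-wise sup-norm and the penalty is tuned appropriately, then the solution of the $\ell^1$-penalized maximum likelihood problem~\eqref{eq:GL} recovers the exact zero pattern of $B$. I would therefore first specialize this result with $A = \Sigma^{(m)}_{\setminus m,\setminus m}$ and $B = \Theta^{(m)}_{\setminus m,\setminus m}$ for each $m\in V$. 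The assumption $\eta^{\text{gl}}>0$ supplies the incoherence condition, and the constants $\kappa_\Sigma, \kappa_\Omega, \min_{i\neq m}\Sigma^{(m)}_{ii}, \theta^{\text{gl}}_{\min}, s$ appearing in $C^\gl$ are exactly those that arise in the proposition.

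Next I would transport the bound on $\|\hat\Gamma-\Gamma\|_\infty$ to a bound on $\|\hat\Sigma^{(m)}-\Sigma^{(m)}\|_\infty$ using the definition~\eqref{eq:gamma2sigma} of the Farris transform. Since each entry of $\Sigma^{(m)}$ is a half-sum of at most three entries of $\Gamma$, one has $\|\hat\Sigma^{(m)}-\Sigma^{(m)}\|_\infty\le \tfrac32\|\hat\Gamma-\Gamma\|_\infty$, which is exactly the $2/3$ factor in the definition of $C^\gl$. Thus $\|\hat\Gamma-\Gamma\|_\infty < C^\gl$ implies simultaneously for every $m$ that (i) $\hat\Sigma^{(m)}_{\setminus m,\setminus m}$ is positive definite (because its diagonal entries dominate and by the first term in $C^\gl$), (ii) the sup-norm deviation is at most $\eta^\gl\rho^\gl_m/8$, and (iii) the minimum signal and sparsity conditions of Proposition~\ref{prop:GL} are satisfied with tuning parameter $\rho^\gl_m$. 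Applying the proposition yields $\hat Z^{(m)}_{ij}=\mathbbm{1}\{\Theta^{(m)}_{ij}\neq 0\}$ for all $i,j\neq m$ and every $m\in V$. By~\eqref{eq:HRcondind}, for any pair $(i,j)$ with $i\neq j$ and any $m\notin\{i,j\}$, this means $\widetilde Z^{(m)}_{ij}=\mathbbm{1}\{(i,j)\in E\}$, so all $d-2$ votes for edge $(i,j)$ agree with the truth and majority voting in Algorithm~\ref{algo:majority} returns $\hat G=G$. This proves the deterministic first part.

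For the probabilistic second part, I would invoke the concentration bound for the empirical extremal variogram in Theorem~\ref{thm:concentration}, which yields, under Assumption~\ref{assum:tail} and the regime $k\ge n^\zeta$, $\log d = o(k/(\log k)^8)$,
\begin{equation*}
\|\hat\Gamma-\Gamma\|_\infty = O_\PP\!\left((k/n)^{\xi}(\log(n/k))^2 + \sqrt{\log d/k}\right) = o_\PP(r_{k,n}^{-1}\cdot r_{k,n}),
\end{equation*}
that is, $\|\hat\Gamma-\Gamma\|_\infty = O_\PP(r_{k,n})$ with $r_{k,n}=(k/n)^\xi(\log(n/k))^2+\sqrt{\log d /k}$. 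Since $\min_{i\neq m}\Sigma^{(m)}_{ii}, \kappa_\Sigma, \kappa_\Omega, \eta^\gl$ (hence $\chi_0$) are bounded away from zero and infinity, the conditions $\rho^\gl_{\max} < (2\chi_0 s)^{-1}\wedge(\theta^\gl_{\min}/(8\kappa_\Omega))$ guarantee that $C^\gl$ is bounded below (up to constants) by $\min\{\rho^\gl_{\min}, s^{-1}, \theta^\gl_{\min}\}$; and the assumed rate $r_{k,n}=o(\rho^\gl_{\min})$, together with the implicit $r_{k,n}=o(1)$, forces $\|\hat\Gamma-\Gamma\|_\infty<C^\gl$ with probability tending to one. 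Applying the deterministic part pointwise in the underlying probability space then completes the argument.

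The main obstacle I anticipate is not the transport step, which is an elementary consequence of the Farris transform, but the careful verification that the conditions in $C^\gl$ match the hypotheses of the underlying graphical lasso proposition. In particular, the scaling $\eta^\gl\rho^\gl_{\min}/8$ and the term $\chi_0 s$ with $\chi_0 = 6\kappa_\Sigma\kappa_\Omega(1\vee(9\kappa_\Sigma^2\kappa_\Omega/\eta^\gl))$ are dictated by the precise form of the deviation bound in the RWRY-type analysis, so one must ensure consistency between the sup-norm deviation on $\hat\Sigma^{(m)}$ (which includes the $3/2$ factor) and the deviation on $\hat\Omega^{(m)} = \hat\Sigma^{(m)}\otimes\hat\Sigma^{(m)}$ that controls the Hessian of the penalized likelihood. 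All other ingredients, including the bookkeeping for the majority vote, are then straightforward.
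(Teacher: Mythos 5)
Your proposal is correct and follows essentially the same route as the paper's proof: you apply the graphical-lasso sparsistency result (Proposition~\ref{prop:GL}) to each $\Sigma^{(m)}_{\setminus m,\setminus m}$, transport the sup-norm error from $\hat\Gamma$ to $\hat\Sigma^{(m)}$ via the Farris transform with the factor $3/2$ (hence the $2/3$ in $C^\gl$), use~\eqref{eq:HRcondind} to read off the edges, and conclude the probabilistic part from Theorem~\ref{thm:concentration}. The only cosmetic differences are that you argue all $d$ base learners succeed while the paper observes $\lfloor d/2\rfloor+2$ would suffice (both work since $C^\gl\le C_m^\gl$ for every $m$), and your parenthetical about needing a separate deviation bound on $\hat\Omega^{(m)}=\hat\Sigma^{(m)}\otimes\hat\Sigma^{(m)}$ is not actually required — in Proposition~\ref{prop:GL} the only input perturbation is $\varepsilon=\|\hat A-A\|_\infty$, and $\kappa_\Omega$ is a population quantity.
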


Similarly to the statements in \Cref{th:NS}, we opt for simplicity over generality. It is possible to obtain sharper statements for the second part by combining the statement in the first part with concentration results on the empirical variogram. Moreover, it suffices if a majority of the incoherence parameters $\eta^\gl_1,\dots,\eta^\gl_d$ are non-negative. 

Akin to $\theta_{\min}^{\text{\rm ns}}$, the quantity $\theta_{\min}^{\text{\rm gl}}$ corresponds to a minimal signal strength condition. Both quantities are of the same order provided that all diagonal entries of $\Theta^{(m)}$ are bounded away from zero and infinity for all $m$. In \cref{th:GL}, we find that assuming all other parameters fixed, $\theta_{\min}^\gl \gg r_{k,n}$ and $s^{-1} \gg r_{k,n}$ are required for consistent graph recovery with graphical lasso as the base learner. This matches the requirements when neighborhood selection is the base learner; see the discussion following Theorem~\ref{th:NS}.

Finally, note that the required \cref{assum:tail} in \cref{th:GL} can be verified from \cref{assum:RJstand}, as discussed after~\cref{th:NS}.

\begin{rem}
One important difference between the assumptions for consistent graph estimation via neighborhood selection and graphical lasso lies in the definition of the corresponding incoherence parameters. While there are no general results stating that one parameter is always smaller than the other, simulations indicate that in the models we considered, the neighborhood selection incoherence parameter is more likely to be positive; see Section~\ref{sec:block_graph} in the Supplementary Material for additional details. This is in line with the discussion in \citet[][Sections~3.1.1, 3.1.2]{RWRY11} who show that in two examples, conditions required for consistent graph recovery via neighborhood selection are weaker than those required for graphical lasso. As pointed out by a referee, incoherence assumptions can be avoided by using non-convex penalties \citep{fan2001variable,loh2017support}. It is also possible to avoid incoherence assumptions through the use of thresholding \citep{wang2023thresholded} or by adaptive reweighting similar to the adaptive lasso \citep{zou2006adaptive}. 

The main reason we decide to stick with the $L^1$ penalty is that it is a simple procedure with a single tuning parameter that does not require non-convex optimization. Thresholding or adaptive re-weighting (similar to the adaptive lasso) requires the choice of a second tuning parameter. Non-convex optimization, as required by SCAD or MCP, can be tricky in high dimensions and many of the available theoretical guarantees concern the existence of a sequence of minimizers with desirable statistical properties. When working on a single regression problem, non-uniqueness of the solution can be diagnosed and different solutions can be compared. This is a process, however, that can hardly be automated inside our algorithm.
\end{rem}

\section{Simulations}\label{sec:sims}

\subsection{Simulation setup}

We conduct several simulation studies to compare the performance and properties of different structure learning methods for extremal graphs. Two classes of H\"usler--Reiss distributions $\g Y$ are chosen as the true extremal graphical model. They are described below by first sampling a random graph structure $G=(V,E)$ and then generating a random parameter matrix $\Gamma$ that factorizes on that graph. Using the exact method of \citet{DEO2016}, we then simulate $n$ samples of a max-stable random vector $\g X$ associated to $\g Y$  \citep[cf.,][]{roo2018}, whose copula is
\begin{align}\label{max_stable}
  \Prob(F(\bX) \leq \bx) = \exp\{-\Prob(\g Y \not \leq - 1/\log \g x)/\Prob(Y_1 > 1)\}, \quad \bx \in [0, 1]^d.
\end{align}
It is shown in \cref{sec:ms} of the Supplementary Material that this distribution satisfies \cref{assum:RJstand} with $\xi' = 1$. Hence by \cref{prop:equiv} and \cref{lemm:HRdensity}, it satisfies \cref{assum:tail} with any $\xi<1$. In particular, it is in the domain of attraction of $\g Y$.

\newcommand{\BA}{\mathrm{BA}}

As the first random graph $G = (V,E)$ we consider the Barabasi--Albert model denoted by $\BA(d, q)$, which is a preferential attachment model with $d$ nodes and a degree parameter $q \in \mathbb N$ \citep{alb2002}.
We consider $d \in \{20, 50, 100\}$ and the parameters $q=1$ (which is a tree) and $q=2$. Figure~\ref{fig:BA} in the Supplementary Material shows realizations of the $\BA(100, 1)$ and $\BA(100, 2)$ models. In each simulation setup, we use $k = \lfloor n^{0.7} \rfloor$ to compute the empirical variogram $\hat\Gamma$, which satisfies the assumptions of \cref{thm:concentration}, where each time the sample size $n$ is chosen so that $k/d \in \{0.5, 1, 2.5\}$ (when $q=1$) and $k/d \in \{0.5, 1, 5\}$ (when $q=2$). The case of $k/d=0.5$ is a high-dimensional setting since there are fewer effective samples than variables.

In order to randomly generate a valid H\"usler--Reiss parameter matrix $\Gamma$ on $G$, we use the scheme in \cite{ying2021minimax} to sample a weighted graph Laplacian matrix.
The latter can be used as a H\"usler--Reiss precision matrix $\Theta$, which then corresponds uniquely to a parameter matrix $\Gamma$; see Example~\ref{ex:HR}. Specifically, we sample for every undirected edge $\{i,j\} \in E$ of $G$ an independent random variable $U_{ij} \sim \text{Unif}[2,5]$, and define the matrix $W \in \R^{d \times d}$ by
\begin{align*}
W_{ij} = W_{ji} :=
\begin{cases}
  U_{ij} \text{ if } \{i,j\} \in E, \\
  0 \text{ otherwise.}
\end{cases}
\end{align*}
Let $D$ be the diagonal degree matrix with entry $D_{ii}$ given by the $i$th row sum of $W$, $i \in V$. The matrix $\Theta = D - W$ is called the weighted Laplacian matrix over the graph $G$ and is a valid H\"usler--Reiss precision matrix \citep{rottger2023total}.

We note that this construction always results in a precision matrix satisfying
\begin{align}\label{emtp2}
  \Theta_{ij} \leq 0, \quad i,j \in V, i\neq j.
\end{align}
By \citet[][Lemma 4.5]{rottger2023total} this implies that the corresponding H\"usler--Reiss distribution is $\text{EMTP}_2$, a notion of positive dependence for multivariate Pareto distributions. While encountered frequently in multivariate extreme value models \citep[see][Section 4]{rottger2023total}, such positive dependence is not present in all H\"usler--Reiss distributions.

As a second model for $G$ and $\Gamma$ we therefore consider a setup where $\Theta_{ij} > 0$ for some $i,j \in V$. 
The construction method is described in Section~\ref{sec:block_graph} in the Supplementary Material.  The resulting block graph $G$ induces a block-like structure on the precision matrix $\Theta$ \citep{H21}; see the right-hand side of Figure~\ref{fig:BA}.

\subsection{Competing methods and evaluation}
\label{sim:compete}

We apply several methods for structure estimation to the simulated data. All methods are based on the empirical extremal variogram $\hat \Gamma$ defined in~\eqref{eq:defempvario}. For a given sample size $n$, this estimator uses only the $k$ largest exceedances in each variable. Throughout the simulation study, we choose $k = \lfloor n^{0.7}\rfloor$, which satisfies the assumptions for our theory. We refer to \citet[][Section 5]{EV22} for a detailed study of the choice of $k$ in the framework of structure learning for extremal trees, which applies in the same manner here.
In practice, $k$ can be chosen based on stability plots of the estimated entries of $\hat \Gamma$, similar to the Hill plot \citep[e.g.,][]{dre2000}. Note that this strategy separates the choice of $k$ from the selection of the penalty parameter used in \eglearn{}, thus avoiding the simultaneous tuning of two hyperparameters.

The first estimator of the extremal graph structure $G$ is the extremal minimum spanning tree $T_{\MST}$ introduced in \cite{EV22}. For a given estimate $\hat \Gamma$ of the extremal variogram, it is defined as
\begin{align}\label{Tmin}
  T_{\MST} = \mathop{\argmin}_{T = (V,E)} \sum_{(i,j) \in E} \hat \Gamma_{ij},
\end{align}
where the minimum is taken over all tree structures $T$. In \cite{EV22} it is shown to consistently recover an underlying tree even in high dimensions. By construction, this always results in a tree and hence cannot be consistent for more general graphs.

A second method is from \cite{rottger2023total} who obtain an $\text{EMTP}_2$ estimator of $\Theta$ as the maximizer of the convex objective
\begin{align}\label{nllk}
	{\log\det}^* \Theta + \frac12 \tr(\hat \Gamma \Theta)
\end{align}
over all H\"usler--Reiss precision matrices satisfying the $\text{EMTP}_2$ constraint~\eqref{emtp2}. Here, the pseudo-determinant ${\det}^*$ of a matrix is defined as the product of its non-zero eigenvalues. With $\hat\Gamma$ the empirical variogram defined in \cref{eq:defempvario}, \cref{nllk} can be seen as a pseudo H\"usler--Reiss log-likelihood. This method sometimes introduces sparsity, but it is important to note that it is not designed for structure estimation. While \cite{rottger2023total} show consistency of the estimates of $\Gamma$ and $\Theta$ in a fixed--dimensional setting, there are no guarantees for consistent graph recovery of $G$, even under the $\text{EMTP}_2$ assumption. The method should thus not be considered as a direct competitor but is included for comparison.

In Section~\ref{sec:majority} we introduced our \eglearn{} algorithm that uses majority voting for structure estimation of general extremal graphical models. It can either be combined with neighborhood selection or graphical lasso as the base learning method. Both methods depend on collections of tuning parameters, which are denoted as $\rho_{m, \ell}^\ns$ and $\rho_m^\gl$, respectively, $m \in V$, $\ell \in [d-1]$.
For simplicity, we set them all to the same value $\rho$. We thus obtain for each of the two base learners a path of estimated graphs indexed by $\rho$, ranging from dense to sparse graphs as $\rho$ increases; see Figure~\ref{fig:lasso_path} in the Supplementary Material for examples of paths resulting from the two base learners, where the true graph was generated from the $\BA(20, 2)$ model. As a benchmark, we consider an oracle version of our estimator by selecting $\rho$ to minimize the evaluation metric~\cref{Fscore} below among all connected graphs in the estimated paths.

In practice, it is often necessary to select the amount of sparsity of the graph structure (or equivalently, the hyperparameter $\rho$) in a data driven way. For this, we propose three tuning strategies termed AIC, BIC and MBIC. They are based on the traditional Akaike (AIC) and Bayesian (BIC) information criteria, and on a modified BIC (MBIC) tailored to high-dimensional model selection \citep{WLL09}. In order to implement those, we need to estimate the parameter matrix of the H\"usler--Reiss distribution constrained to each graph in the obtained path. \cite{H21} show that for a connected graph with edge set $\hat E$, the maximizer of \cref{nllk} under the constraint that $\Theta_{ij} = 0$ for all $(i, j) \notin \hat E$ can be obtained by solving a matrix completion problem on $\hat\Gamma$. Using their implementation, we compute constrained estimators $\hat\Theta^{\hat E}$. The AIC, BIC and MBIC graphs then maximize
\[
	L(\hat\Theta^{\hat E}) - 2|\hat E|, \quad L(\hat\Theta^{\hat E}) - \log(n_e) |\hat E|, \quad L(\hat\Theta^{\hat E}) - \log(n_e)\log\log(d) |\hat E|,
\]
respectively, where $\hat E$ ranges over the path of estimated graphs and $L$ denotes the H\"usler--Reiss log-likelihood based on the extreme pseudo-observations
\[
	\bigg\{ \frac{k/n}{1 - \tilde F(\g X_t) + 1/n} : \max_{i \in V} \tilde F_i(X_{ti}) > 1 - k/n \bigg\},
\]
of which there are $n_e \in [k, n]$; recall the definition of the function $\tilde F$ in \cref{sec:empvario} and the H\"usler--Reiss density in \cref{ex:HR}.

In every simulation, the estimated graph $\hat G = (V, \hat E)$ is compared with the true underlying graph $G = (V, E)$, based on the $F$-score. It is defined as
\begin{align}\label{Fscore}
  F = \frac{|E \cap \hat E| }{|E \cap \hat E| + \frac12 (|E^c \cap \hat E| + |E \cap \hat E^c|)},
\end{align}
where $E^c$ denotes all possible undirected edges on $V\times V$ except for those in the edge set $E$.

\subsection{Results}

The results for the $\BA(d, q)$ models in dimensions $20$ and $100$ are shown in \cref{fig:d20,fig:d100} as box-plots of the $F$-scores for 100 repetitions of each experiment; see \cref{sec:d50} of the Supplementary Material for similar results in dimension $50$. As expected, the minimum spanning tree tends to outperform the other methods when the underlying model is indeed a tree ($q=1$). In the case where $q=2$, however, this method is limited by the tree constraint and it is no longer competitive, especially as the sample size grows. The $\text{EMTP}_2$ estimator does not recover the graph well, especially in high dimensions, which is not surprising given that it is not designed for structure learning.

Turning to our methods, we first observe that \eglearn{} with graphical lasso as the base learner does not perform well on any of the simulations, even with the oracle value for the penalty parameter~$\rho$. This is surprising since the graphical lasso for Gaussian distributions is a well established method. Neighborhood selection outperforms graphical lasso as the base learner in every scenario; with oracle tuning, it outperforms every other method except the minimum spanning tree when the underlying graph is a tree. 

As pointed out by the Associate Editor, thresholding can improve the performance of graphical lasso in terms of graph recovery \citep[see, e.g.,][]{wang2023thresholded}. We tried the thresholded graphical lasso as base learner in simulations and observed that it indeed improves graph recovery performance but is still not better than neighborhood selection. Since the latter requires only one tuning parameter, we omit additional results on thresholded graphical~lasso.

For the reasons above, we only consider data driven hyperparameter tuning with neighborhood selection as the base learner. The filled box-plots in each scenario in \cref{fig:d20,fig:d100} show the performance of the AIC, BIC and MBIC estimated graphs as defined in Section~\ref{sim:compete}. We first observe that the MBIC is consistently at least as good as the other two, and that it is generally competitive with the graph estimated via oracle tuning, with the exception of the high-dimensional tree model $\BA(100, 1)$. It appears that in this model, the oracle graph does not fit the data particularly well, lowering its likelihood and hence its MBIC value, but is still close to the underlying tree in terms of $F$-score due to the sparsity of that graph.

In Section~\ref{sec:block_graph} we discuss additional simulations with H\"usler--Reiss distributions generated according to the block model. The results provide insight into the difference between graphical lasso and neighborhood selection base learners and the connection to incoherence~parameters.

\begin{figure}[h]
  \includegraphics[scale=.85]{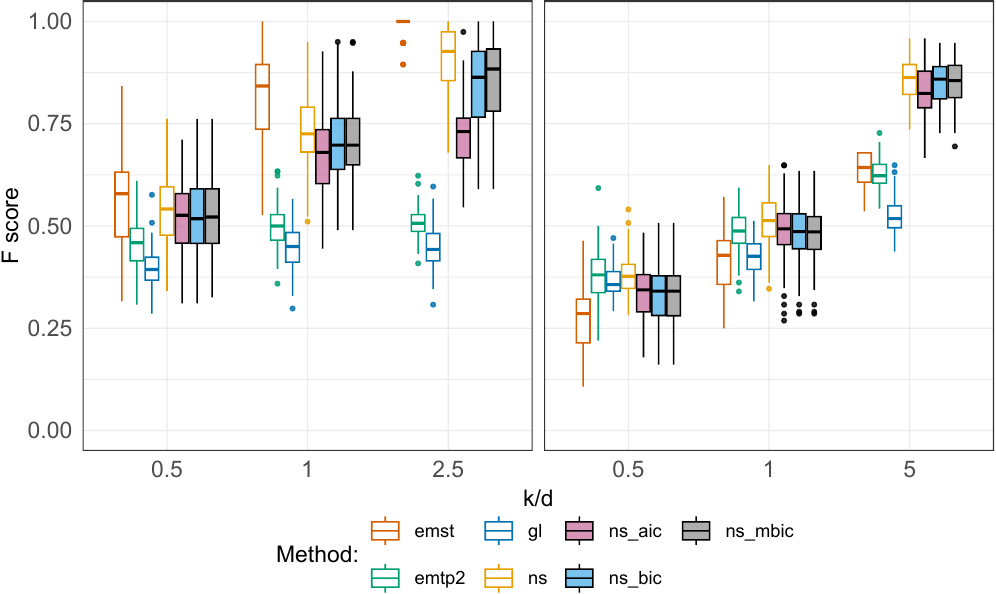}
  \caption{Boxplots of 100 repetitions of the $F$-scores of different methods fitted to data from the model $\BA(d, q)$ of degree $q=1$ (left) and $q=2$ (right) and in dimension $d = 20$.}
  \label{fig:d20}
\end{figure}

\begin{figure}[h]
  \includegraphics[scale = 0.85]{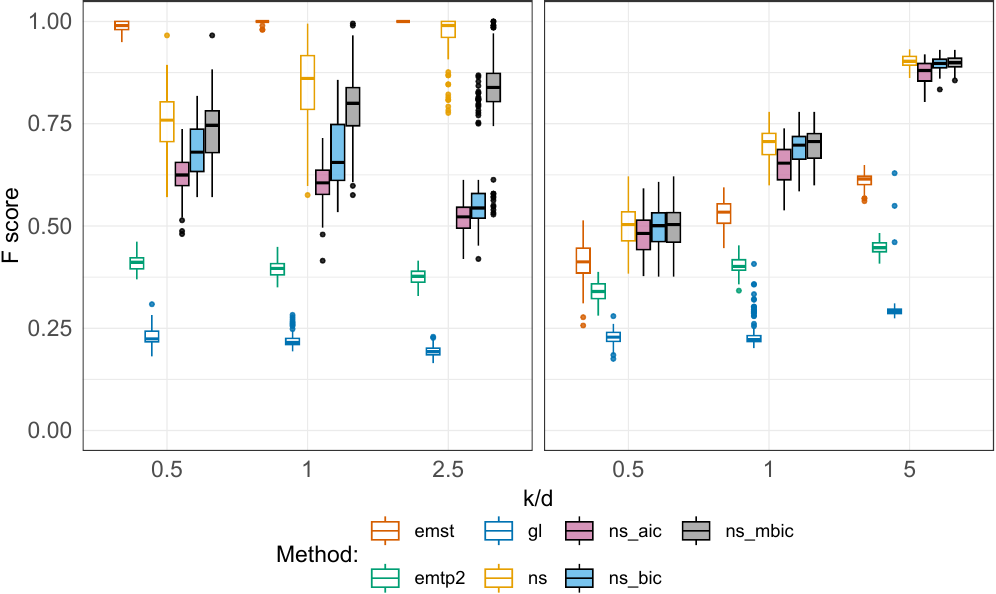}
  \caption{Boxplots of 100 repetitions of the $F$-scores of different methods fitted to data from the model $\BA(d, q)$ of degree $q=1$ (left) and $q=2$ (right) and in dimension $d = 100$.}
  \label{fig:d100}
\end{figure}

\section{Applications} \label{sec:appl}

We apply our \eglearn\ algorithm to a hydrological data set related to flood risk assessment. We focus on learning the extremal graph structures; for comments on how the estimated graphs can be used for subsequent inference in the hydrological example, we refer to the papers cited below. We conducted a similar analysis on a financial data set related to systemic risk; the results are found in \cref{sec:currency} of the Supplementary~Material.

The hydrological data set was introduced in \cite{eng2014b} and has been used as a benchmark in multivariate extremes for dependence modeling \citep{EH2020, rottger2023total} and causal inference \citep{mha2020,gne2021,tra2021}. It contains daily discharge measurements at $d=31$ stations in the upper Danube basin over 50 years. We follow the original paper and use the pre-processed data containing $n=428$ observations after a declustering of the $d$ time series.
The $n$ samples can be considered as independent observations from a $d$-dimensional random vector $\bX$, and we are interested in understanding the extremal dependence structure of $\bX$.
A certain amount of information can be deduced from the physical properties of the river network; the flow connections are shown in the left panel of Figure~\ref{fig:danube}. It is reasonable to expect that extremal dependence is strong along these connections. Due to other meteorological processes such as precipitation, there may be additional dependence between flood events that is not entirely explained by flow connections.

We first compute the empirical variogram $\hat \Gamma$ with $k = \lfloor n^{0.7} \rfloor = 69$, which corresponds to how $k$ is chosen in our simulation studies. We then use \eglearn\ with neighborhood selection as the base learner, a suitable grid of penalty parameters $\rho$ and input variogram estimate $\hat \Gamma$.
Preliminary simulations, as well as prior work \citep{mha2020}, suggest that the stations 23--27 are asymptotically independent of the remaining locations. This can be expected: considering the geographical map of the river basin \citep[][Figure 1]{eng2014b}, these are the only stations in the North of the Danube river, which are not fed by rainfall over the Alps. We therefore choose to remove those five stations and study the remaining 26 variables corresponding to stations along the river flow graph shown in the middle panel of \cref{fig:danube}.
The graph structure obtained by \eglearn{} and selected by the MBIC is shown in the right panel of Figure~\ref{fig:danube}. We first observe that it is fairly sparse and close to the flow connection tree. In particular, it contains almost all the flow connections as edges, confirming the water flow as main source of extremal dependence. However, as conjectured, there is evidence of a strong dependence between several alpine stations at the bottom of the graph that are not directly flow connected, which is potentially explained by extreme rainfall.

\begin{figure}[h]
\centering
\includegraphics[width=0.3\textwidth, trim={5em 5em 5em 5em}]{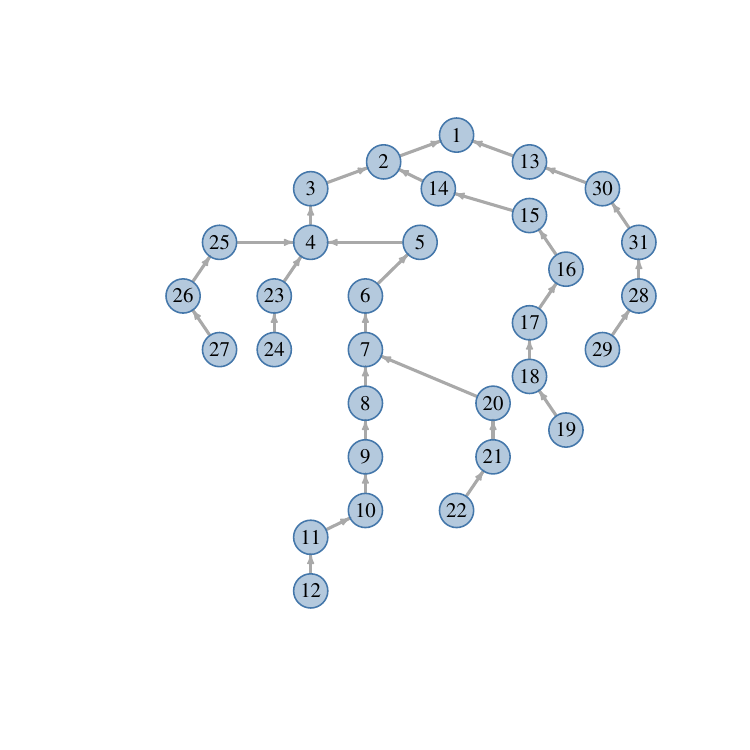}
\hspace{1em}\includegraphics[width=0.3\textwidth, trim={5em 5em 5em 5em}]{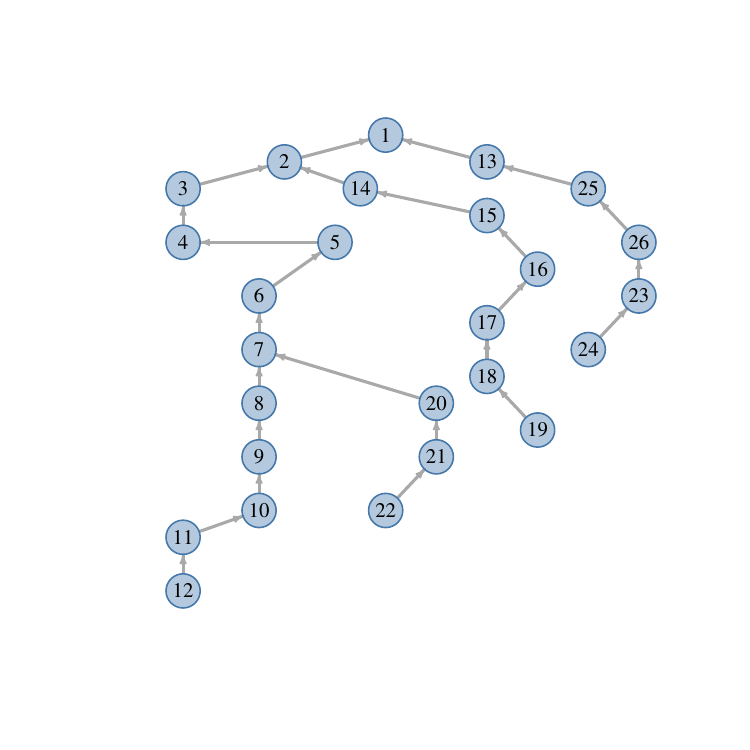}
\hspace{1em}\includegraphics[width=0.3\textwidth, trim={5em 5em 5em 5em}]{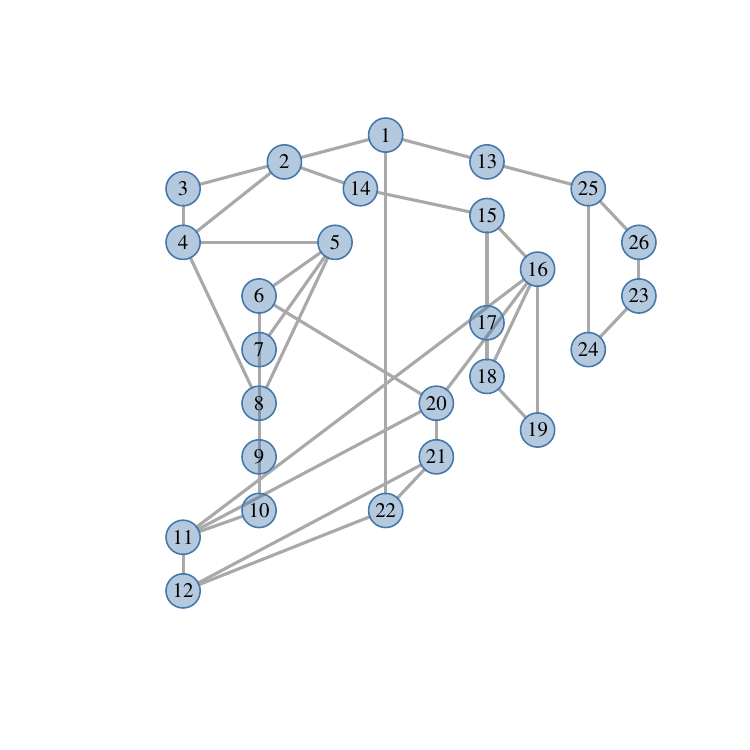}
\caption{The physical flow connection tree corresponding to the full Danube data (left) and to the data without stations 23--27 (middle), and the selected graph by MBIC estimated by \eglearn\ (right).}
\label{fig:danube}
\end{figure}

While the information criteria provide a good heuristic for selecting a unique and connected graphical model, the whole path of estimated graphs corresponding to all penalty parameters $\rho$ offers a more complete view on the structure of the data. Given a target sparsity level, one can choose a suitable model along this path. We report in \cref{sec:graphs_ic} of the Supplementary Material three estimated graphs corresponding to different $\rho$ parameters. The first is much denser than the MBIC graph, whereas the second is slightly sparser. The third, which is the sparsest connected graph that was output by \eglearn{}, is very similar to the flow connection~tree.

Further model evaluation, based on splitting the data sets into training and test subsets, is presented in \cref{sec:graphs_ic} of the Supplementary Material. We find that the test H\"usler--Reiss likelihood on the graphs learned by \eglearn{} compares favorably with that based on the (very dense) full model or on (sparse) tree models. We also provide estimated graphs obtained by applying standard tools for Gaussian graphical models to the full data. Interestingly, \eglearn{} uncovers different graphs than the classical methods. This can be explained by different conditional dependence structures in the bulk and tail and highlights the importance of tools that are specialized for the analysis of tail dependence. Similar conclusions are drawn regarding the financial data; see \cref{fig:likelihoods,fig:vanilla} and the related discussions in \cref{sec:graphs_ic}.

\section{Extensions and future work} \label{sec:future}

We have introduced a general methodology for estimating H\"usler--Reiss graphical models through \texttt{EGlearn} and provided a thorough theoretical analysis of the resulting procedure. This is the first principled approach for estimating extremal graphical models with arbitrary structure, and many questions warrant further~investigation.

A first direction is a systematic exploration of alternative base learners in \texttt{EGlearn}. We have focused on the two most popular and classical approaches, but many more possible choices exist; see \citet[Chapter 3]{DM17} and the references cited therein for a partial overview of the recent literature with a focus on graphical modeling. 

Similarly, in this paper we have used the empirical extremal variogram, but different estimators of the variogram matrix $\Gamma$ could be considered. For instance, one could consider method of moments or M-estimators \citep{EKS2008,EKS2012,LEV21}.

In view of the characterization in~\eqref{eq:HRtheta}, one may consider an $L^1$-penalized version of the pseudo log-likelihood in \cref{nllk} and estimate the H\"usler--Reiss precision matrix $\Theta$ by the extremal graphical lasso problem
\begin{equation} \label{eq:penalTheta}
	\mathop{\arg\min}_{\Theta \in \cS_{\bone}^d} \Big\{ -{\log\det}^*\Theta - \frac12 \tr(\hat \Gamma \Theta) + \rho \mathop{\sum\sum}_{i \neq j} |\Theta_{ij}| \Big\};
\end{equation}
recall that ${\det}^*$ denotes the pseudo-determinant of a matrix (the product of its non-zero eigenvalues), and $\cS_{\bone}^d$ is the cone of symmetric, positive semi-definite matrices with rank $d-1$ and row sums equal to zero. An advantage of this approach over all existing proposals is that it automatically provides an estimator which is a valid H\"usler--Reiss precision matrix. The main difficulties are the semi-definiteness of $\Theta$ and the zero row sum constraint. This renders all available theory for estimation of sparse inverses of precision matrices inapplicable. Indeed, preliminary simulations and theoretical investigations indicate that the estimator in~\eqref{eq:penalTheta} in general does not lead to consistent recovery of the sparsity structure of $\Theta$. Replacing the vanilla $L^1$ penalties by adaptively weighted or non-convex versions might provide a way out, but a full investigation of this approach is beyond the scope of the present paper.

Another highly promising direction for future research is uncertainty quantification for the estimated graph. This could include testing the null hypothesis that an edge between two given nodes is not present or testing for specific structures in a graph. Uncertainty quantification would also be an important step on the way to estimating directed graphs \citep{engelke2025extremesstructuralcausalmodels}, or at lest some aspects such as the moralized graph.

Finally, all of our results on graph recovery rely on the H\"usler--Reiss domain of attraction. We are not aware of any principled tests for the hypothesis that this model indeed provides a good approximation for the tail. Developing such tests is an important future direction.

\section*{Acknowledgments}
The authors would like to thank Nicola Gnecco and Yanbo Tang for helpful comments, and Yan Zhang for comments on the R implementation. We are also grateful to the Associate Editor and three anonymous referees whose constructive comments resulted in a better manuscript. SE was supported by an Eccellenza grant of the Swiss National Science Foundation, ML was supported by a postdoctoral fellowship from NSERC of Canada, and SV was supported by a discovery grant from NSERC of~Canada.

\newpage

\setcounter{page}{1}

\setcounter{section}{0}
\renewcommand*{\theHsection}{chX.\the\value{section}}
\setcounter{lemm}{0}
\renewcommand*{\theHlemm}{chX.\the\value{lemm}}
\setcounter{rem}{0}
\renewcommand*{\theHrem}{chX.\the\value{rem}}
\setcounter{coro}{0}
\renewcommand*{\theHcoro}{chX.\the\value{coro}}
\setcounter{prop}{0}
\renewcommand*{\theHprop}{chX.\the\value{prop}}
\setcounter{assum}{0}
\renewcommand*{\theHassum}{chX.\the\value{assum}}
\setcounter{figure}{0}
\renewcommand*{\theHfigure}{chX.\the\value{figure}}
\setcounter{table}{0}
\renewcommand*{\theHtable}{chX.\the\value{table}}

\renewcommand{\thesection}{S\arabic{section}}
\renewcommand{\thelemm}{S\arabic{lemm}}
\renewcommand{\therem}{S\arabic{rem}}
\renewcommand{\thecoro}{S\arabic{coro}}
\renewcommand{\theprop}{S\arabic{prop}}
\renewcommand{\theassum}{S\arabic{assum}}
\renewcommand{\thefigure}{S\arabic{figure}}
\renewcommand{\thetable}{S\arabic{table}}

\begin{center}
SUPPLEMENTARY MATERIAL
\end{center}

This Supplementary Material is divided into six sections. \Cref{sec:connectionscalssicalCI} contains some additional discussion on extremal graphical models, and \cref{sec:simsupp} presents numerical results that complement \cref{sec:sims,sec:appl}. The rest of the Supplementary Material is dedicated to the proofs of all the results in the paper. We start with the theory related to graph recovery and precision matrix estimation (\cref{sec:proofeglearn,sec:proofgraph}), followed by the proof of \cref{thm:concentration} and related auxiliary results (\cref{sec:proof,sec:auxiliary}). All references to sections, results, equations, etc. not starting with the letter ``S'' are pointing to the main paper.

\section{Additional comments on extremal graphical models}
\label{sec:connectionscalssicalCI}

\subsection{Utility and interpretation of extremal graphical models}

Although the notion of extremal conditional independence in \eqref{ext_CI} is different from classical conditional independence, there are close connections. Specifically, if a multivariate Pareto distribution $\g Y$ satisfying $\g Y_A \perp_e \g Y_B \mid \g Y_C$ is restricted to any rectangle $R \subset \mathcal L = [0,\infty)^d \setminus [0,1)^d$, resulting in the restricted random vector $\g Y^{(R)}$ with support $R$ and law defined by $P(\g Y^{(R)} \in A) := P(\g Y \in A|\g Y \in R)$, then it satisfies the conditional independence in the usual sense $\g Y_A^{(R)} \indep \g Y_B^{(R)} \mid \g Y_C^{(R)}$ \citep{eng_iva_kir}. Restricting to the rectangle $[1,\infty)^d$ leads to a particularly nice interpretation since this corresponds to considering vectors where all entries are large. In other words, if one were to threshold their data by only considering the observations where all variables are simultaneously large, then these rare observations could be assumed to follow a graphical model in the usual sense. Our results show that the structure of this graphical model can be learned by using the considerably more common observations where at least one variable is large.
		
There is also a second way to interpret extremal conditional independence is in terms of classical conditional independence: a line of research \citep{segers2020, AS21} shows that extremal graphical models naturally arise as limits in distribution of certain usual graphical models.

In addition to the above connection to classical conditional independence, there are several other reasons why \eqref{ext_CI} is a natural and useful approach to graphical modeling. 

\begin{enumerate}
\item One classical interpretation of graphical models is in terms of optimal prediction of one variable in the graph given the remaining variables. Similarly as in usual graphical models, in our setting it is sufficient to use the neighborhood of a variable (in the graph we define through \eqref{ext_CI}) to obtain optimal prediction; see the discussion in \citet[][page 12]{engelke2024review}. This provides a natural interpretation of extremal graphs in terms of prediction properties.
\item Multivariate Pareto distributions, and specifically H\"usler--Reiss distributions with conditional independencies as in \eqref{ext_CI} can be seen as a class of sparse models for tail dependence. Varying the degree of sparsity provides a collection of models with different complexities. This allows to balance sample size and model complexity in a principled and systematic way. In high-dimensional settings, enforcing sparsity through penalization also provides an important regularization technique. This is especially crucial for modeling extremes where there are fewer data in the tail than in the bulk of the distribution. 
\item In some fields such as sparse spatial modeling based on SPDEs \citep[e.g.,][]{lin2011}, the  main point of graphical models is computational efficiency through density factorizations. While this is not the primary focus of this paper, it should be kept in mind that extremal graphical models offer the same computational gains, which can be used for cases where computational time is the bottleneck. 
\end{enumerate}

\subsection{Example of non-faithful H\"usler--Reiss distribution}

 The conditional independence graph $G$ defined in~\eqref{CI_graph} encodes the extremal conditional independencies that correspond to zeros in the precision matrix $\Theta$. There might however be additional conditional independence statements, in which case we say that the H\"usler--Reiss distribution is not faithful to the graph $G$; note that this is not a problem for the structure considered in this paper.

To provide a concrete example, consider the H\"usler--Reiss distribution with variogram and corresponding precision matrices given by	
\[
\Gamma =
\begin{bmatrix}
0\phantom{0} & 3\phantom{0} & 4\phantom{0} & 7\phantom{0} & 6 \\
3\phantom{0} & 0\phantom{0} & 3\phantom{0} & 8\phantom{0} & 5 \\
4\phantom{0} & 3\phantom{0} & 0\phantom{0} & 7\phantom{0} & 8 \\
7\phantom{0} & 8\phantom{0} & 7\phantom{0} & 0\phantom{0} & 11 \\
6\phantom{0} & 5\phantom{0} & 8\phantom{0} & 11\phantom{0} & 0
\end{bmatrix} \qquad 
\Theta =
\begin{bmatrix}
1.52\phantom{0} & -0.08\phantom{0} & -0.28\phantom{0} & -0.68\phantom{0} & -0.48 \\
-0.08\phantom{0} & 1.32\phantom{0} & 0.12\phantom{0} & -1.28\phantom{0} & -0.08 \\
-0.28\phantom{0} & 0.12\phantom{0} & 1.92\phantom{0} & -0.48\phantom{0} & -1.28 \\
-0.68\phantom{0} & -1.28\phantom{0} & -0.48\phantom{0} & 4.12\phantom{0} & -1.68 \\
-0.48\phantom{0} & -0.08\phantom{0} & -1.28\phantom{0} & -1.68\phantom{0} & 3.52
\end{bmatrix}.
\]
Since there are no zeros in the precision matrix, the only possible conditional independence graph is the fully connected graph. However, there are other conditional independencies in this model. Indeed, the marginal distribution for the first four nodes is H\"usler--Reiss with variogram and precision
\[
\Gamma_{\setminus 5,\setminus 5} =
\begin{bmatrix}
0\phantom{0} & 3\phantom{0} & 4\phantom{0} & 7 \\
3\phantom{0} & 0\phantom{0} & 3\phantom{0} & 8 \\
4\phantom{0} & 3\phantom{0} & 0\phantom{0} & 7 \\
7\phantom{0} & 8\phantom{0} & 7\phantom{0} & 0
\end{bmatrix}
\qquad
\Theta^{\setminus 5,\setminus 5} =
\begin{bmatrix}
0.42\phantom{0} & -0.25\phantom{0} & -0.08\phantom{0} & -0.08 \\
-0.25\phantom{0} & 0.50\phantom{0} & -0.25\phantom{0} & 0 \\
-0.08\phantom{0} & -0.25\phantom{0} & 0.42\phantom{0} & -0.08 \\
-0.08\phantom{0} & 0\phantom{0} & -0.08\phantom{0} & 0.17
\end{bmatrix}.
\]
Note that the precision matrix $\Theta^{\setminus 5,\setminus 5}$ of the model with variogram $\Gamma_{\setminus 5,\setminus 5}$ is not simply $\Theta_{\setminus 5,\setminus 5}$ (thus the superscript notation); see also \cite{engelke2024extremal}. We can see that we have the extremal conditional independence $Y_2 \perp_e Y_4 \mid \mathbf{Y}_{\{1,3\}}$. Thus, the H\"usler--Reiss distribution with variogram $\Gamma$ is not faithful to the fully connected graph (or any other graph). This example is inspired by Example 1 in \cite{NIPS2014_2bd7f907} in the Gaussian case.

\section{Additional numerical results}
\label{sec:simsupp}

\subsection{Additional figures for \cref{sec:sims}}

\Cref{fig:BA} illustrates example graphs simulated from the Barabasi--Albert and block graph models used throughout the simulation study. \Cref{fig:lasso_path} presents regularization paths obtained by applying \eglearn\ with different values of the penalty parameter. For more details on how these figures are produced, we refer to \cref{sec:sims}.

\begin{figure}[H]
  \includegraphics[width=.32\textwidth, trim= 5em 5em 5em 5em, clip]{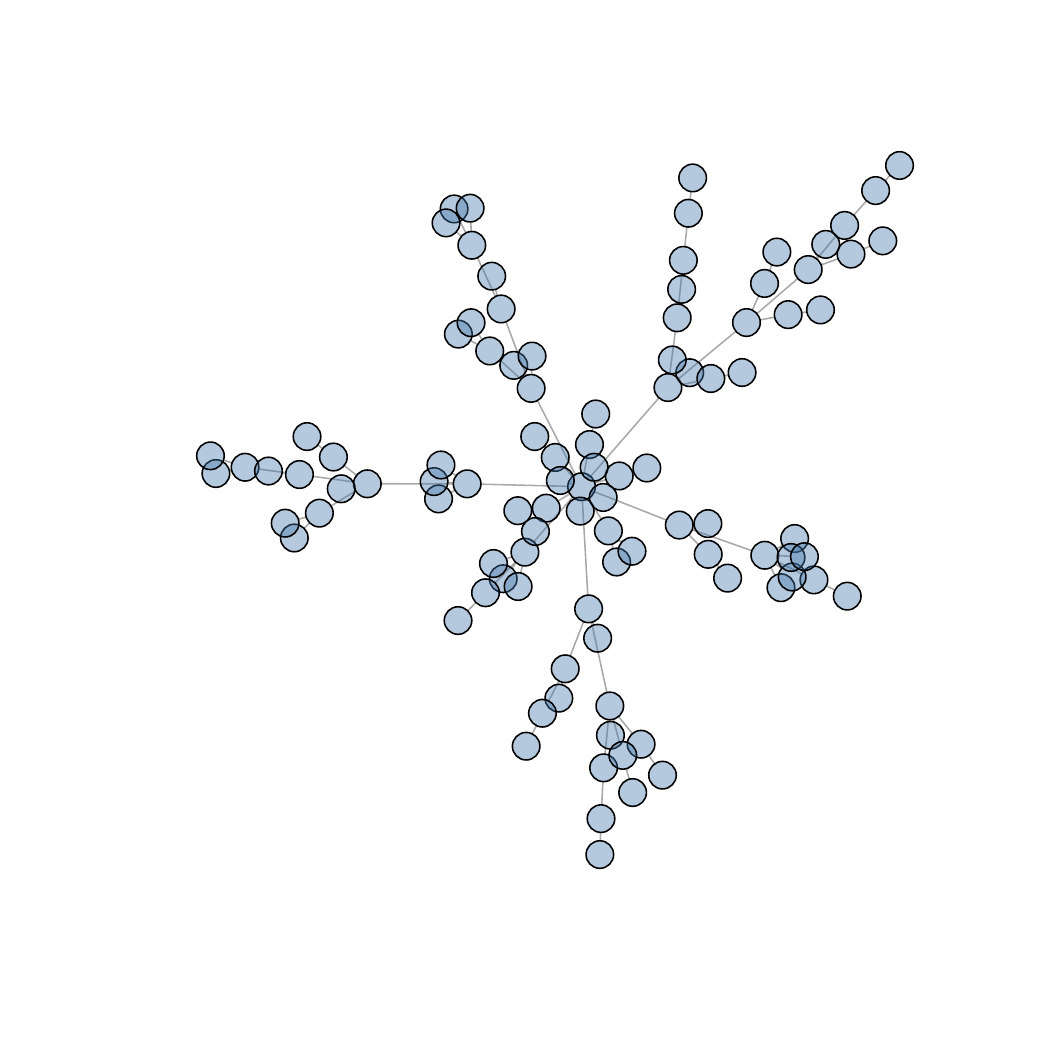}
  \includegraphics[width=.32\textwidth, trim= 5em 5em 5em 5em, clip]{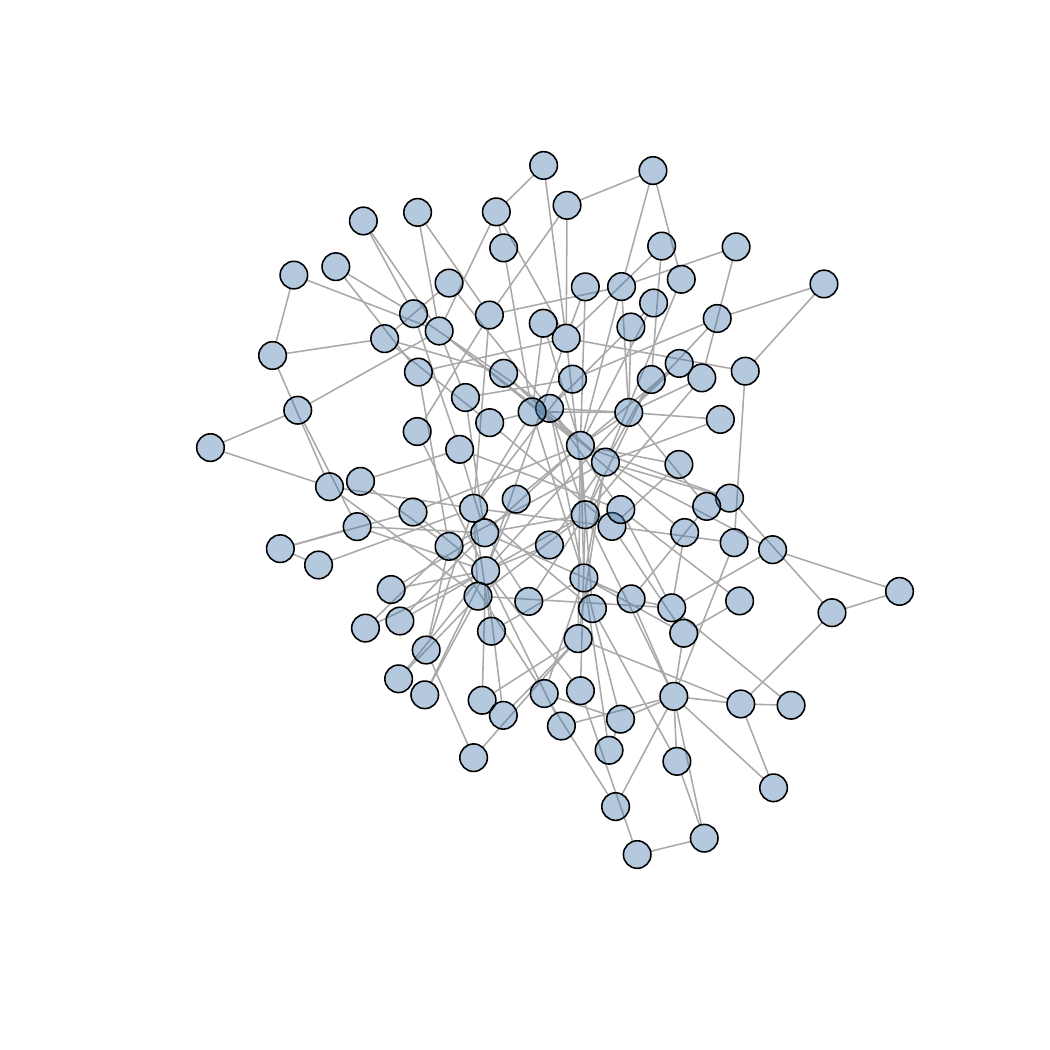}
  \includegraphics[width=.32\textwidth, trim= 5em 5em 5em 5em, clip]{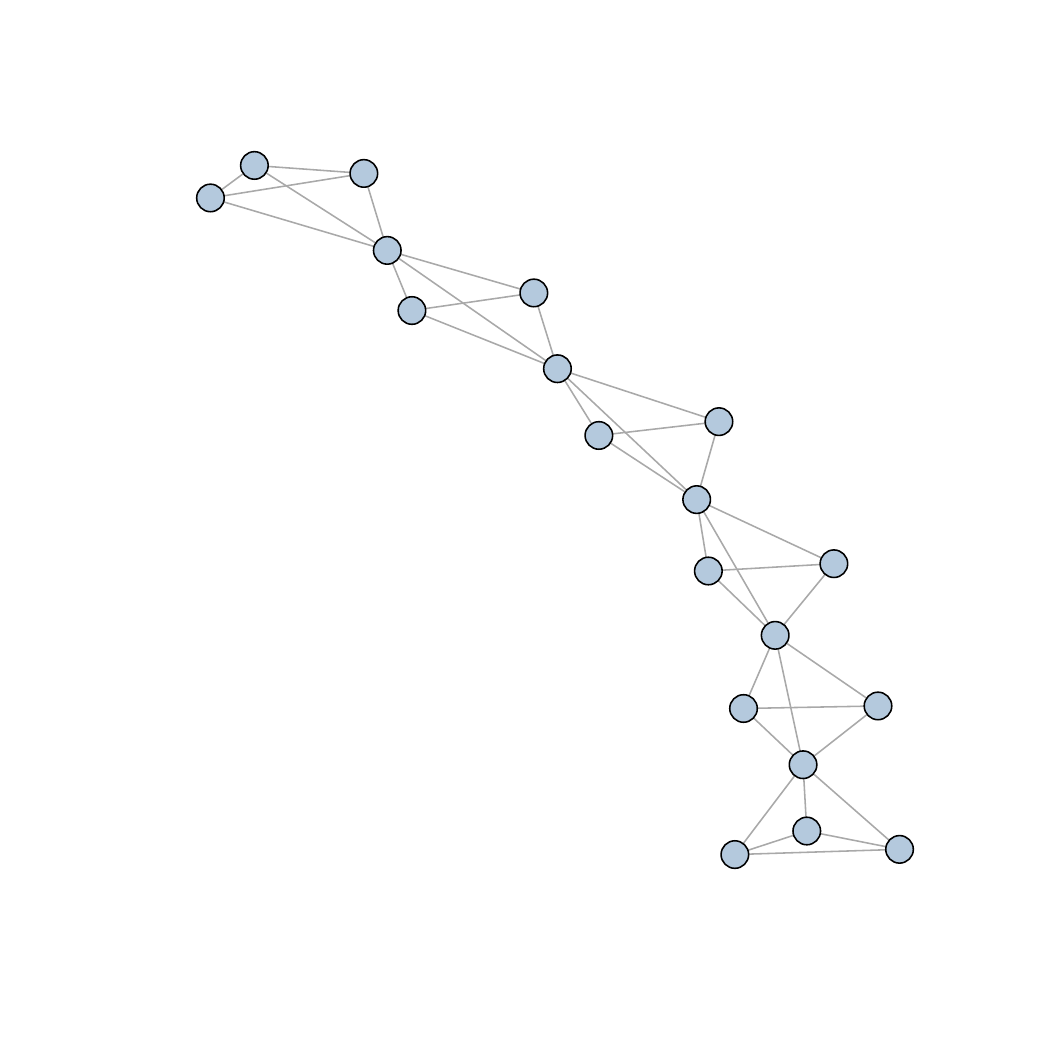}
  \caption{Realizations of the Barabasi--Albert models $\BA(100, 1)$ (left) and $\BA(100, 2)$ (center) in dimensions $d=100$, and of the block graph model $\BM(6, 4, \alpha)$ in dimension $d=19$.}
  \label{fig:BA}
\end{figure}

\begin{figure}[H]
  \includegraphics[scale = 0.85]{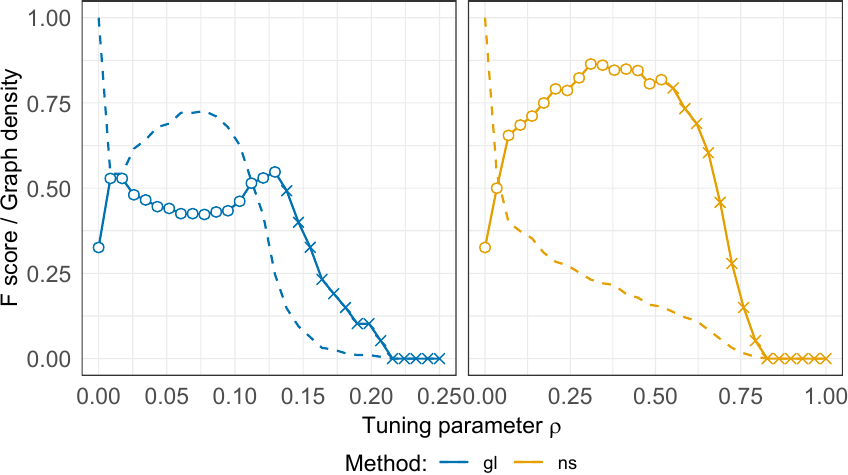}
  \caption{Paths of \eglearn{} estimated graphs with graphical lasso (left) and neighborhood selection (right) as a function of the common tuning parameter $\rho$, fitted to data from the $\BA(20, 2)$ model with $k=100$. Circles indicate that the estimated graph $\hat G$ is connected, and crosses correspond to disconnected graphs. Dashed lines show the density of the graph, that is, the proportion of existing edges in $\hat G$ out of all $d(d-1)/2$ possible edges.}
  \label{fig:lasso_path}
\end{figure}

\subsection{Simulation results for the Barabasi--Albert model in dimension $d=50$}
\label{sec:d50}

The results of our experiments on the Barabasi--Albert model of dimension $d=50$ are presented in \cref{fig:d50}. The conclusions are similar to the 100-dimensional case: with neighborhood selection as base learner, \eglearn{} seems to have a consistent behavior as $k$ increases, as opposed to the graphical lasso case. With either oracle or MBIC tuning, this method moreover outperforms the others except for the minimum spanning tree when the underlying graph is indeed tree structured.

\begin{figure}[H]
  \includegraphics[scale = 0.85]{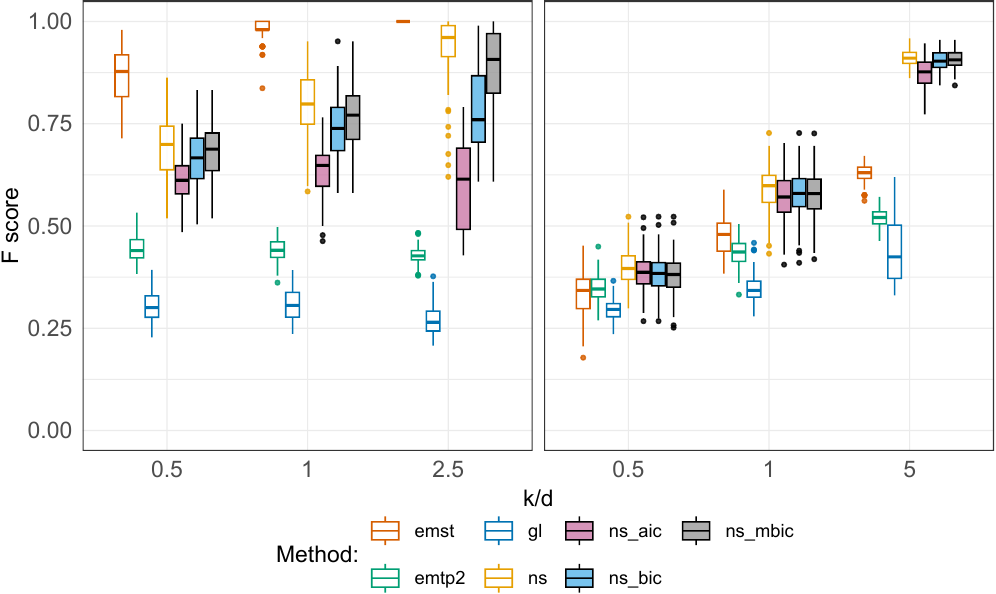}
  \caption{Boxplots of 100 repetitions of the $F$-scores of different methods fitted to data from the model $\BA(d, q)$ of degree $q=1$ (left) and $q=2$ (right) and in dimension $d = 50$.}
  \label{fig:d50}
\end{figure}

\subsection{Simulation setup and results for the block graph model}
\label{sec:block_graph}

As opposed to the Barabasi--Albert model, as a second model for $G$ and $\Gamma$ we consider a setup where $\Theta_{ij} > 0$ for some $i,j \in V$. Note that in this case there is no easy construction method similar to the Laplacian above.
Instead, we consider a graph with $n_C$ fully connected cliques $C_1,\dots ,C_{n_C}$, each consisting of $n_N$ nodes.
We assume that the only intersections of these cliques are between $C_{j}$ and $C_{j+1}$, $j=1,\dots, n_C - 1$, and that each intersection consists of a single node. This results in a block graph $G$ and a block-like structure of the precision matrix $\Theta$ \citep{H21}; see the right-hand side of Figure~\ref{fig:BA} for a block graph with $n_C = 10$ and $n_N = 4$.
On this extremal graph structure, it suffices to specify $\Gamma_{C_j,C_j}$ on each clique $C_j$, and the remaining entries are implied by the conditional independence structure \citep[][Proposition 4]{EH2020}.
Following \cite{H21}, we can construct a valid $\Gamma_{C_j,C_j}$ matrix by taking any $(n_N\times n_N)$-dimensional covariance matrix $S$ and projecting it by $PSP$, where $P = I_d - \g 1 \g 1^\top / d$. For each clique we generate independently a correlation matrix $S$ following the method in \cite{joe2006}, whose off-diagonal entries have marginal $\text{Beta}(\alpha -1 + n_N/2, \alpha -1 + n_N/2)$ distributions rescaled to $(-1,1)$, where $\alpha>0$ is a parameter. We denote this block model for $\Gamma$ by $\BM(n_C, n_N, \alpha)$. It has dimension $d = n_N + (n_C-1)(n_N-1)$ and is parametrized by the number of cliques $n_C$, the number of nodes $n_N$ per clique, and the parameter $\alpha$ governing the dependence inside the cliques. Figure~\ref{fig:alpha} shows the proportion of positive off-diagonal entries of the H\"usler--Reiss precision matrix $\Theta$ corresponding to the block graph model $\BM(6, 4, \alpha)$ for different $\alpha$ values. It can be seen that for increasing $\alpha$, less positive values appear and the model becomes closer to $\text{EMTP}_2$.

We run simulations with H\"usler--Reiss distributions generated according to the block model $\BM(6, 4, \alpha)$, which results in $d =19$ nodes in the graph. For the dependence parameter we choose a sequence of values $\alpha \in \{0.1,1,2,10,20\}$. As before we use $k = \lfloor n^{0.7}\rfloor$ exceedances and we simulate two different sample sizes $n$ such that $k/d \in \{2, 10\}$. The results for 100 repetitions are shown in Figure~\ref{fig:alpha}.
The top right panel shows boxplots of the $F$-scores for the oracle \eglearn{} with graphical lasso and neighborhood selection as a function of $\alpha$.
We observe that, again, \eglearn{} with graphical lasso base learner does not seem to be consistent since even with the larger sample size the $F$-scores do not improve much. On the contrary, \eglearn{} with neighborhood selection performs well especially for larger sample sizes, suggesting consistency of the method.
We further observe that in general, smaller values of $\alpha$ correspond to more difficult estimation problems. Note that this corresponds to the case of higher proportions of positive entries $\Theta_{ij}$ in the H\"usler--Reiss precision matrix (top left panel of Figure~\ref{fig:alpha}).

To understand this behavior and the related phenomenon that the graphical lasso as base learner does not seem to work well, we take a closer look at the assumptions for consistent structure recovery by \eglearn\ in \cref{th:NS,th:GL}. For a given parameter matrix $\Gamma$, a crucial requirement for consistency with both base learners is the positivity of the incoherence parameters $\eta^\ns$ and $\eta^\gl$, respectively. The bottom panels of Figure~\ref{fig:alpha} show boxplots of these parameters for the generated block models. All incoherence parameters $\eta^\ns$ for neighborhood selection are positive and thus, Theorem~\ref{th:NS} guarantees consistent graph recovery. We also note that as $\alpha$ increases, so does $\eta^\ns$, and the graph recovery performance improve. This is in line with our theory; the expression of $C^\ns$ in \cref{th:NS} suggests that a higher $\eta^\ns$ increases the probability of graph recovery. On the other hand, all incoherence parameters $\eta^\gl$ are negative and Theorem~\ref{th:GL} is not applicable. More generally, for all the simulation settings we have considered, the neighborhood selection incoherence parameter $\eta^\ns$ is much more likely to be positive than its graphical lasso equivalent $\eta^\gl$. It thus appears that the assumption of \cref{th:NS} is significantly weaker than that of \cref{th:GL}. This is also consistent with results in the literature of Gaussian structure learning \citep[Sections~3.1.1, 3.1.2]{RWRY11}.

\begin{figure}[H]
	\centering
	\includegraphics[height=.9\textwidth]{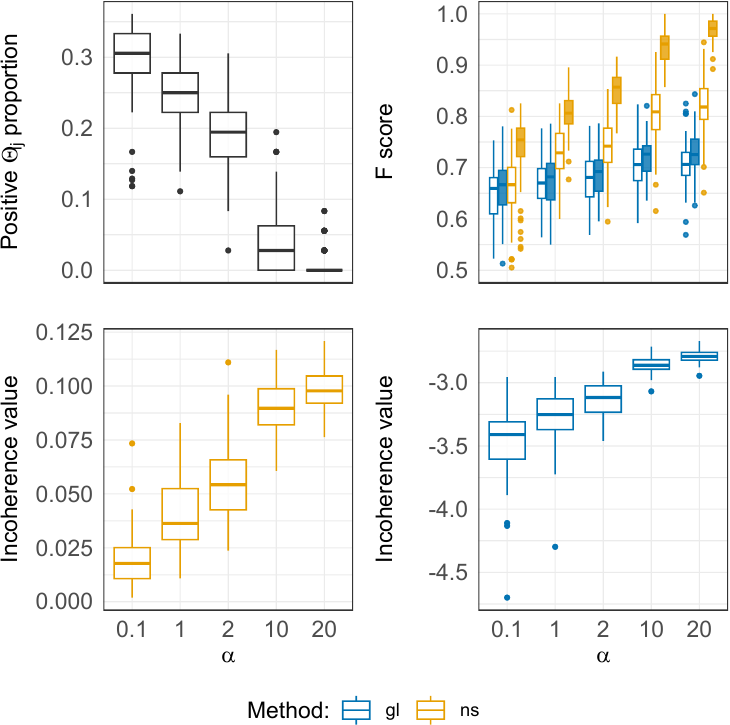}
	\caption{Top left: proportion of positive off-diagonal entries of the H\"usler--Reiss precision matrix $\Theta$ of the model $\BM(6, 4, \alpha)$; top right: performance of \eglearn{} with neighborhood selection and graphical lasso with $k = 2d$ and $k=10d$ for empty and filled boxes, respectively; bottom: incoherence parameters $\eta^\ns$ for neighborhood selection (left) and $\eta^\gl$ for graphical lasso (right).}
	\label{fig:alpha}
\end{figure}

\subsection{A second data application}
\label{sec:currency}

\cite{EV22} consider $n=3790$ daily observations of spot foreign exchange rates between 2005 and 2020 of $d=26$ currencies expressed in terms of the British Pound sterling; the currencies along with their abbreviations are listed in \cref{tab:countries}. As in the latter paper, we pre-process this data to obtain the absolute values of de-garched daily log-returns, and thus consider the extremal dependence structure of the log-returns in both directions.

\begin{table}[H]
	\centering
	\begin{tabular}{l|l|l|l}
		Code & Foreign Exchange Rate (into GBP) & Code & Foreign Exchange Rate (into GBP)\\
		\hline 
		AUS & Australian Dollar & NOR & Norwegian Krone \\
		CAN & Canadian Dollar  & POL &Polish Zloty \\
		CHN & Chinese Yuan & RUS & Russian Ruble\\
		CZE & Czech Koruna & SAU &Saudi Riyal\\
		DNK & Danish Krone & SGP & Singapore Dollar\\
		EUR & Euro & ZAF & South African Rand\\
		HKG & Hong Kong Dollar &  KOR &South Korean Won\\
		HUN & Hungarian & SWE & Swedish Krona\\
		IND & Indian Rupee & CHE & Swiss Franc\\
		ISR & Israeli Shekel & TWN & Taiwan Dollar\\
		JPN &  Japanese Yen & THA & Thai Baht\\
		MYS & Malaysian ringgit & TUR & Turkish Lira\\
		NZL & New Zealand Dollar & USA &  US Dollar
	\end{tabular}
	\caption{\label{tab:countries}Three-letter country codes.}
\end{table}

Figure~\ref{fig:currency} shows the minimum spanning tree obtained in \cite{EV22}. As in the previous application, we compute the empirical variogram $\hat \Gamma$ using $k = \lfloor n^{0.7} \rfloor = 319$. In order to allow for more general graphs than trees, we apply \eglearn\ with neighborhood selection as the base learner, a suitable grid of penalty parameters $\rho$ and input variogram estimate $\hat \Gamma$. The right panel of Figure~\ref{fig:currency} shows the graph selected by the MBIC. The estimated graph is much denser than a tree, suggesting that the dependencies in the extremes are considerably stronger than could have been explained by the minimum spanning tree approach.

\begin{figure}[H]
\centering
\includegraphics[width=0.3\textwidth, trim={5em 5em 5em 5em}]{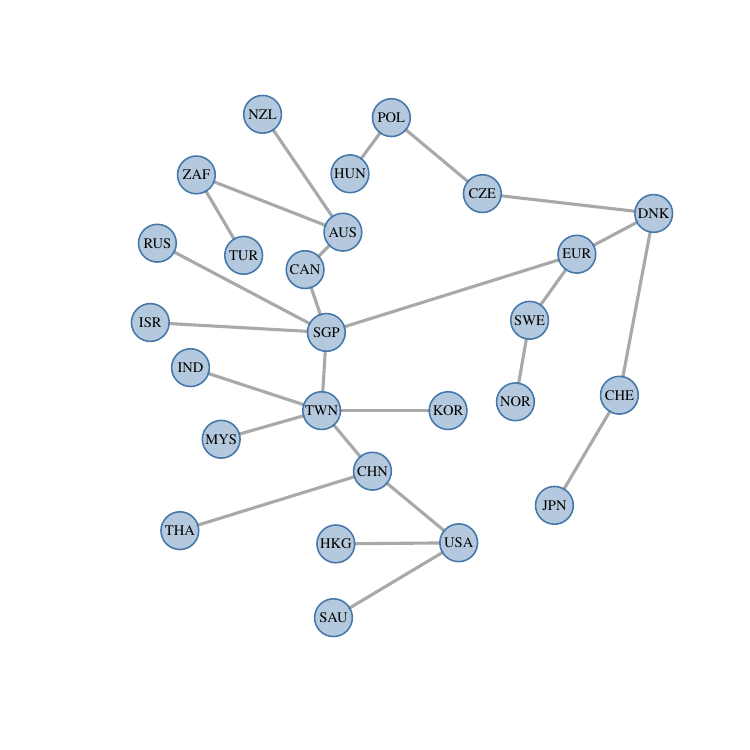}
\hspace{1em}\includegraphics[width=0.3\textwidth, trim={5em 5em 5em 5em}]{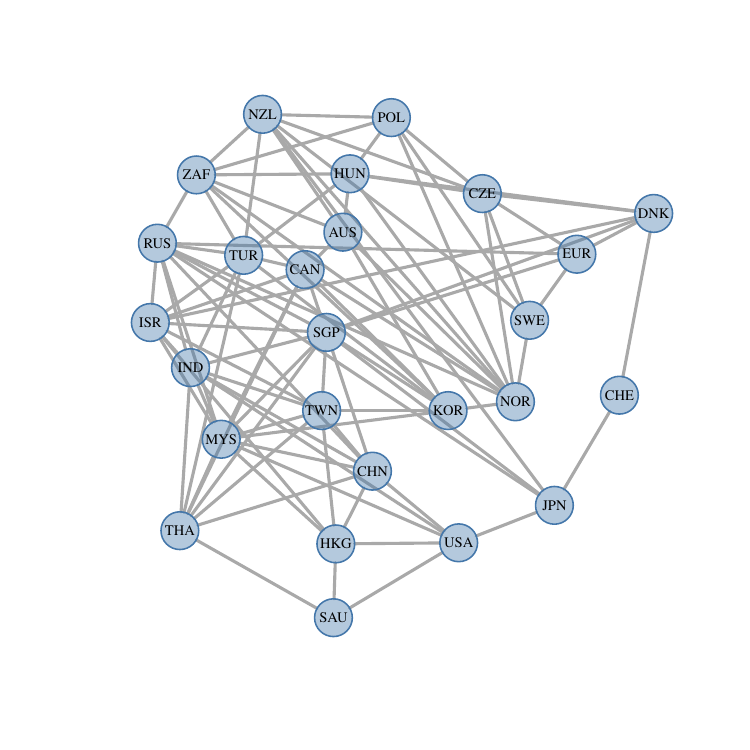}
\caption{The minimum spanning tree (left) and the selected graph by MBIC estimated by \eglearn\ (right) for the exchange rates data set; the list of currencies with their respective abbreviations are shown in \cref{tab:countries}.}
\label{fig:currency}
\end{figure}

As discussed in \cref{sec:appl}, an alternative way to select the graph structure is to look for a desired sparsity level in the estimated path. In \cref{sec:graphs_ic}, we report three such estimated graphs that are considerably sparser than the MBIC graph. An advantage of sparse graphs is that they allow for better interpretation in the context of the application. For instance, in our case, a notable feature of all estimated models is that there are many connections within the cluster of South Asian currencies, suggesting a strong degree of dependence between the currencies of these countries.

\subsection{Additional details on the real data analyses}
\label{sec:graphs_ic}

\Cref{fig:danube_ic} contains graphs obtained by running \eglearn\ on the Danube data set (without stations 23--27) with neighborhood selection as the base learner. The empirical variogram is calculated with $k = 69$, and the three graphs correspond to penalty parameters $\rho$ equal to $0.075$, $0.2$ and $0.405$, respectively. \Cref{fig:currency_ic} contains the estimated graphs calculated similarly for the currency exchange data set with $k = 319$ and $\rho$ equal to $0.35$, $0.45$ and $0.53$, respectively. In both cases, the third graph corresponds to the most sparse connected graph in the estimated path.

\begin{figure}[H]
\centering
\includegraphics[width=0.3\textwidth, trim={5em 5em 5em 5em}]{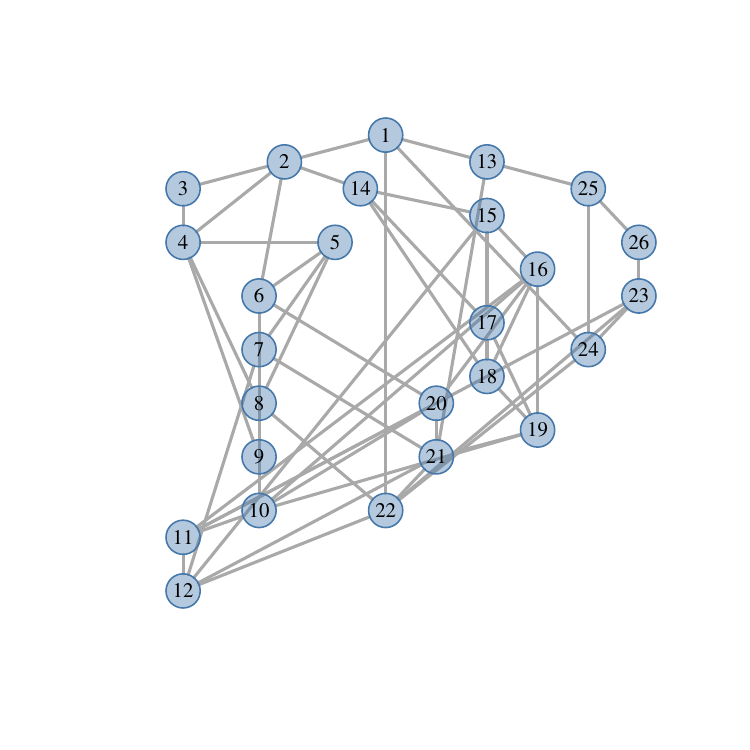}
\includegraphics[width=0.3\textwidth, trim={5em 5em 5em 5em}]{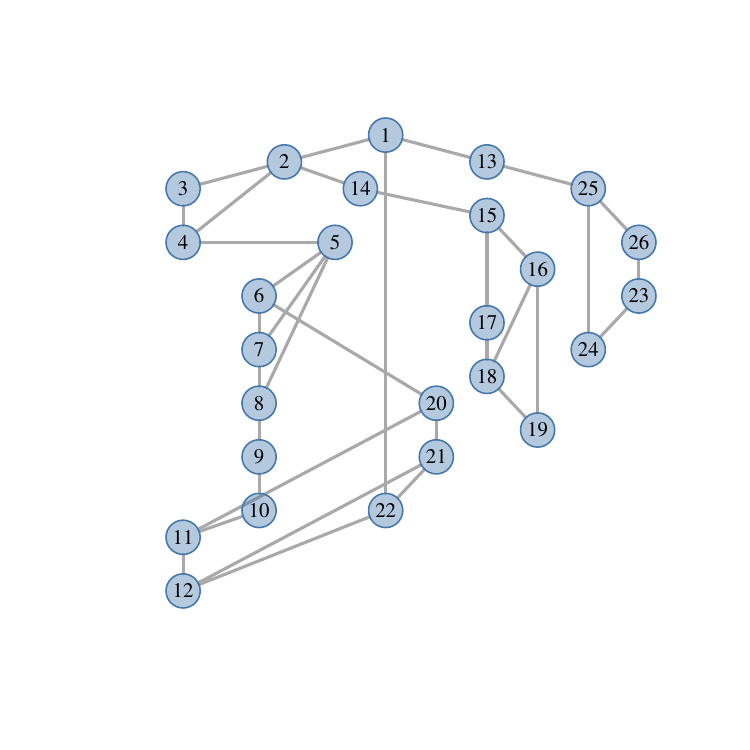}
\includegraphics[width=0.3\textwidth, trim={5em 5em 5em 5em}]{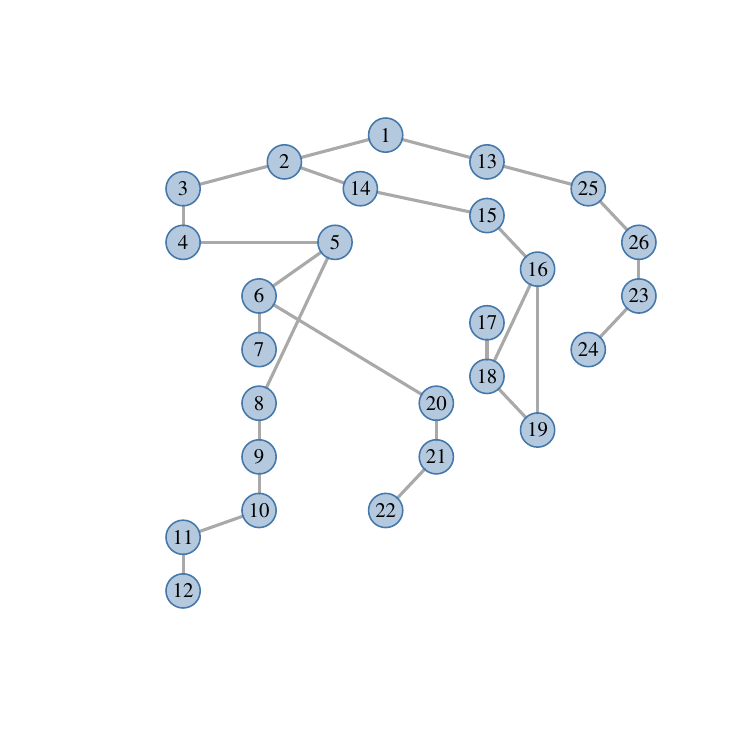}
\caption{Estimated graphs for the Danube data obtained from \eglearn\ with neighborhood selection and penalty parameter $\rho$ equal to $0.06$, $0.09$ and $0.12$, respectively.}
\label{fig:danube_ic}
\end{figure}

\begin{figure}[H]
\centering
\includegraphics[scale=0.45, trim={5em 5em 5em 5em}]{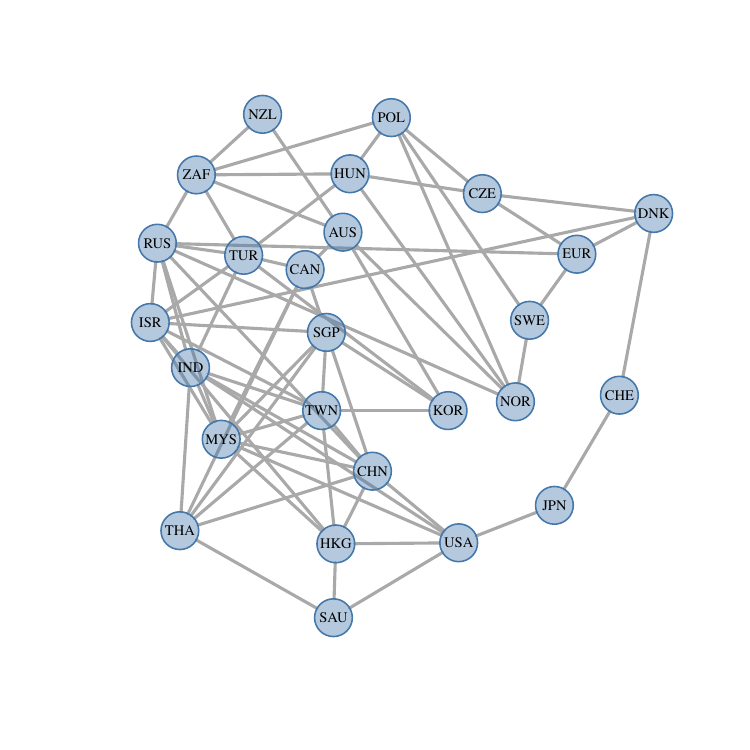}
\includegraphics[scale=0.45, trim={5em 5em 5em 5em}]{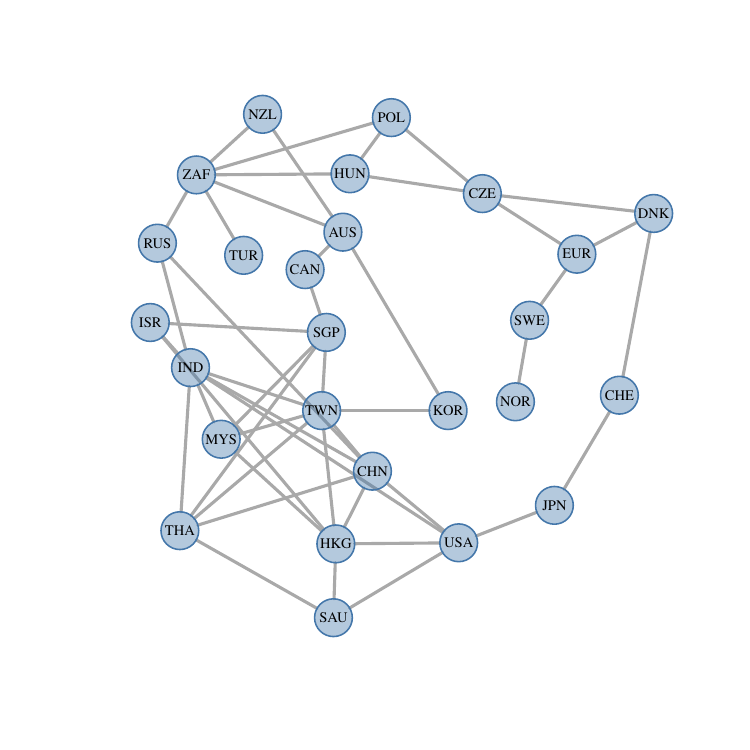}
\includegraphics[scale=0.45, trim={5em 5em 5em 5em}]{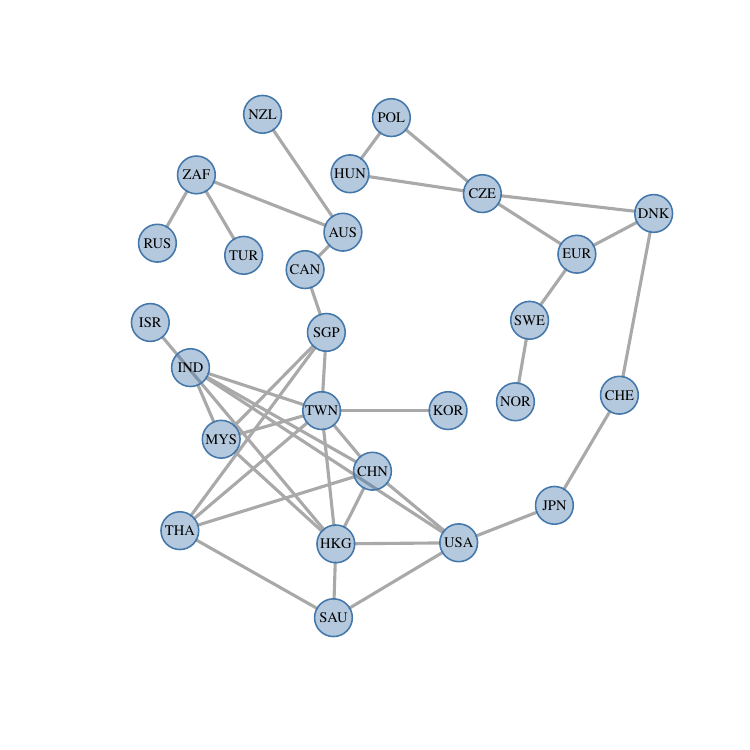}
\caption{Estimated graphs for the currency exchange data obtained from \eglearn\ with neighborhood selection and penalty parameter $\rho$ equal to $0.35$, $0.45$ and $0.53$, respectively.}
\label{fig:currency_ic}
\end{figure}

In order to evaluate the modeling performance of the different estimated graphs, we look at the test H\"usler--Reiss log-likelihood $L$, as defined in \cref{sec:sims}, of the fitted model evaluated on test data. To this end, we split both of our data sets into two parts and use the first half for fitting and the second half for model evaluation.

Figure~\ref{fig:likelihoods} shows the test log-likelihoods for the \eglearn{} estimated graphs along a path of penalty parameters $\rho$, as well as the log-likelihood of the AIC (pink horizontal line) and BIC (gray horizontal line) selected models. In both examples, the MBIC and BIC model selection criteria agree with each other. On the Danube river data (left panel), the fully connected graph ($\rho = 0$) has a much worse performance than sparser graphs, confirming that the enforced sparsity leads to a better model fit. The test likelihood of the extremal minimum spanning tree and the tree induced by flow connections are represented by the dotted and dashed horizontal lines, respectively. While these tree models show a reasonable performance in this case, even more can be gained by allowing denser models such as our AIC and BIC/MBIC graphs. In our analysis, the currency exchange data (right panel) was found to exhibit a much denser extremal dependence structure. As such, too sparse models ($\rho \geq 0.3$) perform poorly as well as the fully connected model ($\rho=0$). The AIC selected model, albeit too conservative when it comes to support recovery in the simulations study, attains a considerably better log-likelihood value than the BIC/MBIC model, which however surpasses the fully connected model. Here, the extremal minimum spanning tree model has a test log-likelihood value that is below the plotting range by thousands of points, suggesting that such a model is not appropriate for this data.

\begin{figure}[H]
	\includegraphics[height=.49 \textwidth]{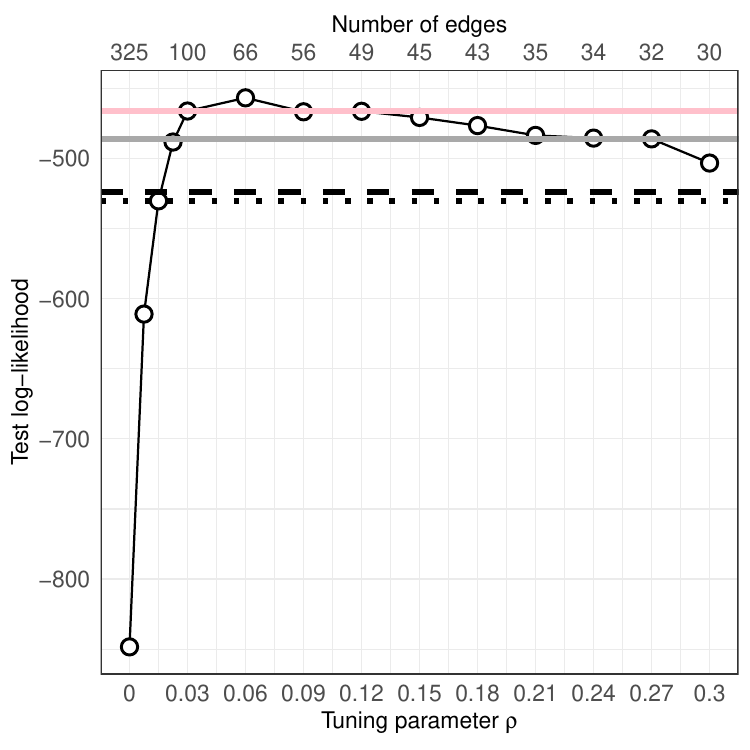}
	\includegraphics[height=.49 \textwidth]{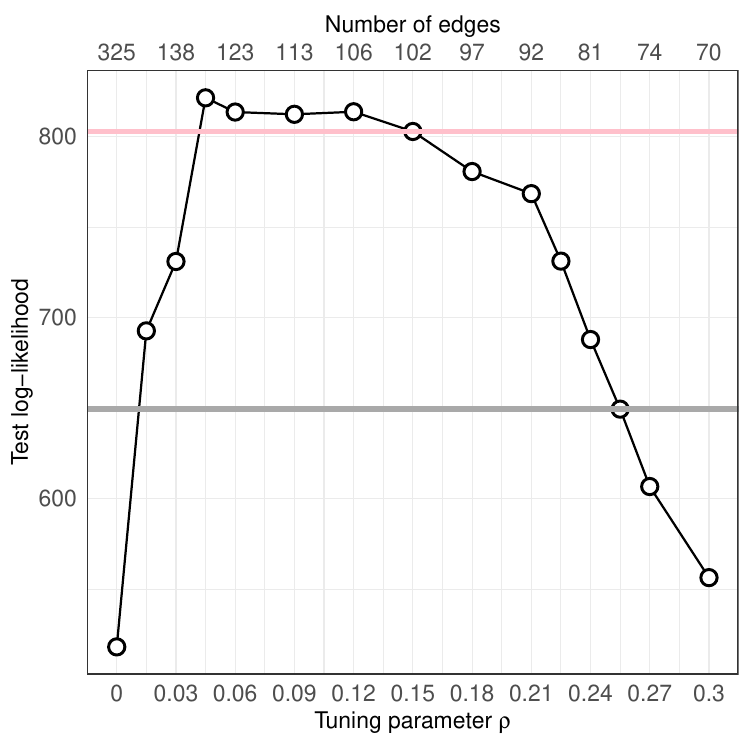}
	\caption{Test log-likelihoods of H\"usler--Reiss models on the estimated graphs by \eglearn{} with neighborhood selection as the base learner, as a function of the penalty parameter $\rho$, on the Danube river (left panel) and currency exchange (right panel) data sets. Pink and gray horizontal lines represent the test log-likelihood of the graphs selected by the AIC and BIC/MBIC criteria. Left panel: dotted and dashed lines represent the test log-likelihood of the extremal minimum spanning tree and the tree induced by flow connections. Right panel: the test log-likelihood of the extremal minimum spanning tree is below the plotting range.}
	\label{fig:likelihoods}
\end{figure}

As a means of comparison, we also apply naive graphical lasso and neighborhood selection to both of our data sets (after suitable marginal normalization) with a range of different penalty parameters. The top row of \cref{fig:vanilla} shows three estimated graphs on the Danube river data by the \eglearn{} (left), graphical lasso (middle), and neighborhood selection (right) methods, each of which contains the same number (27) of edges. Overall, the graphs obtained by the ``vanilla'' methods uncover quite different dependencies. In particular, the graphical lasso method seems to focus on the densely connected group of variables 2 to 7, instead of favoring a more tree-like, connected graph. The graphical lasso and neighborhood selection graphs tend to be highly disconnected, since these methods are completely agnostic to the connectedness requirement of extremal graphical models. Similar conclusions can be drawn from the three graphs on the bottom row of \cref{fig:vanilla}, which are obtained by applying the same methodology to the currency exchange data, each of which contains the same number (49) of edges: the graphical lasso graph contains a densely connected group comprised mostly of the South Asian currencies, but is divided into a large number of connected components.

\begin{figure}[H]
	\centering
	\includegraphics[width=.3\textwidth, page=1, trim={5em 3em 5em 5em}]{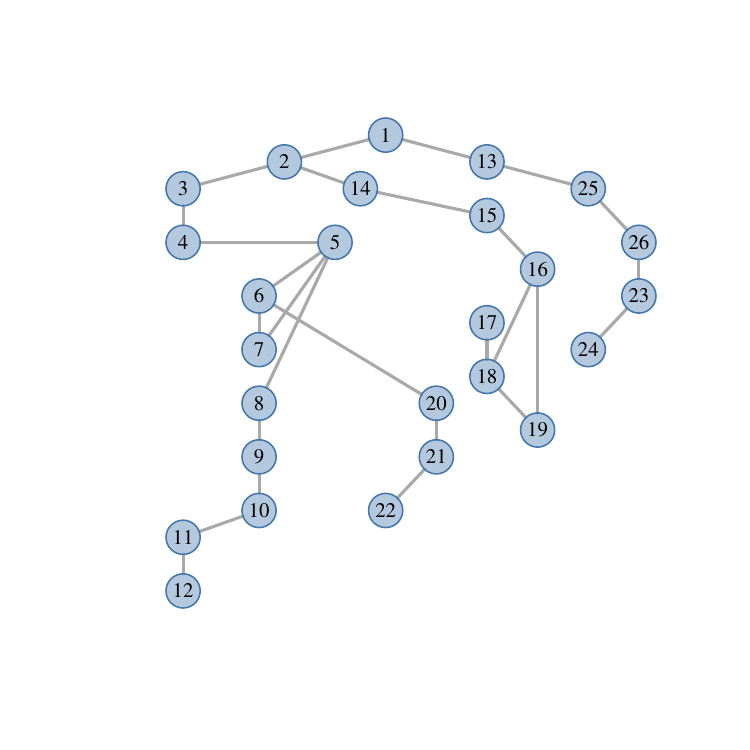}
	\hspace{1em}
	\includegraphics[width=.3\textwidth, page=2, trim={5em 3em 5em 5em}]{img/comparison_danube26_27.pdf}
	\hspace{1em}
	\includegraphics[width=.3\textwidth, page=3, trim={5em 3em 5em 5em}]{img/comparison_danube26_27.pdf}
	\\
	\includegraphics[width=.3\textwidth, page=1, trim={5em 5em 5em 5em}]{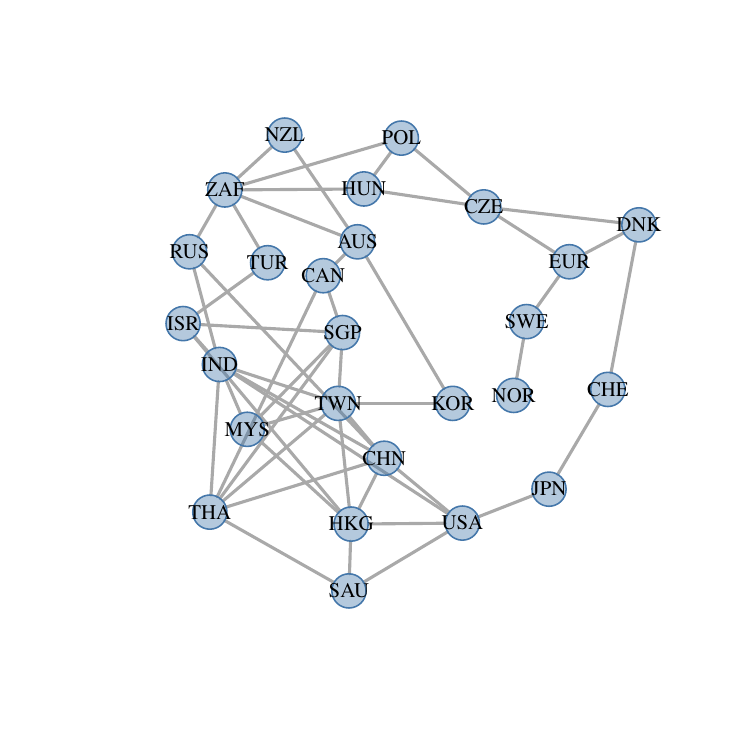}
	\hspace{1em}
	\includegraphics[width=.3\textwidth, page=2, trim={5em 5em 5em 5em}]{img/comparison_exchange_49.pdf}
	\hspace{1em}
	\includegraphics[width=.3\textwidth, page=3, trim={5em 5em 5em 5em}]{img/comparison_exchange_49.pdf}
	\caption{Top: estimated graphs of the same sparsity level (27 edges) obtained by applying \eglearn{} (left), the standard graphical lasso (middle) and standard neighborhood selection (right) to the Danube river data. Bottom: estimated graphs of the same sparsity (49 edges) obtained by applying \eglearn{} (left), the standard graphical lasso (middle) and standard neighborhood selection (right) to the currency exchange data.}
	\label{fig:vanilla}
\end{figure}

\section{Proofs of extremal graph recovery results}
\label{sec:proofeglearn}

\subsection{Proof of \Cref{th:NS}} \label{sec:proofNS}

When running the neighborhood selection algorithm on $\Sigma^{(m)}_{\setminus m, \setminus m}$, $m \in V$, recall that $\rho_{m, 1}^\ns, \dots, \rho_{m, d-1}^\ns$ are used as penalty parameters. Let
\begin{align*}
	\mu_{m, \ell} &:= \lambda_{\min} (\Sigma^{(m)}_{S_{m, \ell}, S_{m, \ell}}),
	\\
	\kappa_{m, \ell} &:= \normop{\Sigma^{(m)}_{S_{m, \ell}^c, S_{m, \ell}}}_\infty,
	\\
	\vartheta_{m, \ell} &:= \normop{ (\Sigma^{(m)}_{S_{m, \ell}, S_{m, \ell}})^{-1}}_\infty,
	\\
	s_{m, \ell} &:= |S_{m, \ell}|,
	\\
	C_{m, \ell}^\ns &:= \frac{2}{3} \min\bigg\{
		\frac{\mu_{m, \ell}}{2s_{m, \ell}},
		\frac{\eta_{m, \ell}^\ns}{4\vartheta_{m, \ell} (1 + \kappa_{m, \ell} \vartheta_{m, \ell}) s_{m, \ell}},
	\\
	&\phantom{:= \min\bigg\{\ }
		\frac{\min_{i \in S_{m, \ell}} \frac{|{B}_{i\ell}|}{{B}_{\ell\ell}} - \vartheta_{m, \ell} \rho_{m, \ell}^\ns}{2 \vartheta_{m, \ell} (1 + \kappa_{m, \ell} \vartheta_{m, \ell})},
		\frac{\rho_{m, \ell}^\ns \eta_{m, \ell}^\ns}{8(1 + \kappa_{m, \ell} \vartheta_{m, \ell})^2}
		\bigg\}.
\end{align*}
By \cref{prop:NS}, we have
\[
	\{(i, j) \in (V\setminus\{m\})^2: \tilde Z^{(m)} = 1\} = \{(i, j) \in (V\setminus\{m\})^2 \cap E\}
\]
provided that
\[
	\|\hat \Sigma^{(m)} - \Sigma^{(m)} \|_\infty < \frac{3}{2} \min_{\ell \neq m} C_{m, \ell}^\ns.
\]
Note that the use of \cref{prop:NS} depends on the assumption that $\hat\Sigma^{(m)}_{\setminus m, \setminus m}$ is symmetric positive semi-definite. However, this is sure to hold by definition: the Farris transform in~\eqref{eq:GammatoSigma} maps strictly conditionally negative definite variogram matrices to positive definite covariance matrices. By continuity, the empirical variogram will then be mapped to a matrix $\hat\Sigma^{(m)}_{\setminus m, \setminus m}$ that lies in the closure of the cone of positive definite covariance matrices, i.e., that is symmetric positive semi-definite.

From the definition of $\Sigma^{m}, \hat \Sigma^{(m)}$ through $\Gamma, \hat\Gamma$,
\[
	\max_{m \in V} \|\hat \Sigma^{(m)} - \Sigma^{(m)} \|_\infty \leq \frac{3}{2} \|\hat \Gamma - \Gamma \|_\infty.
\]
The \eglearn\ algorithm correctly learns the presence (resp. absence) of an edge $(i, j)$ if a 1 (resp. a 0) rightfully appears in position $(i, j)$ of $\tilde Z^{(m)}$ for at least $\lfloor d/2 \rfloor$ values of $m \notin \{i, j\}$. This is clearly the case for each pair $(i, j)$ if at least $\lfloor d/2 \rfloor + 2$ of the neighborhood selections perfectly succeed, which is guaranteed if $\|\hat \Gamma - \Gamma \|_\infty < \min_{\ell \neq m} C_{m, \ell}^\ns$ for at least $\lfloor d/2 \rfloor + 2$ values of $m$. Noting that the assumed bound $C^\ns$ lower bounds each $C_{m, \ell}^\ns$, the first statement of the theorem is proved.

For a proof of the second part, let
\[
	\lambda := \sqrt{\tfrac{3}{c} \log d + \sigma}
\]
with $c$ as in \cref{thm:concentration}, and $\sigma \to \infty$ but $\sigma = o(k/(\log k)^8)$. Then, by assumption on $d$ and $k$, we have eventually $\lambda \leq \sqrt{k}/(\log n)^4$, so that \cref{thm:concentration} applies. It states that with probability at least $1 - M\exp\{ 3\log d - c\lambda^2\} = 1 - o(1)$,
\[
	\|\Gam - \Gamma\|_\infty \lesssim \Big(\frac{k}{n}\Big)^\xi (\log(n/k))^2 + \frac{\sqrt{\log d + \sigma}}{\sqrt{k}}.
\]
By assumption,
\[
	\Big(\frac{k}{n}\Big)^\xi (\log(n/k))^2 + \frac{\sqrt{\log d}}{\sqrt{k}} = o(C^\ns),
\]
so if $\sigma$ is chosen to diverge slowly enough, we have $\Prob(\|\Gam - \Gamma\|_\infty < C^\ns) \to 1$.
\hfill $\Box$

\subsection{Proof of \Cref{th:GL}} \label{sec:proofGL}

When running the graphical lasso algorithm on $\Sigma_{\setminus m, \setminus m}^{(m)}$, $m \in V$, recall that $\rho_m^\gl$ is used as penalty parameter. Let
\begin{align*}
	\kappa_{\Sigma, m} &:= \opnorm{\Sigma^{(m)}}{\infty}, \quad \kappa_{\Omega, m} := \opnorm{(\Omega_{S_m, S_m}^{(m)})^{-1}}{\infty}, \quad \chi_m := 6\kappa_{\Sigma, m} \kappa_{\Omega, m} \Big( 1 \vee \frac{9\kappa_{\Sigma, m}^2 \kappa_{\Omega, m}}{\eta_m^\gl} \Big),
	\\
	s_m &:= \max_{\ell \in [d-1]} s_{m, \ell},
	\\
	C_m^\gl &:= \frac{2}{3} \min \bigg\{
		\min_{i \in [d-1]} \Sigma_{ii}^{(m)},
		\frac{\eta_m^\gl \rho_m^\gl}{8},
		\frac{1}{\chi_m s_m} - \rho_m^\gl,
		\frac{\theta_{\min}^\gl}{4\kappa_{\Omega, m}} - \rho_m^\gl
		\bigg\}.
\end{align*}
By \cref{prop:GL}, we have
\[
	\{(i, j) \in (V\setminus\{m\})^2: \tilde Z^{(m)} = 1\} = \{(i, j) \in (V\setminus\{m\})^2 \cap E\}
\]
provided that
\[
	\|\hat \Sigma^{(m)} - \Sigma^{(m)} \|_\infty < \frac{3}{2} C_m^\gl.
\]
Similarly to the proof of \cref{th:NS}, deduce that whenever $\|\hat\Gamma - \Gamma\|_\infty < C_m^\gl$ for at least $\lfloor d/2 \rfloor + 2$ values of $m$, \eglearn\ correctly recovers the extremal graph $G$. Moreover, $C^\gl \leq C_m^\gl$ for each $m \in V$, which completes the proof of the first part. The proof of the second statement is identical to that of the second statement in \cref{th:NS}.
\hfill $\Box$

\subsection{Sparsistency results for neighborhood selection and graphical lasso}

Let ${A}$ be a $p$-dimensional covariance matrix, ${B} := {A}^{-1}$, $\hat{A}$ be an estimator and $\eps := \|\hat{A} - {A}\|_\infty$. Recall the definition of the graph $G({B})$ associated to ${B}$, i.e., the graph with edge set $\{(i, j): i \neq j, {B}_{ij} \neq 0\}$.

We start by discussing the neighborhood selection algorithm. Define the neighborhood of a node $\ell$ in the graph $G(B)$ by $\nei(\ell) := \{i \in [p]\setminus\ell: B_{i\ell} \neq 0\}$. Let
\begin{align*}
	\mu_\ell &:= \lambda_{\min} ({A}_{\nei(\ell), \nei(\ell)}),
	\\
	\kappa_\ell &:= \normop{{A}_{\nei(\ell)^c, \nei(\ell)}}_\infty,
	\\
	\vartheta_\ell &:= \normop{ ({A}_{\nei(\ell), \nei(\ell)})^{-1}}_\infty,
	\\
	s_\ell &:= |\nei(\ell)|,
	\\
	\eta_\ell &:= 1 - \normop{{A}_{\nei(\ell)^c, \nei(\ell)} ({A}_{\nei(\ell), \nei(\ell)})^{-1}}_{\infty}.
\end{align*}

\begin{prop} \label{prop:NS}
	
	Assume that $\hat A$ is symmetric positive semi-definite, that $\min_{\ell \in [p]} \eta_\ell > 0$, and that
	\[
		\eps < \min_{\ell \in [p]} \min\bigg\{
		\frac{\mu_\ell}{2s_\ell},
		\frac{\eta_\ell}{4\vartheta_\ell (1 + \kappa_\ell \vartheta_\ell) s_\ell},
		\frac{\min_{i \in \nei(\ell)} \frac{|{B}_{i\ell}|}{{B}_{\ell\ell}} - \vartheta_\ell \rho_\ell}{2 \vartheta_\ell (1 + \kappa_\ell \vartheta_\ell)},
		\frac{\rho_\ell \eta_\ell}{8(1 + \kappa_\ell \vartheta_\ell)^2}
		\bigg\}.
	\]
	Then the graph $G_\text{NS}(\hat{A})$ obtained through the neighborhood selection in \cref{algo:NS} with penalty parameters $\rho_1, \dots, \rho_p$ is equal to $G({B})$.
	
\end{prop}

We now discuss the graphical lasso. Define the maximal edge degree $s := \max_{\ell \in [p]} s_\ell$, with $s_\ell$ as earlier, $S$ to be the augmented edge set $\{(i, j): {B}_{ij} \neq 0\}$, including the self loops $(i, i)$, and $S^c$ to be its complement in $V^2$. Let $\Omega := {A} \otimes {A}$,
\begin{align*}
	\kappa_{A} &:= \opnorm{{A}}{\infty}, \quad \kappa_\Omega := \opnorm{(\Omega_{SS})^{-1}}{\infty}, \quad \chi := 6\kappa_{A} \kappa_\Omega \Big( 1 \vee \frac{9\kappa_{A}^2 \kappa_\Omega}{\alpha} \Big),
	\\
	\alpha &:= 1 - \opnorm{\Omega_{S^c S} (\Omega_{SS})^{-1}}{\infty}.
\end{align*}

\begin{prop} \label{prop:GL}
	
	Assume that $\alpha>0$ and that
	\[
		\eps < \min\bigg\{ \min_{i \in [p]} {A}_{ii}, \frac{\alpha\rho}{8}, \frac{1}{\chi s} - \rho, \frac{1}{4\kappa_\Omega} \min_{i \neq j, {B}_{ij} \neq 0} |{B}_{ij}| - \rho \bigg\}.
	\]
	Then the graph $G_\text{GL}(\hat{A})$ obtained through the graphical lasso with penalty parameter $\rho$ is equal to $G({B})$.
	
\end{prop}

\begin{rem}
	
	For both algorithms, $s\eps \to 0$ is sufficient for model selection consistency as the sample size increases, if everything else is constant and the relevant incoherence condition is satisfied.
	
\end{rem}

\section{Sparsistency results for neighborhood selection and graphical lasso: proofs}
\label{sec:proofgraph}

\subsection{Proof of \cref{prop:NS}}

First note that if each of the lasso regressions therein succeeds in recovering the corresponding neighborhood, then clearly \cref{algo:NS} recovers the right graph.

We hereby fix an arbitrary index $\ell \in [p]$. Now, note that the assumed bound on $\eps$ implies
\begin{align}
	s_\ell \eps &\leq \mu_\ell/2, \label{eq:epscond1}
	\\
	\vartheta_\ell s_\ell \eps &\leq 1/2, \label{eq:epscond2}
	\\
	2\vartheta_\ell (1 + \kappa_\ell \vartheta_\ell) s_\ell \eps &\leq \eta_\ell/2, \label{eq:epscond3}
	\\
	2\vartheta_\ell (1 + \kappa_\ell \vartheta_\ell) \eps + \vartheta_\ell \rho_\ell &< \min_{i \in \nei(\ell)} \frac{|{B}_{i\ell}|}{{B}_{\ell\ell}}, \label{eq:epscond4}
	\\
	2(1 + \kappa_\ell \vartheta_\ell)^2 \eps &< \frac{\rho_\ell \eta_\ell}{4}. \label{eq:epscond5}
\end{align}
For the remainder of the proof, $\ell$ will be kept fix and hence will be partially removed from the notation. In particular, the subscripts of $\mu_\ell, \kappa_\ell, \vartheta_\ell, s_\ell, \eta_\ell, \rho_\ell$ will be omitted. Our goal is to show then that
\begin{equation} \label{eq:lasso}
	\hat\theta := \mathop{\arg\min}_{\theta \in \R^{p-1}} \big\{ -2\hat{A}_{\ell, \setminus\ell} \theta + \theta^\top \hat{A}_{\setminus\ell, \setminus\ell} \theta + \rho \|\theta\|_1 \big\}
\end{equation}
has the same support (i.e., the set of indices where it is nonzero) as ${B}_{\setminus\ell, \ell}$.

Partition $[p]$ into the three subsets $\{\ell\}$, $\nei(\ell)$ and $\nei(\ell)^c \setminus \{\ell\}$. We shall index elements of vectors and rows/columns of matrices by $\ell$, 1 and 2, respectively, to denote those subsets, e.g.,
\begin{align*}
	\hat {A}_{11} &:= (\hat {A}_{ij})_{i \in \nei(\ell), j \in \nei(\ell)}
	\\
	\hat {A}_{21} &:= (\hat {A}_{ij})_{i \in \nei(\ell)^c \backslash \{\ell\}, j \in \nei(\ell)}
	\\
	\hat {A}_{1\ell} &:= (\hat {A}_{i\ell})_{i \in \nei(\ell)}
	\\
	\hat {A}_{2\ell} &:= (\hat {A}_{i\ell})_{i \in \nei(\ell)^c \backslash \{\ell\}};
\end{align*}
similarly define the population versions ${A}_{11}$, ${A}_{21}$, ${A}_{1\ell}$ and ${A}_{2\ell}$. We use the same notation for partitioning the matrix ${B}$. We now show that \cref{eq:epscond1,eq:epscond2,eq:epscond3,eq:epscond4,eq:epscond5} imply the bounds
\begin{align}
	\normop{\hat {A}_{21}  (\hat{A}_{11})^{-1}}_{\infty} &\leq 1 - \eta/2 \label{eq:h1}
	\\
	\Big\| (\hat{A}_{11})^{-1} \hat{A}_{1\ell} + \frac{{B}_{1\ell}}{{B}_{\ell\ell}} \Big\|_\infty + \frac{\rho}{2} \normop{(\hat{A}_{11})^{-1}}_{\infty} &< \min_{i \in \nei(\ell)} \frac{|{B}_{i\ell}|}{{B}_{\ell\ell}} \label{eq:h2}
	\\
	\Big\| \hat{A}_{21} (\hat{A}_{11})^{-1} \hat{A}_{1\ell}  - \hat{A}_{2\ell} \Big\|_{\infty} &< \frac{\rho\eta}{4} \label{eq:h3}.
\end{align}
Subsequently, by adapting the arguments of \cite{ZY06}, we will show that the invertibility of $\hat{A}_{11}$, along with \cref{eq:h1,eq:h2,eq:h3}, imply the result.

\noindent\textbf{Preliminaries to the proofs of \cref{eq:h1,eq:h2,eq:h3}:} Let us start by obtaining a few useful bounds and identities.

We first prove that under the assumptions made, the matrix $\hat{A}_{11}$ is invertible. Observe that
\begin{align*}
\lambda_{\min}(\hat{A}_{11}) &= \min_{b: \|b\|_2=1} b^\top \hat{A}_{11} b \geq \mu - \sup_{b: \|b\|_2=1} b^\top (\hat{A}_{11} - {A}_{11}) b \geq \mu - s \|\hat{A}_{11} - {A}_{11}\|_\infty 
\\
& = \mu - s\eps \geq \mu/2,
\end{align*}
by \cref{eq:epscond1}. In the second inequality, we have used the known result that the spectral norm of any square matrix is upper bounded by its dimension times its maximum norm. Most importantly, we have established that $\hat{A}_{11}$ is invertible.

Now, by sub-multiplicativity of operator norms,
\begin{align*}
	\normop{(\hat{A}_{11})^{-1} - ({A}_{11})^{-1}}_\infty &= \normop{(\hat{A}_{11})^{-1} ({A}_{11} - \hat{A}_{11}) ({A}_{11})^{-1}}_\infty
	\\
	&\leq \normop{({A}_{11})^{-1}}_\infty \normop{(\hat{A}_{11})^{-1}}_\infty \normop{\hat{A}_{11} - {A}_{11} }_\infty
	\\
	&\leq \vartheta \Big(\vartheta + \normop{(\hat{A}_{11})^{-1} - ({A}_{11})^{-1}}_\infty \Big) s\eps.
\end{align*}
Rearranging yields
\begin{equation} \label{eq:sigmainvbound}
	\normop{(\hat{A}_{11})^{-1} - ({A}_{11})^{-1}}_\infty \leq \frac{\vartheta^2 s\eps}{1 - \vartheta s\eps} \leq 2\vartheta^2 s\eps,
\end{equation}
since the $L^\infty/L^\infty$-operator norm of a square matrix is also upper bounded by its dimension times its maximum norm. The last inequality is due to \cref{eq:epscond2}. For further use, note that this implies
\begin{equation} \label{eq:sigmainvbound2}
	\normop{(\hat{A}_{11})^{-1}}_\infty \leq \vartheta + \normop{(\hat{A}_{11})^{-1} - ({A}_{11})^{-1}}_\infty \leq \vartheta + 2\vartheta^2 s\eps \leq 2\vartheta,
\end{equation}
where in the last inequality we applied \cref{eq:epscond2} again.

Using \cref{eq:sigmainvbound}, it is possible to obtain a sharper bound on the maximum norm difference between $({A}_{11})^{-1}$ and $(\hat{A}_{11})^{-1}$. Indeed, note that for any matrices $T_1$, $T_2$, we have
\begin{equation} \label{eq:matrixproductnorm}
	\|T_1 T_2\|_\infty \leq \|T_1\|_\infty \normop{T_2}_1,
\end{equation}
which reduces to $\|T_1\|_\infty \|T_2\|_1$ if $T_2$ is a column vector. Similarly,
\begin{equation} \label{eq:matrixproductnorm2}
	\|T_1 T_2\|_\infty \leq \normop{T_1}_\infty \|T_2\|_\infty.
\end{equation}
Repeatedly using those facts, along with symmetry,
\begin{align}
	\Big\| (\hat{A}_{11})^{-1} - ({A}_{11})^{-1} \Big\|_\infty &= \Big\| (\hat{A}_{11})^{-1} ({A}_{11} - \hat{A}_{11}) ({A}_{11})^{-1} \Big\|_\infty \notag
	\\
	&\leq \Big\| (\hat{A}_{11})^{-1} ({A}_{11} - \hat{A}_{11}) \Big\|_\infty \normop{({A}_{11})^{-1}}_1 \notag
	\\
	&\leq \normop{(\hat{A}_{11})^{-1}}_\infty \Big\| {A}_{11} - \hat{A}_{11} \Big\|_\infty \normop{({A}_{11})^{-1}}_1 \notag
	\\
	&\leq 2\vartheta^2 \eps, \label{eq:sigmainvboundew}
\end{align}
where in the last step we used the fact that the $L^1/L^1$- and $L^\infty/L^\infty$-operator norms of a symmetric matrix (in this case, $({A}_{11})^{-1}$) are equal, along with \cref{eq:sigmainvbound2}.

We now prove that
\begin{equation} \label{eq:linpred}
	({A}_{11})^{-1} {A}_{1\ell} = -\frac{{B}_{1\ell}}{{B}_{\ell\ell}}.
\end{equation}
If $\bW \sim N(0, A)$, it is known that $-{B}_{1\ell}/{B}_{\ell\ell}$ is the vector of non-zero coefficients for optimal linear prediction of $W_\ell$ using $\bW_{\setminus\ell}$. Since removing the non-predictor variables does not change the prediction, we have
\[
	-\frac{{B}_{1\ell}}{{B}_{\ell\ell}} = -\frac{{B}_{1\ell}^*}{{B}_{\ell\ell}^*}
\]
where
\[
	{B}^* := \begin{bmatrix}
								{A}_{11} & {A}_{1\ell}
								\\
								{A}_{\ell 1} & {A}_{\ell\ell}
							\end{bmatrix}^{-1}.
\]
By the inversion formula for block matrices,
\[
	-{B}_{1\ell}^* = \frac{1}{{A}_{\ell\ell}} \Big( {A}_{11} - \frac{{A}_{1\ell}{A}_{\ell 1}}{{A}_{\ell\ell}} \Big)^{-1} {A}_{1\ell}, \quad {B}_{\ell\ell}^* = \frac{1}{{A}_{\ell\ell}} \Big( 1 - \frac{{A}_{\ell 1}({A}_{11})^{-1} {A}_{1\ell}}{{A}_{\ell\ell}} \Big)^{-1},
\]
so letting $\lambda := \frac{{A}_{\ell 1}({A}_{11})^{-1} {A}_{1\ell}}{{A}_{\ell\ell}} \in (0, 1)$,
\[
	-\frac{{B}_{1\ell}^*}{{B}_{\ell\ell}^*} = (1 - \lambda) \Big( {A}_{11} - \frac{{A}_{1\ell}{A}_{\ell 1}}{{A}_{\ell\ell}} \Big)^{-1} {A}_{1\ell}.
\]
Applying Woodbury's matrix inversion formula, we find
\begin{align*}
	\Big( {A}_{11} - \frac{{A}_{1\ell}{A}_{\ell 1}}{{A}_{\ell\ell}} \Big)^{-1} &= ({A}_{11})^{-1} - ({A}_{11})^{-1} {A}_{1\ell} \big( -{A}_{\ell\ell} + {A}_{\ell 1} ({A}_{11})^{-1} {A}_{1\ell} \big)^{-1} {A}_{\ell 1} ({A}_{11})^{-1}
	\\
	&= ({A}_{11})^{-1} - ({A}_{11})^{-1} {A}_{1\ell} \frac{1}{{A}_{\ell\ell} (\lambda-1)} {A}_{\ell 1} ({A}_{11})^{-1}.
\end{align*}
Simple matrix algebra yields
\begin{multline*}
	-\frac{{B}_{1\ell}^*}{{B}_{\ell\ell}^*} = (1-\lambda) ({A}_{11})^{-1} {A}_{1\ell} \Big( 1 - \frac{1}{{A}_{\ell\ell} (\lambda-1)} {A}_{\ell 1} ({A}_{11})^{-1} {A}_{1\ell} \Big)
    \\
    = (1-\lambda) ({A}_{11})^{-1} {A}_{1\ell} \Big( 1 - \frac{\lambda}{\lambda-1} \Big) = ({A}_{11})^{-1} {A}_{1\ell},
\end{multline*}
which finally establishes \cref{eq:linpred}.

Similarly, we prove that
\begin{equation} \label{eq:condcov}
	{A}_{21} ({A}_{11})^{-1} {A}_{1\ell} = {A}_{2\ell}.
\end{equation}
Indeed, notice that the Schur complement of ${A}_{11}$ in ${A}$,
\[
	{A}_{(2, \ell), (2, \ell) \given 1}
	:= \begin{bmatrix}
				{A}_{22} & {A}_{2\ell}
				\\
				{A}_{\ell 2} & {A}_{\ell\ell}
			\end{bmatrix}
		- \begin{bmatrix}
				{A}_{21}
				\\
				{A}_{\ell 1}
			\end{bmatrix}
		({A}_{11})^{-1}
		\begin{bmatrix}
			{A}_{12} & {A}_{1\ell}
		\end{bmatrix},
\]
is the conditional covariance matrix of the random vector $\bW_{\nei(\ell)^c}$ given $\bW_{\nei(\ell)}$. The off-diagonal block of ${A}_{(2, \ell), (2, \ell) \given 1}$, that is, ${A}_{2\ell} - {A}_{21} ({A}_{11})^{-1} {A}_{1\ell}$, is therefore the conditional covariance between $W_\ell$ and the other variables not in its neighborhood, given the variables in its neighborhood. By definition of the neighborhood, this is zero, hence \cref{eq:condcov} holds.

\noindent\textbf{Proof of \cref{eq:h1}:} Using \cref{eq:sigmainvbound}, we have
\begin{align*} \label{eq:sigmabound2}
	&\normop{\hat {A}_{21} (\hat{A}_{11})^{-1} - {A}_{21} ({A}_{11})^{-1}}_{\infty}
    \\
    &\leq \normop{\hat {A}_{21} (\hat{A}_{11})^{-1} - {A}_{21} (\hat{A}_{11})^{-1}}_{\infty} + \normop{{A}_{21} (\hat{A}_{11})^{-1} - {A}_{21} ({A}_{11})^{-1}}_{\infty}
	\\
	&\leq \normop{(\hat{A}_{11})^{-1}}_\infty \normop{\hat {A}_{21} - {A}_{21}}_\infty + \normop{{A}_{21}}_\infty \normop{(\hat{A}_{11})^{-1} - ({A}_{11})^{-1}}_\infty
	\\
	&\leq 2\vartheta s\eps + \kappa 2\vartheta^2 s\eps
	\\
	&= 2\vartheta (1 + \kappa\vartheta) s\eps,
\end{align*}
where the third inequality follows by applying \cref{eq:sigmainvbound,eq:sigmainvbound2}. Now by the reverse triangle inequality,
\begin{align*}
\normop{\hat {A}_{21}  (\hat{A}_{11})^{-1}}_{\infty} &\leq 
    \normop{{A}_{21}  ({A}_{11})^{-1}}_{\infty} + \normop{\hat {A}_{21} (\hat{A}_{11})^{-1} - {A}_{21} ({A}_{11})^{-1}}_{\infty}
    \\
    &\leq 1- \eta + 2\vartheta (1 + \kappa\vartheta) s\eps.
\end{align*}
\Cref{eq:h1} then follows from \cref{eq:epscond3}.

\noindent\textbf{Proof of \cref{eq:h2}:} First, by \cref{eq:linpred},
\begin{align*}
	\Big\| (\hat{A}_{11})^{-1} \hat{A}_{1\ell} + \frac{{B}_{1\ell}}{{B}_{\ell\ell}} \Big\|_\infty &= \Big\| (\hat{A}_{11})^{-1} \hat{A}_{1\ell} - ({A}_{11})^{-1} {A}_{1\ell} \Big\|_\infty
	\\
	&\leq \Big\| (\hat{A}_{11})^{-1} \hat{A}_{1\ell} - (\hat{A}_{11})^{-1} {A}_{1\ell} \Big\|_\infty + \Big\| (\hat{A}_{11})^{-1} {A}_{1\ell} - ({A}_{11})^{-1} {A}_{1\ell} \Big\|_\infty
	\\
	&\leq \normop{(\hat{A}_{11})^{-1}}_\infty \Big\| \hat{A}_{1\ell} - {A}_{1\ell} \Big\|_\infty + \Big\| \Big( (\hat{A}_{11})^{-1} - ({A}_{11})^{-1} \Big) \Big\|_\infty \|{A}_{1\ell}\|_1
	\\
	&\leq 2\vartheta \eps + 2\vartheta^2 \eps \kappa
	\\
	&= 2\vartheta (1 + \kappa\vartheta) \eps,
\end{align*}
where we first used \cref{eq:matrixproductnorm,eq:matrixproductnorm2}, then \cref{eq:sigmainvbound2,eq:sigmainvboundew}. Noting that
\[
	\frac{\rho}{2} \normop{(\hat{A}_{11})^{-1}}_{\infty} \leq \frac{\rho}{2} 2\vartheta = \vartheta\rho,
\]
\cref{eq:h2} follows from \cref{eq:epscond4}.

\noindent\textbf{Proof of \cref{eq:h3}:} First, by \cref{eq:condcov},
\begin{align*}
	\Big\| \hat{A}_{21} (\hat{A}_{11})^{-1} \hat{A}_{1\ell}  - \hat{A}_{2\ell} \Big\|_{\infty} &\leq \Big\| \hat{A}_{21} (\hat{A}_{11})^{-1} \hat{A}_{1\ell} - {A}_{21} ({A}_{11})^{-1} {A}_{1\ell} \Big\|_{\infty} + \Big\| \hat{A}_{2\ell} - {A}_{2\ell} \Big\|_{\infty},
\end{align*}
the second term of which is clearly upper bounded by $\eps$. The first term above is upper bounded by
\begin{align*}
	&\Big\| \hat{A}_{21} (\hat{A}_{11})^{-1} \hat{A}_{1\ell} - \hat{A}_{21} ({A}_{11})^{-1} {A}_{1\ell} \Big\|_{\infty} + \Big\| \hat{A}_{21} ({A}_{11})^{-1} {A}_{1\ell} - {A}_{21} ({A}_{11})^{-1} {A}_{1\ell} \Big\|_{\infty}
	\\
	&\quad \leq \normop{\hat{A}_{21}}_\infty \Big\| (\hat{A}_{11})^{-1} \hat{A}_{1\ell} - ({A}_{11})^{-1} {A}_{1\ell} \Big\|_\infty + \Big\| \hat{A}_{21} - {A}_{21} \Big\|_\infty \normop{({A}_{11})^{-1} {A}_{1\ell}}_1
	\\
	&\quad \leq (\kappa + s\eps) 2\vartheta (1 + \kappa\vartheta) \eps + \kappa\vartheta \eps
	\\
	&\quad \leq 2(1 + \kappa\vartheta)^2 \eps - \eps,
\end{align*}
where we used \cref{eq:matrixproductnorm,eq:matrixproductnorm2} and the result in the proof of \cref{eq:h2} above where we bound $\| (\hat{A}_{11})^{-1} \hat{A}_{1\ell} - ({A}_{11})^{-1} {A}_{1\ell} \|_\infty$. The final bound was obtained by applying \cref{eq:epscond2} and rearranging. Hence \cref{eq:h3} follows from \cref{eq:epscond5}.

\noindent\textbf{Proof that \cref{eq:lasso} recovers the support of ${B}_{\setminus\ell, \ell}$:} Note that
\[
	\nabla_\theta \Big\{ -2\hat{A}_{\ell, \setminus\ell}\theta + \theta^\top \hat{A}_{\setminus\ell, \setminus\ell} \theta \Big\} = 2\hat{A}_{\setminus\ell, \setminus\ell} \theta - 2\hat{A}_{\setminus\ell, \ell}
\]
and that the sub-differential of the 1-norm at a point $\theta \in \R^{p-1}$ is given by the set of all $x \in [-1, 1]^{p-1}$ such that
\[
	\theta_j \neq 0 \Longrightarrow x_j = \sign(\theta_j).
\]
Considering the optimization problem in \cref{eq:lasso}, the KKT conditions state that any point $\hat\theta$ satisfying
\begin{align}
	\big( 2\hat{A}_{\setminus\ell, \setminus\ell} \hat\theta - 2\hat{A}_{\setminus\ell, \ell} \big)_{\hat J} &= -\rho\sign(\hat\theta_{\hat J}) \label{eq:KKT1}
	\\
	\big\| \big( 2\hat{A}_{\setminus\ell, \setminus\ell} \hat\theta - 2\hat{A}_{\setminus\ell, \ell} \big)_{\hat J^c} \big\|_\infty &\leq \rho \label{eq:KKT2}
\end{align}
where $\hat J := \{j \in [p-1]: \hat\theta_j \neq 0\}$, is a solution. Following the arguments in the proof of Proposition 1 in \cite{ZY06}, we shall identify one such solution that has the right support. We will subsequently show that it is unique, utilizing arguments similar to (but not implied by) what is found in Section 2.1 of \cite{T13}.

Let $\theta^* := -{B}_{\setminus\ell, \ell}/{B}_{\ell\ell}$ and $\theta_1^*$, $\theta_2^*$ denote its subvectors indexed by $\nei(\ell)$ and $\nei(\ell)^c \backslash \{\ell\}$, respectively\footnote{For simplicity, we slightly abuse the notation here. In fact, $\theta_1^*$ and $\theta_2^*$ are the subvectors that, in the product ${A}_{\setminus\ell, \setminus\ell} \theta^*$, are multiplied by the columns of $A$ corresponding to variables in $\nei(\ell)$ and in $\nei(\ell)^c \backslash \{\ell\}$, respectively.}; we will use the same notation for $\hat\theta$ defined below. The candidate solution $\hat\theta$ is defined by $\hat\theta_2 = 0$ and
\[
	\hat\theta_1 := (\hat{A}_{11})^{-1} \Big( \hat{A}_{1\ell} - \frac{\rho}{2} \sign(\theta_1^*) \Big),
\]
where the $\sign$ function is applied to $\theta_1^*$ coordinate-wise. First note that by \cref{eq:h2},
\[
	\|\hat\theta_1 - \theta_1^*\|_\infty < \min_{i \in \nei(\ell)} |\theta_i^*|,
\]
hence $\sign(\theta_1^*) = \sign(\hat\theta_1)$. Thus, we find
\[
	2\hat{A}_{\nei(\ell), \setminus\ell} \hat\theta - 2\hat{A}_{1\ell} = 2\hat{A}_{11} \hat\theta_1 - 2\hat{A}_{1\ell} = -\rho\sign(\theta_1^*) = -\rho\sign(\hat\theta_1),
\]
i.e., \cref{eq:KKT1} is satisfied. Next, by \cref{eq:h1,eq:h3} we have
\begin{align*}
	\|2\hat{A}_{\nei(\ell)^c \backslash \{\ell\}, \setminus\ell} \hat\theta - 2\hat{A}_{2\ell}\|_\infty &= \|2\hat{A}_{21} \hat\theta_1 - 2\hat{A}_{2\ell}\|_\infty
	\\
	&= \Big\| 2\hat{A}_{21} (\hat{A}_{11})^{-1} \hat{A}_{1\ell} - 2\hat{A}_{2\ell} - \rho \hat{A}_{21} (\hat{A}_{11})^{-1} \sign(\theta_1^*) \Big\|_\infty
	\\
	&\leq 2 \Big\| \hat{A}_{21} (\hat{A}_{11})^{-1} \hat{A}_{1\ell} - \hat{A}_{2\ell} \Big\|_\infty + \rho \normop{\hat{A}_{21} (\hat{A}_{11})^{-1}}_\infty
	\\
	&< \frac{\rho\eta}{2} + \rho\Big( 1 - \frac{\eta}{2} \Big)
	\\
	&= \rho,
\end{align*}
i.e., \cref{eq:KKT2} is satisfied. Therefore, we have proved the existence of a solution $\hat\theta$ to \cref{eq:lasso} which has the same sign pattern (hence the same support) as $\theta^*$.

It remains to show that this solution is unique. We first prove a weaker statement: $\hat{A}_{\setminus\ell, \setminus\ell} \tilde\theta$ is unique across all solutions $\tilde\theta$ to \cref{eq:lasso}. Indeed, suppose that $\tilde\theta^{(1)}$ and $\tilde\theta^{(2)}$ are two distinct solutions such that $\hat{A}_{\setminus\ell, \setminus\ell} \tilde\theta^{(1)} \neq \hat{A}_{\setminus\ell, \setminus\ell} \tilde\theta^{(2)}$. Let $W$ be a (possibly rectangular) matrix such that $\hat{A}_{\setminus\ell, \setminus\ell} = W^\top W$. For instance, $W$ can be obtained from a Cholesky decomposition; this is possible by the assumption that $\hat A$ (and hence $\hat{A}_{\setminus\ell, \setminus\ell}$) is symmetric positive semi-definite. By assumption, $W\tilde\theta^{(1)} \neq W\tilde\theta{(2)}$. Both points being a solution means that the objective function in \cref{eq:lasso} attains its minimum value at both points: if
\[
	Q(\theta) := -2\hat{A}_{\ell, \setminus\ell} \theta + \|W\theta\|_2^2 + \rho\|\theta\|_1,
\]
then $Q(\tilde\theta^{(1)}) = Q(\tilde\theta^{(2)}) = \min_{\theta} Q(\theta) =: v_{\min}$, say. Then, evaluating $Q$ at a point $\alpha\tilde\theta^{(1)} + (1-\alpha)\tilde\theta^{(2)}$, for some $\alpha \in (0, 1)$, yields
\begin{align*}
	&-2\hat{A}_{\ell, \setminus\ell} \big( \alpha\tilde\theta^{(1)} + (1-\alpha)\tilde\theta^{(2)} \big) + \big\| \alpha W\tilde\theta^{(1)} + (1-\alpha) W\tilde\theta^{(2)} \big\|_2^2 + \rho\big\| \alpha\tilde\theta^{(1)} + (1-\alpha)\tilde\theta^{(2)} \big\|_1
	\\
	&\quad < \alpha \big\{ -2\hat{A}_{\ell, \setminus\ell} \tilde\theta^{(1)} + \|W\tilde\theta^{(1)}\|_2^2 + \rho\|\tilde\theta^{(1)}\|_1 \big\}
    \\
    &\quad \quad + (1-\alpha) \big\{ -2\hat{A}_{\ell, \setminus\ell} \tilde\theta^{(2)} + \|W\tilde\theta^{(2)}\|_2^2 + \rho\|\tilde\theta^{(2)}\|_1 \big\}
	\\
	&\quad = \alpha Q(\tilde\theta^{(1)}) + (1-\alpha) Q(\tilde\theta^{(2)})
	\\
	&\quad = v_{\min},
\end{align*}
where the strict inequality is a consequence of the strict convexity of the squared Euclidean norm and the convexity of the 1-norm. This is a contradiction since $v_{\min}$ was assumed to be the minimum value. Hence $\hat{A}_{\setminus\ell, \setminus\ell} \tilde\theta$ must be unique across all solutions $\tilde\theta$ of \cref{eq:lasso}.

Now, using this preliminary uniqueness result, we notice that for any solution $\tilde\theta$, we have
\[
	\Big\| 2\hat{A}_{\nei(\ell)^c \backslash \{\ell\}, \setminus\ell} \tilde\theta - 2\hat{A}_{2\ell} \Big\|_\infty = \Big\| 2\hat{A}_{\nei(\ell)^c \backslash \{\ell\}, \setminus\ell} \hat\theta - 2\hat{A}_{2\ell} \Big\|_\infty < \rho,
\]
hence $\tilde\theta_2 = 0$. Therefore,
\[
	2\hat{A}_{11} \tilde\theta_1 - 2\hat{A}_{1\ell} = 2\hat{A}_{\nei(\ell), \setminus\ell} \tilde\theta - 2\hat{A}_{1\ell} = 2\hat{A}_{\nei(\ell), \setminus\ell} \hat\theta - 2\hat{A}_{1\ell} = -\rho\sign(\theta_1^*),
\]
which uniquely defines $\tilde\theta_1$ by the invertibility of $\hat{A}_{11}$. Deduce that $\tilde\theta = \hat\theta$.
\hfill $\Box$

\subsection{Proof of \cref{prop:GL}}

First note that the assumed bound on $\eps$ implies
\begin{align}
	\eps &\leq \frac{\alpha\rho}{8}, \label{eq:glassocond1}
	\\
	2\kappa_\Omega (\eps + \rho) &\leq \min\bigg\{ \frac{1}{3\kappa_A s}, \frac{1}{3\kappa_A^3 \kappa_\Omega s} \bigg\}, \label{eq:glassocond2}
	\\
	6 \kappa_A^3 \kappa_\Omega^2 s (\eps + \rho) &\leq \frac{\alpha}{9} \leq \frac{1}{1 + 8/\alpha}, \label{eq:glassocond3}
	\\
	2\kappa_\Omega (\eps + \rho) &\leq \frac{1}{2} \min_{i \neq j, B_{ij} \neq 0} |B_{ij}|. \label{eq:glassocond4}
\end{align}
Without loss of generality, assume that $\rho>0$. Otherwise, \cref{eq:glassocond1} implies that $\hat A = A$, in which case the result is trivial.

The proof is heavily based on that of Theorems 1 and 2 of \cite{RWRY11}. Note that by assumption, each diagonal element of $\hat A$ satisfies $\hat A_{ii} \geq A_{ii} - \eps > 0$, $i \in [p]$. Then by Lemma 3 of that paper, the positivity of $\rho$ ensures that the solution $\hat B$ exists, is unique and satisfies
\[
	-\hat B^{-1} + \hat A + \rho Z = 0,
\]
for some matrix $Z$ in the sub-differential of the off-diagonal norm at the point $\hat B$, as defined in \cite{RWRY11}. The strategy is now to consider the solution $\tilde B$ of the graphical lasso optimization problem with the additional constraint that $B_{S^c} = 0$, which is also guaranteed to exist and to be unique by \cref{eq:glassocond1}. Define $\Delta := \|\tilde B - B\|_\infty$. By \cref{eq:glassocond2}, the condition of Lemma 6 of \cite{RWRY11} is satisfied. It then follows from that result that $\|\Delta\|_\infty \leq 2\kappa_\Omega (\eps + \rho)$. Lemma 5 from that paper now implies that the matrix $R(\Delta)$, as defined therein, satisfies
\begin{align*}
	\|R(\Delta)\|_\infty \leq \frac{3}{2} \kappa_A^3 s \|\Delta\|_\infty^2
	\leq 6 \kappa_A^3 \kappa_\Omega^2 s (\eps + \rho)^2
	\leq \frac{\eps + \rho}{1 + 8/\alpha}
	\leq \frac{(\alpha/8 + 1)\rho}{1 + 8/\alpha} = \frac{\alpha\rho}{8},
\end{align*}
where the last three inequalities are due to the previously obtained bound on $\|\Delta\|_\infty$, to \cref{eq:glassocond3} and to \cref{eq:glassocond1}, respectively. Now that $\eps \vee \|R(\Delta)\|_\infty \leq \alpha\rho/8$, we may apply Lemma 4 of \cite{RWRY11} and find that in fact, $\hat B = \tilde B$. It follows, by definition of $\tilde B$, that $\hat B_{ij} = 0$ for all $(i, j) \in S^c$ and that for $(i, j) \in S$, $i \neq j$,
\[
	|\hat B_{ij} - B_{ij}| \leq \|\Delta\|_\infty \leq \frac{|B_{ij}|}{2}
\]
by \cref{eq:glassocond4}. That is, the element-wise error is too small for $\hat B_{ij}$ to reach 0. We have therefore guaranteed that the sparsity pattern of $\hat B$ is the same as that of $B$.
\hfill $\Box$

\section{Proof of Theorem~\ref{thm:concentration}}
\label{sec:proof}

We start by introducing useful additional notation and auxiliary variables that will be useful throughout this proof. For $\ell \in \{1, 2\}$, let $e_i^{(m),\ell} := \E[(\log Y_i^{(m)})^\ell]$ and $e_{ij}^{(m)} := \E[(\log Y_i^{(m)})(\log Y_j^{(m)})]$. Then we have
\begin{equation}
\label{eq:repGamma}
\Gamma_{ij}^{(m)} = e_i^{(m),2} + e_j^{(m),2} - 2e_{ij}^{(m)} - \big( e_i^{(m),1} - e_j^{(m),1} \big)^2 , \quad i \neq j, m \in V.
\end{equation} 
Similarly
\begin{align*}
	\hat\Gamma_{ij}^{(m)} &= \hat e_i^{(m),2} + \hat e_j^{(m),2} - 2\hat e_{ij}^{(m)} - \big( \hat e_i^{(m),1} - \hat e_j^{(m),1} \big)^2, \quad i \neq j, m \in V,
\end{align*}
where
\begin{align*}
	\hat e_i^{(m),\ell} &:= \frac{1}{k} \sum_{t=1}^n \Big\{\log \Big(\frac{k}{n \hat F_{i}(U_{ti})}\Big)\Big\}^\ell \Ind{\hat F_m(U_{tm}) \leq k/n},
	\\
	\hat e_{ij}^{(m)} &:= \frac{1}{k} \sum_{t=1}^n \log \Big(\frac{k}{n \hat F_{j}(U_{tj})}\Big) \log \Big(\frac{k}{n \hat F_{i}(U_{ti})}\Big) \Ind{\hat  F_m(U_{tm}) \leq k/n},
\end{align*}
$U_{ti} := 1 - F_i(X_{ti})$, $1 \leq t \leq n$, are independent and uniformly distributed, and $\hat F_i$ is the empirical distribution function of $(U_{ti})_{1 \leq t \leq n}$, satisfying $\hat F_i(U_{ti}) = \tfrac{n+1}{n} - \tilde F_i(X_{ti})$.
\footnote{Strictly speaking, in the implementation the quantities $\hat e$ above are scaled by $\tfrac{1}{k+1}$ rather than $\tfrac{1}{k}$. By a boundedness argument (see the proof of \cref{lemm:boundGamma}), one easily shows that the scaling difference does not affect the result of \cref{thm:concentration}.}

Let $i \neq j$ and $m$ be arbitrary. An expression for the estimation error is given by
\begin{align}
	\hat\Gamma_{ij}^{(m)} - \Gamma_{ij}^{(m)} &= \big( \hat e_i^{(m),2} - e_i^{(m),2} \big) + \big( \hat e_j^{(m),2} - e_j^{(m),2} \big) - 2\big( \hat e_{ij}^{(m)} - e_{ij}^{(m)} \big) \notag
	\\
	&\quad -2\big( e_i^{(m),1} - e_j^{(m),1} \big) \big( \big( \hat e_i^{(m),1} - e_i^{(m),1} \big) - \big( \hat e_j^{(m),1} - e_j^{(m),1} \big) \big) \notag
	\\
	&\quad - \big( \big( \hat e_i^{(m),1} - e_i^{(m),1} \big) - \big( \hat e_j^{(m),1} - e_j^{(m),1} \big) \big)^2, \label{eq:estimerror}
\end{align}
the last two terms stemming from the identity $y^2 - x^2 = 2x(y-x) + (y-x)^2$. In order to prove the result, it is sufficient to bound the differences
\[
	\hat e_i^{(m), \ell} - e_i^{(m), \ell}, \quad \hat e_m^{(m), \ell} - e_m^{(m), \ell}, \quad \hat e_{im}^{(m)} - e_{im}^{(m)}, \quad \hat e_{ij}^{(m)} - e_{ij}^{(m)}
\]
for all distinct triples $(i, j, m)$ and $\ell \in \{1, 2\}$. The terms $\hat e_m^{(m), \ell} - e_m^{(m), \ell}$ are entirely deterministic, since it is known that the observations $X_{tm}$ that are used for the estimator $\Gam^{(m)}$ have ranks $n-k+1, \dots, n$ (by continuity, it can be assumed that there are no ties in the data). They are on the order of $(\log k)^\ell/k$, as is proved in \cref{sec:emml}. The rest of the proof thus focuses on the other three differences.

\subsection{Preliminaries, additional notation {and structure of the proof}}

Recall that $\hat F_i$ is the empirical distribution function of $(U_{ti})_{1 \leq t \leq n}$ and denote its left-continuous inverse by $\hat F_i^-$, where $f^-(t) := \inf\{x: f(x) \geq t\}$. Consider the rescaled tail quantile processes
\begin{equation} \label{eq:defuni}
	u_n^{(i)}(x) := \frac{n}{k} \hat F_i^-(kx/n).
\end{equation}
Similarly to $R$ and its margins $R_J$, $J \subset V$, let
\begin{equation} \label{eq:def}
	\hat R_J^0(\bx_J) := \frac{1}{k} \sum_{t=1}^n \Ind{U_{ti} \leq \frac{k}{n} x_i, i \in J}, \quad \hat R_J(\bx_J) := \hat R_J^0(\hat \bx_J), \quad \bx_J \in [0, \infty)^{|J|},
\end{equation}
where $\hat\bx_J := (u_n^{(i)}(x_i))_{i \in J}$. The function $\hat R_J$ can be seen as the tail empirical copula of the random vector $\bU_J$. As an intermediate between $R_J$ and $\hat R_J$, let
\[
	R_{J, n}(\bx_J) := \frac{n}{k} \Prob\Big( \bU_J \leq \frac{k}{n} \bx_J \Big) = \frac{n}{k} \Prob\Big( F_J(\bX_J) > 1 - \frac{k}{n} \bx_J \Big),
\]
the pre-asymptotic version of $R_J$.

The function $R_J$ can be seen as a measure on $[0, \infty)^{|J|}$ and for measurable sets $A_i \subset \R$, we will write $R_J((A_i)_{i \in J})$ to denote $R_J(\otimes_{i \in J} A_i)$. If in place of one of the $A_i$ there is a number $a_i$, it will be understood that $A_i = [0, a_i]$. For example, $R_{ij}([x, \infty), y) = R_{ij}([x, \infty) \times [0, y])$. We use the same conventions for the functions $R_{J, n}$, $\hat R_J^0$ and $\hat R_J$, as well as $G_{J, n}$ and $\bar R_J$ to be defined later.

The functions $R_J$, $R_{J, n}$, $\hat R_J^0$ and $\hat R_J$ enjoy certain properties which will be used throughout the proof, as well as throughout \cref{sec:auxiliary}: they are non-decreasing in each component and are upper bounded by their minimum argument. Beyond component-wise monotonicity, the induced measures, as described above, are non-negative. The functions $R_J$ moreover inherit the homogeneity property of multivariate Pareto distributions ($R_J(q\bx_J) = q R_J(\bx_J)$, $q \geq 0$).

From now on, fix a number $a \in (0, 1)$. It is proved in \cref{lemm:intrep} that for all distinct triples $(i, j, m)$ and $\ell \in \{1, 2\}$, we have
\begin{align}
	& \hat e_i^{(m), \ell} - e_i^{(m), \ell}\notag
    \\
    &= \int_a^1 \frac{\big( \hat R_{im}(x, 1) - R_{im}(x, 1) \big) (-2\log x)^{\ell-1}}{x} \d x \notag
	\\
	&\quad - \int_1^{n/k} \frac{\big( \hat R_{im}([x, \infty), 1) - R_{im}([x, \infty), 1) \big) (-2\log x)^{\ell-1}}{x} \d x \notag
	\\
	&\quad + {O\bigg( \Big( \frac{k}{n} \Big)^\xi \log(n/k) + \frac{(\log(n/k) + \log(1/a))^2}{k} + a(\log(n/k) + \log(1/a)) \bigg)}, \label{eq:erriml}
	\\ \notag
	\\
	\hat e_{im}^{(m)} - e_{im}^{(m)} &= \int_a^1 \int_a^1 \frac{\hat R_{im}(x, y) - R_{im}(x, y)}{xy} \d x \d y \notag
	\\
	&\quad - \int_a^1 \int_1^{n/k} \frac{\hat R_{im}([x, \infty), y) - R_{im}([x, \infty), y)}{xy} \d x \d y \notag
	\\
	&\quad + {O\bigg( \Big( \frac{k}{n} \Big)^\xi \log(n/k) + \frac{(\log(n/k) + \log(1/a))^2}{k} + a(\log(n/k) + \log(1/a)) \bigg)}, \label{eq:errimm}
	\\ \notag
	\\
	\hat e_{ij}^{(m)} - e_{ij}^{(m)} &= \int_a^1 \int_a^1 \frac{\hat R_{ijm}(x, y, 1) - R_{ijm}(x, y, 1)}{xy} \d x \d y \notag
	\\
	&\quad - \int_a^1 \int_1^{n/k} \frac{\hat R_{ijm}([x, \infty), y, 1) - R_{ijm}([x, \infty), y, 1)}{xy} \d x \d y \notag
	\\
	&\quad - \int_1^{n/k} \int_a^1 \frac{\hat R_{ijm}(x, [y, \infty), 1) - R_{ijm}(x, [y, \infty), 1)}{xy} \d x \d y \notag
	\\
	&\quad + \int_1^{n/k} \int_1^{n/k} \frac{\hat R_{ijm}([x, \infty), [y, \infty), 1) - R_{ijm}([x, \infty), [y, \infty), 1)}{xy} \d x \d y \notag
	\\
	&\quad + {O\bigg( \Big( \frac{k}{n} \Big)^\xi \log(n/k) + \frac{(\log(n/k) + \log(1/a))^2}{k} + a(\log(n/k) + \log(1/a)) \bigg)} \label{eq:errijm}
\end{align}
almost surely, where the error terms are not stochastic. We shall separately bound each of the eight integrals above. Denote these integrals, in order of appearance in \cref{eq:erriml,eq:errimm,eq:errijm}, by $\cI_i^{(m), \ell, -}$, $\cI_i^{(m), \ell, +}$, $\cI_{im}^{(m), --}$, $\cI_{im}^{(m), +-}$, $\cI_{ij}^{(m), --}$, $\cI_{ij}^{(m), +-}$, $\cI_{ij}^{(m), -+}$ and $\cI_{ij}^{(m), ++}$.

The processes $\hat R_J - R_J$ can be decomposed into the stochastic error $\hat R_J - R_{J, n}$ and the difference $R_{J, n} - R_J$ between the tail at finite and infinite levels. Replacing $\hat R_J - R_J$ by $(\hat R_J - R_{J, n}) + (R_{J, n} - R_J)$, each integral $\cI_\cdot^{(m), \cdot}$ is written as
\[
	\cI_\cdot^{(m), \cdot} =: A_\cdot^{(m), \cdot} + B_\cdot^{(m), \cdot},
\]
where the $A$ terms are stochastic and the $B$ terms represent deterministic bias.

We proceed as follows. In \cref{sec:B}, it is shown that \cref{assum:tail} is sufficient to bound all the bias terms, up to a constant, by $(k/n)^\xi (\log(n/k) + \log(1/a))^2$. Subsequently, we prove in \cref{sec:A} concentration results for the stochastic terms which are then leveraged in \cref{subsec:proof} to complete the proof.

Before moving on, we highlight some consequences of \cref{assum:tail,prop:equiv} in terms of the functions $R_{J, n}$ and $R_J$. For any distinct triple $(i, j, m)$ and $q \in (0, 1]$,
\begin{align}
	\sup_{x \leq n/k, y \leq 1} |R_{ij, n}(x, y) - R_{ij}(x, y)| &\leq 2K \Big( \frac{k}{n} \Big)^\xi, \label{eq:B}
	\\
	\sup_{x \leq n/k, y \leq n/k, z \leq 1} |R_{ijm, n}(x, y, z) - R_{ijm}(x, y, z)| &\leq K \Big( \frac{k}{n} \Big)^\xi, \label{eq:B3}
	\\
	1 - R_{ijm}(q^{-1}, q^{-1}, 1) \leq 1 - R_{im}(q^{-1}, 1) + 1 - R_{jm}(q^{-1}, 1) &\leq 2Kq^\xi; \label{eq:T3}
\end{align}
to obtain the first inequality in \cref{eq:T3}, apply the inequality
\[
	P([0, q^{-1}] \times [0, q^{-1}]) \geq P([0, q^{-1}] \times [0, \infty)) + P([0, \infty) \times [0, q^{-1}]) - 1,
\]
valid for probability measures $P$ on $[0, \infty)^2$, with $P = R_{ijm}(\cdot \times [0, 1])$.

\subsection{The bias terms \texorpdfstring{$B$}{B}}
\label{sec:B}

\Cref{eq:B} directly implies
\begin{align*}
	\big| B_i^{(m), \ell, -} \big| &\leq \int_a^1 \frac{\big| R_{im, n}(x, 1) - R_{im}(x, 1) \big| (-2\log x)^{\ell-1}}{x} dx
	\\
	&\leq 2K \Big( \frac{k}{n} \Big)^\xi \int_a^1 \frac{(-2\log x)^{\ell-1}}{x} dx
	\\
	&\lesssim \Big( \frac{k}{n} \Big)^\xi (\log(1/a))^\ell,
	\\
	\\
	\big| B_i^{(m), \ell, +} \big| &\leq \int_1^{n/k} \frac{\big| R_{im, n}([x, \infty), 1) - R_{im}([x, \infty), 1) \big| (2\log x)^{\ell-1}}{x} dx
	\\
	&\leq 2K \Big( \frac{k}{n} \Big)^\xi \int_1^{n/k} \frac{(2\log x)^{\ell-1}}{x} dx
	\\
	&\lesssim \Big( \frac{k}{n} \Big)^\xi (\log(n/k))^\ell,
	\\
	\\
	\big| B_{im}^{(m), --} \big| &\leq \int_a^1 \int_a^1 \frac{\big| R_{im, n}(x, y) - R_{im}(x, y) \big|}{xy} dxdy
	\\
	&\leq 2K \Big( \frac{k}{n} \Big)^\xi \int_a^1 \int_a^1 \frac{1}{xy} dxdy
	\\
	&\lesssim \Big( \frac{k}{n} \Big)^\xi (\log(1/a))^2,
	\\
	\\
	\big| B_{im}^{(m), +-} \big| &\leq \int_a^1 \int_1^{n/k} \frac{\big| R_{im, n}([x, \infty), y) - R_{im}([x, \infty), y) \big|}{xy} dxdy
	\\
	&\leq 2K \Big( \frac{k}{n} \Big)^\xi \int_a^1 \int_1^{n/k} \frac{1}{xy} dxdy
	\\
	&\lesssim \Big( \frac{k}{n} \Big)^\xi (\log(n/k))(\log(1/a)).
\end{align*}

Note further that by \cref{eq:B3}, $B_{ij}^{(m), --}$ admits the same bound as $B_{im}^{(m), --}$. Similarly, $B_{ij}^{(m), +-}$, and by symmetry $B_{ij}^{(m), -+}$, admit the same bound as $B_{im}^{(m), +-}$. Finally, since
\[
	R_{ijm}([x, \infty), [y, \infty), 1) = 1 - R_{jm}(y, 1) - R_{im}(x, 1) + R_{ijm}(x, y, 1)
\]
and the same relation holds for $R_{ijm, n}$, \cref{eq:B,eq:B3} also imply that
\[
	\sup_{x \leq n/k, y \leq n/k} |R_{ijm, n}([x, \infty), [y, \infty), 1) - R_{ijm}([x, \infty), [y, \infty), 1)| \leq 5K \Big( \frac{k}{n} \Big)^\xi.
\]
Deduce that
\begin{align*}
	\big| B_{ij}^{(m), ++} \big| &\leq \int_1^{n/k} \int_1^{n/k} \frac{\big| R_{ijm, n}([x, \infty), [y, \infty), 1) - R_{ijm}([x, \infty), [y, \infty), 1) \big|}{xy} \d x \d y
	\\
	&\leq 5K \Big( \frac{k}{n} \Big)^\xi \int_1^{n/k} \int_1^{n/k} \frac{1}{xy} \d x \d y
	\\
	&\lesssim \Big( \frac{k}{n} \Big)^\xi (\log(n/k))^2.
\end{align*}

\subsection{The stochastic error terms \texorpdfstring{$A$}{A}}
\label{sec:A}

It remains to bound the stochastic error terms $A_\cdot^\cdot$, which entirely depend on the processes $\hat R_J - R_{J, n}$. Recall how, for $\bx_J \in [0, \infty)^{|J|}$, we define $\hat\bx_J$ in \cref{eq:def}. Consider further the relation $\hat R_J(\bx_J) = \hat R_J^0(\hat\bx_J)$. We shall rely on the decomposition
\begin{align}
	\hat R_J(\bx_J) - R_{J, n}(\bx_J) &= \big( \hat R_J^0(\hat\bx_J) - R_{J, n}(\hat\bx_J) \big) + \big( R_{J, n}(\hat \bx_J) - R_{J, n}(\bx_J) \big) \notag
	\\
	&= \big( G_{J, n}(\hat\bx_J) - G_{J, n}(\bx_J) \big) + G_{J, n}(\bx_J) + \big( R_{J, n}(\hat \bx_J) - R_{J, n}(\bx_J) \big), \label{eq:decomp}
\end{align}
where
\begin{equation} \label{eq:G}
	G_{J, n} := \hat R_J^0 - R_{J, n}.
\end{equation}
Accordingly, each $A_\cdot^\cdot$ term is further decomposed into three integrals $A_{\cdot, 1}^\cdot$, $A_{\cdot, 2}^\cdot$ and $A_{\cdot, 3}^\cdot$. For instance,
\begin{align}
	A_{ij}^{(m), --} &= A_{ij, 1}^{(m), --} + A_{ij, 2}^{(m), --} + A_{ij, 3}^{(m), --} \notag
	\\
	&:= \int_{a}^1 \int_{a}^1 \frac{G_{ijm, n}(u_n^{(i)}(x), u_n^{(j)}(y), u_n^{(m)}(1)) - G_{ijm, n}(x, y, 1)}{xy} \d x \d y \notag
	\\
	&\quad + \int_{a}^1 \int_{a}^1 \frac{G_{ijm, n}(x, y, 1)}{xy} \d x \d y \notag
	\\
	&\quad + \int_{a}^1 \int_{a}^1 \frac{R_{ijm, n}(u_n^{(i)}(x), u_n^{(j)}(y), u_n^{(m)}(1)) - R_{ijm, n}(x, y, 1)}{xy} \d x \d y. \label{eq:decompint}
\end{align}

The first of the three terms in \cref{eq:decomp} is proportional to a standard empirical process evaluated at a set corresponding to the difference between $\bx_J$ and $\hat\bx_J$. It will be uniformly bounded by using well known concentration inequalities for empirical processes appearing in \cite{K06} and \cite{mass2000}. The second term, $G_{J, n}$, is now a rescaled sum of $n$ independent and identically distributed (iid) processes which, when integrated as in \cref{eq:decompint}, becomes a sum of iid, bounded random variables. We will be able to control this sum via Bernstein's inequality. Finally, the third term relates to the difference between $\bx_J$ and $\hat\bx_J$. It can be controlled by weighted approximation results on the uniform quantile processes given in \cref{lemm:unappr,lemm:wn}. Two bounds are then proved on the corresponding term in \cref{eq:decompint}, the stronger of which only holds under \cref{assum:r}. This gives rise to the two desired results.

\subsubsection{Technical preliminaries on uniform quantile processes} \label{sec:unifqp}

Before tackling each term in \cref{eq:decompint}, we prove a few properties of the rescaled quantile functions $u_n^{(i)}$ which will be used throughout. In \cref{lemm:unappr}, we first prove an approximation of $u_n^{(i)}$ by a certain Gaussian process. We then establish various properties of $u_n^{(i)}$ and its approximation in \cref{coro:unweight,lemm:wn}.

\begin{lemm} \label{lemm:unappr}
For any fixed $i \in V$, define the random function $u_n^{(i)}$ as in \cref{eq:defuni} and let $0 < \nu < 1/2$. There exist random functions $w_n^{(i)}$ defined on a possibly enriched probability space and universal constants $A,B,C \in (0,\infty)$ such that for any $z>0$,
\[
\Prob\Big(\sup_{x \in [0,n/k]} |u_n^{(i)}(x)-w_n^{(i)}(x)| > k^{-1}(A\log n + z)\Big) \leq B e^{-Cz}.
\]
Moreover, the functions $w_n^{(i)}$, along with constants $\tilde A,\tilde B,\tilde C$ possibly depending on $\nu$, can be chosen such that for any $z > 0$,
\[
\Prob\Big(\max\Big\{ \sup_{0 \leq x \leq 1} \frac{|w_n^{(i)}(x) - x|}{x^\nu}, \sup_{1 \leq x \leq n/k} \frac{|w_n^{(i)}(x) - x|}{x^{1-\nu}} \Big\} > k^{-1/2}(\tilde A+ z)\Big) \leq \tilde B e^{-\tilde Cz^2}.
\]
\end{lemm}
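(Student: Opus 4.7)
The plan is to construct $w_n^{(i)}$ from a strong approximation of the uniform quantile process by Brownian bridges, and then to control that approximation via classical weighted tail inequalities.

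Since $U_{ti} := 1 - F_i(X_{ti})$ are iid $\mathrm{Unif}(0,1)$, $\hat F_i^-$ is the empirical quantile function of $n$ iid uniforms. By the Csörgő--Révész strong approximation \cite{CR1978}, on a possibly enlarged probability space one can construct Brownian bridges $B_n$ such that, for universal constants $A_0, B_0, C_0 > 0$,
\[
\Prob\Big(\sup_{t \in [0,1]} \big|\sqrt{n}(\hat F_i^-(t) - t) + B_n(t)\big| > n^{-1/2}(A_0 \log n + z)\Big) \leq B_0 e^{-C_0 z}.
\]
I would then set
\[
w_n^{(i)}(x) := x - \frac{\sqrt{n}}{k}\, B_n\!\Big(\frac{kx}{n}\Big), \qquad x \in [0, n/k].
\]
The identity $u_n^{(i)}(x) - w_n^{(i)}(x) = (\sqrt{n}/k)[\sqrt{n}(\hat F_i^-(kx/n) - kx/n) + B_n(kx/n)]$ and the substitution $t = kx/n$ immediately convert the factor $n^{-1/2}$ above into $k^{-1}$, yielding the first claimed tail bound.

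For the second bound it suffices to control $(\sqrt{n}/k)|B_n(kx/n)|$ in the two stated weighted sup norms. I would invoke the classical weighted tail estimate for the Brownian bridge: for each $\delta \in (0, 1/2)$ there exist $c_1(\delta), c_2(\delta) > 0$ with
\[
\Prob\!\Big(\sup_{t \in (0,1)} \frac{|B_n(t)|}{[t(1-t)]^{1/2 - \delta}} > y\Big) \leq c_1 e^{-c_2 y^2}.
\]
Substituting $t = kx/n$, both suprema reduce after algebraic manipulation of the exponents to a constant multiple of the random variable above, divided by $\sqrt{k}$, for a suitable $\delta = \delta(\nu) \in (0, 1/2 - \nu]$. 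The key observations are that (i) for $x \in [0,1]$ the power $x^{1/2 - \delta - \nu}$ is bounded since $\delta \leq 1/2 - \nu$, and (ii) for $x \in [1, n/k]$ the power $x^{\nu - 1/2 - \delta}$ is bounded since $\nu < 1/2$. Combining with the Gaussian tail of the bridge supremum yields the stated bound, with $\tilde A, \tilde B, \tilde C$ depending on $\nu$ through $\delta(\nu)$ and the corresponding $c_1, c_2$.

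The main obstacle is the tightness of the $k^{-1/2}$ scaling: a naive application of the weighted bound leaves a residual factor of the form $(n/k)^\delta$, and removing this (or absorbing it into the constants consistent with the downstream application in \cref{thm:concentration}) requires choosing $\delta(\nu)$ carefully against the bridge-tail constants, and exploiting the two-sided decay of the Brownian bridge near both endpoints of $[0,1]$---that is, the extra factor $(1 - kx/n)^{1/2 - \delta}$ which becomes small precisely where the weight $x^{1-\nu}$ is largest. The remaining computations reduce to routine algebra of exponents.
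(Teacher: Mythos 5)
Your first part is essentially the same as the paper's: invoke the Cs\"org\H{o}--R\'ev\'esz strong approximation, define $w_n^{(i)}$ by plugging the Brownian bridge into the rescaled time, and the $n^{-1/2}$ rate becomes $k^{-1}$ by the $\sqrt{n}/k$ prefactor. One small omission: the approximation in \cite{CR1978} is stated for a particular (left-continuous) version of the uniform quantile process, whereas $u_n^{(i)}$ is built from $\hat F_i^-(x)=U_{ni,\lfloor nx\rfloor}$; the paper explicitly bounds the gap between the two versions using order statistics and the modulus of the bridge before applying the approximation, a step you skip. This is minor and can be supplied.

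The second part, however, has a genuine gap that you partly flag but whose proposed repair does not work. After substituting $t=kx/n$ in the weighted bridge bound with weight $[t(1-t)]^{1/2-\delta}$, the comparison over $t\in(0,k/n]$ produces
\[
\sup_{0<x\leq 1}\frac{|w_n^{(i)}(x)-x|}{x^\nu}\;\leq\; k^{-1/2}\Big(\frac{n}{k}\Big)^{\delta}\sup_{t}\frac{|B_n(t)|}{[t(1-t)]^{1/2-\delta}},
\]
with a symmetric computation for the range $1\leq x\leq n/k$. The offending factor $(n/k)^\delta$ originates from bounding $t^{1/2-\delta-\nu}$ (resp.\ $t^{\nu-1/2-\delta}$) at the endpoint $t=k/n$. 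At that endpoint $1-t\approx 1$, so the factor $(1-t)^{1/2-\delta}$ you hope to exploit is of order $1$ and gives no relief: the "two-sided decay" is irrelevant precisely where the loss is incurred. Sending $\delta\to0$ with $n$ does not help either, because the constants in the weighted bridge tail bound blow up as $\delta\to0$ (the $\delta=0$ weighted supremum is a.s.\ infinite by the law of the iterated logarithm). Even a finer $\sqrt{t\log\log(1/t)}$-type weight would leave a residual $\sqrt{\log\log(n/k)}$ factor, which is not in the statement.

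The fix used in the paper is a genuinely different reduction: rather than leaving the bridge on $[0,1]$ and restricting $t\in(0,k/n]$, rescale the time axis so that
\[
\{w_n^{(i)}(x)-x\}_{x\in[0,n/k]}\;\stackrel{d}{=}\;\big\{k^{-1/2}W(x)-k^{-1/2}\tfrac{kx}{n}W(n/k)\big\}_{x\in[0,n/k]},
\]
i.e.\ a Brownian \emph{motion} on $[0,n/k]$ (plus a linear drift) scaled by $k^{-1/2}$. Then $\sup_{0<x\leq1}|W(x)|/x^\nu$ and $\sup_{1\leq x<\infty}|W(x)|/x^{1-\nu}$ are tight random variables whose laws do not depend on $n$ or $k$ (by the laws of the iterated logarithm at $0$ and $\infty$), and the linear-drift terms contribute variances $\leq k/n\leq 1$ and $(k/n)^{1-2\nu}\leq1$. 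Once uniform tightness of the medians is established, Proposition A.2.1 of \cite{VW1996} gives the sub-Gaussian tail with constants independent of $n$. It is this time-rescaling—from a bridge on $[0,1]$ restricted to a shrinking window, to a Wiener process on a growing interval restricted to a fixed window—that eliminates the $(n/k)^\delta$ loss; a weighted bridge bound alone cannot do so.
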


\textit{Proof.} By definition, the quantile function $\hat F_i^-$ appearing in $u_n^{(i)}(x)$ is the right-continuous function
\[
	\hat F_i^-(x) = U_{ni, \lfloor nx \rfloor},
\]
where $U_{ni, j}$ is the $j$th order statistic from the sample $U_{1i}, \dots, U_{ni}$. We use the convention $U_{ni, 0} = 0$. Similarly define the left-continuous quantile function
\[
	\hat F_i^+(x) = U_{ni, \lceil nx \rceil}.
\]
Theorem 1 from \cite{CR1978} states that for every $z>0$,
\[
	\Prob\Big( \sup_{x \in [0,1]} |(\widehat F_i^+(x) -x)- n^{-1/2}B_n(x)| > n^{-1}(A^+ \log n + z) \Big) \leq B^+ e^{-C^+ z},
\]
for positive constants $A^+, B^+, C^+$ and a sequence of Brownian bridges $B_n$. We first establish a similar tail bound for the right-continuous quantile function. Using $\lfloor y \rfloor \geq \lceil y \rceil - 1 = \lceil y-1 \rceil$, note that
\[
	\hat F_i^+(x) \geq \hat F_i^-(x) \geq U_{ni, \lceil nx-1 \rceil} = \hat F_i^+(x - 1/n),
\]
so for every $x \in [0, 1]$, using the convention $\hat F_i^+(x) = \hat F_i^+(0)$ if $x<0$,
\begin{align*}
	0 &\leq \hat F_i^+(x) - \hat F_i^-(x)
    \\
    &\leq \hat F_i^+(x) - \hat F_i^+(x-1/n)
	\\
	&\leq 2\sup_{x \in [0,1]} |(\widehat F_i^+(x) -x)- n^{-1/2}B_n(x)| 
    \\
    & \quad\quad+ \Big|\Big( x + n^{-1/2}B_n(x) \Big) - \Big( x - \frac{1}{n} + n^{-1/2}B_n(x-1/n) \Big)\Big|
	\\
	&\leq 2\sup_{x \in [0,1]} |(\widehat F_i^+(x) -x)- n^{-1/2}B_n(x)| + \frac{1}{n} + n^{-1/2} |B_n(x) - B_n(x-1/n)|.
\end{align*}
Thus
\begin{align*}
&\sup_{x \in [0,1]} |(\widehat F_i^-(x) -x)- n^{-1/2}B_n(x)| 
\\
&\leq 3\sup_{x \in [0,1]} |(\widehat F_i^+(x) -x)- n^{-1/2}B_n(x)| + \frac{1}{n} + n^{-1/2} |B_n(x) - B_n(x-1/n)|.
\end{align*}
Using the covariance function of the standard Brownian bridge, $n^{-1/2} (B_n(x) - B_n(x-1/n))$ is normally distributed with mean 0 and variance upper bounded by $4/n^2$. Thus, its absolute value is upper bounded by $\sqrt{z}/n$ with probability greater than $1 - e^{-z/4}$. Therefore,
\[
	\Prob\Big( \sup_{x \in [0,1]} |(\widehat F_i^-(x) -x)- n^{-1/2}B_n(x)| > n^{-1}(3A^+ \log n + 3z + 1 + \sqrt{z}) \Big) \leq B^+ e^{-C^+ z} + e^{-z/4}
\]
which implies, for the right choice of $A, B, C$,
\begin{equation} \label{eq:rcquantile}
	\Prob\Big( \sup_{x \in [0,1]} |(\widehat F_i^-(x) -x)- n^{-1/2}B_n(x)| > n^{-1}(A \log n + z) \Big) \leq B e^{-C z}.
\end{equation}

\color{black}

Now define
\[
w_n^{(i)}(x) := x + n^{-1/2}  \frac{n}{k} B_n(kx/n).
\]
Then 
\begin{align*}
u_n^{(i)}(x)-w_n^{(i)}(x) &= \frac{n}{k}\widehat F_i^-(kx/n) - x - n^{-1/2} \frac{n}{k} B_n(kx/n)
\\
&= \frac{n}{k}\Big(\widehat F_i^-(kx/n) - \frac{kx}{n} - n^{-1/2}  B_n(kx/n)  \Big)
\end{align*}
Thus for $w_n^{(i)}$ defined above
\begin{align*}
&\Prob\Big(\sup_{x \in [0,n/k]} |u_n^{(i)}(x)-w_n^{(i)}(x)| > k^{-1}(A\log n + z)\Big)
\\
=~ &\Prob\Big(\sup_{x \in [0,n/k]} \frac{n}{k}\Big|\widehat F_i^-(kx/n) - \frac{kx}{n} - n^{-1/2}  B_n(kx/n)  \Big| > k^{-1}(A\log n + z)\Big)
\\
=~ &\Prob\Big(\sup_{x \in [0,1]} \Big|\widehat F_i^-(x) - x - n^{-1/2}  B_n(x)  \Big| > n^{-1}(A\log n + z)\Big)
\\
\leq~ & Be^{-Cz}
\end{align*}
by \cref{eq:rcquantile}, which proves the first claim.

We now prove the second claim. Let 
\[
Z_n(x) := n^{-1/2} \frac{n}{k} B_n(kx/n).
\]
Observe that 
\begin{align*}
\{Z_n(x)\}_{x \in [0,n/k]} &\stackrel{\mathcal{D}}{=} \Big\{n^{-1/2} \frac{n}{k} W_n(kx/n) - n^{-1/2} x W_n(1) \Big\}_{x \in [0,n/k]}
\\
&\stackrel{\mathcal{D}}{=} \Big\{k^{-1/2} W_n(x) - k^{-1/2} kx/n W_n(n/k) \Big\}_{x \in [0,n/k]},
\end{align*}
where $W_n$ are standard Wiener processes on $[0,\infty)$. If the sequences $\sup_{0 < x \leq 1} k^{1/2}|Z_n(x)|/x^\nu$ and $\sup_{1 \leq x \leq n/k} k^{1/2}|Z_n(x)|/x^{1-\nu}$ are uniformly tight, their distributions have finite medians independent of $n$. Hence by Proposition A.2.1 of \cite{VW1996}, there exist constants $\tilde A,\tilde B,\tilde C$, {depending only on those medians,} such that
\[
\Prob\Big( \max\Big\{ \sup_{0 < x \leq 1} \frac{k^{1/2}|Z_n(x)|}{x^\nu}, \sup_{1 \leq x \leq \frac{n}{k}} \frac{k^{1/2}|Z_n(x)|}{x^{1-\nu}} \Big\} > \tilde A + z \Big) \leq \tilde B e^{-\tilde C z^2}.
\]
and the result follows.

To establish tightness, note that since $\nu<1/2$,
\[
\sup_{0 < x \leq 1} \frac{k^{1/2}|Z_n(x)|}{x^\nu} \leq \sup_{0 < x \leq 1} \frac{|W_n(x)|}{x^\nu} + \frac{k}{n} |W_n(n/k)| = \Op{1 + \sqrt{\frac{k}{n}}} = \Op{1}
\]
and
\begin{align*}
\sup_{1 \leq x \leq \frac{n}{k}} \frac{k^{1/2}|Z_n(x)|}{x^{1-\nu}} &\leq \sup_{1 \leq x < \infty} \frac{|W_n(x)|}{x^{1-\nu}} + \Big( \frac{k}{n} \Big)^{1-\nu} |W_n(n/k)|
\\
&= \Op{1 + \Big( \frac{k}{n} \Big)^{1/2 - \nu} \sqrt{\log\log\frac{n}{k}}} = \Op{1},
\end{align*}
where we used the law of the iterated logarithm at 0 and at $\infty$, respectively, for Wiener processes \citep[see for instance][Section 8.11]{Durrett4.1}. Note that the probability bounds on the suprema above, and hence the bounds on their medians, are uniform in $n$ but may depend on $\nu$, hence the dependence of the constants $\tilde A,\tilde B,\tilde C$ on this parameter.
\hfill $\Box$

Let $w_n^{(i)}$ be as in the proof of \cref{lemm:unappr}. For $a \in (0, 1)$ and $\nu_1, \nu_2 \in [0, 1]$, let
\[
	\tilde\Delta_n^{(i)}(a, \nu_1, \nu_2) = \max\bigg\{ \sup_{a \leq x \leq 1} \frac{|w_n^{(i)}(x) - x|}{x^{\nu_1}}, \sup_{1 \leq x \leq n/k} \frac{|w_n^{(i)}(x) - x|}{x^{1-\nu_2}} \bigg\}
\]
and
\[
	\tilde\Delta_n^{(i)}(a, \nu) = \tilde\Delta_n^{(i)}(a, \nu, \nu).
\]
Similarly, let
\[
	\hat\Delta_n^{(i)}(a, \nu_1, \nu_2) = \max\bigg\{ \sup_{a \leq x \leq 1} \frac{|u_n^{(i)}(x) - x|}{x^{\nu_1}}, \sup_{1 \leq x \leq n/k} \frac{|u_n^{(i)}(x) - x|}{x^{1-\nu_2}} \bigg\}
\]
and
\[
	\hat\Delta_n^{(i)}(a, \nu) = \hat\Delta_n^{(i)}(a, \nu, \nu).
\]

It is established in \cref{lemm:unappr} that there are constants $\tilde A,\tilde B,\tilde C$ only depending on $\nu \in (0, 1/2)$ such that
\[
	\Prob\Big( \tilde\Delta_n^{(i)}(0, \nu) > k^{-1/2}(\tilde A+ z) \Big) \leq \tilde B e^{-\tilde Cz^2}.
\]
\Cref{lemm:unappr} also allows us to obtain a certain bound for the terms $\hat\Delta_n^{(i)}$.

\begin{coro}
\label{coro:unweight}
	
	Let $\hat\Delta_n^{(i)}$ be as above. There exist constants $\hat A, \hat B, \hat C$ depending only on $\nu \in (1/2)$ such that for all $z \geq 0$,
	\[
		\Prob\bigg( \hat\Delta_n^{(i)}(0, 0, \nu) > \hat A \Big( \frac{1}{\sqrt{k}} + \frac{\log n}{k} \Big) + \sqrt{\frac{z}{k}} + \frac{z}{k} \bigg) \leq \hat B e^{-\hat C z}.
	\]
	
\end{coro}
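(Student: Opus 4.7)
The strategy is to leverage the Gaussian approximation $w_n^{(i)}$ supplied by \cref{lemm:unappr}, writing $u_n^{(i)}(x) - x = (u_n^{(i)}(x) - w_n^{(i)}(x)) + (w_n^{(i)}(x) - x)$ and controlling the two pieces separately on $[0,1]$ and on $[1,n/k]$.

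First I would establish the deterministic bound
\[
\hat\Delta_n^{(i)}(0,0,\nu) \;\leq\; \sup_{x \in [0,n/k]} |u_n^{(i)}(x) - w_n^{(i)}(x)| \;+\; \tilde\Delta_n^{(i)}(0,\nu).
\]
On $[0,1]$ the weight in $\hat\Delta_n^{(i)}(0,0,\nu)$ is trivial, and the triangle inequality together with $|w_n^{(i)}(x) - x| \leq x^{\nu}\, \tilde\Delta_n^{(i)}(0,\nu) \leq \tilde\Delta_n^{(i)}(0,\nu)$ for $x \leq 1$ yields the bound. On $[1,n/k]$ the weight $x^{1-\nu}$ is at least one, so dividing the two summands by $x^{1-\nu}$ keeps $|u_n^{(i)} - w_n^{(i)}|/x^{1-\nu}$ below the uniform difference, while $|w_n^{(i)}(x)-x|/x^{1-\nu}$ is controlled by $\tilde\Delta_n^{(i)}(0,\nu)$ by definition.

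Next I would apply the two probability bounds of \cref{lemm:unappr} to each summand. For a given $z \geq 0$, set $z_1 = z$ in the first bound and $z_2 = \sqrt{z}$ in the second; this gives
\[
\Prob\!\Big(\sup_{x \in [0,n/k]} |u_n^{(i)}(x) - w_n^{(i)}(x)| > k^{-1}(A\log n + z)\Big) \leq B e^{-Cz},
\]
\[
\Prob\!\Big(\tilde\Delta_n^{(i)}(0,\nu) > k^{-1/2}(\tilde A + \sqrt{z})\Big) \leq \tilde B e^{-\tilde C z},
\]
where the second follows because $e^{-\tilde C(\sqrt{z})^2} = e^{-\tilde C z}$. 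A union bound shows that, except on an event of probability at most $(B + \tilde B) e^{-\min(C,\tilde C)z}$,
\[
\hat\Delta_n^{(i)}(0,0,\nu) \;\leq\; \frac{A\log n + z}{k} + \frac{\tilde A + \sqrt{z}}{\sqrt{k}} \;\leq\; \max(A,\tilde A)\Big(\frac{1}{\sqrt{k}} + \frac{\log n}{k}\Big) + \sqrt{\tfrac{z}{k}} + \tfrac{z}{k}.
\]
Setting $\hat A := \max(A,\tilde A)$, $\hat B := B + \tilde B$ and $\hat C := \min(C,\tilde C)$ gives the desired inequality.

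There is no real obstacle here: the only slightly delicate point is choosing the calibration $z_2 = \sqrt{z}$ so that the sub-Gaussian tail of $\tilde\Delta_n^{(i)}$ matches the exponential tail of the Gaussian approximation error, thereby producing a single exponential tail in $z$ on the right-hand side. All dependence on $\nu$ is inherited from the constants of \cref{lemm:unappr}.
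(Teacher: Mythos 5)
Your proof is correct and follows essentially the same route as the paper: both decompose $u_n^{(i)}-x=(u_n^{(i)}-w_n^{(i)})+(w_n^{(i)}-x)$, bound the weighted supremum by $\sup_{[0,n/k]}|u_n^{(i)}-w_n^{(i)}|+\tilde\Delta_n^{(i)}(0,\nu)$, and calibrate $z_2=\sqrt{z}$ against $z_1=z$ in the two tail bounds of \cref{lemm:unappr} before a union bound. The paper phrases the deterministic step as $\hat\Delta \le \tilde\Delta + |\hat\Delta - \tilde\Delta|$ together with $|\sup f - \sup g|\le \sup|f-g|$, but this is the same estimate.
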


\begin{proof}
	
	Write
	\begin{align*}
		\hat\Delta_n^{(i)}(0, 0, \nu) &\leq \tilde\Delta_n^{(i)}(0, 0, \nu) + \big| \hat\Delta_n^{(i)}(0, 0, \nu) - \tilde\Delta_n^{(i)}(0, 0, \nu) \big|
		\\
		&\leq \tilde\Delta_n^{(i)}(0, \nu) + \max\bigg\{ \sup_{0 \leq x \leq 1} |u_n^{(i)}(x) - w_n^{(i)}(x)|, \sup_{1 \leq x \leq n/k} \frac{|u_n^{(i)}(x) - w_n^{(i)}(x)|}{x^{1-\nu}} \bigg\}
		\\
		&\leq \tilde\Delta_n^{(i)}(0, \nu) + \sup_{0 \leq x \leq n/k} |u_n^{(i)}(x) - w_n^{(i)}(x)|,
	\end{align*}
	where the second inequality follows form the fact that for two functions $f$ and $g$ defined on the same domain, $|\sup_x f(x) - \sup_x g(x)| \leq \sup_x |f(x) - g(x)|$.
	
	By \cref{lemm:unappr}, the first term above is larger than $(\tilde A + \sqrt{z})/\sqrt{k}$ with probability at most $\tilde B e^{-\tilde C z}$ and the second one is larger than $(A\log n + z)/k$ with probability at most $B e^{-Cz}$. The result follows by the right choice of $\hat A, \hat B, \hat C$.
\end{proof}

\begin{lemm}
\label{lemm:wn}
	
	With $w_n^{(i)}$ as above, there exists a universal positive constant $c'$ such that for all $a \in (0, 1)$,
	\[
		\Prob\bigg( \sup_{a \leq x \leq n/k} \frac{|w_n^{(i)}(x) - x|}{x} > 1/2 \bigg) \leq 6\exp\Big\{ -c' k \Big( 1 \wedge \frac{a}{\log\log(1/a)} \Big) \Big\}.
	\]
	
\end{lemm}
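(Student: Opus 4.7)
The plan is to reduce the statement to a tail bound for a supremum of a standard Brownian motion after a time inversion. Recall from the proof of \cref{lemm:unappr} that $w_n^{(i)}(x) = x + n^{-1/2}(n/k)B_n(kx/n)$ where $B_n$ is a standard Brownian bridge on $[0,1]$. Consequently
\[
\frac{|w_n^{(i)}(x) - x|}{x} = \frac{|B_n(kx/n)|}{\sqrt{n}\,(kx/n)},
\]
so the substitution $t = kx/n$ turns the event of interest into
\[
\Bigl\{\sup_{u_0 \leq t \leq 1}\tfrac{|B_n(t)|}{t} > \sqrt{n}/2\Bigr\}, \quad u_0 := ka/n.
\]

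The key step is to decompose the bridge as $B_n(t) = W(t) - tW(1)$ with $W$ a standard Brownian motion, which gives $|B_n(t)|/t \leq |W(t)|/t + |W(1)|$. The contribution of $|W(1)|$ is controlled by the Gaussian tail bound, so $\Prob(|W(1)| > \sqrt{n}/4) \leq 2\exp(-n/32)$. For $|W(t)|/t$ I would use the time inversion $W^*(s) := sW(1/s)$, which defines another standard Brownian motion and satisfies $|W(t)|/t = |W^*(1/t)|$. Hence
\[
\sup_{u_0 \leq t \leq 1}\frac{|W(t)|}{t} = \sup_{1 \leq s \leq 1/u_0}|W^*(s)| \leq \sup_{0 \leq s \leq 1/u_0}|W^*(s)|,
\]
and the reflection principle gives $\Prob(\sup_{0 \leq s \leq T}|W^*(s)| > \lambda) \leq 4\exp(-\lambda^2/(2T))$. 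Applying this with $T = 1/u_0$ and $\lambda = \sqrt{n}/4$ yields a bound of $4\exp(-nu_0/32) = 4\exp(-ka/32)$.

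Combining the two pieces, I obtain
\[
\Prob\Bigl(\sup_{a \leq x \leq n/k}\tfrac{|w_n^{(i)}(x)-x|}{x} > \tfrac{1}{2}\Bigr) \leq 2\exp(-n/32) + 4\exp(-ka/32).
\]
Since $k \leq n$ and $a \leq 1$ imply $ka \leq n$, the first term is dominated by the second and the sum is at most $6\exp(-ka/32)$. It remains to check that $ka/32$ exceeds $c' k(1 \wedge a/\log\log(1/a))$ for some universal $c' > 0$. In the regime $a \leq e^{-e}$ one has $\log\log(1/a) \geq 1$, so $1 \wedge a/\log\log(1/a) = a/\log\log(1/a) \leq a$ and any $c' \leq 1/32$ works; in the regime where $a \geq \log\log(1/a)$ the minimum equals $1$ and a slightly smaller universal $c'$ (say $c' \leq a_0/32$ with $a_0$ the crossover point) suffices; in the remaining regime $a > 1/e$ the stated bound is trivial because $\log\log(1/a) \leq 0$ makes the exponent non-positive.

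The main obstacle is in fact not the sharp concentration estimate itself, but rather the bookkeeping to match the particular form of the exponent in the statement. A cleaner route could avoid the case analysis by noting that our approach produces the strictly stronger exponent $-ka/32$, and then invoking monotonicity of the exponential to conclude. The interesting step from a probabilistic standpoint is the time inversion $W^*(s) = sW(1/s)$, which trades the singularity of $|W(t)|/t$ near $t = 0$ for the much more tractable tail behaviour of $\sup_{s \leq 1/u_0}|W^*(s)|$.
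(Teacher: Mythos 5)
Your proof is correct and takes a genuinely different, and in fact sharper, route than the paper. Both arguments begin from the same Gaussian representation $w_n^{(i)}(x) - x \overset{\mathcal D}{=} n^{-1/2}(n/k)B_n(kx/n)$ with a Brownian bridge $B_n$, but they diverge immediately afterwards. The paper decomposes $B_n$ into a Brownian motion $W$ plus the correction $-tW(1)$, splits the domain at $e^{-2}$, and normalises $|W(x)|/x$ on the singular end by the iterated-logarithm weight $\sqrt{x\log\log(1/x)}$; it then appeals to tightness and the median-to-sub-Gaussian-tail argument (Proposition A.2.1 of van der Vaart \& Wellner) to control the three resulting suprema. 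That route is soft in the sense that the constants are never made explicit, and the factor $a/\log\log(1/a)$ in the exponent is an artefact of the LIL normalisation. Your approach instead performs the time inversion $W^*(s) := sW(1/s)$, so that the weighted supremum $\sup_{u_0 \leq t \leq 1} |W(t)|/t$ becomes a plain supremum $\sup_{1 \leq s \leq 1/u_0} |W^*(s)|$, and the reflection principle together with the Gaussian tail bound then gives the fully explicit estimate $4\exp(-ka/32)$; combined with the trivially controlled $|W(1)|$ term you obtain $6\exp(-ka/32)$, which is strictly stronger than the stated bound and absorbs the $\log\log(1/a)$ factor entirely. The bookkeeping at the end (choosing $c'$ so that $a/32 \geq c'(1 \wedge a/\log\log(1/a))$, and noting triviality when $\log\log(1/a) \leq 0$) is sound. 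The time-inversion step is the genuinely new idea here: it trades the singular weight near $t=0$ for a bounded-time maximal inequality, which is both more elementary and gives better constants than the LIL-plus-tightness machinery the paper uses.
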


\begin{proof}
	
	From the proof of \cref{lemm:unappr}, there is a standard Brownian motion $W$ such that $\{w_n^{(i)}(x) - x\}_{x \in [a, n/k]}$ is equal in distribution to the zero-mean Gaussian process
	\[
		\{Z_n(x)\}_{x \in [a, n/k]} := \Big\{W(x)/\sqrt{k} - \sqrt{k}x/n W(n/k) \Big\}_{x \in [a, n/k]}.
	\]
	Assume first that $a \leq e^{-2}$. We are therefore interested in
	\begin{align}
		&\sup_{x \in [a, n/k]} \frac{|Z_n(x)|}{x}\notag
        \\
        &\leq \frac{1}{\sqrt{k}} \bigg( \sup_{x \in [a, n/k]} \frac{|W(x)|}{x} + \frac{k}{n} W(n/k) \bigg) \notag
		\\
		&\leq \frac{1}{\sqrt{k}} \bigg( \sup_{x \in [a, e^{-2}]} \sqrt{\frac{\log\log(1/x)}{x}} \frac{|W(x)|}{\sqrt{x \log\log(1/x)}} + \sup_{x \in [e^{-2}, n/k]} \frac{|W(x)|}{x} + \frac{k}{n} W(n/k) \bigg) \notag
		\\
		&\leq \frac{1}{\sqrt{k}} \bigg( \sqrt{\frac{\log\log(1/a)}{a}} \sup_{x \in [0, e^{-2}]} \frac{|W(x)|}{\sqrt{x \log\log(1/x)}} + \sup_{x \in [e^{-2}, \infty)} \frac{|W(x)|}{x} + \frac{k}{n} W(n/k) \bigg). \label{eq:GPs}
	\end{align}
	By the laws of the iterated logarithm at 0 and at infinity, respectively, the above two suprema of Gaussian processes are tight random variables. It follows that they have finite medians and hence, by Proposition A.2.1 of \cite{VW1996}, that they have sub-Gaussian tails. The same can be said of the uniformly (in $n$) tight random variable $\tfrac{k}{n} W(n/k)$. Therefore,
	\begin{align*}
		\Prob\bigg( \sup_{x \in [a, n/k]} \frac{|Z_n(x)|}{x} > 1/2 \bigg) &\leq \Prob\bigg( \sup_{x \in [0, e^{-2}]} \frac{|W(x)|}{\sqrt{x \log\log(1/x)}} > \frac{\sqrt{ka}}{6\sqrt{\log\log(1/a)}} \bigg)
		\\
		&\quad + \Prob\bigg( \sup_{x \in [e^{-2}, \infty)} \frac{|W(x)|}{x} > \frac{\sqrt{k}}{6} \bigg) + \Prob\bigg( \frac{k}{n} W(n/k) > \frac{\sqrt{k}}{6} \bigg)
		\\
		&\leq 2\exp\Big\{ -c_1 \frac{ka}{\log\log(1/a)} \Big\} + 2e^{-c_2 k} + 2e^{-c_3 k},
	\end{align*}
	for some universal positive constants $c_1$, $c_2$, $c_3$. The result follows for some $c'$ depending on those three constants only.
	
	If, instead, $a > e^{-2}$, then the sum of the last two terms of \cref{eq:GPs} is a valid bound in itself. The rest of the proof goes through and we obtain the bound $4e^{-c' k}$.
\end{proof}

In particular, \cref{lemm:wn} lower bounds the probability that for all $x \in [a, n/k]$, $x/2 \leq w_n^{(i)}(x) \leq 2x$, which will be repeatedly used in \cref{sec:copula}.

The following sections are respectively dedicated to each of the three terms of the decomposition introduced in \cref{eq:decomp}.

\subsubsection{Increments of empirical processes}
\label{sec:ep}

We first consider the terms $A_{\cdot, 1}^\cdot$. In this section we prove that for any $\nu \in (0, 1/2)$ there exists a constant $C_1 < \infty$ (which can also depend on the constant $K$ from \cref{assum:tail}) such that for any $\eps \leq 1$,
\begin{align*}
&\Prob\Big( \max \big| A_{\cdot, 1}^{\cdot} \big| > C_1 (\log(n/k) + \log(1/a))^2 \Big\{ \Big(\frac{\log(n/k)}{k}\Big)^{1/2} \Big((k/n)^\xi + \eps\Big)^{1/2}
\\
& \quad \quad \quad + \frac{\log(n/k)}{k} + \frac{\lambda}{\sqrt{k}} \Big((k/n)^\xi + \eps\Big)^{1/2} + \frac{\lambda^2}{k} \Big\} \Big)
\\
&\leq d^3 e^{-\lambda^2} + \Prob\Big(\max_{i \in V} \hat\Delta_n^{(i)}(a, 0, \nu) > \eps \Big).
\end{align*} 
Consider the following decompositions. For all $x, y \in [a, 1]$, the numerator in the integral $A_{ij, 1}^{(m), --}$ satisfies
\begin{align*}
	&\big| G_{ijm, n}(u_n^{(i)}(x), u_n^{(j)}(y), u_n^{(m)}(1)) - G_{ijm, n}(x, y, 1) \big|
	\\
	&\quad \leq \big| G_{ijm, n}(u_n^{(i)}(x), u_n^{(j)}(y), [1 \wedge u_n^{(m)}(1), 1 \vee u_n^{(m)}(1)]) \big| 
    \\
    &\quad\quad+ \big| G_{ijm, n}(u_n^{(i)}(x), [y \wedge u_n^{(j)}(y), y \vee u_n^{(j)}(y)], 1) \big|
	\\
	&\quad\quad + \big| G_{ijm, n}([x \wedge u_n^{(i)}(x), x \vee u_n^{(i)}(x)], y, 1) \big|.
\end{align*}
The numerators in $A_{im, 1}^{(m), --}$ and $A_{i, 1}^{(m), \ell, -}$ satisfy a similar bound with only the first two terms, up to a logarithmic factor that is everywhere bounded by {$\log(1/a)$} in the case of $A_{i, 1}^{(m), 2, -}$.

For all $x \in [1, n/k]$, $y \in [a, 1]$, the numerator in the integral $A_{ij, 1}^{(m), +-}$ satisfies
\begin{align*}
	&\big| G_{ijm, n}([u_n^{(i)}(x), \infty), u_n^{(j)}(y), u_n^{(m)}(1)) - G_{ijm, n}([x, \infty), y, 1) \big|
	\\
	&\quad \leq \big| G_{ijm, n}([u_n^{(i)}(x), \infty), u_n^{(j)}(y), [1 \wedge u_n^{(m)}(1), 1 \vee u_n^{(m)}(1)]) \big|
	\\
	&\quad\quad + \big| G_{ijm, n}([u_n^{(i)}(x), \infty), [y \wedge u_n^{(j)}(y), y \vee u_n^{(j)}(y)], 1) \big|
    \\
	&\quad\quad + \big| G_{ijm, n}([x \wedge u_n^{(i)}(x), x \vee u_n^{(i)}(x)], y, 1) \big|.
\end{align*}
The numerators in $A_{im, 1}^{(m), +-}$ and $A_{i, 1}^{(m), \ell, +}$ satisfy a similar bound with only the first two terms, up to a logarithmic factor that is everywhere bounded by {$\log(n/k)$} in the case of $A_{i, 1}^{(m), 2, +}$, as well as the numerators in $A_{ij, 1}^{(m), -+}$ by symmetry.

For all $x, y \in [1, n/k]$, the numerator in the integral $A_{ij, 1}^{(m), ++}$ satisfies
\begin{align*}
	&\big| G_{ijm, n}([u_n^{(i)}(x), \infty), [u_n^{(j)}(y), \infty), u_n^{(m)}(1)) - G_{ijm, n}([x, \infty), [y, \infty), 1) \big|
	\\
	&\quad \leq \big| G_{ijm, n}([u_n^{(i)}(x), \infty), [u_n^{(j)}(y), \infty), [1 \wedge u_n^{(m)}(1), 1 \vee u_n^{(m)}(1)]) \big|
	\\
	&\quad\quad + \big| G_{ijm, n}([u_n^{(i)}(x), \infty), [y \wedge u_n^{(j)}(y), y \vee u_n^{(j)}(y)], 1) \big|
\\
	&\quad\quad
    + \big| G_{ijm, n}([x \wedge u_n^{(i)}(x), x \vee u_n^{(i)}(x)], [y, \infty), 1) \big|.
\end{align*}

Define, for any $\eps \in (0, 1]$, $\cF(\eps) := \cup_{i, j, m} \cF_{ijm}(\eps)$ where
\[
\cF_{ijm}(\eps) := \Big\{ \frac{n}{k} \One_{\frac{k}{n} S}: S \in \cS_{ijm}^- \cup \cS_{ijm}^+ \Big\},
\]
and where the classes of sets $\cS_{ijm}^-$ and $\cS_{ijm}^+$ are defined as
\begin{align*}
\cS_{ijm}^- := \big\{ \{w \in [0, \infty)^d: x - \eps \leq w_i \leq x + \eps, &\ a_j \leq w_j \leq b_j, a_m \leq w_m \leq b_m\}:
\\
&a \leq x \leq 1, a_j, b_j, a_m, b_m \in [0, \infty] \big\},
\end{align*}
and
\begin{align*}
\cS_{ijm}^+ := \big\{ \{w \in [0, \infty)^d: x - x^{1-\nu} \eps \leq w_i \leq x + x^{1-\nu} \eps, &\ a_j \leq w_j \leq b_j, 0 \leq w_m \leq 1\}:
\\
&1 \leq x \leq n/k, a_j, b_j, \in [0, \infty] \big\}.
\end{align*}
Recalling the definition of $\hat\Delta_n^{(i)}(a, 0, \nu)$, it follows from the definition of the class $\cF(\eps)$ that whenever
\[
\hat\Delta_n^{(i)}(a, 0, \nu) = \max_{i \in V} \max\Big\{ \sup_{a \leq x \leq 1} |u_n^{(i)}(x) - x|, \sup_{1 \leq x \leq n/k} \frac{|u_n^{(i)}(x) - x|}{x^{1-\nu}} \Big\} \leq \eps,
\]
the numerator inside any of the integrals $A_{\cdot, 1}^\cdot$ can be expressed as a sum of at most three terms of the form $(P_nf_1 - Pf_1) + (P_nf_2 - Pf_2) + (P_nf_3 - Pf_3)$, for functions $f_1, f_2, f_3 \in \cF(\eps)$. Here, $P_n$ is the empirical distribution of the random vectors $\bU_1, \dots, \bU_n$ whereas $P$ is their true distribution. In the case of the terms $A_{i, 1}^{(m), 2, \pm}$, the sum is multiplied by a logarithmic term. In this case, all the integrals $A_{\cdot, 1}^\cdot$ are upper bounded, in absolute value, by
\[
{3 (\log(n/k) + \log(1/a))^2 \sup_{f \in \cF(\eps)} |P_nf - Pf|.}
\]
What we have established so far is that each such term $A_{\cdot, 1}^\cdot$ satisfies, for any $t>0$,
\begin{align*}
\Prob\Big(|A_{\cdot, 1}^\cdot| \geq t\Big) \leq~& \Prob\Big( 3 (\log(n/k) + \log(1/a))^2 \sup_{f \in \cF(\eps)} |P_n f - Pf| \geq t \Big)
\\
& + \Prob\Big(\max_{i \in V}  \max\Big\{ \sup_{a \leq x \leq 1} |u_n^{(i)}(x) - x|, \sup_{1 \leq x \leq n/k} \frac{|u_n^{(i)}(x) - x|}{x^{1-\nu}} \Big\} > \eps \Big). 
\end{align*}

For any triple $(i, j, m)$, $\cF_{ijm}(\eps)$ clearly admits the constant envelope function of the form $n/k$. Moreover it is a VC-subgraph class that satisfies \cref{eq:entr} with universal constants $A$ and $V$ (see for instance \cite{VW1996}, Theorem 2.6.7). Furthermore, the variance of any single function $f$ in $\cF_{ijm}(\eps)$ is bounded by
\begin{align*}
	Pf^2
    &\leq \Big( \frac{n}{k} \Big)^2 \bigg\{ \sup_{a \leq x \leq 1} \Prob\Big( U_i \in \frac{k}{n} [x - \eps, x + \eps] \Big)
    \\
    &\quad\quad\quad\quad \quad\vee \sup_{1 \leq x \leq n/k} \Prob\Big( U_i \in \frac{k}{n} [x - x^{1-\nu}\eps, x + x^{1-\nu}\eps], U_m \leq \frac{k}{n} \Big) \bigg\}
	\\
	&\leq \frac{n}{k} \bigg\{ 2\eps \vee \sup_{1 \leq x \leq n/k} R_{im, n}([x - x^{1-\nu}\eps, x + x^{1-\nu}\eps], 1) \bigg\}
	\\
	&\leq \frac{n}{k} \bigg\{ 2\eps + \sup_{1 \leq x \leq n/k} R_{im}([x - x^{1-\nu}\eps, x + x^{1-\nu}\eps], 1) + 2K\Big( \frac{k}{n} \Big)^\xi \bigg\}
	\\
	&\lesssim \frac{n}{k} \bigg\{ \eps + \Big( \frac{k}{n} \Big)^\xi \bigg\}
\end{align*}
where the last two inequalities follow from \cref{eq:B} and \cref{lemm:boundR}, respectively. By \cref{eq:Gexpect} we therefore have
\begin{align*}
	\E\Big[ \sup_{f \in \cF_{ijm}(\eps)} |P_n f - Pf| \Big] &\lesssim k^{-1/2} \Big((k/n)^\xi + \eps\Big)^{1/2} \log\Big((n/k)^{1/2}\Big((k/n)^\xi + \eps\Big)^{-1/2}\Big)^{1/2}
	\\
	&\quad + k^{-1} \log\Big((n/k)^{1/2}\Big((k/n)^\xi + \eps\Big)^{-1/2}\Big)
	\\
	&\lesssim \Big( \frac{\log(n/k)}{k} \Big)^{1/2} \Big((k/n)^\xi + \eps\Big)^{1/2} + \frac{\log(n/k)}{k}.
\end{align*}
It follows from \cref{eq:Gcontract} that there exists a constant $c$ such that for each triple $(i, j, m)$ and each $\lambda > 0$,
\begin{multline*}
	\Prob\Big( \sup_{f \in \cF_{ijm}(\eps)} |P_n f - Pf| \geq c\Big\{ \Big(\frac{\log(n/k)}{k}\Big)^{1/2} \Big((k/n)^\xi + \eps\Big)^{1/2}
    \\
    + \frac{\log(n/k)}{k} + \frac{\lambda}{\sqrt{k}} {\Big((k/n)^\xi + \eps\Big)^{1/2}} + \frac{\lambda^2}{k} \Big\} \Big) \leq e^{-\lambda^2}.
\end{multline*}
Combined with the union bound, this completes the proof.

\subsubsection{Sums of iid processes}
\label{sec:iid}

We now deal with the terms $A_{\cdot, 2}^\cdot$ involving integrated empirical processes, such as in \cref{eq:decompint}. In this section, we show that there exists a positive constant $c_2$ such that for all $\lambda>0$,
\begin{multline*}
	\Prob\bigg( \max\big| A_{\cdot, 2}^\cdot \big| > \Big( 1 + \Big(\frac{k}{n}\Big)^\xi (\log(n/k))^2 (\log(n/k) + \log(1/a))^2 \Big)^{1/2} \frac{\lambda}{\sqrt{k}}
    \\
    + (\log(n/k) + \log(1/a))^2 \frac{\lambda^2}{k} \bigg) \leq 16d^3 e^{-c_2\lambda^2 / 2}.
\end{multline*}

Starting with $A_{ij, 2}^{(m), --}$, we have by definition of $G_{ijm, n}$
\begin{align*}
	A_{ij, 2}^{(m), --}
    = \int_a^1 \int_a^1 \frac{G_{ijm, n}(x, y, 1)}{xy} dxdy
	= \sum_{t=1}^n \big( V_{t, ijm}^{(m), --} - \E\big[ V_{t, ijm}^{(m), --} \big] \big),
\end{align*}
where $V_{t, ijm}^{(m), --}$, $1 \leq t \leq n$, are independent copies of the random variable
\begin{align*}
	V_{ijm}^{(m), --} &:= \frac{1}{k} \int_a^1 \int_a^1 \frac{ \Ind{U_i \leq \frac{k}{n} x, U_j \leq \frac{k}{n} y, U_m \leq \frac{k}{n}} }{xy} dxdy\\
	&= \frac{1}{k} \log\Big( \frac{k}{nU_i} \wedge a^{-1} \Big) \log\Big( \frac{k}{nU_j} \wedge a^{-1} \Big) \Ind{U_i \leq \frac{k}{n}, U_j \leq \frac{k}{n}, U_m \leq \frac{k}{n}}.
\end{align*}
Recall that by assumption $a<1$. We may then write
\[
	V_{ijm}^{(m), --} = \frac{1}{k} \log(W_i) \log(W_j) \Ind{W_i, W_j > 1},
\]
with
\[
	W_i := \Big( \frac{k}{nU_i} \wedge a^{-1} \Big)^{\Ind{U_m \leq \frac{k}{n}}}
\]
and $W_j$ defined the same way. We easily notice that $0 \leq V_{ijm}^{(m), --} \leq (\log(1/a))^2 / k$. Moreover, an application of \cref{lemm:Elog} (particularly \cref{eq:moment++}) gives
\begin{align*}
	\var\big( V_{ijm}^{(m), --} \big) &\leq \E\big[ \big( V_{ijm}^{(m), --} \big)^2 \big]
	\\
	&= \frac{4}{k^2} \int_a^1 \int_a^1 \frac{\frac{k}{n} R_{ijm, n}(x, y, 1) |(\log x)(\log y)|}{xy} dxdy
	\\
	&\leq \frac{4}{kn} \int_0^1 \int_0^1 \frac{|(\log x)(\log y)|}{\sqrt{xy}} dxdy
	\\
	&= \frac{64}{kn},
\end{align*}
where we used once again that $R_{ijm, n}(x, y, 1) \leq x \wedge y \leq \sqrt{xy}$, along with the formula $\int_0^1 \log(x)/\sqrt{x} \ dx = -4$. We may therefore apply Bernstein's inequality for bounded random variables \citep[][Lemma 2.2.9]{VW1996} with $v = 64/k$, $M = (\log(1/a))^2/k$, which yields
\[
	\Prob\big( \big| A_{ij, 2}^{(m), --} \big| > \lambda \big) \leq 2\exp\Big\{ -\frac{k \lambda^2}{2(64 + \lambda (\log(1/a))^2/3)} \Big\}.
\]

Now considering $A_{ij, 2}^{(m), +-}$, we use the same approach and see that
\[
	A_{ij, 2}^{(m), +-} = \sum_{t=1}^n \big( V_{t, ijm}^{(m), +-} - \E\big[ V_{t, ijm}^{(m), +-} \big] \big),
\]
where
\[
	V_{ijm}^{(m), +-} = -\frac{1}{k} \log(W_i) \log(W_j) \Ind{W_i < 1, W_j > 1}
\]
and $W_i$, $W_j$ are as before. This time, $0 \leq V_{ijm}^{(m), +-} \leq (\log(n/k))\log(1/a) / k $. An application of \cref{lemm:Elog} (this time, \cref{eq:moment+-}) gives
\begin{align*}
	\var\big( V_{ijm}^{(m), +-} \big) &\leq \E\big[ \big( V_{ijm}^{(m), +-} \big)^2 \big]
	\\
	&= \frac{4}{k^2} \int_a^1 \int_1^{n/k} \frac{\frac{k}{n} R_{ijm, n}([x, \infty), y, 1) |(\log x)(\log y)|}{xy} dxdy
	\\
	&\leq \frac{4}{kn} \int_a^1 \int_1^{n/k} \frac{\big( R_{ijm}([x, \infty), y, 1) + 3K (k/n)^\xi \big) |(\log x)(\log y)|}{xy} dxdy,
\end{align*}
by \cref{eq:B,eq:B3}. By \cref{eq:T3}, $R_{ijm}([x, \infty), y, 1) \leq R_{im}([x, \infty), 1) \wedge R_{jm}(y, 1) \leq K x^{-\xi} \wedge y \leq K x^{-\xi/2} y^{1/2}$. The integral above is thus bounded by
\begin{align*}
	&K \int_0^1 \int_1^\infty \frac{(\log x)(-\log y)}{x^{1+\xi/2} y^{1/2}} dxdy + 3K \Big( \frac{k}{n} \Big)^\xi \int_a^1 \int_1^{n/k} \frac{(\log x)(-\log y)}{xy} dxdy
	\\
	&\quad \leq \frac{16K}{\xi^2} + 3K \Big( \frac{k}{n} \Big)^\xi (\log(n/k))^2 (\log(1/a))^2 \leq C_2 \Big( 1 + \Big( \frac{k}{n} \Big)^\xi (\log(n/k))^2 (\log(1/a))^2 \Big),
\end{align*}
for a suitably chosen constant $C_2$ depending on $K$ and $\xi$ only. Bernstein's inequality, with
\[
	v = \frac{4C_2}{k} \Big( 1 + \Big( \frac{k}{n} \Big)^\xi (\log(n/k))^2 (\log(1/a))^2 \Big)
\]
and $M = (\log(n/k))\log(1/a)/k$, therefore implies that for a positive constant $c_2$ depending on $C_2$ only,
\[
	\Prob\big( \big| A_{ij, 2}^{(m), +-} \big| > \lambda \big) \leq 2\exp\Big\{ -c_2 \frac{k \lambda^2}{1 + (k/n)^\xi (\log(n/k))^2 (\log(1/a))^2 + \lambda (\log(n/k))\log(1/a)} \Big\}.
\]
By symmetry, $A_{ij, 2}^{(m), -+}$ admits the same bound.

As for $A_{ij, 2}^{(m), ++}$, we write it as
\[
	A_{ij, 2}^{(m), ++} = \sum_{t=1}^n \big( V_{t, ijm}^{(m), ++} - \E\big[ V_{t, ijm}^{(m), ++} \big] \big),
\]
where
\[
	V_{ijm}^{(m), ++} = \frac{1}{k} \log(W_i) \log(W_j) \Ind{W_i, W_j < 1}
\]
and $W_i$, $W_j$ are as before. Again, $0 \leq V_{ijm}^{(m), ++} \leq (\log(n/k))^2/k$. An application of \cref{lemm:Elog} (this time, \cref{eq:moment--}) gives
\begin{align*}
	\var\big( V_{ijm}^{(m), ++} \big) &\leq \E\big[ \big( V_{ijm}^{(m), ++} \big)^2 \big]
	\\
	&= \frac{4}{k^2} \int_1^{n/k} \int_1^{n/k} \frac{\frac{k}{n} R_{ijm, n}([x, \infty), [y, \infty), 1) (\log x)(\log y)}{xy} dxdy
	\\
	&\leq \frac{4}{kn} \int_1^{n/k} \int_1^{n/k} \frac{\big( R_{ijm}([x, \infty), [y, \infty), 1) + 5K (k/n)^\xi \big) (\log x)(\log y)}{xy} dxdy,
\end{align*}
by \cref{eq:B,eq:B3}. By \cref{eq:T3}, 
\begin{align*}
R_{ijm}([x, \infty), [y, \infty), 1) \leq R_{im}([x, \infty), 1) \wedge R_{jm}([y, \infty), 1) \leq K x^{-\xi} \wedge K y^{-\xi} \leq K x^{-\xi/2} y^{-\xi/2}.
\end{align*}
The integral above is thus bounded by
\begin{align*}
	&K \int_1^\infty \int_1^\infty \frac{(\log x)(\log y)}{(xy)^{1+\xi/2}} dxdy + 5K \Big( \frac{k}{n} \Big)^\xi \int_1^{n/k} \int_1^{n/k} \frac{(\log x)(\log y)}{xy} dxdy
	\\
	&\quad \leq \frac{16K}{\xi^4} + 5K \Big( \frac{k}{n} \Big)^\xi (\log(n/k))^4 \leq C_2 \Big( 1 + \Big( \frac{k}{n} \Big)^\xi (\log(n/k))^4 \Big),
\end{align*}
after possibly enlarging the constant $C_2$. Hence by a similar application of Bernstein's inequality for bounded random variables as before,
\[
	\Prob\big( \big| A_{ij, 2}^{(m), ++} \big| > \lambda \big) \leq 2\exp\Big\{ -c_2 \frac{k \lambda^2}{1 + (k/n)^\xi (\log(n/k))^4 + \lambda (\log(n/k))^2} \Big\},
\]
after possibly decreasing the (still positive) constant $c_2$.

Finally, the terms $A_{i, 2}^{(m), \ell, -}$, $A_{i, 2}^{(m), \ell, +}$, $A_{im, 2}^{(m), --}$ and $A_{im, 2}^{(m), +-}$ can be shown to satisfy similar tail bounds by the same strategy, using \cref{eq:moment-,eq:moment+} instead of \cref{eq:moment--,eq:moment+-,eq:moment++} for $A_{i, 2}^{(m), \ell, -}$ and $A_{i, 2}^{(m), \ell, +}$.

The conclusion of this section is that the positive constant $c_2$ can be chosen sufficiently small (only depending on $K$ and $\xi$) such that all the terms $A_{\cdot, 2}^\cdot$ satisfy
\begin{align*}
&\Prob\big( \big| A_{\cdot, 2}^\cdot \big| > \lambda \big) 
\\
&\leq 2\exp\bigg\{ -c_2 \frac{k\lambda^2}{ 1 + (k/n)^\xi (\log(n/k))^2 (\log(n/k) + \log(1/a))^2 + \lambda (\log(n/k) + \log(1/a))^2 } \bigg\},
\end{align*}
for all $\lambda>0$. The denominator in the exponential above is clearly upper bounded by
\[
	2\max\Big\{ 1 + (k/n)^\xi (\log(n/k))^2 (\log(n/k) + \log(1/a))^2, \lambda (\log(n/k) + \log(1/a))^2 \Big\},
\]
so the whole exponential is upper bounded by
\begin{multline*}
\max\bigg\{ 2\exp\bigg\{ -c_2 \frac{k \lambda^2}{2(1 + (k/n)^\xi (\log(n/k))^2 (\log(n/k) + \log(1/a))^2)} \bigg\},
\\
2\exp\bigg\{ -c_2 \frac{k \lambda}{2(\log(n/k) + \log(1/a))^2} \bigg\}.
\end{multline*}
Deduce that at least one of
\[
	\Prob\bigg( \big| A_{\cdot, 2}^\cdot \big| > \Big( 1 + \Big(\frac{k}{n}\Big)^\xi (\log(n/k))^2 (\log(n/k) + \log(1/a))^2 \Big)^{1/2} \frac{\lambda}{\sqrt{k}} \bigg) \leq 2e^{-c_2\lambda^2 / 2}
\]
or
\[
	\Prob\bigg( \big| A_{\cdot, 2}^\cdot \big| > (\log(n/k) + \log(1/a))^2 \frac{\lambda^2}{k} \bigg) \leq 2e^{-c_2\lambda^2 / 2}
\]
holds. Therefore
\begin{multline*}
\Prob\bigg( \big| A_{\cdot, 2}^\cdot \big| > \Big( 1 + \Big(\frac{k}{n}\Big)^\xi (\log(n/k))^2 (\log(n/k)+ \log(1/a))^2 \Big)^{1/2} \frac{\lambda}{\sqrt{k}}
\\
+ (\log(n/k) + \log(1/a))^2 \frac{\lambda^2}{k} \bigg) \leq 2e^{-c_2\lambda^2 / 2},
\end{multline*}
and a union bound allows us to conclude.

\subsubsection{Increments of rescaled copulae}
\label{sec:copula}

It remains to bound the terms $A_{\cdot, 3}^\cdot$, corresponding to increments of the measures $R_{J, n}$ when the rescaled quantile functions $u_n^{(i)}$ are applied to their arguments. In this section, we prove that under \cref{assum:tail}, there exists a constant $C_3$ such that for any $\lambda, \tau > 0$,
\begin{align*}
	&\Prob\Big( \max \big| A_{\cdot, 3}^\cdot \big| > 3\tau (\log(n/k) + \log(1/a))^2 + C_3 (\log(n/k) + \log(1/a))^2 \Big( \frac{\tilde A + \lambda}{\sqrt{k}} + \Big(\frac{k}{n}\Big)^\xi \Big) \Big)
	\\
	&\quad\leq \Prob\Big( \max_i \tilde\Delta_n^{(i)}(a, \nu) > \frac{\tilde A + \lambda}{\sqrt{k}} \Big) + \Prob\Big( \max_{i} \sup_{x \in [0,n/k]} |u_n^{(i)}(x) - w_n^{(i)}(x)| > \tau \Big)
	\\
	& \quad \quad + \Prob\Big( \max_i \sup_{a \leq x \leq n/k} \frac{|w_n^{(i)}(x) - x|}{x} > 1/2 \Big),
\end{align*}
where the constant $\tilde A$ is as in \cref{lemm:unappr}. Moreover, if \cref{assum:r} is also satisfied, $C_3$ can be modified in a way that only depends on $\eps$ and $K(\beta)$ and such that the slightly larger probability
\[
	\Prob\Big( \max \big| A_{\cdot, 3}^\cdot \big| > 3\tau (\log(n/k) + \log(1/a))^2 + C_3 \Big( \frac{\tilde A + \lambda}{\sqrt{k}} + \Big(\frac{k}{n}\Big)^\xi (\log(n/k) + \log(1/a))^2 \Big) \Big)
\]
admits the same upper bound.

By \cref{assum:tail}, the measure $R_{J, n}$ in any integrand can be replaced by $R_J$ at the cost of adding a deterministic error of the order of $(k/n)^\xi$. After being integrated, such an error is of order at most $(k/n)^\xi(\log(n/k) + \log(1/a))^2$. We will use this fact on multiple occasions by bounding the increments of $R_J$ instead of $R_{J, n}$.

Next we observe that by Lipschitz continuity of $R_{J,n}$ the quantities $u_n^{(i)}(x),u_n^{(j)}(y),u_n^{(m)}(1)$ appearing in the arguments of $R_J$ inside $A_{\cdot, 3}^\cdot$ can be replaced by $w_n^{(i)}(x),w_n^{(j)}(y),w_n^{(m)}(1)$ with an error that is controlled by Lemma~\ref{lemm:unappr}, uniformly over $x,y,i,j,m$. For example
\begin{align*}
& \max_{i,j,m} \sup_{x,y \in [0,n/k]}\Big|R_{ijm, n}(u_n^{(i)}(x), u_n^{(j)}(y), u_n^{(m)}(1)) - R_{ijm, n}(w_n^{(i)}(x), w_n^{(j)}(y), w_n^{(m)}(1))\Big|
\\
\leq~& 3 \max_{i} \sup_{x \in [0,n/k]} |u_n^{(i)}(x) - w_n^{(i)}(x)|.
\end{align*}
Define 
\[
\tilde A_{ij, 3}^{(m), --} := \int_a^1 \int_a^1 \frac{|R_{ijm, n}(w_n^{(i)}(x), w_n^{(j)}(y), w_n^{(m)}(1)) - R_{ijm, n}(x, y, 1)|}{xy} dxdy,
\]
and similarly define other terms $\tilde A_{\cdot, 3}^\cdot$ replacing the different $A_{\cdot, 3}^\cdot$. {The difference between those quantities and the original $A_{\cdot, 3}^\cdot$ terms that they replace can be uniformly controlled as
\begin{align*}
\max \big| A_{\cdot, 3}^{\cdot} - \tilde A_{\cdot, 3}^{\cdot} \big|
\leq 3 (\log(n/k) + \log(1/a))^2 \max_{i} \sup_{x \in [0,n/k]} |u_n^{(i)}(x) - w_n^{(i)}(x)|,
\end{align*}
some of those bounds using the fact that
\begin{equation} \label{eq:intlog}
\int \frac{(\log x)^{\ell-1}}{x} dx = \frac{(\log x)^\ell}{\ell} + \text{constant}.
\end{equation}
We then obtain that for all $\tau>0$,
\[
	\Prob\Big( \max \big| A_{\cdot, 3}^{\cdot} - \tilde A_{\cdot, 3}^{\cdot} \big| > 3 \tau (\log(n/k) + \log(1/a))^2 \Big) \leq \Prob\Big( \max_{i} \sup_{x \in [0,n/k]} |u_n^{(i)}(x) - w_n^{(i)}(x)| > \tau \Big).
\]
}
Hence it remains to bound the terms $\tilde A_{\cdot, 3}^\cdot$ which are defined in the same way as $A_{\cdot, 3}^\cdot$ but with $w_n^{(i)}(x),w_n^{(j)}(y),w_n^{(m)}(1)$ replacing $u_n^{(i)}(x),u_n^{(j)}(y),u_n^{(m)}(1)$.

Note that whenever
\[
	\max_i \sup_{a \leq x \leq n/k} \frac{|w_n^{(i)}(x) - x|}{x} \leq 1/2,
\]
we have for all $i$ and $x \in [a, n/k]$ that $x/2 \leq u_n^{(i)}(x) \leq 2x$. We will assume, for the remainder of the section, that this is realized. \Cref{lemm:wn} allows us to lower bound the probability of that event.

Finally, recall the quantities
\[
	\tilde\Delta_n^{(i)}(a, \nu) = \max\bigg\{ \sup_{x \in [a, 1]} \frac{|w_n^{(i)}(x) - x|}{x^\nu}, \sup_{x \in [1, n/k]} \frac{|w_n^{(i)}(x) - x|}{x^{1-\nu}} \bigg\}
\]
from the discussion after \cref{lemm:unappr}.

\paragraph{The general case.}

We first prove the weaker bound that does not rely on \cref{assum:r}.

Firstly, for $x, y \in [a, 1]$ and every triple $(i, j, m)$,
\begin{align}\notag
	&R_{ijm, n}(w_n^{(i)}(x), w_n^{(j)}(y), w_n^{(m)}(1)) - R_{ijm, n}(x, y, 1)
    \\
    &= R_{ijm, n}(w_n^{(i)}(x), w_n^{(j)}(y), w_n^{(m)}(1)) - R_{ijm, n}(w_n^{(i)}(x), w_n^{(j)}(y), 1) \notag
	\\
	&\quad + R_{ijm, n}(w_n^{(i)}(x), w_n^{(j)}(y), 1) - R_{ijm, n}(w_n^{(i)}(x), y, 1) \notag
	\\
	&\quad + R_{ijm, n}(w_n^{(i)}(x), y, 1) - R_{ijm, n}(x, y, 1). \label{eq:copula---}
\end{align}
Each of the three differences above, by Lipschitz continuity of $R_{ij, m}$, is of course bounded by $\max_i \tilde\Delta_n^{(i)}(a, 0) \leq \max_i \tilde\Delta_n^{(i)}(a, \nu)$, for arbitrary $1/2>\nu >0$. Deduce that
\begin{align*}
	\big| \tilde A_{ij, 3}^{(m), --} \big| &\leq \int_a^1 \int_a^1 \frac{|R_{ijm, n}(w_n^{(i)}(x), w_n^{(j)}(y), w_n^{(m)}(1)) - R_{ijm, n}(x, y, 1)|}{xy} dxdy
    \\
    &\leq (\log(1/a))^2 \max_i \tilde\Delta_n^{(i)}(0, \nu).
\end{align*}
The term $\tilde A_{im, 3}^{(m), --}$ is bounded using the same strategy, following an expansion similar to \cref{eq:copula---} but without the first term. As for the term $\tilde A_{i, 3}^{(m), \ell, -}$, it is also bounded after an expansion similar to the third term in \cref{eq:copula---}, using the indefinite integral~\eqref{eq:intlog}.

Secondly, for all $x \in [1, n/k]$, $y \in [a, 1]$ and every triple $(i, j, m)$,
\begin{align}
	&R_{ijm, n}([w_n^{(i)}(x), \infty), w_n^{(j)}(y), w_n^{(m)}(1)) - R_{ijm, n}([x, \infty), y, 1) \notag
	\\
	&\quad = R_{ijm, n}([w_n^{(i)}(x), \infty), w_n^{(j)}(y), w_n^{(m)}(1)) - R_{ijm, n}([w_n^{(i)}(x), \infty), w_n^{(j)}(y), 1) \notag
	\\
	&\quad\quad + R_{ijm, n}([w_n^{(i)}(x), \infty), w_n^{(j)}(y), 1) - R_{ijm, n}([w_n^{(i)}(x), \infty), y, 1) \notag
	\\
	&\quad\quad + R_{ijm, n}([w_n^{(i)}(x), \infty), y, 1) - R_{ijm, n}([x, \infty), y, 1). \label{eq:copula+--}
\end{align}
{
The first two differences are again uniformly bounded by $\max_i \tilde\Delta_n^{(i)}(a, \nu)$ by Lipschitz continuity. As for the third difference in \cref{eq:copula+--}, let us replace $R_{ijm, n}$ by $R_{ijm}$ as described at the beginning of this section. We are then left with
\begin{equation}\label{eq:helpboundR1}
R_{ijm}([x \wedge w_n^{(i)}(x), x \vee w_n^{(i)}(x)), y, 1) \leq y \frac{|w_n^{(i)}(x) - x|}{x \wedge w_n^{(i)}(x)} \leq 2 x^{-\nu} y \max_i \tilde\Delta_n^{(i)}(a, \nu),
\end{equation}
by \cref{lemm:boundR} and the fact that we are on the event $w_n^{(i)}(x) \geq x/2$. Deduce that
\begin{align*}
\big| \tilde A_{ij, 3}^{(m), +-} \big| &\leq \int_1^{n/k} \int_a^1 \frac{|R_{ijm, n}([w_n^{(i)}(x), \infty), w_n^{(j)}(y), w_n^{(m)}(1)) - R_{ijm, n}([x, \infty), y, 1)|}{xy} dxdy
\\
&\lesssim (\log(n/k))(\log(1/a)) \max_i \tilde\Delta_n^{(i)}(a, \nu) + \Big(\frac{k}{n}\Big)^\xi (\log(n/k))(\log(1/a)),
\end{align*}
where the last term comes from the approximation of $R_{ijm, n}$ by $R_{ijm}$.
}
By symmetry, $A_{ij, 3}^{(m), -+}$ enjoys the same bound. Moreover, $A_{im, 3}^{(m), +-}$ is bounded using the same strategy, following an expansion similar to \cref{eq:copula+--} but without the first term. As for the term $A_{i, 3}^{(m), \ell, +}$, it is also bounded after an expansion similar to the third term in \cref{eq:copula+--}, using \cref{eq:intlog}.

Thirdly, for all $x, y \in [1, n/k]$ and every triple $(i, j, m)$,
\begin{align}
	&R_{ijm, n}([w_n^{(i)}(x), \infty), [w_n^{(j)}(y), \infty), w_n^{(m)}(1)) - R_{ijm, n}([x, \infty), [y, \infty), 1) \notag
	\\
	&\quad = R_{ijm, n}([w_n^{(i)}(x), \infty), [w_n^{(j)}(y), \infty), w_n^{(m)}(1)) - R_{ijm, n}([w_n^{(i)}(x), \infty), [w_n^{(j)}(y), \infty), 1) \notag
	\\
	&\quad\quad + R_{ijm, n}([w_n^{(i)}(x), \infty), [w_n^{(j)}(y), \infty), 1) - R_{ijm, n}([w_n^{(i)}(x), \infty), [y, \infty), 1) \notag
	\\
	&\quad\quad + R_{ijm, n}([w_n^{(i)}(x), \infty), [y, \infty), 1) - R_{ijm, n}([x, \infty), [y, \infty), 1). \label{eq:copula++-}
\end{align}
The first difference is upper bounded similarly to before by using the Lipschitz continuity of $R_{ijm, n}$. As for the second term, we once again replace $R_{ijm, n}$ by its limit $R_{ijm}$ and obtain
{
\begin{align}
	R_{ijm}([w_n^{(i)}(x), \infty), [y \wedge w_n^{(j)}(y), y \vee w_n^{(j)}(y)), 1) &\leq R_{jm}([y \wedge w_n^{(j)}(y), y \vee w_n^{(j)}(y)), 1) \notag
	\\
	&\leq \frac{|w_n^{(j)}(y) - y|}{y \wedge w_n^{(j)}(y)} \leq 2 y^{-\nu} \max_i \tilde\Delta_n^{(i)}(a, \nu). \label{eq:helpboundR2}
\end{align}
The third term of \cref{eq:copula++-} admits the same bound with $x$ replacing $y$. Deduce that
\begin{align*}
	&\big| \tilde A_{ij, 3}^{(m), ++} \big|
    \\
    &\leq \int_1^{\frac nk} \int_1^{\frac nk} \frac{|R_{ijm, n}([w_n^{(i)}(x), \infty), [w_n^{(j)}(y), \infty), w_n^{(m)}(1)) - R_{ijm, n}([x, \infty), [y, \infty), 1)|}{xy} dxdy
	\\
	&\lesssim (\log(n/k))^2 \max_i \tilde\Delta_n^{(i)}(a, \nu) +  \Big(\frac{k}{n}\Big)^\xi (\log(n/k))^2,
\end{align*}
the last term again from replacing $R_{ijm, n}$ by $R_{ijm}$.

We have therefore proved that, for any $1/2>\nu>0$, each term $A_{\cdot, 3}^\cdot$ is upper bounded by a constant multiple of
\[
(\log(n/k) + \log(1/a))^2 \max_i \tilde\Delta_n^{(i)}(a, \nu) +  \Big(\frac{k}{n}\Big)^\xi (\log (n/k) + \log(1/a))^2.
\]
}

\paragraph{Assuming bounded densities.}

{
Let us now suppose that \cref{assum:r} is satisfied with a certain $\eps \in (0, 4)$. While in the general case above $\nu \in (0, 1/2)$ was arbitrary, let now $\nu = 1/2 - \eps/8$.

The various bounds above on the numerators in the integrals $\tilde A_{\cdot, 3}^\cdot$ were for the most part uniform in the integrands $x$ and $y$. By integrating them over a growing domain, a poly-logarithmic factor was paid. We shall now derive more subtle bounds that are proportional to functions $f(x, y)$ such that $f(x, y)/xy$ is integrable over the infinite domain, thus allowing us to remove the extra poly-logarithmic factors.
}

Firstly, for $x, y \in [a, 1]$, consider the three terms in \cref{eq:copula---} and in each one, replace $R_{ijm, n}$ by $R_{ijm}$. By \cref{lemm:boundRsmall} with $\beta = \nu/2$, the third term is then bounded by
\[
	R_{ij}([x \wedge w_n^{(i)}(x), x \vee w_n^{(i)}(x)], y) \lesssim y^{\nu/2} \frac{|w_n^{(i)}(x) - x|}{x^{\nu/2}} \leq (xy)^{\nu/2} \max_i \tilde\Delta_n^{(i)}(a, \nu).
\]
The second term admits the same bound up to a factor of $2^{\nu/2}$, since by assumption $w_n^{(i)}(x) \leq 2x$. As for the first term, now using \cref{lemm:boundRsmall} with $\beta = \nu$, it is upper bounded by both
\[
	R_{im}(2x, [1 \wedge w_n^{(m)}(1), 1 \vee w_n^{(m)}(1)]) \lesssim x^\nu \max_i \tilde\Delta_n^{(i)}(a, \nu)
\]
and
\[
	R_{jm}(2y, [1 \wedge w_n^{(m)}(1), 1 \vee w_n^{(m)}(1)]) \lesssim y^\nu \max_i \tilde\Delta_n^{(i)}(a, \nu),
\]
hence by
\[
	(xy)^{\nu/2} \max_i \tilde\Delta_n^{(i)}(a, \nu)
\]
up to a constant. It then follows that
\begin{align*}
\big| \tilde A_{ij, 3}^{(m), --} \big| &\lesssim \max_i \tilde\Delta_n^{(i)}(a, \nu) \int_0^1 \int_0^1 (xy)^{\nu/2 - 1} dxdy + \Big(\frac{k}{n}\Big)^\xi (\log(1/a))^2
\\
&\lesssim \max_i \tilde\Delta_n^{(i)}(a, \nu) +  \Big(\frac{k}{n}\Big)^\xi (\log(1/a))^2 .
\end{align*}
The bounds on $A_{im, 3}^{(m), --}$ and $A_{i, 3}^{(m), \ell, -}$ follow from the same argument, noting for the latter that
\[
	\int_0^1 \frac{(\log x)^{\ell-1}}{x^{1 - \nu/2}} dx < \infty.
\]

Secondly, for $x \in [1, n/k]$, $y \in [a, 1]$ and every triple $(i, j, m)$, consider the three terms in \cref{eq:copula+--} and in each one, replace $R_{ijm, n}$ by $R_{ijm}$. It was already proved in the general case that by \cref{lemm:boundR}, the third term satisfies (see \cref{eq:helpboundR1})
\begin{align*}
R_{ijm}([x \wedge w_n^{(i)}(x), x \vee w_n^{(i)}(x)], y, 1) &\leq R_{ij}([x \wedge w_n^{(i)}(x), x \vee w_n^{(i)}(x)], y)
\\
&\leq 2x^{-\nu} y \max_i \tilde\Delta_n^{(i)}(a, \nu).
\end{align*}
The second term, by an application of \cref{lemm:boundRmix} with $\beta = -\eps$, is upper bounded by
\begin{align*}
R_{ij}([w_n^{(i)}(x), \infty), [y \wedge w_n^{(j)}(y), y \vee w_n^{(j)}(y)]) &\lesssim w_n^{(i)}(x)^{-\eps} (y \vee w_n^{(j)}(y))^\eps |w_n^{(j)}(y) - y|
\\
&\lesssim x^{-\eps} y^{\eps + \nu} \max_i \tilde\Delta_n^{(i)}(a, \nu).
\end{align*}
The first term of \cref{eq:copula+--} is upper bounded by both
\[
	R_{jm}(w_n^{(j)}(y), [1 \wedge w_n^{(m)}(1), 1 \vee w_n^{(m)}(1)]) \leq w_n^{(j)}(y) |w_n^{(m)}(1) - 1| \lesssim y \max_i \tilde\Delta_n^{(i)}(a, \nu),
\]
by \cref{lemm:boundRsmall}, and
\begin{align*}
R_{im}([w_n^{(i)}(x), \infty), [1 \wedge w_n^{(m)}(1), 1 \vee w_n^{(m)}(1)]) &\lesssim w_n^{(i)}(x)^{-\eps} |w_n^{(m)}(1) - 1| 
\\
&\lesssim x^{-\eps} \max_i \tilde\Delta_n^{(i)}(a, \nu),
\end{align*}
again by \cref{lemm:boundRmix} with $\beta=-\eps$. Hence the first term is in fact bounded by
\[
	x^{-\eps/2} y^{1/2} \max_i \tilde\Delta_n^{(i)}(a, \nu)
\]
up to a constant. It then follows that
\begin{align*}
	\big| \tilde A_{ij, 3}^{(m), +-} \big| &\lesssim \max_i \tilde\Delta_n^{(i)}(a, \nu) \int_0^1 \int_1^\infty \big( x^{-1-\nu} + x^{-1-\eps} y^{\eps+\nu-1} + x^{-1-\eps/2} y^{-1/2} \big) dxdy
	\\
	&\quad + \Big(\frac{k}{n}\Big)^\xi (\log(n/k))(\log(1/a))
	\\
	&\lesssim \max_i \tilde\Delta_n^{(i)}(a, \nu) +  \Big(\frac{k}{n}\Big)^\xi (\log(n/k))(\log(1/a)).
\end{align*}
The same holds for $A_{ij, 3}^{(m), -+}$ by symmetry. The bounds on $A_{im, 3}^{(m), +-}$ and $A_{i, 3}^{(m), \ell, +}$ follow from the same argument, noting for the latter that
\[
	\int_1^\infty \frac{(\log x)^{\ell-1}}{x^{1 + \zeta}} dx < \infty
\]
for any positive $\zeta$.

Finally, for $x, y \in [1, n/k]$ and every triple $(i, j, m)$, consider the three terms in \cref{eq:copula++-} and in each one, replace $R_{ijm, n}$ by $R_{ijm}$. By \cref{lemm:boundRsmall} with $\beta = 1+\eps$, the third term of \cref{eq:copula++-} satisfies
\begin{align*}
R_{ijm}([x \wedge w_n^{(i)}(x), x \vee w_n^{(i)}(x)], [y, \infty), 1) &\leq R_{im}([x \wedge w_n^{(i)}(x), x \vee w_n^{(i)}(x)], 1)
\\
&\lesssim \frac{|w_n^{(i)}(x) - x|}{x^{1+\eps}} \lesssim x^{-\nu - \eps} \max_i \tilde\Delta_n^{(i)}(a, \nu)
\end{align*}
but at the same time, \cref{lemm:boundRmix} with $\beta = -\eps/2$ yields
\begin{align*}
	R_{ijm}([x \wedge w_n^{(i)}(x), x \vee w_n^{(i)}(x)], [y, \infty), 1) &\leq R_{ij}([x \wedge w_n^{(i)}(x), x \vee w_n^{(i)}(x)], [y, \infty))
	\\
	&\lesssim y^{-\eps/2} x^{\eps/2} |w_n^{(i)}(x) - x|
	\\
	&\lesssim x^{1-\nu + \eps/2} y^{-\eps/2} \max_i \tilde\Delta_n^{(i)}(a, \nu).
\end{align*}
The minimum between the bounds above being smaller than their geometric mean, we then have
\[
	R_{ijm}([x \wedge w_n^{(i)}(x), x \vee w_n^{(i)}(x)], [y, \infty), 1) \lesssim x^{-\eps/8} y^{-\eps/4} \max_i \tilde\Delta_n^{(i)}(a, \nu),
\]
recalling that $\nu = 1/2 - \eps/8$, so that $(-\nu -\eps) + (1 - \nu + \eps/2) = 1 - 2\nu - \eps/2 = -\eps/4$. The second term of \cref{eq:copula++-} admits a similar bound since by assumption $w_n^{(i)}(x) \geq x/2$. As for the first term, by \cref{lemm:boundRmix} with $\beta=-\eps$ it is bounded by
\[
	R_{im}([x/2, \infty), [1 \wedge w_n^{(m)}(1), 1 \vee w_n^{(m)}(1)]) \leq x^{-\eps} |w_n^{(m)}(1) - 1| \lesssim x^{-\eps} \max_i \tilde\Delta_n^{(i)}(a, \nu),
\]
but also by
\[
	R_{jm}([y/2, \infty), [1 \wedge w_n^{(m)}(1), 1 \vee w_n^{(m)}(1)]) \leq y^{-\eps} |w_n^{(m)}(1) - 1| \lesssim y^{-\eps} \max_i \tilde\Delta_n^{(i)}(a, \nu),
\]
hence by
\[
	(xy)^{-\eps/2} \max_i \tilde\Delta_n^{(i)}(a, \nu)
\]
up to a constant. It then follows that
\begin{align*}
	\big| \tilde A_{ij, 3}^{(m), ++} \big| &\lesssim \max_i \tilde\Delta_n^{(i)}(a, \nu) \int_1^\infty \int_1^\infty \big( x^{-1-\eps/8} y^{-1-\eps/4}
+ (xy)^{-1 - \eps/2} \big) dxdy
    \\
    &\quad+ \Big(\frac{k}{n}\Big)^\xi (\log(n/k))^2
	\\
	&\lesssim \max_i \tilde\Delta_n^{(i)}(a, \nu) +  \Big(\frac{k}{n}\Big)^\xi (\log(n/k))^2.
\end{align*}
We have therefore proved that each term $A_{\cdot, 3}^\cdot$ is upper bounded, up to a constant, by
\[
\max_i \tilde\Delta_n^{(i)}(a, \nu) + \Big(\frac{k}{n}\Big)^\xi (\log(n/k) + \log(1/a))^2.
\]

\hfill $\Box$

\subsection{Proof of \cref{thm:concentration}}
\label{subsec:proof}

As per the statement of the theorem, pick an arbitrary $\zeta \in (0, 1)$ and assume that $k \geq n^\zeta$. Since the statement is trivial for $\lambda < 1$ (with the right choice of constants), suppose that $1 \leq \lambda \leq \sqrt{k}/(\log n)^4$. Moreover let $a$ satisfy
\begin{equation} \label{eq:a}
	\max\Big\{ \frac{\lambda^2 \log n}{k}, \Big(\frac{k}{n}\Big)^\xi \Big\} \leq a \leq \max\Big\{ \frac{\lambda}{\sqrt{k} \log n}, \Big(\frac{k}{n}\Big)^\xi \Big\}.
\end{equation}
Note that by our choice of $\lambda$, the interval above is always non-empty. Introduce the notation $l_{n,a} := \log(n/k) + \log(1/a)$ and note that by \cref{eq:a}, $l_{n, a} \lesssim \log(n/k)$. Consider the results of \cref{sec:ep,sec:iid,sec:copula} with
\[
\eps := \hat A \Big( \frac{1}{\sqrt{k}} + \frac{\log n}{k} \Big) + \frac{\lambda}{\sqrt{k}} + \frac{\lambda^2}{k} \lesssim \frac{\lambda}{\sqrt{k}} \leq \frac{1}{(\log n)^4}
\]
and
\[
\tau := \frac{A \log n + \lambda^2}{k},
\]
for $\hat A$ and $A$ as in \cref{coro:unweight} and \cref{lemm:unappr}, respectively. Combining these results with those of \cref{sec:B}, we obtain the following simultaneous upper bound on each integral $\cI_\cdot^\cdot$ in \cref{eq:erriml,eq:errimm,eq:errijm}:
{
\begin{align}
	&C_1 l_{n,a}^2 \Big\{ \Big(\frac{\log(n/k)}{k}\Big)^{1/2} \Big(\Big(\frac{k}{n}\Big)^\xi + \eps \Big)^{1/2} + \frac{\log(n/k)}{k} + \frac{\lambda}{\sqrt{k}} \Big(\Big(\frac{k}{n}\Big)^\xi + \eps \Big)^{1/2} + \frac{\lambda^2}{k} \Big\} \notag
	\\
	&\quad + \Big( 1 + \Big(\frac{k}{n}\Big)^\xi (\log(n/k))^2 l_{n,a}^2 \Big)^{1/2} \frac{\lambda}{\sqrt{k}} + l_{n,a}^2 \frac{\lambda^2}{k} \notag
	\\
	&\quad + 3 \tau l_{n,a}^2 + C_3 l_{n,a}^2 \Big( \frac{\tilde A + \lambda}{\sqrt{k}} + \Big(\frac{k}{n}\Big)^\xi \Big) \notag
	\\
	&\quad + O\Big( \Big(\frac{k}{n}\Big)^\xi l_{n,a}^2 \Big). \label{eq:preboundweak}
\end{align}
Note that $(k/n)^\xi (\log(n/k))^2 l_{n, a}^2 \lesssim (k/n)^\xi (\log(n/k))^4$ can be upper bounded by a constant only depending on $\xi$. Using this and the fact that $(x+y)^{1/2} \leq x^{1/2} + y^{1/2}$ for $x,y\geq 0$, we find
\begin{align*}
&l_{n,a}^2 \Big(\frac{\log(n/k)}{k}\Big)^{1/2} \Big(\Big(\frac{k}{n}\Big)^\xi + \eps \Big)^{1/2}
\\
&\lesssim l_{n,a}^2 \Big(\frac{k}{n}\Big)^{\xi/2} \Big(\frac{\log(n/k)}{k}\Big)^{1/2} + l_{n,a}^2  \Big(\frac{\log(n/k)}{k}\Big)^{1/2}\eps^{1/2}
\\
\lesssim& \frac{1}{\sqrt{k}} + l_{n,a}^2  \Big(\frac{\log(n/k)}{k}\Big)^{1/2}\lambda^{1/2}k^{-1/4} \lesssim \frac{\lambda}{\sqrt{k}}
\end{align*}
since $(\log n)^{5/2} \lesssim k^{1/4}$. By similar arguments using that $\eps \lesssim 1/(\log n)^4$
\[
l_{n,a}^2 \Big(\Big(\frac{k}{n}\Big)^\xi + \eps \Big)^{1/2} \lesssim 1.
\]
Moreover,
\[
\tau l_{n, a}^2 \lesssim \frac{(\log n)^3}{k} + l_{n, a}^2 \frac{\lambda^2}{k}.
\]
In addition, notice that by our choice of $\lambda$,
\[
\frac{\lambda^2}{k} \leq l_{n, a}^2 \frac{\lambda^2}{k} \lesssim \frac{\lambda (\log n)^{-2} \sqrt{k}}{k} \leq \frac{\lambda}{\sqrt{k}}
\]
and that since $k \geq n^\zeta$,
\[
	\frac{(\log n)^3}{k} \lesssim \frac{1}{\sqrt{k}}.
\]
Piecing those results together, \cref{eq:preboundweak} can be bounded by
\begin{equation} \label{eq:boundweak}
	C' \Big\{ \Big(\frac{k}{n}\Big)^\xi (\log(n/k))^2 + \frac{(\log(n/k))^2(1 + \lambda)}{\sqrt{k}} \Big\},
\end{equation}
for the right constant $C'$. If \cref{assum:r} is made, the same strategy yields the sharper bound
\begin{align}
	&C_1 l_{n,a}^2 \Big\{ \Big(\frac{\log(n/k)}{k}\Big)^{1/2} \Big(\Big(\frac{k}{n}\Big)^\xi + \eps \Big)^{1/2} + \frac{\log(n/k)}{k} + \frac{\lambda}{\sqrt{k}} \Big(\Big(\frac{k}{n}\Big)^\xi + \eps \Big)^{1/2} + \frac{\lambda^2}{k} \Big\} \notag
	\\
	&\quad + \Big( 1 + \Big(\frac{k}{n}\Big)^\xi (\log(n/k))^2 l_{n,a}^2 \Big)^{1/2} \frac{\lambda}{\sqrt{k}} + l_{n,a}^2 \frac{\lambda^2}{k} \notag
	\\
	&\quad + 3 \tau l_{n,a}^2 + C_3 \Big( \frac{\tilde A + \lambda}{\sqrt{k}} + \Big(\frac{k}{n}\Big)^\xi l_{n,a}^2 \Big) \notag
	\\
	&\quad + O\Big( \Big(\frac{k}{n}\Big)^\xi l_{n,a}^2 \Big) \notag
	\\
	&\leq \bar C' \Big\{ \Big(\frac{k}{n}\Big)^\xi (\log(n/k))^2 + \frac{1 + \lambda}{\sqrt{k}} \Big\} \label{eq:boundstrong},
\end{align}
for the right constant $\bar C'$. It is left to control the deterministic error terms in \cref{eq:erriml,eq:errimm,eq:errijm} arising from the truncation of the integrals. Those terms are upper bounded by a constant multiple of
\begin{align*}
& \Big(\frac{k}{n}\Big)^\xi (\log(n/k)) + l_{n,a}^2k^{-1} + al_{n,a}
\\
&\lesssim 
\Big(\frac{k}{n}\Big)^\xi (\log(n/k)) + \frac{1}{\sqrt{k}} + \max\Big\{ \frac{\lambda}{\sqrt{k}}, \Big(\frac{k}{n}\Big)^\xi (\log(n/k)) \Big\}
\\
&\lesssim  \Big(\frac{k}{n}\Big)^\xi (\log(n/k)) + \frac{1 + \lambda}{\sqrt{k}}
\end{align*}
so they are absorbed into the bounds above. Note that this time we have used the upper bound on $a$ in \cref{eq:a} in order to bound $a l_{n, a}$.
}

The probability that each of the two bounds in \cref{eq:boundweak,eq:boundstrong} holds is at least
\begin{align*}
	&1 - d^3 e^{-\lambda^2} - \Prob\bigg(\max_{i \in V} \hat\Delta_n^{(i)}(a, 0, \nu) > \hat A \Big( \frac{1}{\sqrt{k}} + \frac{\log n}{k} \Big) + \frac{\lambda}{\sqrt{k}} + \frac{\lambda^2}{k} \bigg)
    \\
    &\qquad - 16d^3 e^{-c_2\lambda^2 / 2} - \Prob\Big( \max_i \tilde\Delta_n^{(i)}(a, \nu) > \frac{\tilde A + \lambda}{\sqrt{k}} \Big)
	\\
	&\qquad - \Prob\bigg( \max_{i} \sup_{x \in [0,n/k]} |u_n^{(i)}(x) - w_n^{(i)}(x)| > \frac{A \log n + \lambda^2}{k} \bigg) 
    \\
    &\qquad- \Prob\bigg( \max_i \sup_{a \leq x \leq n/k} \frac{|w_n^{(i)}(x) - x|}{x} > 1/2 \bigg)
	\\
	&\geq 1 - d^3 e^{-\lambda^2} - \hat B d e^{-\hat C \lambda^2} - 16d^3 e^{-c_2\lambda^2 / 2} - \tilde B d e^{-\tilde C \lambda^2} - B d e^{-C \lambda^2}
    \\
    &\qquad - 6d \exp\Big\{ -c' k \Big( 1 \wedge \frac{a}{\log\log(1/a)} \Big) \Big\}
	\\
	&\geq 1 - Md^3 \exp\Big\{ -c \min\Big\{ \lambda^2, \frac{ka}{\log\log(1/a)} \Big\} \Big\},
\end{align*}
for suitable constants $M$ and $c$, where we have used \cref{coro:unweight,lemm:unappr,lemm:wn}. By \cref{eq:a}, since $a \geq \lambda^2 (\log n)/k$, we find
\[
	\frac{ka}{\log\log(1/a)} \geq \frac{\lambda^2 \log n}{\log\log k} \geq \lambda^2,
\]
so that the probability above is equal to
\[
	1 - Md^3 e^{-c\lambda^2}.
\]
Combining this with \cref{eq:estimerror,eq:erriml,eq:errimm,eq:errijm} finally concludes the proof, upon noting that the factor $e_i^{(m), 1} - e_j^{(m), 1}$ appearing in \cref{eq:estimerror} is upper bounded by $1 + K/\xi$ (see the proof of \cref{lemm:boundGamma}) and properly choosing the constants $C$ and $\bar C$ in terms of {$C'$ and $\bar C'$}. \hfill $\Box$

\section{Auxiliary results and proofs}
\label{sec:auxiliary}

\subsection{Discussion of \texorpdfstring{\cref{assum:tail}}{the main assumption}}

We open this section by discussing the relation between \Cref{assum:tail} and the following more standard second order condition on bounded sets.

\begin{assum}[Second order]\label{assum:RJstand}
	The marginal distributions $F_1,\dots,F_d$ are continuous and there exist positive constants $K', \xi'$ such that for all $J \subset V$, $|J| \in \{2,3\}$ and $q \in (0, 1]$,
	\begin{equation}\label{eq:RJstand}
		\sup_{\g x \in [0,1]^{|J|}} \Big|q^{-1} \Prob(F_J(\g X_J) > 1 - q\g x)- R_J(\g x) \Big| \leq K'q^{\xi'}.
	\end{equation}
\end{assum}

In Proposition~\ref{prop:maxstableA3}, we shall prove that max-stable distributions always satisfy Assumption~\ref{assum:RJstand}. As we show below, this condition together with an assumption on the tails of the bivariate $R$-functions $R_{ij}$ implies the stronger Assumption~\ref{assum:tail} and vice versa.

\begin{assum}[Tail]\label{assum:tailR}
	There exist positive constants $K_T, \xi_T$ such that for all $i \neq j \in V$ and $q \in (0, 1]$,
	\begin{equation}\label{eq:tailR}
		1 - R_{ij}(q^{-1}, 1) \leq K_T q^{\xi_T}.
	\end{equation}
\end{assum}

The relation between the above conditions and the main \cref{assum:tail} is summarized in the following proposition.

\begin{prop}\label{prop:equiv}
	If \cref{assum:tail} holds then \cref{assum:RJstand} holds with $K' = 2K$, $\xi' = \xi$ and \cref{assum:tailR} holds with $K_T = K$, $\xi_T = \xi$. Conversely, if \cref{assum:RJstand} holds with $K'$, $\xi'$ and \cref{assum:tailR} holds with $K_T$, $\xi_T$, then \cref{assum:tail} holds with $K = (K' + 2K_T)$, $\xi = \xi'\xi_T/(1 + \xi' + \xi_T)$.
\end{prop}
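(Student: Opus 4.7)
My plan is to prove the two implications separately, using monotonicity and homogeneity of the exponent measure along with a careful rescaling argument.
\emph{Forward direction.} The key trick is that in \cref{assum:tail}, the first two coordinates of $\bx$ are allowed to be as large as $q^{-1}$, so setting them to $q^{-1}$ trivializes the corresponding marginal constraint: by continuity of $F_k$, one has $F_k(X_k) > 0$ almost surely. To derive \cref{assum:tailR}, I apply \cref{assum:tail} to any ordered triple $(i,k,j)$ with $\bx = (q^{-1}, q^{-1}, 1)$. The probability then simplifies to $\Prob(F_j(X_j) > 1-q) = q$, giving $|1 - R_{ikj}(q^{-1}, q^{-1}, 1)| \leq K q^\xi$. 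Monotonicity (dropping the constraint on $z_k$ enlarges the exponent measure) yields $R_{ikj}(q^{-1}, q^{-1}, 1) \leq R_{ij}(q^{-1}, 1)$, so $1 - R_{ij}(q^{-1}, 1) \leq K q^\xi$, establishing \cref{assum:tailR} with $K_T = K$, $\xi_T = \xi$. For \cref{assum:RJstand}, the $|J|=3$ case is immediate because $[0,1]^3 \subset [0, q^{-1}]^2 \times [0,1]$. For a pair $J=\{i,j\}$, apply \cref{assum:tail} to the ordered triple $(k, i, j)$ with $\bx = (q^{-1}, x_i, x_j)$: the probability reduces to $\Prob(F_i > 1-qx_i, F_j > 1-qx_j)$, while the gap between $R_{kij}(q^{-1}, x_i, x_j)$ and $R_{ij}(x_i, x_j)$ equals the exponent measure of $\{z_k \leq q, z_i > 1/x_i, z_j > 1/x_j\}$ and is bounded by $x_i - R_{ki}(q^{-1}, x_i)$; via the homogeneity identity $R_{ki}(q^{-1}, x_i) = x_i R_{ki}(1/(qx_i), 1)$ and the tail condition just derived, this is $\leq K q^\xi$. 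Summing the two contributions gives $K' = 2K$, $\xi' = \xi$.

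\emph{Reverse direction.} The key tool is a rescaling argument combined with truncation at a threshold $T := q^{-\beta}$ chosen optimally below. Fix an ordered triple $J = (i,j,m)$ and $\bx \in [0,q^{-1}]^2 \times [0,1]$, and let $M := \max(x_i, x_j, 1) \in [1, q^{-1}]$. Setting $q' := qM \in (0,1]$ and $\bx' := \bx/M \in [0,1]^3$, one can apply \cref{assum:RJstand} at $(q', \bx')$ and use the homogeneity $M R_J(\bx/M) = R_J(\bx)$ to deduce
\[ \big| q^{-1} \Prob(F_J(\bX_J) > 1 - q\bx) - R_J(\bx) \big| \leq K' M^{1+\xi'} q^{\xi'}. \]
For $M \leq T$ this bound is $\leq K' q^{\xi' - \beta(1+\xi')}$, matching $K' q^\xi$ when $\beta = (\xi' - \xi)/(1+\xi')$. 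For $M > T$ (WLOG $x_i > T$), I truncate to $\bx^* := \bx \wedge (T, T, 1)$ and split $|q^{-1} \Prob(F_J > 1-q\bx) - R_J(\bx)|$ via the triangle inequality into three pieces: (i) the rescaling bound at $\bx^*$, still $\leq K' q^\xi$; (ii) the probability gap $q^{-1}[\Prob(F_J > 1-q\bx^*) - \Prob(F_J > 1-q\bx)] \leq q^{-1}\Prob(F_i \leq 1-qT, F_m > 1-qx_m) = x_m - q^{-1}\Prob(F_i > 1-qT, F_m > 1-qx_m)$, controlled by combining the pair-rescaling bound applied at $(T, x_m)$ with \cref{assum:tailR}; and (iii) the exponent-measure gap $R_J(\bx) - R_J(\bx^*) \leq \mu(\{z_i \leq 1/T, z_m > 1/x_m\}) = x_m - R_{im}(T, x_m)$, which by the homogeneity $R_{im}(T, x_m) = x_m R_{im}(T/x_m, 1)$ and \cref{assum:tailR} is $\leq K_T T^{-\xi_T}$.

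The final rate $\xi$ is dictated by balancing the rescaling bias $K' T^{1+\xi'} q^{\xi'}$ against the truncation bias $K_T T^{-\xi_T}$, both forced to be of common order $q^\xi$: the two equations $\beta(1+\xi') = \xi' - \xi$ and $\beta \xi_T = \xi$ have unique simultaneous solution $\beta = \xi'/(1+\xi'+\xi_T)$ and $\xi = \xi' \xi_T / (1 + \xi' + \xi_T)$. The main obstacle is the truncation step in the reverse direction: the naive rescaling bound blows up as $M \to q^{-1}$, so \cref{assum:tailR} must be invoked on \emph{both} the empirical-probability side (piece (ii)) and the exponent-measure side (piece (iii)) to control the two truncation gaps in a compatible way. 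Careful accounting of the three error contributions through the triangle inequality yields the claimed constant $K = K' + 2K_T$.
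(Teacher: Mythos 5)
Your forward direction is sound and follows the same outline as the paper; the only stylistic difference is that to bound the pair-to-triple gap $R_{kij}([q^{-1},\infty), x_i, x_j)$ you invoke homogeneity and apply the freshly derived tail condition at parameter $qx_i$, whereas the paper simply uses $R_{kij}([q^{-1},\infty),x_i,x_j)\le R_{ki}([q^{-1},\infty),1)\le Kq^\xi$ directly, bypassing the rescaling. Both give $K'=2K$, $\xi'=\xi$.

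Your reverse direction identifies the right threshold $T=q^{-\psi}$ with $\psi=\xi'/(1+\xi'+\xi_T)$ and the correct balancing argument for the rate $\xi$, but the triangle-inequality accounting does not yield the stated constant $K=K'+2K_T$, and the ``WLOG $x_i>T$'' step papers over a real case. Concretely: (a) when \emph{both} $x_i>T$ and $x_j>T$, the probability gap in piece (ii) splits into two slab events, one for each truncated coordinate, and the exponent-measure gap in piece (iii) likewise doubles, so those bounds should be $2(K'+K_T)q^\xi$ and $2K_Tq^\xi$ rather than their one-coordinate versions; (b) piece (ii) is itself controlled by another application of \cref{assum:RJstand}, so the rescaling error $K'q^\xi$ is paid once in piece (i) and again in piece (ii). Adding everything up, your argument in the worst case produces $(3K'+4K_T)q^\xi$, not $(K'+2K_T)q^\xi$. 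The paper avoids this double counting by \emph{sandwiching} both $q^{-1}\Prob(F_J>1-q\bx)$ and $R_J(\bx)$ around a common pivot (either $z$ when both coordinates exceed $T$, or $R_{jm}(y,z)$ when only one does) and then taking a single difference; the positive errors $a-a^*$ and $b-b^*$, which have the same sign and partially cancel in $a-b$, are never added. Your rate $\xi$ is correct, but if you want the exact constant you should replace the three-term triangle inequality with the paper's quadrant-by-quadrant sandwich, and in the quadrant where both $x_i,x_j$ are large, explicitly use the two-sided inclusion--exclusion bound $R_{ijm}(T,T,1)\ge R_{im}(T,1)+R_{jm}(T,1)-1$, which is where the $2K_T$ comes from.
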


Note that extremal logistic distributions satisfy Assumption~\ref{assum:tailR} with $\xi_T = \theta^{-1} - 1$, and H\"usler--Reiss distributions satisfy it for any $\xi_T > 0$ provided that all entries of the matrix $\Gamma$ are bounded away from zero and infinity (see \cref{lemm:logisticdensity,lemm:HRdensity} below). Therefore, for extremal logistic and H\"usler--Reiss distributions, \cref{assum:tail} holds with exponent $\xi$ as soon as \cref{assum:RJstand} holds for a strictly larger exponent $\xi'$.

\begin{proof}

We first show that \cref{eq:tail} is sufficient for \cref{assum:RJstand,assum:tailR}, and subsequently prove the converse which turns out to be more involved.

\noindent\textbf{\Cref{assum:tail} implies \cref{assum:RJstand,assum:tailR}:}
Assume that \cref{eq:tail} holds for all $q \in (0, 1]$ and all sets $J \subset V$ of size 3. We then have, for any $i, j, m$,
\begin{align*}
	R_{im}(q^{-1}, 1) &= R_{ijm}(q^{-1}, \infty, 1)
	\\
	&\geq R_{ijm}(q^{-1}, q^{-1}, 1)
	\\
	&\geq q^{-1} \Prob(F_i(X_i) > 0, F_j(X_j) > 0, F_m(X_m) > 1-q) - K q^\xi
	\\
	&= 1 - K q^\xi,
\end{align*}
since the marginal distribution of $F_m(X_m)$ is uniform on $(0, 1)$. Thus \cref{eq:tailR} holds with $K_T = K$, $\xi_T = \xi$.

Now, that \cref{eq:RJstand} follows from \cref{eq:tail} when $|J|=3$ is trivial. For the case where $J$ is a pair, say $(i, m)$, let $x \leq q^{-1}$, $z \leq 1$. We have
\begin{align}
	&\Big| q^{-1} \Prob\big( F_i(X_i) > 1-qx, F_m(X_m) > 1-qz \big) - R_{im}(x, z) \Big| \notag
	\\
	&\quad = \Big| q^{-1} \Prob\big( F_i(X_i) > 1-qx, F_j(X_j) > 1-qq^{-1}, F_m(X_m) > 1-qz \big) - R_{im}(x, z) \Big| \notag
	\\
	&\quad \leq \Big| R_{ijm}(x, q^{-1}, z) - R_{im}(x, z) \Big| + K q^\xi \notag
	\\
	&\quad = R_{ijm}([0, x], [q^{-1}, \infty), [0, z]) + K q^\xi, \label{eq:B2proof}
\end{align}
using \cref{eq:tail} and the representation of $R_{ijm}$ as a non-negative measure. Then, \cref{eq:tailR} implies that the first term above is upper bounded by $R_{jm}([q^{-1}, \infty), 1) \leq K q^\xi$. Hence for pairs $J$, \cref{eq:RJstand} (in fact, a stronger version thereof where one component of $\bx$ is allowed to grow) holds with $K' = 2K$, $\xi' = \xi$.

\noindent\textbf{\Cref{assum:RJstand,assum:tailR} imply \cref{assum:tail}:}
Assume that \cref{eq:RJstand,eq:tailR} hold for all $q \in (0, 1]$ and all pairs and triples $J \subset V$ of indices. As in the statement of the result, let $\xi := \xi'\xi_T / (1 + \xi' + \xi_T)$. Let
\[
	\psi := \frac{\xi'}{1 + \xi' + \xi_T} \in (0, 1),
\]
and note that both $-\psi + (1-\psi)\xi'$ and $\psi\xi_T$ are equal to $\xi$.

We wish to bound
\begin{equation} \label{eq:RJtobound}
	\Big| q^{-1} \Prob(F_i(X_i) > 1-qx, F_j(X_j) > 1-qy, F_m(X_m) > 1-qz) - R_{ijm}(x, y, z) \Big|
\end{equation}
uniformly over all $x, y \in [0, q^{-1}]$, $z \in [0, 1]$. Let us divide the square $[0, q^{-1}]^2$ of possible values of $(x, y)$ into four quadrants defined by the axes $x=q^{-\psi}$ and $y=q^{-\psi}$. First, for all $x, y, z \leq q^{-\psi}$,
\begin{align}
	&\Big| q^{-1} \Prob(F_i(X_i) > 1-qx, F_j(X_j) > 1-qy, F_m(X_m) > 1-qz) - R_{ijm}(x, y, z) \Big| \notag
	\\
	&\quad = \Big| q^{-1} \Prob(F_i(X_i) > 1 - q^{1-\psi} q^\psi x, F_j(X_j) > 1 - q^{1-\psi} q^\psi y, F_m(X_m) > 1 - q^{1-\psi} q^\psi z) \notag
	\\
	&\quad\quad - q^{-\psi} R_{ijm}(q^\psi x, q^\psi y, q^\psi z) \Big| \notag
	\\
	&\quad = q^{-\psi} \Big| q^{\psi-1} \Prob(F_i(X_i) > 1 - q^{1-\psi} q^\psi x, F_j(X_j) > 1 - q^{1-\psi} q^\psi y, F_m(X_m) > 1 - q^{1-\psi} q^\psi z) \notag
	\\
	&\quad\quad - R_{ijm}(q^\psi x, q^\psi y, q^\psi z) \Big| \notag
	\\
	&\quad \leq K' q^{-\psi + (1-\psi)\xi'} = K' q^\xi, \label{eq:B3proof1}
\end{align}
where we applied \cref{eq:RJstand} with $q$ replaced by $q^{1-\psi}$, since $q^\psi (x, y, z) \in [0, 1]^3$. This bounds \cref{eq:RJtobound} for $x, y \leq q^{-\psi}$.

Second, for $q^{-\psi} \leq x, y \leq q^{-1}$, $z \leq 1$,
\begin{align}
	z &\geq R_{ijm}(x, y, z) = z R_{ijm}(x/z, y/z, 1) \geq z R_{ijm}(q^{-\psi}, q^{-\psi}, 1) \notag
	\\
	&= z R_{ijm}([0, q^{-\psi}] \times [0, q^{-\psi}] \times [0, 1]) \notag
	\\
	&\geq z (R_{im}([0, q^{-\psi}], [0, 1]) + R_{jm}([0, q^{-\psi}], [0, 1]) - 1) \notag
	\\
	&\geq z\big( 1 - 2K_T (q^{\psi})^{\xi_T} \big) \notag
	\\
	&\geq z - 2K_T q^\xi, \label{eq:B3proof2}
\end{align}
using \cref{eq:tailR} to lower bound $R_{im}$ and $R_{jm}$. Similarly,
\begin{align*}
	z &\geq q^{-1} \Prob(F_i(X_i) > 1-qx, F_j(X_j) > 1-qy, F_m(X_m) > 1-qz)
	\\
	&\geq q^{-1} \Prob(F_i(X_i) > 1-qq^{-\psi}, F_j(X_j) > 1-qq^{-\psi}, F_m(X_m) > 1-qz)
	\\
	&\geq R_{ijm}(q^{-\psi}, q^{-\psi}, z) - K' q^\xi
	\\
	&= z R_{ijm}(q^{-\psi}/z, q^{-\psi}/z, 1) - K' q^\xi
	\\
	&\geq z R_{ijm}(q^{-\psi}, q^{-\psi}, 1) - K' q^\xi,
\end{align*}
where the third inequality follows from \cref{eq:B3proof1}. Using the developments leading to \cref{eq:B3proof2}, this lower bound is itself lower bounded by
\[
	z - (K' + 2K_T) q^\xi.
\]
Deduce that \cref{eq:RJtobound} is bounded by $(K' + 2K_T) q^\xi$ for $q^{-\psi} \leq x, y \leq q^{-1}$.

Third, let $q^{-\psi} \leq x \leq q^{-1}$, $y \leq q^{-\psi}$, $z \leq 1$; the case where $q^{-\psi} \leq y \leq q^{-1}$ and $x \leq q^{-\psi}$ is handled symmetrically. We will again sandwich the two terms in \cref{eq:RJtobound}. We first have
\begin{align*}
	R_{jm}(y, z) &\geq R_{ijm}(x, y, z)
	\\
	&\geq R_{ijm}(q^{-\psi}, y, z)
	\\
	&= R_{jm}(y, z) - (R_{jm}(y, z) - R_{ijm}(q^{-\psi}, y, z))
	\\
	&\geq R_{jm}(y, z) - (z - R_{im}(q^{-\psi}, z))
	\\
	&\geq R_{jm}(y, z) - (1 - R_{im}(q^{-\psi}, 1))
	\\
	&\geq R_{jm}(y, z) - K_T q^\xi,
\end{align*}
where in the last step we use \cref{eq:tailR}. The other term in \cref{eq:RJtobound} enjoys similar upper and lower bounds: by \cref{eq:B3proof1} and by the preceding lower bound on $R_{ijm}(q^{-\psi}, y, z)$,
\begin{align*}
	&q^{-1} \Prob(F_i(X_i) > 1-qx, F_j(X_j) > 1-qy, F_m(X_m) > 1-qz)
	\\
	&\quad \geq q^{-1} \Prob(F_i(X_i) > 1-qq^{-\psi}, F_j(X_j) > 1-qy, F_m(X_m) > 1-qz)
	\\
	&\quad \geq R_{ijm}(q^{-\psi}, y, z) - K'q^\xi
	\\
	&\quad \geq R_{jm}(y, z) - (K' + K_T) q^\xi.
\end{align*}
Meanwhile,
\begin{align*}
	&q^{-1} \Prob(F_i(X_i) > 1-qx, F_j(X_j) > 1-qy, F_m(X_m) > 1-qz)
	\\
	&\quad \leq q^{-1} \Prob(F_j(X_j) > 1-qy, F_m(X_m) > 1-qz)
	\\
	&\quad \leq q^{-\psi} q^{\psi-1} \Prob(F_j(X_j) > 1 - q^{1-\psi} q^\psi y, F_m(X_m) > 1 - q^{1-\psi} q^\psi z)
	\\
	&\quad \leq q^{-\psi} \big( R_{jm}(q^\psi y, q^\psi z) + K' q^{(1-\psi)\xi'} \big)
	\\
	&\quad = R_{jm}(y, z) + K' q^\xi,
\end{align*}
where we have used \cref{eq:RJstand} with $q$ replaced by $q^{1-\psi}$, since $q^\psi (y, z) \in [0, 1]^2$. Deduce that \cref{eq:RJtobound} is bounded by $(K' + K_T) q^\xi$ for $y \leq q^{-\psi} \leq x \leq q^{-1}$ (and also for $y \leq q^{-\psi} \leq x \leq q^{-1}$ by symmetry).

We have therefore established that for all $x, y \leq q^{-1}$, $z \leq 1$, \cref{eq:RJtobound} is upper bounded by $(K' + 2K_T) q^\xi$, i.e., \cref{assum:tail} is satisfied with the desired values $K$ and $\xi$.
\end{proof}

\subsection{Properties of extremal logistic and H\"usler--Reiss distributions}

Several objects, other than multivariate Pareto distributions, are routinely used to describe tails of multivariate random vectors. For instance, one can show that~\eqref{eq:mPdlimit} implies existence of the limit
\begin{equation} \label{eq:elllimit}
	L(\g x) = \lim_{q \downarrow 0} q^{-1} \Prob(F(\g X) \not \leq 1 - q\g x), \quad \g x \in [0,\infty)^d.
\end{equation} 
The function $L$ is called the stable tail dependence function of $\g X$ and is a popular object in the study of multivariate extremes \citep{H1992,EKS2012,fougeres2015}. The link between the distribution of the multivariate Pareto attractor $\g Y$ of $\bX$ and its stable tail dependence function $L$ is given by the relation
\[
	L(\g x) = \frac{\Prob(\g Y \not \leq 1/\g x)}{\Prob(Y_1 > 1)}, \quad \g x \in [0,1]^d.
\]
Using known expressions for the stable tail dependence functions of the bivariate extremal logistic and H\"usler--Reiss distributions \citep[see, e.g.,][]{seg2010}, we now show that any such distribution satisfies \cref{assum:r,assum:tailR}.

\begin{lemm}
\label{lemm:logisticdensity}
	
	Suppose that $\bY$ has an extremal logistic distribution with parameter $\theta \in (0, 1)$. For any distinct pair $(i, j)$, the bivariate $R$-function $R_{ij}$ satisfies the following.
	\begin{itemize}
		\item[(i)] We have
		\[
			1 - R_{ij}(q^{-1}, 1) \leq \theta q^{\theta^{1-} - 1}, \quad q \in (0, 1].
		\]
		\item[(ii)] The function $R_{ij}$ has density
		\[
			r_{ij}(x, y) = \frac{1-\theta}{\theta} \frac{(xy)^{\theta^{-1} - 1}}{(x^{\theta^{-1}} + y^{\theta^{-1}})^{2-\theta}}, \quad (x, y) \in (0, \infty)^2.
		\]
		For any $\beta \in [1-\theta^{-1}, \theta^{-1}]$, this density enjoys the upper bound
		\[
			r_{ij}(x, y) \leq \frac{1-\theta}{\theta x^\beta y^{1-\beta}}.
		\]
	\end{itemize}
	
\end{lemm}

\begin{proof}
	
	The pair $(Y_i, Y_j)$ has a bivariate extremal logistic distribution with dependence parameter $\theta$, so its stable tail dependence function is
	\[
		L_{ij}(x, y) = \big( x^{\theta^{-1}} + y^{\theta^{-1}} \big)^\theta
	\]
	and $R_{ij}$ is given by
	\[
		R_{ij}(x, y) = x + y - \big( x^{\theta^{-1}} + y^{\theta^{-1}} \big)^\theta.
	\]
	
	\noindent\textbf{Proof of} \textit{(i)}\textbf{:} We have
	\[
		1 - R_{ij}(q^{-1}, 1) = \big( q^{-\theta^{-1}} + 1 \big)^\theta - q^{-1} = \big( q^{-\theta^{-1}} + 1 \big)^\theta - \big( q^{-\theta^{-1}} \big)^\theta \leq \theta \big( q^{-\theta^{-1}} \big)^{\theta-1} = \theta q^{\theta^{-1} - 1},
	\]
	where the inequality holds since the function $x \mapsto x^\theta$ is concave and its derivative is given by $\d x^\theta/\d\theta = \theta x^{\theta-1}$.
	
	\noindent\textbf{Proof of} \textit{(ii)}\textbf{:}
	The density of $R_{ij}$ is defined as
	\[
		r_{ij}(x, y) := \frac{\partial^2}{\partial x \partial y} R_{ij}(x, y) = -\frac{\partial^2}{\partial x \partial y} L_{ij}(x, y).
	\]
	By elementary calculus, we have
	\[
		\frac{\partial}{\partial x} L_{ij}(x, y) = \big( x^{\theta^{-1}} + y^{\theta^{-1}} \big)^{\theta-1} x^{\theta^{-1} - 1},
	\]
	and so
	\[
		\frac{\partial^2}{\partial x \partial y} L_{ij}(x, y) = \frac{\theta-1}{\theta} \big( x^{\theta^{-1}} + y^{\theta^{-1}} \big)^{\theta-2} (xy)^{\theta^{-1} - 1},
	\]
	which yields the desired density $r_{ij}$. As for the upper bound, first consider points $(x, y) \in (0, \infty)^2$ such that $x \leq y$. Note then that $x^\beta y^{1-\beta}$ is decreasing in $\beta$, which means that for $\beta \in [1 - \theta^{-1}, \theta^{-1}]$,
	\begin{align*}
		x^\beta y^{1-\beta} r_{ij}(x, y) &\leq x^{1-\theta^{-1}} y^{\theta^{-1}} r_{ij}(x, y) = \frac{1-\theta}{\theta} \frac{y^{2\theta^{-1} - 1}}{\big( x^{\theta^{-1}} + y^{\theta^{-1}} \big)^{2-\theta}}
        \\
        &= \frac{1-\theta}{\theta} \bigg( \frac{y^{\theta^{-1}}}{x^{\theta^{-1}} + y^{\theta^{-1}}} \bigg)^{2-\theta} \leq \frac{1-\theta}{\theta}.
	\end{align*}
	For points such that $x > y$, by the same argument we have 
    \[
    x^\beta y^{1-\beta} r_{ij}(x, y) \leq x^{\theta^{-1}} y^{1-\theta^{-1}} r_{ij}(x, y) \leq (1-\theta)/\theta.
    \]
\end{proof}

\begin{lemm}
\label{lemm:HRdensity}
	
	Suppose that $\bY$ has a H\"usler--Reiss distribution with parameter matrix $\Gamma$. For any distinct pair $(i, j)$, as long as $\lambda := \sqrt{\Gamma_{ij}} > 0$, its bivariate $R$-function $R_{ij}$ satisfies the following.
	\begin{itemize}
		\item[(i)] For any positive $\xi$, there exists a finite constant $K_\xi$ (which also depends on $\lambda$) such that
		\[
			1 - R_{ij}(q^{-1}, 1) \leq K_\xi q^\xi, \quad q \in (0, 1].
		\]
		\item[(ii)] The function $R_{ij}$ has density
		\[
			r_{ij}(x, y) = \frac{1}{2 \sqrt{2\pi} \lambda \sqrt{xy}} \exp\left\{ -\frac{\lambda^2}{2} - \frac{(\log x - \log y)^2}{8\lambda^2} \right\}, \quad (x, y) \in (0, \infty)^2.
		\]
		For any $\beta \in \R$, this density enjoys the upper bound
		\[
			r_{ij}(x, y) \leq \frac{K(\beta)}{x^\beta y^{1-\beta}}, \quad K(\beta) := \frac{\exp\{ \lambda^2 (2 (\beta - 1/2)^2 - 1/2) \}}{2\sqrt{2\pi} \lambda}.
		\]
	\end{itemize}
	
\end{lemm}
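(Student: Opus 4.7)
The plan is to establish (ii) first, since (i) will follow as a direct corollary by integration.

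For the density formula, I would start from the well-known stable tail dependence function for the bivariate Hüsler--Reiss distribution, which takes the form $L_{ij}(x,y) = x\Phi(a) + y\Phi(b)$, where $a$ and $b$ are explicit functions of $\log(x/y)$ and $\lambda$. The specific parametrization is fixed by the convention $\lambda = \sqrt{\Gamma_{ij}}$ together with the multivariate density formula \eqref{eq:fYHR}. Since $R_{ij}(x, y) = x + y - L_{ij}(x, y)$, I have $r_{ij} = -\partial^2 L_{ij}/\partial x\partial y$. The computation simplifies considerably using two standard identities: the symmetry relation $a + b = $ const together with the ``golden identity'' $x\phi(a) = y\phi(b)$ (which follows from the fact that $(a^2 - b^2)/2 = \log(x/y)$), so that $\partial L_{ij}/\partial x = \Phi(a)$. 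Differentiating once more in $y$ and using $\phi(a) = (2\pi)^{-1/2}(y/x)^{1/2}\exp\{-\lambda^2/2 - (\log x - \log y)^2/(8\lambda^2)\}$ yields exactly the claimed density.

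For the pointwise bound, I would introduce the substitutions $t := \log x - \log y$ and $\alpha := \beta - 1/2$, rewrite
\[
\frac{r_{ij}(x,y)}{x^{-\beta} y^{-(1-\beta)}} = \frac{1}{2\sqrt{2\pi}\lambda} \exp\left\{ \alpha t - \frac{t^2}{8\lambda^2} - \frac{\lambda^2}{2} \right\},
\]
and complete the square in $t$. The maximum over $t \in \RR$ is attained at $t = 4\lambda^2\alpha$ with value $\exp\{2\lambda^2\alpha^2 - \lambda^2/2\}$, which yields the explicit constant $K(\beta) = (2\sqrt{2\pi}\lambda)^{-1}\exp\{\lambda^2(2(\beta-1/2)^2 - 1/2)\}$.

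For part (i), I would use the representation of $1 - R_{ij}(q^{-1}, 1)$ as the $R_{ij}$-measure of $(q^{-1}, \infty) \times [0,1]$, which (since $R_{ij}$ has marginals Lebesgue up to the appropriate scaling) equals the double integral of $r_{ij}$ over this region. Applying the bound from (ii) with any $\beta = 1 + \xi > 1$,
\[
1 - R_{ij}(q^{-1}, 1) \leq K(\beta) \int_{q^{-1}}^\infty x^{-\beta} \, dx \int_0^1 y^{\beta - 1}\, dy = \frac{K(1+\xi)}{\xi(\xi+1)} \, q^\xi.
\]
Choosing $K_\xi := K(1+\xi)/(\xi(\xi+1))$ completes the proof.

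I do not expect any real obstacle here; the main subtlety is simply bookkeeping with the parametrization of the bivariate STDF to ensure the exponent matches $\lambda = \sqrt{\Gamma_{ij}}$ as stated. The maximization in part (ii) is a one-line exercise in completing the square, and (i) is a direct consequence.
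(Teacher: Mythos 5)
Your proposal is correct, and it differs from the paper's proof in two ways that are worth noting. For part (ii), you compute $\partial^2 L_{ij}/\partial x\,\partial y$ using the identity $x\phi(a) = y\phi(b)$ (which indeed follows from $a^2 - b^2 = 2\log(x/y)$, since $a+b = 2\lambda$), so that $\partial L_{ij}/\partial x = \Phi(a)$ collapses cleanly; the paper instead differentiates term by term and uses $\phi'(t) = -t\phi(t)$ to simplify. Both routes reach the same density, and the subsequent completion of the square for $K(\beta)$ is essentially identical to the paper's (your $t = u - v$, $\alpha = \beta - 1/2$ is just a relabeling of their $u,v$ computation). The more substantive difference is part (i): the paper proves it directly from the explicit form $1 - R_{ij}(q^{-1},1) \leq \Phi^c\bigl((\log q^{-1})/(2\lambda) - \lambda\bigr)$ and Gaussian tail bounds, obtaining a bound of the form $q^{(\log q^{-1})/(32\lambda^2)}$ that beats any polynomial. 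You instead observe that $1 - R_{ij}(q^{-1},1) = R_{ij}\bigl((q^{-1},\infty)\times[0,1]\bigr)$ (valid since $R_{ij}(\infty,1) = 1$ and the measure is absolutely continuous on $(0,\infty)^2$) and then integrate the density bound from (ii) with $\beta = 1+\xi$. Your route is shorter and produces an explicit constant $K_\xi = K(1+\xi)/(\xi(1+\xi))$, at the cost of not exhibiting the super-polynomial decay that the paper's direct argument makes visible; for the purposes of the lemma statement either is perfectly adequate.
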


\begin{proof}
	
	The pair $(Y_i, Y_j)$ has a bivariate H\"usler--Reiss distribution with dependence parameter $\lambda^2$, so its stable tail dependence function is
	\[
		L_{ij}(x, y) = x \Phi\bigg( \lambda + \frac{\log x - \log y}{2\lambda} \bigg) + y \Phi\bigg( \lambda + \frac{\log y - \log x}{2\lambda} \bigg),
	\]
	so $R_{ij}$ is given by
	\[
		R_{ij}(x, y) = x \Phi^c\bigg( \lambda + \frac{\log x - \log y}{2\lambda} \bigg) + y \Phi^c\bigg( \lambda + \frac{\log y - \log x}{2\lambda} \bigg),
	\]
	where $\Phi^c$ denotes the standard Gaussian survival function.
	
	\noindent\textbf{Proof of} \textit{(i)}\textbf{:} Fix a number $\xi>0$. First note that if $q \geq e^{-4\lambda^2}$, we trivially have that
	\[
		1 - R_{ij}(q^{-1}, 1) \leq 1 \leq e^{4\lambda^2 \xi} q^\xi,
	\]
	so we shall assume without loss of generality that $q \leq e^{-4\lambda^2}$. This implies that $\log q^{-1} \geq 4\lambda^2$, or equivalently
	\[
		\frac{\log q^{-1}}{2\lambda} - \lambda \geq \frac{\log q^{-1}}{4\lambda}.
	\]
	We then have
	\begin{align*}
		1 - R_{ij}(q^{-1}, 1) &\leq 1 - \Phi^c\bigg( \lambda + \frac{\log q}{2\lambda} \bigg) = \Phi^c\bigg( \frac{\log q^{-1}}{2\lambda} - \lambda \bigg) \leq \Phi^c\bigg( \frac{\log q^{-1}}{4\lambda} \bigg)
		\\
		&\leq \frac{4\lambda}{\sqrt{2\pi} \log q^{-1}} \exp\bigg\{ -\frac{1}{32\lambda^2} \big( \log q^{-1} \big)^2 \bigg\},
	\end{align*}
	the last inequality following from well known bounds on the Gaussian tails \citep[][Theorem 1.2.3]{Durrett4.1}. This is in turn upper bounded by
	\[
		\frac{1}{\sqrt{2\pi} \lambda} q^{(\log q^{-1})/32\lambda^2},
	\]
	which is of smaller order than any power of $q$ since the exponent diverges as $q \downarrow 0$. We can therefore upper bound it by any power $q^\xi$, up to a multiplicative constant depending on both $\xi$ and $\lambda$.
	
	\noindent\textbf{Proof of} \textit{(ii)}\textbf{:}
	The density of $R_{ij}$ is defined as
	\[
		r_{ij}(x, y) := \frac{\partial^2}{\partial x \partial y} R_{ij}(x, y) = -\frac{\partial^2}{\partial x \partial y} L_{ij}(x, y).
	\]
	
	First, we have
	\begin{align*}
		\frac{\partial}{\partial x} x \Phi\bigg( \lambda + \frac{\log x - \log y}{2\lambda} \bigg) &= \Phi\bigg( \lambda + \frac{\log x - \log y}{2\lambda} \bigg) + x \phi\bigg( \lambda + \frac{\log x - \log y}{2\lambda} \bigg) \frac{1}{2\lambda x}
		\\
		&= \Phi\bigg( \lambda + \frac{\log x - \log y}{2\lambda} \bigg) + \frac{1}{2\lambda} \phi\bigg( \lambda + \frac{\log x - \log y}{2\lambda} \bigg),
	\end{align*}
	so
	\begin{align*}
		&\frac{\partial^2}{\partial x \partial y} x \Phi\bigg( \lambda + \frac{\log x - \log y}{2\lambda} \bigg)
        \\
        &= -\frac{1}{2\lambda y} \phi\bigg( \lambda + \frac{\log x - \log y}{2\lambda} \bigg) - \frac{1}{4\lambda^2 y} \phi'\bigg( \lambda + \frac{\log x - \log y}{2\lambda} \bigg)
		\\
		&= -\frac{1}{4\lambda^2 y} \bigg( \lambda + \frac{\log y - \log x}{2\lambda} \bigg) \phi\bigg( \lambda + \frac{\log x - \log y}{2\lambda} \bigg),
	\end{align*}
	where we used the expression $\phi'(t) = -t\phi(t)$ for the derivative of the standard Gaussian density $\phi$. Now by definition of $\phi$, this is equal to
	\begin{align*}
		&-\frac{1}{4\sqrt{2\pi} \lambda^2 y} \bigg( \lambda + \frac{\log y - \log x}{2\lambda} \bigg) \exp\left\{ -\frac{\lambda^2}{2} - \frac{(\log x - \log y)^2}{8\lambda^2} + \frac{\log y - \log x}{2} \right\}
		\\
		&\quad = -\frac{1}{4\sqrt{2\pi} \lambda^2 \sqrt{xy}} \bigg( \lambda + \frac{\log y - \log x}{2\lambda} \bigg) \exp\left\{ -\frac{\lambda^2}{2} - \frac{(\log x - \log y)^2}{8\lambda^2} \right\}.
	\end{align*}
	Adding this to
	\begin{align*}
		&\frac{\partial^2}{\partial x \partial y} y \Phi\bigg( \lambda + \frac{\log y - \log x}{2\lambda} \bigg)
        \\
        & = -\frac{1}{4\sqrt{2\pi} \lambda^2 \sqrt{xy}} \bigg( \lambda + \frac{\log x - \log y}{2\lambda} \bigg) \exp\left\{ -\frac{\lambda^2}{2} - \frac{(\log x - \log y)^2}{8\lambda^2} \right\},
	\end{align*}
	obtained by a symmetric argument, yields the desired density. As for the upper bound, note that for any $\beta \in \R$,
	\[
		r_{ij}(x, y) = \frac{1}{2 \sqrt{2\pi} \lambda x^\beta \sqrt{y}} \exp\left\{ -\frac{\lambda^2}{2} - \frac{(\log x - \log y)^2}{8\lambda^2} + (\beta - 1/2) \log x \right\}.
	\]
	Writing $u$ and $v$ for $\log x$ and $\log y$, the exponent above is
	\[
		-\frac{\lambda^2}{2} - \frac{(u-v)^2}{8\lambda^2} + (\beta - 1/2) u = \frac{-u^2}{8\lambda^2} + \Big( (\beta - 1/2) + \frac{v}{4\lambda^2} \Big) u - \frac{v^2}{8\lambda^2}
	\]
	which is maximized (in $u$) at $u = v + 4\lambda^2 (\beta - 1/2)$, hence
	\begin{align*}
		-\frac{\lambda^2}{2} - \frac{(u-v)^2}{8\lambda^2} + (\beta - 1/2) u &\leq -\frac{\lambda^2}{2} - 2\lambda^2 (\beta - 1/2)^2 + (\beta - 1/2)v + 4\lambda^2 (\beta - 1/2)^2
		\\
		&= (\beta - 1/2)v + \lambda^2 (2 (\beta - 1/2)^2 - 1/2).
	\end{align*}
	Conclude that
	\[
		r_{ij}(x, y) \leq \frac{1}{2 \sqrt{2\pi} \lambda x^\beta y^{1-\beta}} \exp\{ \lambda^2 (2 (\beta - 1/2)^2 - 1/2) \}.
	\]
\end{proof}

\subsection{The moments \texorpdfstring{$e_m^{(m), \ell}$}{emml}}
\label{sec:emml}

Recalling that for any $m$, $Y_m^{(m)}$ has a unit Pareto distribution, and thus that $\log Y_m^{(m)}$ has a unit exponential distribution, it is evident that $e_m^{(m), 1} = 1$ and $e_m^{(m), 2} = 2$. As for the empirical versions $\hat e_m^{(m), \ell}$ of those moments, they are in fact deterministic, since the terms $\hat F_m(U_{tm})$ appearing in the sum are exactly the $k$ smallest such terms $\{1/n, \dots, k/n\}$. Precisely, we have the following result.

\begin{lemm}
\label{lemm:emml}
	
	As long as $k \geq 3$, we have
	\[
		\big| \hat e_m^{(m), 1} - 1 \big| \leq \frac{3\log k}{k}, \quad \big| \hat e_m^{(m), 2} - 2 \big| \leq \frac{8(\log k)^2}{k}.
	\]
	
\end{lemm}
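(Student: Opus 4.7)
The plan is to observe that, by continuity of $F_m$, the $k$ values of $\hat F_m(U_{tm})$ that satisfy $\hat F_m(U_{tm}) \leq k/n$ are precisely $\{1/n, 2/n, \dots, k/n\}$. Consequently, $\hat e_m^{(m), \ell}$ is entirely deterministic and reduces to the explicit expression
\[
\hat e_m^{(m), \ell} = \frac{1}{k}\sum_{j=1}^k \big(\log(k/j)\big)^\ell.
\]
On the other hand, the substitution $x = j/k$ shows that $e_m^{(m), \ell} = \int_0^1 (-\log x)^\ell \, dx$ can be written as $\frac{1}{k}\int_0^k (\log(k/t))^\ell \, dt$. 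Thus the problem reduces to bounding the error in approximating this integral by the left Riemann sum of $g(t) := (\log(k/t))^\ell$ at integer nodes $1,\dots,k$.

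Next, I would exploit the fact that $g$ is non-negative and monotone decreasing on $(0, k]$, with $g(k)=0$. Splitting the error as
\[
\hat e_m^{(m), \ell} - e_m^{(m), \ell} = \frac{1}{k}\bigg(g(1) - \int_0^1 g(t)\,dt\bigg) + \frac{1}{k}\sum_{j=2}^k \bigg(g(j) - \int_{j-1}^j g(t)\,dt\bigg),
\]
monotonicity gives that each summand in the second sum lies in $[-(g(j-1)-g(j)),\, 0]$, so the sum telescopes and is bounded in absolute value by $g(1)-g(k) = (\log k)^\ell$. The boundary term $g(1) - \int_0^1 g(t)\,dt$ can be computed in closed form using $\int_0^1 (\log k - \log t)^\ell \, dt$: for $\ell=1$ it equals $-1$ and for $\ell=2$ it equals $-2(\log k) - 2$.

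Combining the telescoped bound and the explicit boundary term yields
\[
\big|\hat e_m^{(m), 1} - 1\big| \leq \frac{1 + \log k}{k}, \qquad \big|\hat e_m^{(m), 2} - 2\big| \leq \frac{(\log k)^2 + 2\log k + 2}{k}.
\]
For $k \geq 3$ we have $\log k \geq 1$, so the right-hand sides are bounded by $3\log k/k$ and $8(\log k)^2/k$, respectively, as claimed. No step is especially delicate here; the only minor subtlety is keeping careful track of the sign of the boundary correction and verifying the numerical constants $3$ and $8$ for $k \geq 3$.
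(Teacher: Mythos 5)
Your proof is correct, and it takes essentially the same approach as the paper: both start from the explicit formula $\hat e_m^{(m),\ell} = \tfrac{1}{k}\sum_{j=1}^k (\log(k/j))^\ell$ and bound the gap to the integral $e_m^{(m),\ell} = \tfrac{1}{k}\int_0^k (\log(k/t))^\ell\,dt$ via a monotone Riemann-sum comparison. Your telescoping bookkeeping of the per-interval errors is a slightly tidier organization than the paper's direct sandwiching of $\sum_{j=1}^k (\log j)^\ell$ between two shifted integrals, but the underlying idea and resulting constants are the same.
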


\begin{proof}
	
	By definition, we have
	\begin{equation} \label{eq:emml}
		\hat e_m^{(m), \ell} = \frac{1}{k} \sum_{j=1}^k \{\log(k/j)\}^\ell =
		\begin{cases}
			\log k - \frac{1}{k} \sum_{j=1}^k \log j, \quad &\ell=1
			\\
			(\log k)^2 - \frac{2\log k}{k} \sum_{j=1}^k \log j + \frac{1}{k} \sum_{j=1}^k (\log j)^2, \quad &\ell=2
		\end{cases}.
	\end{equation}
	Note that
	\[
		\sum_{j=1}^k (\log j)^\ell = \sum_{j=2}^k \int_j^{j+1} (\log j)^\ell dt \in \bigg[ \int_1^k (\log t)^\ell dt, \int_2^{k+1} (\log t)^\ell dt \bigg].
	\]
	Evaluating those integrals yields
	\[
		k \{\log k - 1\} + 1 \leq \sum_{j=1}^k \log j \leq (k+1)\{\log(k+1) - 1\} - 2(\log 2 - 1)
	\]
	and
	\begin{align*}
	    &k \big\{ (\log k)^2 - 2\log k + 2 \big\} - 2 \leq \sum_{j=1}^k (\log j)^2
        \\
        &\leq (k+1) \big\{ (\log(k+1))^2 - 2\log(k+1) + 2 \big\} - 2\big\{ (\log 2)^2 - 2\log 2 + 2 \big\}.
	\end{align*}
	Denote by $a_\ell$ and $b_\ell$ the lower and upper bound on $\sum_{j=1}^k (\log j)^\ell$ above, $\ell \in \{1, 2\}$. As long as $k \geq 3$, we have by \cref{eq:emml} and by simple computations
	\[
		k \big| \hat e_m^{(m), 1} - 1 \big| \leq |a_1 - k\log k + k| \vee |b_1 - k\log k + k| \leq 3 \log k
	\]
	and
	\begin{align*}
		k \big| \hat e_m^{(m), 2} - 2 \big| &\leq |a_2 - 2(\log k) b_1 + k(\log k)^2 - 2k| \vee |b_2 - 2(\log k) a_1 + k(\log k)^2 - 2k| 
        \\
        &\leq 8(\log k)^2,
	\end{align*}
	which is the desired result.
\end{proof}

\subsection{Verifying the integral representations of different moments}

We start by deriving general expressions for the moments of logarithms of random vectors which will lead to proving the representations in \cref{eq:erriml,eq:errimm,eq:errijm}. The following result is a multivariate version of the so-called ``Darth Vader rule''.

\begin{lemm}
\label{lemm:DarthVader}
	
	Let $X_1, \dots, X_d$ be non-negative random variables and $p_1, \dots, p_d > 0$. Then
	\[
		\E\bigg[ \prod_{j=1}^d X_j^{p_j} \bigg] = \int_{[0, \infty)^d} \prod_{j=1}^d p_j x_j^{p_j-1} \Prob(X_1 \geq x_1, \dots, X_d \geq x_d) dx_1 \dots dx_d.
	\]
	Moreover, any number of ``$\geq$'' can be replaced by ``$>$'', as this changes the value of the probability, at most, on a Lebesgue-null set.
	
\end{lemm}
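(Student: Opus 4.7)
The plan is to reduce the claim to the elementary identity $x^{p} = \int_{0}^{\infty} p t^{p-1} \Ind{t \leq x} \, dt$, valid for $x \geq 0$ and $p > 0$, and then apply Tonelli's theorem. No sophisticated machinery is required; the entire proof is essentially bookkeeping.

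First, I would write, for each $j$,
\[
	X_j^{p_j} = \int_0^{\infty} p_j x_j^{p_j - 1} \Ind{x_j \leq X_j} \, dx_j,
\]
which is just the fundamental theorem of calculus applied to $t \mapsto t^{p_j}$. Taking the product over $j$ and using that all factors are non-negative so Fubini/Tonelli applies pointwise to the resulting iterated integral,
\[
	\prod_{j=1}^{d} X_j^{p_j} = \int_{[0,\infty)^d} \prod_{j=1}^{d} p_j x_j^{p_j - 1} \Ind{X_j \geq x_j, \, j = 1,\dots,d} \, dx_1 \cdots dx_d.
\]
Taking expectations on both sides and interchanging expectation with integration, again by Tonelli since the integrand is non-negative, gives
\[
	\E\bigg[ \prod_{j=1}^{d} X_j^{p_j} \bigg] = \int_{[0,\infty)^d} \prod_{j=1}^{d} p_j x_j^{p_j - 1} \Prob(X_1 \geq x_1, \dots, X_d \geq x_d) \, dx_1 \cdots dx_d,
\]
which is the claimed identity.

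For the last assertion, I would observe that the set of points $\g x \in [0,\infty)^d$ where $\Prob(X_1 \geq x_1, \dots, X_d \geq x_d)$ and $\Prob(X_1 > x_1, \dots, X_d > x_d)$ differ is contained in $\bigcup_{j=1}^d \{\g x : x_j \in A_j\}$, where $A_j$ is the (at most countable) set of atoms of the marginal distribution of $X_j$. Each such set is a finite or countable union of hyperplanes in $\R^d$ and therefore has Lebesgue measure zero, so replacing any subset of the ``$\geq$'' by ``$>$'' leaves the integral unchanged. The main ``obstacle'' is really just stating these measure-theoretic justifications cleanly; there is no substantive difficulty.
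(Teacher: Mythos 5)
Your proof is correct and follows essentially the same route as the paper: represent each $X_j^{p_j}$ as an integral of an indicator, take the product, and swap expectation and integration via Tonelli. The only cosmetic difference is that you introduce the weight $p_j x_j^{p_j-1}$ directly inside the indicator integral, whereas the paper first integrates an unweighted indicator over the box $\prod_j [0, X_j^{p_j}]$ and then performs the substitution $x_j = u_j^{1/p_j}$ at the end; the two presentations are mathematically identical. Your measure-theoretic justification of the ``$\geq$'' versus ``$>$'' statement via the countable set of marginal atoms is also correct and is the intended argument.
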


\begin{proof}
	
	Letting $(\Omega, \cF, P)$ be the underlying probability space containing all the random variables, we have
	\begin{align*}
		\E\bigg[ \prod_{j=1}^d X_j^{p_j} \bigg] &= \int_\Omega \prod_{j=1}^d X_j(\omega)^{p_j} P(d\omega)
		\\
		&= \int_\Omega \int_{[0, X_1(\omega)^{p_1}] \times \dots \times [0, X_d(\omega)^{p_d}]} du_1 \dots du_d P(d\omega)
		\\
		&= \int_\Omega \int_{[0, \infty)^d} \Ind{ X_1(\omega) \geq u_1^{1/p_1}, \dots, X_d(\omega) \geq u_d^{1/p_d} } du_1 \dots du_d P(d\omega)
		\\
		&= \int_{[0, \infty)^d} \bigg( \int_\Omega \Ind{ X_1(\omega) \geq u_1^{1/p_1}, \dots, X_d(\omega) \geq u_d^{1/p_d} } P(d\omega) \bigg) du_1 \dots du_d
		\\
		&= \int_{[0, \infty)^d} \Prob\big( X_1 \geq u_1^{1/p_1}, \dots, X_d \geq u_d^{1/p_d} \big) du_1 \dots du_d,
	\end{align*}
	where we have used the fact that $X_j(\omega) \geq 0$ for almost every $\omega$ to justify the second equality. The change in the order of integration was allowed by Tonelli's theorem. Finally, applying the change of variable $x_j = u_j^{1/p_j}$, $du_j/dx_j = p_j x_j^{p_j-1}$ produces the desired result.
\end{proof}

\begin{lemm}
\label{lemm:Elog}
Let $X$ and $Y$ be almost surely positive random variables and let $S$ be the distribution function of $(1/X, 1/Y)$, so that for positive $x, y$, $\Prob(X \geq x, Y \geq y) = S(1/x, 1/y)$. Then for any $p \in \{1, 2, \dots\}$,
	\begin{align}
		&\E\big[ (\log X)^p \Ind{X > 1} \big] = p \int_0^1 \frac{S(x, \infty) |\log x|^{p-1}}{x} dx, \label{eq:moment+}
		\\
		&\E\big[ (-\log X)^p \Ind{X < 1} \big] = p \int_1^\infty \frac{S([x, \infty), \infty) |\log x|^{p-1}}{x} dx, \label{eq:moment-}
		\\
		&\E\big[ ((\log X)(\log Y))^p \Ind{X, Y > 1} \big] = p^2 \int_0^1 \int_0^1 \frac{S(x, y) |(\log x)(\log y)|^{p-1}}{xy} dxdy \label{eq:moment++},
		\\
		&\E\big[ (-(\log X)(\log Y))^p \Ind{X < 1, Y > 1} \big] = p^2 \int_0^1 \int_1^\infty \frac{S([x, \infty), y) |(\log x)(\log y)|^{p-1}}{xy} dxdy \label{eq:moment+-},
		\\
		&\E\big[ ((\log X)(\log Y))^p \Ind{X, Y < 1} \big] = p^2 \int_1^\infty \int_1^\infty \frac{S([x, \infty), [y, \infty)) |(\log x)(\log y)|^{p-1}}{xy} dxdy \label{eq:moment--},
	\end{align}
	where $S([x, \infty), y)$ and $S([x, \infty), [y, \infty))$ are shorthand for $S(\infty, y) - S(x, y)$ and $1 - S(x, \infty) - S(\infty, y) + S(x, y)$, respectively.
	
\end{lemm}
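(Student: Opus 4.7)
The plan is to obtain each of the five identities as a direct consequence of the Darth Vader rule (Lemma \ref{lemm:DarthVader}) applied to suitable non-negative transformations of $\log X$ and $\log Y$, followed by a change of variables from logarithmic to original scale.

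For the univariate identities, observe that $(\log X)^p \Ind{X>1} = Z^p$ for the non-negative random variable $Z := (\log X)\Ind{X>1}$, and likewise $(-\log X)^p \Ind{X<1} = W^p$ for $W := (-\log X)\Ind{X<1}$. Applying the univariate Darth Vader rule, $\E[Z^p] = \int_0^\infty p u^{p-1} \Prob(Z \geq u)\,du$, and noting that for $u>0$ one has $\Prob(Z \geq u) = \Prob(\log X \geq u) = \Prob(1/X \leq e^{-u}) = S(e^{-u},\infty)$, I substitute $x = e^{-u}$ so that $du = -dx/x$ and $u^{p-1} = |\log x|^{p-1}$. The range $u \in (0,\infty)$ becomes $x \in (0,1)$, which yields the first identity. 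The second identity follows identically, with $\Prob(W \geq u) = \Prob(1/X \geq e^{u}) = S([e^u,\infty),\infty)$ and the substitution $x = e^u$, mapping $u \in (0,\infty)$ to $x \in (1,\infty)$ and giving $du = dx/x$.

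For the three bivariate identities, I set $U$ equal to either $(\log X)\Ind{X>1}$ or $(-\log X)\Ind{X<1}$ (according to whether the case treats the region $\{X>1\}$ or $\{X<1\}$), and define $V$ analogously from $Y$. In each case $U,V \geq 0$ and $U^p V^p$ reproduces the quantity inside the expectation. I then apply Lemma \ref{lemm:DarthVader} with $d=2$ and $p_1=p_2=p$, so that the expectation becomes $p^2 \iint_{[0,\infty)^2} u^{p-1} v^{p-1} \Prob(U \geq u, V \geq v)\,du\,dv$. For $u,v>0$ the joint tail probability equals a probability involving $(1/X,1/Y)$ which, under the shorthand notation of the statement, coincides with $S(x,y)$, $S([x,\infty),y)$ or $S([x,\infty),[y,\infty))$ depending on the case. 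The appropriate exponential change of variables ($x = e^{-u}$ when $U$ uses $\{X>1\}$, $x = e^u$ when $U$ uses $\{X<1\}$, and similarly for $y$) converts $u^{p-1}v^{p-1}\,du\,dv$ into $|(\log x)(\log y)|^{p-1}\,dx\,dy/(xy)$ and maps the quadrant $(0,\infty)^2$ to the product of half-lines $(0,1)$ or $(1,\infty)$ indicated in each formula.

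I do not anticipate any real obstacle: the only subtlety is the harmless issue that the event $\{X=1\}$ or $\{Y=1\}$ might carry mass, but the closing remark of Lemma \ref{lemm:DarthVader} allows the free interchange of ``$\geq$'' with ``$>$'', so the boundary of the Jacobian change of variables contributes nothing. The bookkeeping of signs in the exponents (i.e.\ $|\log x|^{p-1}$ rather than $(\log x)^{p-1}$) is automatic since on each integration region $\log x$ has a definite sign.
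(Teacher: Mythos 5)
Your proof is correct and follows essentially the same route as the paper's: apply the Darth Vader rule (Lemma~\ref{lemm:DarthVader}) to the non-negative truncated logarithms with $d=1$ or $d=2$, rewrite the tail probabilities in terms of $S$, and perform the exponential substitutions. The slight streamlining of naming the transformed variables $Z$, $W$, $U$, $V$ explicitly and then observing the tail-probability identification for $u,v>0$ is a cleaner bookkeeping of the same argument.
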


\begin{proof}
	
	First, by \cref{lemm:DarthVader} with $d=1$, $p_1=p$,
	\begin{align*}
		\E\big[ (\log X)^p \Ind{X>1} \big] &= p \int_0^\infty u^{p-1} \Prob(\log X \geq u) du
		\\
		&= p \int_0^\infty u^{p-1} S(e^{-u}, \infty) du
		\\
		&= p \int_0^1 \frac{S(x, \infty) (-\log x)^{p-1}}{x} dx,
	\end{align*}
	by the change of variable $x = e^{-u}$. Similarly,
	\begin{align*}
		\E\big[ (-\log X)^p \Ind{X<1} \big] &= p \int_0^\infty u^{p-1} \Prob(\log X \leq -u) du
		\\
		&= p \int_0^\infty u^{p-1} S([e^u, \infty), \infty) du
		\\
		&= p \int_1^\infty \frac{S([x, \infty), \infty) (\log x)^{p-1}}{x} dx,
	\end{align*}
	by the change of variable $x = e^u$. This establishes \cref{eq:moment+,eq:moment-}.
	
	\Cref{eq:moment--,eq:moment+-,eq:moment++} are proved in a similar fashion by using \cref{lemm:DarthVader} with $d=2$, $p_1=p_2=p$. First,
	\begin{align*}
		\E[((\log X)(\log Y))^p \Ind{X, Y > 1}] &= p^2 \int_0^\infty \int_0^\infty (uv)^{p-1} \Prob(\log X \geq u, \log Y \geq v) dudv
		\\
		&= p^2 \int_0^\infty \int_0^\infty (uv)^{p-1} S(e^{-u}, e^{-v}) dudv
		\\
		&= p^2 \int_0^1 \int_0^1 \frac{S(x, y) ((\log x)(\log y))^{p-1}}{xy} dxdy,
	\end{align*}
	using the change of variable $x=e^{-u}$, $y=e^{-v}$. Second,
	\begin{align*}
		&\E[((-\log X)(\log Y))^p \Ind{X < 1, Y > 1}]
        \\
        &= p^2 \int_0^\infty \int_0^\infty (uv)^{p-1} \Prob(\log X \leq -u, \log Y \geq v) dudv
		\\
		&= p^2 \int_0^\infty \int_0^\infty (uv)^{p-1} S([e^u, \infty), e^{-v}) dudv
		\\
		&= p^2 \int_0^1 \int_1^\infty \frac{S([x, \infty), y) (-(\log x)(\log y))^{p-1}}{xy} dxdy,
	\end{align*}
	using the change of variable $x=e^u$, $y=e^{-v}$. Third,
	\begin{align*}
		&\E[((\log X)(\log Y))^p \Ind{X, Y < 1}]
        \\
        &= p^2 \int_0^\infty \int_0^\infty (uv)^{p-1} \Prob(\log X \leq -u, \log Y \leq -v) dudv
		\\
		&= p^2 \int_0^\infty \int_0^\infty (uv)^{p-1} S([e^u, \infty), [e^v, \infty)) dudv
		\\
		&= p^2 \int_1^\infty \int_1^\infty \frac{S([x, \infty), [y, \infty)) ((\log x)(\log y))^{p-1}}{xy} dxdy,
	\end{align*}
	using the change of variable $x=e^u$, $y=e^v$. This establishes \cref{eq:moment++,eq:moment+-,eq:moment--}.
\end{proof}

\begin{lemm}
\label{lemm:intrep}
Under \cref{assum:tail}, \cref{eq:erriml,eq:errimm,eq:errijm} hold for any $a \in (0, 1)$.
	
\end{lemm}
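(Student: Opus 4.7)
\textit{Plan for the proof of Lemma~\ref{lemm:intrep}.}

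\textbf{Step 1: Integral representation of the population moments.} The starting point is to apply \cref{lemm:Elog} to the random variables $(1/Y_i^{(m)}, 1/Y_j^{(m)})$, using the identities $\Prob(Y_i^{(m)} > 1/x) = R_{im}(x, 1)$ and the trivariate analog $\Prob(Y_i^{(m)} > 1/x, Y_j^{(m)} > 1/y) = R_{ijm}(x, y, 1)$, which follow directly from the definition of the auxiliary vector $\bY^{(m)}$ and the $R$-function. Splitting each logarithmic moment at $1$ (so that both logarithms have definite sign) and invoking \eqref{eq:moment+}--\eqref{eq:moment--}, I obtain
\begin{align*}
e_i^{(m),\ell} &= \int_0^1 \frac{R_{im}(x, 1)(-2\log x)^{\ell-1}}{x} \d x - \int_1^\infty \frac{R_{im}([x, \infty), 1)(-2\log x)^{\ell-1}}{x}\d x,
\end{align*}
and the analogous two-dimensional splittings for $e_{im}^{(m)}$ (two quadrants) and $e_{ij}^{(m)}$ (four quadrants).

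\textbf{Step 2: Integral representation of the empirical moments.} The same Darth Vader identity applies to any finite nonnegative measure, so I apply it to the counting measure $\mu_n = \tfrac{1}{k} \sum_t \delta_{(n\hat F_i(U_{ti})/k,\,n\hat F_j(U_{tj})/k)} \mathbbm{1}\{\hat F_m(U_{tm}) \leq k/n\}$. Because $\hat F_i(U_{ti}) \in \{1/n, \dots, 1\}$, the resulting expressions involve $\hat R_{im}$ and $\hat R_{ijm}$, and the tails beyond $x=n/k$ vanish identically (since $n\hat F_i(U_{ti})/k \leq n/k$). This yields the $\hat e$ analogs of the formulas in Step~1, with the upper limit $\infty$ replaced by $n/k$ and with a boundary residual that equals exactly $\hat e_m^{(m), \ell}$ minus its population counterpart $e_m^{(m), \ell}$; by \cref{lemm:emml}, this residual is $O((\log k)^\ell/k) = O((\log(n/k))^\ell/k)$.

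\textbf{Step 3: Subtraction and control of the missing tails.} Subtracting Step~2 from Step~1 produces the integrals of $\hat R - R$ over $[0,1] \cup [1, \infty)$ (and the two-dimensional analogs). Truncating at $a$ from below and $n/k$ from above introduces four deterministic error terms, each of which I bound using properties of the $R$-function:
\begin{itemize}
\item The low-end tails $\int_0^a R(x, 1)(-\log x)^{\ell-1}/x \,\d x$ and the corresponding $\hat R$ term are controlled via $R_{im}(x, 1) \wedge \hat R_{im}(x, 1) \leq x$, yielding a contribution of order $a(1 + \log(1/a))^{\ell}$, and analogously $a(\log(n/k) + \log(1/a))$ in the mixed two-dimensional integrals.
\item The high-end tails $\int_{n/k}^\infty R_{im}([x, \infty), 1)(\log x)^{\ell-1}/x \,\d x$ are controlled by the tail bound $R_{im}([x, \infty), 1) \leq K x^{-\xi}$, which follows from \cref{assum:tail} via \cref{prop:equiv}; integration by parts gives $O((k/n)^\xi \log(n/k))$. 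The corresponding $\hat R$-tails vanish as noted in Step~2.
\end{itemize}

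\textbf{Main obstacle.} The principal difficulties are (i) rigorously justifying the discrete Darth Vader representation, in particular accounting for the fact that the ranks $\hat F_i(U_{ti})$ are not continuous, so the step function $x \mapsto \hat R_{im}(x, 1)$ induces small discretization errors that must be matched against integer-valued rank increments; and (ii) handling the two-dimensional case for $\hat e_{ij}^{(m)}$, where four quadrants must be combined and the boundary residuals at $x=1$ and $y=1$ interact, producing the $(\log(n/k)+\log(1/a))^2$ factor in the error term. These combine to yield the stated $O\big((k/n)^\xi\log(n/k) + (\log(n/k)+\log(1/a))^2/k + a(\log(n/k)+\log(1/a))\big)$ bound.
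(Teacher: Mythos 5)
Your Steps 1 and 3 match the structure of the paper's argument: derive integral representations for both the population and empirical log-moments via a Darth Vader identity (\cref{lemm:Elog}), then truncate below at $a$ and above at $n/k$ and bound the tails using $R_J(\bx) \leq \min_i x_i$ and the tail estimate $R_{im}([x,\infty),1) \lesssim x^{-\xi}$ from \cref{assum:tail}. However, Step 2 contains a genuine gap and a misattribution that together hide where the $(\log(n/k)+\log(1/a))^2/k$ error actually comes from.

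First, the discrete Darth Vader representation does \emph{not} directly produce $\hat R_J$. What falls out of the counting measure $\tfrac{1}{k}\sum_t \delta_{\cdots}$ is the tail copula $\bar R_J(\bx_J) := \tfrac{1}{k}\sum_t \Ind{\hat F_i(U_{ti}) \leq \tfrac{k}{n}x_i,\, i \in J}$, built from the right-continuous empirical distribution functions. By contrast, $\hat R_J(\bx_J) = \hat R_J^0(\hat\bx_J)$ feeds the left-continuous quantile functions $\hat F_i^-$ through $u_n^{(i)}$. These are not the same object, and the whole point of the lemma is to produce a representation in terms of $\hat R_J$, since that is what the decomposition \eqref{eq:decomp} in the rest of the proof is built on. The passage from $\bar R_J$ to $\hat R_J$ requires a separate argument: the paper invokes the almost-sure bound $\sup_{\bx_J}|\bar R_J(\bx_J) - \hat R_J(\bx_J)| \leq 3/k$ (Appendix C.1 of \cite{RWZ17}), and integrating this $3/k$ pointwise bound against $\d x/x \, \d y/y$ over $[a,1]\cup[1,n/k]$ squared is precisely what produces the $(\log(n/k)+\log(1/a))^2/k$ term. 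Your explanation that this factor comes from "boundary residuals at $x=1$ and $y=1$ interacting across four quadrants" is therefore incorrect; truncation alone gives only the $(k/n)^\xi\log(n/k) + a(\log(n/k)+\log(1/a))$ contributions.

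Second, the claim that the discrete representation carries a "boundary residual that equals exactly $\hat e_m^{(m),\ell} - e_m^{(m),\ell}$" is a confusion. The quantities $\hat e_m^{(m),\ell}$ are the self-moments (the case $i=m$), which are deterministic and handled by \cref{lemm:emml} in a separate part of the proof of \cref{thm:concentration}; they do not appear as a boundary term in the representation of $\hat e_i^{(m),\ell}$ for $i\neq m$. The representations \eqref{eq:eiml hat}--\eqref{eq:eijm hat} already have the correct finite domain $[1/k, n/k]$ with no such residual; the lower endpoint $1/k$ arises because the smallest attainable value of $\tfrac{n}{k}\hat F_i(U_{ti})$ is $\tfrac{1}{k}$, and that endpoint is then moved to $a$ as part of Step 3.
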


\begin{proof}
	
	Recall that $i, j, m$ are assumed to be distinct indices. {It is already proved in \cite[Section S.7]{EV22} that the moments of interest satisfy}
	\begin{align}
		e_i^{(m), \ell} &= \int_0^1 \frac{R_{im}(x, 1) (-2\log x)^{\ell-1}}{x} \d x - \int_1^\infty \frac{R_{im}([x, \infty), 1) (-2\log x)^{\ell-1}}{x} \d x, \label{eq:eiml}
		\\
		e_{im}^{(m)} &= \int_0^1 \int_0^1 \frac{R_{im}(x, y)}{xy} \d x \d y - \int_0^1 \int_1^\infty \frac{R_{im}([x, \infty), y)}{xy} \d x \d y, \label{eq:eimm}
		\\
		e_{ij}^{(m)} &= \int_0^1 \int_0^1 \frac{R_{ijm}(x, y, 1)}{xy} \d x \d y - \int_0^1 \int_1^\infty \frac{R_{ijm}([x, \infty), y, 1)}{xy} \d x \d y \notag
		\\
		&\quad - \int_1^\infty \int_0^1 \frac{R_{ijm}(x, [y, \infty), 1)}{xy} \d x \d y + \int_1^\infty \int_1^\infty \frac{R_{ijm}([x, \infty), [y, \infty), 1)}{xy} \d x \d y \label{eq:eijm}
	\end{align}
	and that their empirical versions satisfy
	\begin{align}
		\hat e_i^{(m), \ell} &= \int_{1/k}^1 \frac{\bar R_{im}(x, 1) (-2\log x)^{\ell-1}}{x} \d x - \int_1^{n/k} \frac{\bar R_{im}([x, \infty), 1) (-2\log x)^{\ell-1}}{x} \d x, \label{eq:eiml hat}
		\\
		\hat e_{im}^{(m)} &= \int_{1/k}^1 \int_{1/k}^1 \frac{\bar R_{im}(x, y)}{xy} \d x \d y - \int_{1/k}^1 \int_1^{n/k} \frac{\bar R_{im}([x, \infty), y)}{xy} \d x \d y, \label{eq:eimm hat}
		\\
		\hat e_{ij}^{(m)} &= \int_{1/k}^1 \int_{1/k}^1 \frac{\bar R_{ijm}(x, y, 1)}{xy} \d x \d y - \int_{1/k}^1 \int_1^{n/k} \frac{\bar R_{ijm}([x, \infty), y, 1)}{xy} \d x \d y \notag
		\\
		&\quad - \int_1^{n/k} \int_{1/k}^1 \frac{\bar R_{ijm}(x, [y, \infty), 1)}{xy} \d x \d y + \int_1^{n/k} \int_1^{n/k} \frac{\bar R_{ijm}([x, \infty), [y, \infty), 1)}{xy} \d x \d y, \label{eq:eijm hat}
	\end{align}
	where
	\[
		\bar R_J(\bx_J) := \frac{1}{k} \sum_{t=1}^n \Ind{\hat F_i(U_{ti}) \leq \frac{k}{n} x_i, i \in J}, \quad \bx_J := (x_i)_{i \in J} \in [0, \infty)^{|J|}.
	\]
	
	The integrals in \cref{eq:eiml,eq:eimm,eq:eijm} can be truncated above by using \cref{eq:T3}, which allows to upper bound the tails of the functions $R_J$. In particular, we have
	\begin{align*}
		\int_{n/k}^\infty \frac{R_{im}([x, \infty), 1) (2\log x)^{\ell-1}}{x} dx &\lesssim \int_{n/k}^\infty \frac{(\log x)^{\ell-1}}{x^{1+\xi}} dx \lesssim \Big( \frac{k}{n} \Big)^\xi \log(n/k),
		\\
		\\
		\int_0^1 \int_{n/k}^\infty \frac{R_{im}([x, \infty), y)}{xy} dxdy &= \int_0^1 \int_{n/k}^\infty \frac{R_{im}([x/y, \infty), 1)}{x} dxdy
        \\
        &\lesssim \int_0^1 \int_{n/k}^\infty \frac{(x/y)^{-\xi}}{x} dxdy \lesssim \Big( \frac{k}{n} \Big)^\xi,
	\end{align*}
	and
	\begin{align*}
		&\iint_{[1, \infty)^2 \backslash [1, n/k]^2} \frac{R_{ijm}([x, \infty), [y, \infty), 1)}{xy} dxdy
		\\
		= &\int_{n/k}^\infty \int_{n/k}^\infty \frac{R_{ijm}([x, \infty), [y, \infty), 1)}{xy} dxdy
		+ \int_1^{n/k} \int_{n/k}^\infty \frac{R_{ijm}([x, \infty), [y, \infty), 1)}{xy} dxdy
		\\
		& + \int_{n/k}^\infty\int_1^{n/k} \frac{R_{ijm}([x, \infty), [y, \infty), 1)}{xy} dxdy
		\\
		\leq &\int_{n/k}^\infty \int_{n/k}^\infty \frac{R_{ijm}([x, \infty), [y, \infty), 1)}{xy} dxdy
		+\int_1^{n/k} \int_{n/k}^\infty \frac{R_{im}([x, \infty), 1) }{xy} dxdy
        \\
        &
		+ \int_{n/k}^\infty\int_1^{n/k} \frac{R_{jm}([y, \infty), 1)}{xy} dxdy
		\\
		\lesssim &\int_{n/k}^\infty \int_{n/k}^\infty \frac{x^{-\xi} \wedge y^{-\xi}}{xy} dxdy + 2 \int_1^{n/k} \int_{n/k}^\infty \frac{x^{-\xi}}{xy} dxdy
		\\
		\lesssim &\Big( \frac{k}{n} \Big)^\xi \log(n/k).
	\end{align*}
	
	Hence we proved that all integral can be truncated above at $n/k$ while incurring an error of at most {$O((k/n)^\xi \log(n/k))$}. Next we show that the integrals can as well be truncated below.
	
	{Recall that $a \in (0, 1)$.} Since by their definitions, $R_J$ and $\bar R_J$ are both upper bounded by the minimum component of their argument, so is $|\bar R_J - R_J|$. We then have for $\ell \in \{1, 2\}$
	\begin{align*}
		&\int_0^a \frac{|\bar R_{im}(x, 1) - R_{im}(x, 1)| (-2\log x)^{\ell-1}}{x} dx
        \\
        &\leq \int_0^a (-2\log x)^{\ell-1} dx \lesssim a(1 + \log(1/a)),
		\\
		\\
		&\iint_{[0, 1]^2 \backslash [a, 1]^2} \frac{|\bar R_{im}(x, y) - R_{im}(x, y)|}{xy} dxdy
        \\
        &\leq \int_0^a \int_0^a \frac{x \wedge y}{xy} dxdy + 2 \int_a^1 \int_0^a \frac{1}{y} dxdy \lesssim a(1 + \log(1/a)),
		\\
		\\
		&\int_0^a \int_1^{n/k} \frac{|\bar R_{im}([x, \infty), y) - R_{im}([x, \infty), y)|}{xy} dxdy \leq \int_0^a \int_1^{n/k} \frac{1}{x} dxdy \leq a\log(n/k),
	\end{align*}
	and by symmetry
	\[
		\int_1^{n/k} \int_0^a \frac{|\bar R_{im}(x, [y, \infty)) - R_{im}(x, [y, \infty))|}{xy} dxdy
	\]
	admits the same bound. Finally, the integral
	\[
		\iint_{[0, 1]^2 \backslash [a, 1]^2} \frac{|\bar R_{ijm}(x, y,1) - R_{ijm}(x, y,1)|}{xy} dxdy
	\]
	is handled similarly as 
	\[
		\iint_{[0, 1]^2 \backslash [a, 1]^2} \frac{|\bar R_{im}(x, y) - R_{im}(x, y)|}{xy} dxdy.
	\]
	
	We have therefore proved that each of the integrals in \cref{eq:eiml,eq:eimm,eq:eijm,eq:eiml hat,eq:eimm hat,eq:eijm hat} can be truncated below at a point $a$ and above at $n/k$, up to a deterministic additive error which satisfies the bound {$\lesssim (k/n)^\xi \log(n/k) + a (\log(n/k) + \log(1/a))$}. It follows that with probability 1,
	\begin{align*}
		\hat e_i^{(m), \ell} - e_i^{(m), \ell} &= \int_a^1 \frac{\big( \bar R_{im}(x, 1) - R_{im}(x, 1) \big) (-2\log x)^{\ell-1}}{x} \d x
		\\
		&\quad - \int_1^{n/k} \frac{\big( \bar R_{im}([x, \infty), 1) - R_{im}([x, \infty), 1) \big) (-2\log x)^{\ell-1}}{x} \d x
		\\
		&\quad + O\bigg( \Big( \frac{k}{n} \Big)^\xi \log(n/k) + a(\log(n/k) + \log(1/a)) \bigg),
		\\
		\hat e_{im}^{(m)} - e_{im}^{(m)} &= \int_a^1 \int_a^1 \frac{\bar R_{im}(x, y) - R_{im}(x, y)}{xy} \d x \d y
		\\
		&\quad - \int_a^1 \int_1^{n/k} \frac{\bar R_{im}([x, \infty), y) - R_{im}([x, \infty), y)}{xy} \d x \d y
		\\
		&\quad + O\bigg( \Big( \frac{k}{n} \Big)^\xi \log(n/k) + a(\log(n/k) + \log(1/a)) \bigg),
		\\
		\hat e_{ij}^{(m)} - e_{ij}^{(m)} &= \int_a^1 \int_a^1 \frac{\bar R_{ijm}(x, y, 1) - R_{ijm}(x, y, 1)}{xy} \d x \d y
		\\
		&\quad - \int_a^1 \int_1^{n/k} \frac{\bar R_{ijm}([x, \infty), y, 1) - R_{ijm}([x, \infty), y, 1)}{xy} \d x \d y
		\\
		&\quad - \int_1^{n/k} \int_a^1 \frac{\bar R_{ijm}(x, [y, \infty), 1) - R_{ijm}(x, [y, \infty), 1)}{xy} \d x \d y
		\\
		&\quad + \int_1^{n/k} \int_1^{n/k} \frac{\bar R_{ijm}([x, \infty), [y, \infty), 1) - R_{ijm}([x, \infty), [y, \infty), 1)}{xy} \d x \d y
		\\
		&\quad + O\bigg( \Big( \frac{k}{n} \Big)^\xi \log(n/k) + a(\log(n/k) + \log(1/a)) \bigg),
	\end{align*}
	where the error terms are deterministic. All that remains to obtain the desired result is to replace the functions $\bar R_J$ above by $\hat R_J$, which amounts to comparing the left- and right-continuous versions of an empirical tail copula. By the result in Appendix C.1 of \cite{RWZ17}, we have
	\[
		\max_{J: |J| \leq 3} \sup_{\bx_J \in [0, \infty)^{|J|}} |\bar R_J(\bx_J) - \hat R_J(\bx_J)| \leq \frac{3}{k}
	\]
	almost surely, so replacing $\bar R_J$ by $\hat R_J$ in the integrals above adds an error that is at most of the order of $(\log(n/k) + \log(1/a))^2/k$.
\end{proof}

\begin{lemm}
\label{lemm:boundGamma}
	
	Under \cref{assum:tail}, $\max_{m \in V} \big\| \Gamma^{(m)} \big\|_\infty$ admits an upper bound that depends only on $K$ and $\xi$.
	
\end{lemm}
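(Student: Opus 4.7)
The plan is to bound $\Gamma^{(m)}_{ij}$ entrywise via the decomposition \eqref{eq:repGamma},
\[
	\Gamma_{ij}^{(m)} = e_i^{(m),2} + e_j^{(m),2} - 2e_{ij}^{(m)} - \big( e_i^{(m),1} - e_j^{(m),1} \big)^2,
\]
so that it suffices to bound each of the moments $e_i^{(m),\ell}$, $e_j^{(m),\ell}$, and $e_{ij}^{(m)}$ by a constant depending only on $K$ and $\xi$. For this, I will use the integral representations \eqref{eq:eiml} and \eqref{eq:eijm} that already appear in the proof of \cref{lemm:intrep}, together with the following two elementary bounds on the $R$-function: the homogeneity/monotonicity bound $R_J(\bx_J) \leq \min_{i \in J} x_i$, and the tail bound $R_{im}([x,\infty), 1) \leq K x^{-\xi}$ for $x \geq 1$ that follows from \cref{prop:equiv} (which extracts \cref{assum:tailR} from \cref{assum:tail} with $K_T = K$, $\xi_T = \xi$).

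For the single-index moments, split the representation \eqref{eq:eiml} into the integral on $[0,1]$ and the integral on $[1,\infty)$. On $[0,1]$, bound $R_{im}(x,1) \leq x$ to obtain $\int_0^1 (2|\log x|)^{\ell-1} dx \lesssim 1$. On $[1,\infty)$, bound $R_{im}([x,\infty),1) \leq K x^{-\xi}$ so that $\int_1^\infty K x^{-1-\xi} (2\log x)^{\ell-1} dx \lesssim K/\xi^{\ell}$. This yields $|e_i^{(m),\ell}| \leq C(K,\xi)$ for $\ell \in \{1,2\}$, and in particular $|e_i^{(m),1} - e_j^{(m),1}|^2$ is bounded by a function of $K$ and $\xi$ alone. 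A very similar argument (splitting into the two quadrants appearing in the representation of $e_{im}^{(m)}$, and using $R_{im}([x,\infty),y) \leq y \wedge (Kx^{-\xi}) \leq K^{1/2} y^{1/2} x^{-\xi/2}$ for the mixed quadrant) handles $e_{im}^{(m)}$.

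The only step that requires a bit more care is the bound on $e_{ij}^{(m)}$, whose representation \eqref{eq:eijm} splits into four quadrants. On $[0,1]^2$, use $R_{ijm}(x,y,1) \leq x \wedge y \leq \sqrt{xy}$ so the double integral is finite. On the two mixed quadrants $[0,1]\times[1,\infty)$ and $[1,\infty)\times[0,1]$, apply the bound $R_{ijm}([x,\infty),y,1) \leq R_{im}([x,\infty),1) \wedge y \leq K^{1/2} x^{-\xi/2} y^{1/2}$ (and symmetrically), which makes the integrals converge. On $[1,\infty)^2$, apply $R_{ijm}([x,\infty),[y,\infty),1) \leq R_{im}([x,\infty),1) \wedge R_{jm}([y,\infty),1) \leq K (xy)^{-\xi/2}$, giving an integral bounded by $4K/\xi^2$. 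Summing these contributions and combining with the bounds on the single-index moments in \eqref{eq:repGamma} yields the desired uniform bound depending only on $K$ and $\xi$.

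The only mild obstacle is bookkeeping: each of the multidimensional integrals must be split according to whether each coordinate lies in $[0,1]$ or $[1,\infty)$, and in each region one must choose the right combination of the ``volume'' bound $R \leq \min$ and the tail bound $R([\cdot,\infty)) \leq K(\cdot)^{-\xi}$—sometimes via $\min \leq $ geometric mean—to ensure the integral converges. No further assumption beyond \cref{assum:tail} is needed, so the final constant indeed depends only on $K$ and $\xi$.
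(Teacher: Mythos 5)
Your proof is correct and follows essentially the same route as the paper: it uses the decomposition \eqref{eq:repGamma}, the integral representations \eqref{eq:eiml}--\eqref{eq:eijm}, the tail bound $R_{im}([x,\infty),1)\leq Kx^{-\xi}$ from \cref{prop:equiv}, the volume bound $R_J\leq \min$, and the $a\wedge b\leq\sqrt{ab}$ trick, region by region. The only thing you leave implicit is the case $j=m$, where $\Gamma_{im}^{(m)}$ involves $e_m^{(m),\ell}$; the paper handles this by recalling directly that $\log Y_m^{(m)}$ is standard exponential, so $e_m^{(m),\ell}=\ell$, rather than via an integral representation. This is a trivial omission and does not affect the argument.
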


\begin{proof}
	
First, as is pointed out in \cref{sec:emml}, for $\ell \in \{1, 2\}$, $e_m^{(m), \ell} = \ell$.
	
The remaining arguments are based on \cref{eq:T3}, which holds by assumption and states that for all distinct triples $(i, j, m)$,
\[
	R_{ij}([x, \infty), 1) \leq Kx^{-\xi}, \quad R_{ijm}([x, \infty), [y, \infty), 1) \leq K (x \wedge y)^{-\xi}, \quad x, y \geq 1.
\]
Equally important is the fact that every function $R_J$ is upper bounded by its minimum argument. The proof consists of plugging those different bounds in \cref{eq:eiml,eq:eimm,eq:eijm} above, which provided expressions for the moments $e_i^{(m), \ell}$, $e_{im}^{(m)}$ and $e_{ij}^{(m)}$. Repeatedly using the inequality $a \wedge b \leq (ab)^{1/2}$ for positive $a,b$, deduce that
\begin{align*}
\big| e_i^{(m), \ell} \big| &\leq \int_0^1 (-2\log x)^{\ell-1} dx + K \int_1^\infty \frac{(-2\log x)^{\ell-1}}{x^{1+\xi}} dx,
\\
\big| e_{im}^{(m)} \big| &\leq \int_0^1 \int_0^1 (xy)^{-1/2} dxdy + \sqrt{K} \int_0^1 \int_1^\infty x^{-1-\xi/2} y^{-1/2} dxdy,
\\
\big| e_{ij}^{(m)} \big| &\leq \int_0^1 \int_0^1 (xy)^{-1/2} dxdy + 2\sqrt{K} \int_0^1 \int_1^\infty x^{-1-\xi/2} y^{-1/2} dxdy
\\
& \quad + K \int_1^\infty \int_1^\infty (xy)^{-1-\xi/2} dxdy.
\end{align*}
Simply plugging those bounds in \cref{eq:repGamma} yields the result.
\end{proof}
\color{black}

\subsection{Bounds on the measures \texorpdfstring{$R_{ij}$}{Rij}}

Recall the representation of $R_{ij}$ as a non-negative measure, for an arbitrary pair $i \neq j$. The following bounds necessarily hold.

\begin{lemm}
\label{lemm:boundR}
	
	Let $0 < a \leq b$ and $y>0$. Then for every distinct pair $(i, j)$,
	\[
		R_{ij}([a, b], y) \leq y \frac{b-a}{a}.
	\]
	
\end{lemm}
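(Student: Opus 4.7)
The plan is to exploit the 1-homogeneity of $R_{ij}$ together with concavity. First, using the measure-theoretic identity $R_{ij}([a,b], y) = R_{ij}(b, y) - R_{ij}(a, y)$ and then applying 1-homogeneity with scaling $1/y$ yields
\[
R_{ij}([a, b], y) = y\bigl[f(b/y) - f(a/y)\bigr], \quad \text{where } f(s) := R_{ij}(s, 1).
\]
So the task reduces to bounding $f(b/y) - f(a/y)$ by $(b-a)/a$.

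To do this, I will use three properties of $f$. First, $f(0) = 0$ and $f(s) \leq 1$, both following from the bound $R_{ij}(x, y) \leq x \wedge y$ that is used repeatedly throughout the paper. Second, $f$ is concave: via the identity $R_{ij}(x, y) = x + y - L_{ij}(x, y)$ (stated in the paper for the bivariate case), this is inherited from convexity of the stable tail dependence function $L_{ij}$, a standard property of multivariate extreme value distributions. The key elementary fact I will then invoke is that any non-negative concave function $f$ with $f(0) = 0$ satisfies the property that $s \mapsto f(s)/s$ is non-increasing on $(0, \infty)$; this follows by writing $s_1 = (s_1/s_2)\, s_2 + (1 - s_1/s_2)\cdot 0$ and applying concavity. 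Applied with $a/y \leq b/y$, this yields $f(b/y) \leq (b/a) f(a/y)$, hence
\[
f(b/y) - f(a/y) \leq \bigl(b/a - 1\bigr) f(a/y) \leq \frac{b-a}{a},
\]
where the last step uses $f(a/y) \leq 1$.

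Multiplying by $y$ delivers the claim. The only step that is not entirely routine is the appeal to concavity of $R_{ij}$, which is not explicitly stated earlier in the excerpt but is a standard consequence of the convexity of stable tail dependence functions; thus I do not anticipate any real obstacle.
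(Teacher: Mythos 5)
Your proof is correct, but it takes a genuinely different route from the paper. The paper's argument is purely geometric: the rectangle $[a,b] \times [0,y]$ is contained in the trapezoid $S(b) \setminus S(a)$, where $S(x) := \{(u,v): u \le x,\ v \le yu/a\}$ is a scaled copy of a fixed wedge ($S(x) = x\,S(1)$). Homogeneity of the measure $R_{ij}$ then gives the \emph{exact} identity $R_{ij}(S(b) \setminus S(a)) = (b-a)\,R_{ij}(S(1))$, and the bound $R_{ij}(S(1)) \le R_{ij}(1, y/a) \le y/a$ finishes. That argument uses only homogeneity and the fact that $R_{ij}$ is bounded by its smallest argument, both of which are stated and used repeatedly in the paper. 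Your argument instead reduces to the one-dimensional map $f(s) = R_{ij}(s,1)$ and invokes concavity of $R_{ij}$ --- equivalently, convexity of the bivariate stable tail dependence function $L_{ij}$ --- to deduce that $f(s)/s$ is non-increasing. This is a correct and standard property, but it is an additional structural input the paper's proof does not need, and indeed nowhere else in the paper is convexity of $L$ relied upon. What your route buys is a cleaner reduction to elementary real analysis of a concave function; what the paper's route buys is self-containment, requiring only the two properties of $R_{ij}$ already in play. Both are short and both are valid.
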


\begin{proof}
	
	The idea is that the rectangle $[a, b] \times [0, y]$ is included in the trapezoid $\{(u, v) \in [0, \infty)^2: a \leq u \leq b, v \leq yu/a\} = S(b) \backslash S(a)$, where
	\[
		S(x) := \{(u, v) \in [0, \infty)^2: u \leq x, v \leq yu/a\}.
	\]
	By homogeneity of $R_{ij}$,
	\[
		R_{ij}(S(b) \backslash S(a)) = (b-a) R_{ij}(S(1)) \leq (b-a) R_{ij}(1, y/a) \leq y \frac{b-a}{a},
	\]
	since $R_{ij}$ is always upper bounded by its smallest argument.
\end{proof}

The following bound assumes more but is considerably more flexible, as $\beta$ can be both smaller and larger than 1.

\begin{lemm}
\label{lemm:boundRsmall}
	
	Under \cref{assum:r}, for every $\beta \in (0, 1+\eps]$ there exists $K(\beta) < \infty$ such that for any $0 < a \leq b$, $y>0$ and every distinct pair $(i, j)$,
	\[
		R_{ij}([a, b], y) \leq \frac{K(\beta)}{\beta} y^\beta \frac{b-a}{a^\beta}.
	\]
	
\end{lemm}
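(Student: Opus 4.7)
The plan is to bound $R_{ij}([a,b], y)$ directly by integrating the density bound from Assumption~\ref{assum:r}. Recall that by the convention adopted in Section~\ref{sec:proof}, $R_{ij}([a,b], y) = R_{ij}([a,b] \times [0,y])$, interpreted as the mass assigned by the measure $R_{ij}$ to that rectangle.

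The first step is to express this mass as an integral of the mixed partial derivative. Since $r_{ij}$ exists on $(0,\infty)^2$ by Assumption~\ref{assum:r}, for any $c \in (0, y]$ we have
\[
R_{ij}([a,b] \times [c, y]) = \int_a^b \int_c^y r_{ij}(u, v)\, dv\, du.
\]
Applying the assumed bound with exponent $\beta \in (0, 1+\eps]$ (which lies in the allowed range $[-\eps, 1+\eps]$) gives $r_{ij}(u,v) \leq K(\beta) u^{-\beta} v^{\beta - 1}$. Since $\beta > 0$, the function $v \mapsto v^{\beta - 1}$ is integrable near $0$, so by monotone (or dominated) convergence as $c \downarrow 0$,
\[
R_{ij}([a,b] \times [0, y]) = \int_a^b \int_0^y r_{ij}(u, v)\, dv\, du.
\]

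The second step is the explicit computation. Using the density bound and separating the integrals,
\[
R_{ij}([a,b], y) \leq K(\beta) \int_a^b u^{-\beta}\, du \int_0^y v^{\beta - 1}\, dv = K(\beta) \left( \int_a^b u^{-\beta}\, du \right) \frac{y^\beta}{\beta}.
\]
Because $\beta > 0$, the map $u \mapsto u^{-\beta}$ is decreasing on $(0,\infty)$, so $\int_a^b u^{-\beta}\, du \leq (b-a) a^{-\beta}$. Combining yields
\[
R_{ij}([a,b], y) \leq \frac{K(\beta)}{\beta} y^\beta \frac{b-a}{a^\beta},
\]
which is the claimed bound.

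There is essentially no obstacle here: the argument is a direct two-variable integration of the density bound, and the only minor technical point is justifying that the integral representation extends down to $v=0$, which follows immediately from the integrability of $v^{\beta-1}$ at the origin for $\beta > 0$. The monotonicity argument used to pass from $\int_a^b u^{-\beta} du$ to $(b-a)a^{-\beta}$ is what forces the restriction $\beta > 0$ in the statement, mirroring the choice made in the $v$ integration.
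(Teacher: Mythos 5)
Your proposal is correct and follows essentially the same route as the paper: integrate the density bound $r_{ij}(u,v) \leq K(\beta)\, u^{-\beta} v^{\beta-1}$ over $[a,b] \times [0,y]$, compute the $v$-integral exactly, and bound the $u$-integral using monotonicity of $u \mapsto u^{-\beta}$. The extra care you take in justifying the extension of the integral down to $v=0$ is a fine (if slightly more than the paper bothers to spell out) remark, but otherwise there is nothing to add.
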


\begin{proof}
	
	The bound in \cref{assum:r} gives
	\begin{align*}
		R_{ij}([a, b], y) &= \int_0^y \int_a^b r_{ij}(u, v) dudv \leq K(\beta) \int_0^y v^{\beta-1} dv \int_a^b u^{-\beta} du
        \\
        &\leq \frac{K(\beta)}{\beta} y^\beta \int_a^b a^{-\beta} du = \frac{K(\beta)}{\beta} y^\beta \frac{b-a}{a^\beta}.
	\end{align*}
\end{proof}

\begin{lemm}
\label{lemm:boundRmix}
	
	Under \cref{assum:r}, for every $\beta \in [-\eps, 0)$ there exists $K(\beta) < \infty$ such that for any $0 < a \leq b$, $y>0$ and every distinct pair $(i, j)$,
	\[
		R_{ij}([a, b], [y, \infty)) \leq \frac{K(\beta)}{-\beta} y^\beta b^{-\beta} (b-a).
	\]
	
\end{lemm}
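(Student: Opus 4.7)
The plan is to directly apply the density bound from \cref{assum:r} with exponent $\beta$ (since the assumption allows $\beta \in [-\eps, 1+\eps]$ and we are given $\beta \in [-\eps, 0)$), then use Fubini to separate the double integral over $[a,b] \times [y, \infty)$ into a product of one-dimensional integrals. This is very close in spirit to the proof of \cref{lemm:boundRsmall}, but with the roles of the variables adjusted so that the integral over the unbounded interval $[y, \infty)$ in the second coordinate converges.

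More concretely, I would first write
\[
R_{ij}([a,b], [y, \infty)) = \int_a^b \int_y^\infty r_{ij}(u, v) \, dv \, du \leq K(\beta) \int_a^b u^{-\beta} \, du \int_y^\infty v^{\beta - 1} \, dv,
\]
where the inequality uses \cref{assum:r} applied at the exponent $\beta \in [-\eps, 0)$. The second integral converges precisely because $\beta < 0$, and equals $-y^\beta/\beta = y^\beta/(-\beta)$.

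For the first integral, I would use the elementary observation that since $-\beta > 0$, the function $u \mapsto u^{-\beta}$ is increasing on $(0, \infty)$, so $u^{-\beta} \leq b^{-\beta}$ for all $u \in [a, b]$. This gives $\int_a^b u^{-\beta} du \leq b^{-\beta}(b-a)$. Combining the two bounds yields
\[
R_{ij}([a,b], [y, \infty)) \leq K(\beta) \cdot b^{-\beta}(b-a) \cdot \frac{y^\beta}{-\beta} = \frac{K(\beta)}{-\beta} y^\beta b^{-\beta}(b-a),
\]
which is the desired conclusion. There is no substantive obstacle in this proof; the only point requiring care is recognizing that the sign condition $\beta < 0$ is exactly what makes the tail integral in $v$ finite, whereas in \cref{lemm:boundRsmall} the complementary condition $\beta > 0$ was needed to make an integral from $0$ finite.
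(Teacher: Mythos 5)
Your proof is correct and coincides with the paper's argument: both apply the density bound from \cref{assum:r} with exponent $\beta$, integrate the $v$-variable over $[y,\infty)$ using $\beta<0$ for convergence, and bound $u^{-\beta}\leq b^{-\beta}$ on $[a,b]$. Your exposition just spells out more explicitly why each step works.
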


\begin{proof}
	
	Following the proof of \cref{lemm:boundRsmall},
	\begin{align*}
		R_{ij}([a, b], [y, \infty)) &\leq K(\beta) \int_y^\infty v^{\beta-1} dv \int_a^b u^{-\beta} du
        \\
        &\leq \frac{K(\beta)}{-\beta} y^\beta \int_a^b b^{-\beta} du = \frac{K(\beta)}{-\beta} y^\beta b^{-\beta} (b-a).
	\end{align*}
\end{proof}

\subsection{Technical results from empirical process theory}\label{sec:s_mislan}

In this section, we collect two fundamental inequalities from empirical process theory that are used in \cref{sec:ep}. Denote by $\cG$ a class of real-valued functions that satisfies $|f(x)| \leq F(x) \leq U$ for every $f \in \cG$ and let $\sigma^2 \geq \sup_{f \in \cG} Pf^2$. Additionally, suppose that for some positive $A$, $V$ and for all $\eps > 0$,
\begin{equation} \label{eq:entr}
	N(\eps,\cG,L_2(\Prob_n)) \leq \Big(\frac{A \|F\|_{L^2(\Prob_n)}}{\eps} \Big)^V
\end{equation}
almost surely. In that case, the symmetrization inequality and inequality (2.2) from \cite{K06} yield
\begin{equation} \label{eq:Gexpect}
	\E[\|\Prob_n - P\|_{\cG}] \leq c_0 \Big[\sigma \Big(\frac{V}{n} \log \frac{A \|F\|_{L^2(P)}}{\sigma}\Big)^{1/2} + \frac{VU}{n} \log \frac{A \|F\|_{L^2(P)}}{\sigma}  \Big]
\end{equation}
for a universal constant $c_0>0$ provided that $1 \geq \sigma^2 > \text{const}\times n^{-1}$. In fact, the inequality in \cite{K06} is for $\sigma^2 = \sup_{f \in \cG} Pf^2$. However, this is not a problem since we can replace $\cG$ by $\cG \sigma / (\sup_{f \in \cG} Pf^2)^{1/2}$.

The second inequality (a refined version of Talagrand's concentration inequality) states that for any countable class of measurable functions $\cF$ with elements mapping into $[-M,M]$,
\begin{equation} \label{eq:Gcontract}
	\Prob\Big( \|\Prob_n - P\|_{\cF} \geq 2 \E[\|\Prob_n - P\|_{\cF}] + c_1 n^{-1/2 }\Big(\sup_{f \in \cF} Pf^2\Big)^{1/2} \sqrt{t} + n^{-1}c_2 M t \Big) \leq e^{-t},
\end{equation}
for all $t>0$ and some universal constants $c_1,c_2 >0$. This is a special case of Theorem 3 in \cite{mass2000} (in the notation of that paper, set $\eps = 1$).

\subsection{Discussion of multivariate Pareto distributions and some domain of attraction conditions}
\label{sec:doa}

\newcommand{\CP}{{\rm{CP}}}

In this section, it is shown that the basic domain of attraction condition in \cref{eq:mPdlimit} implies other limit relations. We denote convergence in distribution in Euclidean spaces by~``$\wc$''. Unless stated otherwise, all the convergence statements are understood as $q \downarrow 0$.

\begin{prop} \label{prop:doa}
	Under \cref{eq:mPdlimit},
	\[
		\frac{q}{1-F(\g X)} \, \Big | \, \{F_m(X_m) > 1-q\} \wc \bY^{(m)}.
	\]
\end{prop}

\begin{proof}

	Let $\bY_q \in \R^d$ denote a random vector distributed according to the conditional law of $\tfrac{q}{1 - F(\bX)} \mid \|\tfrac{q}{1 - F(\bX)}\|_\infty > 1$. Writing $\CP(f)$ for the set of continuity points of a function $f$, the assumption in \cref{eq:mPdlimit} can be written as
	\[
		\Prob(\bY_q \leq \bx) \too \Prob(\bY \leq \bx), \quad \bx \in \CP(\Prob(\bY \leq \cdot)),
	\]
	which defines convergence in distribution of $\bY_q$ to $\bY$. Note that this is further equivalent to
	\[
		\Prob(\bY_q > \bx) \too \Prob(\bY > \bx), \quad \bx \in \CP(\Prob(\bY > \cdot)),
	\]
	since $\bY_q \wc \bY$ if and only if $-\bY_q \wc -\bY$ and since $\CP(\Prob(-\bY \leq \cdot)) = \CP(\Prob(-\bY < \cdot))$. Using this principle, it suffices to show that for any $\bx \in \CP(\Prob(\bY^{(m)} > \cdot))$, $\Prob(\bY_q > \bx \mid (\bY_q)_m > 1) \to \Prob(\bY^{(m)} > \bx)$. Before establishing this, we shall study the continuity points of $\Prob(\bY^{(m)} > \cdot)$.

	\textbf{The continuity points where $x_m > 1$:}
	Let $\bx \in \CP(\Prob(\bY^{(m)} > \cdot))$ be such that $x_m > 1$. Then $\bx$ is also a continuity point of $\Prob(\bY > \cdot)$, since these two survival functions are (locally) proportional to each other: for $\by$ in an open neighborhood of $\bx$,
	\[
		\Prob(\bY > \by) = \Prob(\bY > \by, Y_m > 1) = \Prob(Y_m > 1) \Prob(\bY^{(m)} > \by).
	\]

	\textbf{The continuity points where $x_m \leq 1$:}
	Let $\bx \in \CP(\Prob(\bY^{(m)} > \cdot))$ be such that $x_m \leq 1$. We claim that necessarily, the point $\bx^{(m)} := (x_1, \dots, x_{m-1}, 1, x_{m+1}, \dots, x_d)$ belongs to $\CP(\Prob(\bY > \cdot))$. Indeed, $\bx \in \CP(\Prob(\bY^{(m)} > \cdot))$ and $\bx^{(m)} \in \CP(\Prob(\bY > \cdot))$ are equivalent to
	\begin{equation} \label{eq:YmCP}
		\Prob(\bY^{(m)} \in \partial(\bx, \infty)) = 0
	\end{equation}
	and
	\begin{equation} \label{eq:YCP}
		\Prob(\bY \in \partial(\bx^{(m)}, \infty)) = 0,
	\end{equation}
	respectively, where $\partial A$ denotes the boundary of $A$ in $\R^d$. The latter probability is upper bounded by
	\begin{multline*}
		\Prob(Y_m = 1) + \sum_{j \notin \{m, j\}} \Prob(Y_j = x_j, \bY_{\setminus \{m, j\}} \geq \bx_{\setminus \{m, j\}}, Y_m \geq 1)
        \\
		= \sum_{j \notin \{m, j\}: x_j = 0} \Prob(Y_j = 0, \bY_{\setminus \{m, j\}} \geq \bx_{\setminus \{m, j\}}, Y_m \geq 1),
	\end{multline*}
	using the fact that $\Prob(Y_j = x) = 0$ for each $j$ and every $x>0$. To check the latter, suppose that $\Prob(Y_j = x) = p > 0$ for some $x>0$ with $j=1$, say. Then one can identify an infinite number of mutually disjoint sets of the form $( \{ax\} \times [0, \infty) \times \dots \times [0, \infty) ) \cap \cL$ with $1 \leq a \leq 2$, which by homogeneity all have a $\bY$-mass of at least $p/2$, a contradiction. Thus in order for \cref{eq:YCP} to fail, there needs to exist at least one index $j^*$ such that $x_{j^*} = 0$ and $\Prob(Y_{j^*} = 0, \bY_{\setminus \{m, j^*\}} \geq \bx_{\setminus \{m, j^*\}}, Y_m \geq 1) > 0$. Supposing this is the case, however, we have
	\begin{align*}
		\Prob(\bY^{(m)} \in \partial(\bx, \infty)) &\geq \Prob(\bY \in \partial(\bx, \infty), Y_m>1)
        \\
        &\geq \Prob(Y_{j^*} = 0, \bY_{\setminus \{m, j^*\}} \geq \bx_{\setminus \{m, j^*\}}, Y_m > 1) > 0,
	\end{align*}
	so that \cref{eq:YmCP} also fails. Deduce that \cref{eq:YmCP} implies \cref{eq:YCP}.

	\textbf{Proof of the proposition:}
	We are now ready to establish convergence of the conditional survival function of $\bY_q \mid (\bY_q)_m > 1$. We have
	\[
		\Prob(\bY_q > \bx \mid (\bY_q)_m > 1) = \frac{\Prob(\bY_q > \bx, (\bY_q)_m > 1)}{\Prob((\bY_q)_m > 1)} \sim \frac{\Prob(\bY_q > \bx, (\bY_q)_m > 1)}{\Prob(Y_m > 1)},
	\]
	since it was already shown that $\Prob(Y_m = 1) = 0$. If $\bx \in \CP(\Prob(\bY^{(m)} > \cdot))$ and $x_m > 1$, it was shown that $\bx$ is also a continuity point of $\Prob(\bY > \cdot)$, hence $\Prob(\bY_q > \bx, (\bY_q)_m > 1) = \Prob(\bY_q > \bx) \to \Prob(\bY > \bx)$. If on the other hand $\bx \in \CP(\Prob(\bY^{(m)} > \cdot))$ and $x_m \leq 1$, it was shown that $\bx^{(m)} \in \CP(\Prob(\bY > \cdot))$, hence $\Prob(\bY_q > \bx, (\bY_q)_m > 1) = \Prob(\bY_q > \bx^{(m)}) \to \Prob(\bY > \bx^{(m)})$. In both cases, we therefore have
	\[
		\Prob(\bY_q > \bx \mid (\bY_q)_m > 1) \too \frac{\Prob(\bY > \bx, Y_m > 1)}{\Prob(Y_m > 1)} = \Prob(\bY^{(m)} > \bx),
	\]
	from which we may finally conclude that $\bY_q \mid \{(\bY_q)_m > 1\} \wc \bY^{(m)}$.
\end{proof}

\begin{prop} \label{prop:subR}
	Assume that \cref{eq:mPdlimit} holds, that the marginal distribution functions $F_j$ are eventually continuous, and that the multivariate Pareto distribution $\bY$ satisfies $\Prob(Y_j=0)$, $j \in V$. For each subset $J \subseteq \{1, \dots, d\}$, define
	\[
		R_J(\bx_J) := \lim_{q \downarrow 0} q^{-1} \Prob(F_J(\bX_J) > 1 - q\bx_J).
	\]
	Then for each such $J$,
	\begin{itemize}
		\item[(i)] for every $\bx_J \in [0, \infty)^{|J|}$ such that $\|1/\bx_J\|_\infty > 1$,
		\[
			R_J(\bx_J) = \frac{\Prob(\bY_J > 1/\bx_J)}{\Prob(Y_1 > 1)};
		\]
		\item[(ii)] for every $\bx_J \in [0, \infty)^{|J|}$
		\[
			\lim_{\bx_{\setminus J} \to \infty} R(\bx) = R_J(\bx_J).
		\]
	\end{itemize}
\end{prop}

\begin{proof}

	\textbf{Proof of (i):}
	It suffices to prove the case where $J = \{1, \dots, d\}$, i.e., that for every $\bx \in [0, \infty)^d$ such that $\|1/\bx\|_\infty > 1$,
	\begin{equation} \label{eq:Y2R}
		R(\bx) = \frac{\Prob(\bY > 1/\bx)}{\Prob(Y_1 > 1)}.
	\end{equation}
	Indeed, for $J \subset \{1, \dots, d\}$, the random vector $\bX_J$ itself satisfies \cref{eq:mPdlimit} with the multivariate Pareto distributed $\bY$ replaced by $\bY_J \mid \{\|\bY_J\|_\infty > 1\}$ (the components of which do not have a positive mass at 0) in the sense that
	\begin{align*}
		&\Prob\Big( \frac{q}{1 - F_J(\bX_J)} > \bx_J \, \Big | \, \Big\| \frac{q}{1 - F_J(\bX_J)} \Big\|_\infty > 1 \Big)
        \\
        &= \frac{\Prob\Big( \frac{q}{1 - F_J(\bX_J)} > \bx_J, \Big\| \frac{q}{1 - F_J(\bX_J)} \Big\|_\infty > 1 \Big)}{\Prob\Big( \Big\| \frac{q}{1 - F_J(\bX_J)} \Big\|_\infty > 1 \Big)}
		\\
		&= \frac{\Prob\Big( \frac{q}{1 - F_J(\bX_J)} > \bx_J, \Big\| \frac{q}{1 - F_J(\bX_J)} \Big\|_\infty > 1 \, \Big | \, \Big\| \frac{q}{1 - F(\bX)} \Big\|_\infty > 1 \Big)}{\Prob\Big( \Big\| \frac{q}{1 - F_J(\bX_J)} \Big\|_\infty > 1 \, \Big | \, \Big\| \frac{q}{1 - F(\bX)} \Big\|_\infty > 1 \Big)}
		\\
		&\too \frac{\Prob(\bY_J > \bx_J, \|\bY_J\|_\infty > 1)}{\Prob(\|\bY_J\|_\infty > 1)}
		\\
		&= \Prob(\bY_J > \bx_J \mid \|\bY_J\|_\infty > 1).
	\end{align*}
	In order to guarantee the convergence of all the probabilities above, we have used the assumption that $\Prob(Y_j=0)=0$, along with the fact shown in the proof of \cref{prop:doa} that $Y_j$ cannot have a point mass in $(0, \infty)$, to deduce that $\Prob(\bY > \cdot)$ is continuous everywhere. Therefore if \cref{eq:Y2R} holds, then it also holds that for $\bx_J \in [0, \infty)^{|J|}$ such that $\|1/\bx_J\|_\infty > 1$,
	\[
		R_J(\bx_J) = \frac{\Prob(\bY_J > 1/\bx_J \mid \|\bY_J\|_\infty > 1)}{\Prob(Y_1 > 1 \mid \|\bY_J\|_\infty > 1)} = \frac{\Prob(\bY_J > 1/\bx_J)}{\Prob(Y_1 > 1 )}.
	\]

	To prove \cref{eq:Y2R}, simply apply \cref{eq:mPdlimit} to obtain, for $\bx$ such that $1/\bx \in \cL$,
	\begin{align*}
		\frac{\Prob(\bY > 1/\bx)}{\Prob(Y_1 > 1)} &= \lim_{q \downarrow 0} \frac{\Prob(F(\bX) > 1-q\bx \mid F(\bX) \not\leq 1-q)}{\Prob(F_1(X_1) > 1-q \mid F(\bX) \not\leq 1-q)}
        \\
		&= \lim_{q \downarrow 0} \frac{\Prob(F(\bX) > 1-q\bx)}{\Prob(F_1(X_1) > 1-q)}
		= \lim_{q \downarrow 0} \frac{\Prob(F(\bX) > 1-q\bx)}{q},
	\end{align*}
	since the eventual continuity of $F_1$ guarantees that $\Prob(F_1(X_1) > 1-q) = q$ for sufficiently small $q$. The limit on the right-hand side above is by definition equal to $R(\bX)$.

	\textbf{Proof of (ii):}
	Let $j \in V$ be arbitrary and $\bx \in [0, \infty)^d$ be such that $\|1/\bx_{\setminus j}\|_\infty > 1$. Using (i), we find that as $x_j \to \infty$,
	\[
		R(\bx) = \frac{\Prob(\bY > 1/\bx)}{\Prob(Y_1 > 1)} \too \frac{\Prob(\bY_{\setminus j} > 1/\bx_{\setminus j}, Y_j > 0)}{\Prob(Y_1 > 1)} = \frac{\Prob(\bY_{\setminus j} > 1/\bx_{\setminus j})}{\Prob(Y_1 > 1)}
	\]
	where the convergence is due to continuity from below, and the last equality uses the assumption that $\Prob(Y_j > 0) = 1$. The right-hand side above is equal to $R_{\setminus j}(\bx_{\setminus j})$, again by (i). The proof follows by induction on the size of $J$.
\end{proof}

\subsection{Discussion of max-stable distributions}
\label{sec:ms}

In this section, we take $\bX$ to be distributed according to the max-stable distribution associated to an arbitrary multivariate Pareto $\bY$ with stable dependence function $L$; recall the definition of this function from \cref{eq:elllimit}. That is, the copula of $\bX$ is given by
\[
	\Prob(F(\bX) \leq \bx) = \exp\{ -L(-\log \bx) \}.
\]
We shall demonstrate the following result. Note that the constant $K' = 48$ therein is not particularly sharp, and can be improved at the cost of more detailed calculations.

\begin{prop}\label{prop:maxstableA3}
	
	The max-stable random vector $\bX$, assuming that its marginal distributions are continuous, satisfies \cref{assum:RJstand} with $K' = 48$ and $\xi' = 1$.
	
\end{prop}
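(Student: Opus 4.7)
The idea is to expand $\Prob(F_J(\bX_J) > \bone - q\bx_J)$ via inclusion--exclusion, then compare to an analogous representation of $R_J$ in terms of $L_J$ and its lower-dimensional restrictions, matching terms order-by-order in $q$.

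I would first dispose of the non-small $q$ regime. When $q \in (1/2,1]$, $q^{-1}\Prob(\cdots) \leq 2$ and $R_J(\bx_J) \leq \min_i x_i \leq 1$, so the difference is bounded by $3 \leq 48q$. Hence it suffices to consider $q \in (0, 1/2]$, which also ensures $qx_i \leq 1/2$ and avoids boundary issues with $-\log(1-qx_i)$. For such $q$, the identity $\prod_{j\in J}\mathbf{1}_{A_j} = \sum_{I\subseteq J}(-1)^{|I|}\prod_{j\in I}\mathbf{1}_{A_j^c}$ together with the continuity of the margins and the max-stable form of the copula (noting that the sub-vector $F_I(\bX_I)$ is itself max-stable with stable tail dependence function $L_I$, the restriction of $L$ to coordinates in $I$) yields
\[
\Prob(F_J(\bX_J) > \bone - q\bx_J) = \sum_{I\subseteq J}(-1)^{|I|}\exp\{-a_I\}, \qquad a_I := L_I(-\log(\bone - q\bx_I)),
\]
with the convention $a_\emptyset = 0$. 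A parallel inclusion--exclusion applied to the limit defining $R_J$, or equivalently Taylor expansion of the above in $q \downarrow 0$, gives $R_J(\bx_J) = -\sum_{\emptyset \neq I \subseteq J}(-1)^{|I|} L_I(\bx_I)$ (this matches the stated relation $R(x,y)=x+y-L(x,y)$ in the bivariate case). Since $\sum_{I\subseteq J}(-1)^{|I|} = 0$ for $J\neq\emptyset$, subtracting gives
\[
q^{-1}\Prob(F_J(\bX_J) > \bone - q\bx_J) - R_J(\bx_J) = q^{-1}\sum_{\emptyset \neq I\subseteq J}(-1)^{|I|}\bigl[e^{-a_I} - 1 + qL_I(\bx_I)\bigr].
\]

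The key observation is that the singleton terms ($|I|=1$) cancel \emph{exactly}: for $I=\{i\}$, one has $L_{\{i\}}(u)=u$, so $a_I = -\log(1-qx_i)$, which gives $e^{-a_I} = 1 - qx_i = 1 - qL_{\{i\}}(x_i)$, and the bracket vanishes. Only the $|I|\geq 2$ terms remain. For these, I would split the bracket as $(e^{-a_I}-1+a_I) - (a_I - qL_I(\bx_I))$. The first summand is non-negative and bounded by $a_I^2/2$. For the second summand, subadditivity of $L_I$ (which follows from convexity and degree-$1$ homogeneity, and in turn implies Lipschitz continuity in $\ell^1$) gives
\[
0 \leq a_I - qL_I(\bx_I) \leq \sum_{i\in I}\bigl[-\log(1-qx_i) - qx_i\bigr] \leq \sum_{i\in I}(qx_i)^2 \leq |I|q^2,
\]
using $-\log(1-u)-u \leq u^2$ for $u \leq 1/2$. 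The same bound on $-\log(1-u)$ gives $a_I \leq |I|q(1+q) \leq \tfrac{3}{2}|I|q$, so $a_I^2/2 \leq \tfrac{9}{8}|I|^2 q^2$.

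Combining, for $|I|\geq 2$ we obtain $|e^{-a_I}-1+qL_I(\bx_I)| \leq \tfrac{13}{8}|I|^2 q^2$. Since there are at most $3$ pairs and $1$ triple inside $J$ (for $|J|\leq 3$), summing yields a total bound of at most $(3\cdot \tfrac{13}{2} + \tfrac{117}{8})q^2 < 48 q^2$, and dividing by $q$ gives the stated inequality uniformly over $\bx_J \in [0,1]^{|J|}$. The only obstacle worth emphasising is identifying the exact cancellation of the singleton contributions; once that is recognised, the rest is elementary Taylor analysis and careful constant tracking.
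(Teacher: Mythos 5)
Your argument is correct, and it shares the same skeleton as the paper's proof: reduce to small $q$, apply inclusion--exclusion to express $\Prob(F_J(\bX_J)>\bone-q\bx_J)$ in terms of the max-stable copula, and Taylor-expand the exponentials. The organizational difference is worth noting. You expand over intersections of complements, writing $\Prob(\cap_{j\in J}A_j)=\sum_{I\subseteq J}(-1)^{|I|}e^{-a_I}$ with $a_I = L_I(-\log(\bone-q\bx_I))$, then observe that the singleton terms cancel exactly and Taylor-bound the remaining $|I|\geq 2$ contributions. The paper instead first establishes an auxiliary bound $|q^{-1}\Prob(F_J(\bX_J)\not\leq 1-q\bx)-L_J(\bx)|\leq 12q$ on the ``union'' probabilities, then applies it to each union appearing in the inclusion--exclusion expansion of $\Prob(\cap E_j)$ (once for $|J|=2$, four times for $|J|=3$, which is where the constant $48$ comes from). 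The two decompositions are dual, and the underlying bounds ($-\log(1-u)-u\leq u^2$ for $u\leq 1/2$, subadditivity and Lipschitz continuity of $L$, and $e^{-a}-1+a\leq a^2/2$) are the same; your route avoids the intermediate lemma and gives a slightly smaller constant ($\approx 35q$ for $|J|=3$), which still satisfies the stated bound with $K'=48$. Two small bookkeeping remarks: your $\tfrac{13}{8}|I|^2 q^2$ bound adds the two nonnegative halves of the split bracket rather than taking their maximum, so it is not tight but is a valid upper bound; and the ``Lipschitz in $\ell^1$'' property of $L_I$ that you invoke is standard (each partial derivative lies in $[0,1]$), but it is worth stating since it is the step that justifies $a_I - qL_I(\bx_I)\leq\sum_{i\in I}[-\log(1-qx_i)-qx_i]$.
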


\begin{proof}
	
	First note that for $q>1/2$ and $\bx \in [0, 1]^{|J|}$, we have
	\[
		\Big| q^{-1} \Prob(F_J(\bX_J) > 1 - q\bx) - R_J(\bx) \Big| \leq 1 \leq 2q,
	\]
	so we may without loss of generality consider only $q \leq 1/2$.
	
	Now let $J$ be a subset of size 2 or 3. For $q \leq 1/2$, we have
	\begin{align}
		\Prob(F_J(\bX_J) \not\leq 1-q\bx) &= 1 - \exp\{ -L_J(-\log(1-q\bx)) \} \notag
		\\
		&\in \Big[ L_J(-\log(1-q\bx)) - \frac{1}{2} L_J(-\log(1-q\bx))^2, L_J(-\log(1-q\bx)) \Big], \label{eq:boundell}
	\end{align}
	using a Taylor expansion of the exponential around the origin: for $z > 0$, there exists $0 \leq \bar z \leq z$ such that $e^{-z} = 1 - z + e^{-\bar z} z^2/2 \in [1-z, 1-z+z^2/2]$. Here $L_J$ is the stable tail dependence function of the subvector $\bX_J$; it is obtained by evaluating $L$ at a point the components of which in positions $J^c$ are zero. Using a similar decomposition of $-\log(1 - z)$ around $z=0$, we find that
	\[
		z \leq -\log(1 - z) \leq z + 2z^2.
	\]
	Now recall that $\bx \in [0, 1]^{|J|}$. Using the properties of stable tail dependence functions, namely that $L$ (and $L_J$) is component-wise monotone, convex, homogeneous and upper bounded by the sum of its arguments, we find
	\begin{align*}
		L_J(q\bx) &\leq L_J(-\log(1-q\bx)) \leq L_J(q\bx + 2q^2\bone)
        \\
        &= 2L_J\big( \tfrac{1}{2} q\bx + \tfrac{1}{2} 2q^2\bone \big) \leq L_J(q\bx) + L_J(2q^2\bone) \leq L_J(q\bx) + 2|J|q^2.
	\end{align*}
	Finally, deduce from \cref{eq:boundell} that
	\[
		q^{-1} \Prob(F_J(\bX_J) \not\leq 1-q\bx) \leq L_J(\bx) + 6q,
	\]
	and that
	\begin{align*}
		&q^{-1} \Prob(F_J(\bX_J) \not\leq 1-q\bx)
        \\
        &\geq
		\min\Big\{
			L_J(\bx) - q^{-1} \frac{1}{2} L_J(q\bx)^2,
			L_J(\bx) + 6q - q^{-1} \frac{1}{2} \big( L_J(q\bx) + 6q^2 \big)^2
		\Big\}
		\\
		&\geq \min\Big\{ L_J(\bx) - \frac{9}{2} q, L_J(\bx) + 6q - q^{-1} \frac{1}{2} (3q + 3q)^2 \Big\}
		\\
		&= L_J(\bx) - 12q.
	\end{align*}
	We have established an approximation similar to what is desired, but for the probabilities $\Prob(F_J(\bX_J) \not\leq 1-q\bx)$ by the functions $L_J$:
	\begin{equation} \label{eq:approxell}
		\Big| q^{-1} \Prob(F_J(\bX_J) \not\leq 1-q\bx) - L_J(\bx) \Big| \leq 12q.
	\end{equation}
	We shall use this result to complete the proof.
	
	For $i \in V$, let $E_i = \{F_i(X_i) > 1 - qx_i\}$. Now, if $J = (i, j)$ has size 2, then
	\[
		\Prob(F_J(\bX_J) > 1 - q\bx) = \Prob(E_i \cap E_J) = qx_i + qx_j - \Prob(E_i \cup E_j)
	\]
	and
	\[
		R_J(\bx) = x_i + x_j - L_J(\bx),
	\]
	so the result follows from \cref{eq:approxell}. If $J = (i, j, m)$ has size 3,
	\begin{align*}
		\Prob(F_J(\bX_J) > 1 - q\bx) &= \Prob(E_i \cap E_j \cap E_m)
		\\
		&= \Prob(E_i \cup E_j \cup E_m) - \Prob(E_i \cup E_j) - \Prob(E_i \cup E_m) - \Prob(E_j \cup E_m)
		\\
		&\quad + \Prob(E_i) + \Prob(E_j) + \Prob(E_m)
		\\
		&= \Prob(E_i \cup E_j \cup E_m) - \Prob(E_i \cup E_j) - \Prob(E_i \cup E_m) - \Prob(E_j \cup E_m)
		\\
		&\quad + qx_i + qx_j + qx_m,
	\end{align*}
	and similarly
	\[
		R_J(\bx) = L_J(\bx) - L_{ij}(x_i, x_j) - L_{im}(x_i, x_m) - L_{jm}(x_j, x_m) + x_i + x_j + x_m,
	\]
	so the result again follows by applying \cref{eq:approxell} to approximate each of the four probabilities above by the corresponding $L$ terms.
\end{proof}


\begin{thebibliography}{}

\bibitem[Albert and Barab\'asi, 2002]{alb2002}
Albert, R. and Barab\'asi, A.-L. (2002).
\newblock Statistical mechanics of complex networks.
\newblock {\em Rev. Mod. Phys.}, 74:47--97.

\bibitem[Am{\'e}ndola et~al., 2021]{AHST21}
Am{\'e}ndola, C., Hollering, B., Sullivant, S., and Tran, N. (2021).
\newblock Markov equivalence of max-linear bayesian networks.
\newblock In {\em Uncertainty in Artificial Intelligence}, pages 1746--1755. PMLR.

\bibitem[Am\'{e}ndola et~al., 2022]{AKLT22}
Am\'{e}ndola, C., Kl\"{u}ppelberg, C., Lauritzen, S., and Tran, N.~M. (2022).
\newblock Conditional independence in max-linear {B}ayesian networks.
\newblock {\em Ann. Appl. Probab.}, 32(1):1--45.

\bibitem[Asadi et~al., 2015]{eng2014b}
Asadi, P., Davison, A.~C., and Engelke, S. (2015).
\newblock Extremes on river networks.
\newblock {\em Ann. Appl. Stat.}, 9:2023--2050.

\bibitem[Asenova et~al., 2021]{asenova2021inference}
Asenova, S., Mazo, G., and Segers, J. (2021).
\newblock Inference on extremal dependence in the domain of attraction of a structured {H\"usler--Reiss} distribution motivated by a {Markov} tree with latent variables.
\newblock {\em Extremes}, 24:461--500.

\bibitem[Asenova and Segers, 2023]{AS21}
Asenova, S. and Segers, J. (2023).
\newblock Extremes of {M}arkov random fields on block graphs: max-stable limits and structured {H}\"usler--{R}eiss distributions.
\newblock {\em Extremes}, 26(3):433--468.

\bibitem[Beirlant et~al., 2004]{beirlant2004statistics}
Beirlant, J., Goegebeur, Y., Segers, J., and Teugels, J.~L. (2004).
\newblock {\em Statistics of extremes: theory and applications}, volume 558.
\newblock John Wiley \& Sons.

\bibitem[Casey and Papastathopoulos, 2023]{casey2023decomposable}
Casey, A. and Papastathopoulos, I. (2023).
\newblock Decomposable tail graphical models.

\bibitem[Chang and Allen, 2023]{chang2023subbotin}
Chang, A. and Allen, G.~I. (2023).
\newblock Subbotin graphical models for extreme value dependencies with applications to functional neuronal connectivity.
\newblock {\em Ann. Appl. Stat.}, 17(3):2364--2386.

\bibitem[Cl\'emen\c{c}on et~al., 2023]{CJLSS2021}
Cl\'emen\c{c}on, S., Jalalzai, H., Lhaut, S., Sabourin, A., and Segers, J. (2023).
\newblock Concentration bounds for the empirical angular measure with statistical learning applications.
\newblock {\em Bernoulli}, 29(4):2797--2827.

\bibitem[Csorgo and Revesz, 1978]{CR1978}
Csorgo, M. and Revesz, P. (1978).
\newblock Strong approximations of the quantile process.
\newblock {\em Ann. Stat.}, pages 882--894.

\bibitem[Davison et~al., 2012]{dav2012b}
Davison, A.~C., Padoan, S.~A., and Ribatet, M. (2012).
\newblock Statistical modeling of spatial extremes.
\newblock {\em Stat. Sci.}, 27:161--186.

\bibitem[de~Fondeville and Davison, 2018]{deFondeville2018}
de~Fondeville, R. and Davison, A.~C. (2018).
\newblock High-dimensional peaks-over-threshold inference.
\newblock {\em Biometrika}, 105:575--592.

\bibitem[de~Haan and Ferreira, 2006]{DF2006}
de~Haan, L. and Ferreira, A. (2006).
\newblock {\em Extreme Value Theory}.
\newblock Springer.

\bibitem[Dombry et~al., 2016]{DEO2016}
Dombry, C., Engelke, S., and Oesting, M. (2016).
\newblock Exact simulation of max-stable processes.
\newblock {\em Biometrika}, 103(2):303--317.

\bibitem[Drees et~al., 2000]{dre2000}
Drees, H., Resnick, S., and de~Haan, L. (2000).
\newblock {How to make a Hill plot}.
\newblock {\em Ann. Stat.}, 28(1):254 -- 274.

\bibitem[Drton and Maathuis, 2017]{DM17}
Drton, M. and Maathuis, M.~H. (2017).
\newblock Structure learning in graphical modeling.
\newblock {\em Annu. Rev. Stat. Appl.}, 4:365--393.

\bibitem[Durrett, 2010]{Durrett4.1}
Durrett, R. (2010).
\newblock {\em Probability: theory and examples}, volume~31 of {\em Cambridge Series in Statistical and Probabilistic Mathematics}.
\newblock Cambridge University Press, Cambridge, fourth edition.

\bibitem[Einmahl et~al., 2008]{EKS2008}
Einmahl, J. H.~J., Krajina, A., and Segers, J. (2008).
\newblock {A method of moments estimator of tail dependence}.
\newblock {\em Bernoulli}, 14:1003--1026.

\bibitem[Einmahl et~al., 2012]{EKS2012}
Einmahl, J. H.~J., Krajina, A., and Segers, J. (2012).
\newblock {An M-estimator for tail dependence in arbitrary dimensions}.
\newblock {\em Ann. Stat.}, 40:1764--1793.

\bibitem[Engelke et~al., 2019]{eng2017a}
Engelke, S., de~Fondeville, R., and Oesting, M. (2019).
\newblock Extremal behaviour of aggregated data with an application to downscaling.
\newblock {\em Biometrika}, 106:127--144.

\bibitem[Engelke et~al., 2025]{engelke2025extremesstructuralcausalmodels}
Engelke, S., Gnecco, N., and Röttger, F. (2025).
\newblock Extremes of structural causal models.

\bibitem[Engelke et~al., 2024]{engelke2024review}
Engelke, S., Hentschel, M., Lalancette, M., and Röttger, F. (2024).
\newblock Graphical models for multivariate extremes.

\bibitem[Engelke and Hitz, 2020]{EH2020}
Engelke, S. and Hitz, A.~S. (2020).
\newblock Graphical models for extremes (with discussion).
\newblock {\em J. R. Stat. Soc. Ser. B. Stat. Methodol.}, 82:871--932.

\bibitem[Engelke and Ivanovs, 2021]{eng2021}
Engelke, S. and Ivanovs, J. (2021).
\newblock Sparse structures for multivariate extremes.
\newblock {\em Annu. Rev. Stat. Appl.}, 8:241--270.

\bibitem[Engelke et~al., 2022]{eng_iva_kir}
Engelke, S., Ivanovs, J., and Strokorb, K. (2022).
\newblock Graphical models for infinite measures with applications to extremes and {L\'e}vy processes.

\bibitem[Engelke et~al., 2015]{EMKS2015}
Engelke, S., Malinowski, A., Kabluchko, Z., and Schlather, M. (2015).
\newblock Estimation of {H}{\"u}sler--{R}eiss distributions and {B}rown--{R}esnick processes.
\newblock {\em J. R. Stat. Soc. Ser. B Stat. Methodol.}, 77(1):239--265.

\bibitem[Engelke and Taeb, 2025]{engelke2024extremal}
Engelke, S. and Taeb, A. (2025).
\newblock Extremal graphical modeling with latent variables via convex optimization.
\newblock {\em Journal of Machine Learning Research}, 26(42):1--68.

\bibitem[Engelke and Volgushev, 2022]{EV22}
Engelke, S. and Volgushev, S. (2022).
\newblock Structure learning for extremal tree models.
\newblock {\em J. R. Stat. Soc. Ser. B. Stat. Methodol.}, 84(5):2055--2087.

\bibitem[Fan and Li, 2001]{fan2001variable}
Fan, J. and Li, R. (2001).
\newblock Variable selection via nonconcave penalized likelihood and its oracle properties.
\newblock {\em Journal of the American statistical Association}, 96(456):1348--1360.

\bibitem[Farris et~al., 1970]{FKE70}
Farris, J.~S., Kluge, A.~G., and Eckardt, M.~J. (1970).
\newblock A numerical approach to phylogenetic systematics.
\newblock {\em Syst. Zo{\"o}l}, 19(2):172--189.

\bibitem[Foug{\`e}res et~al., 2015]{fougeres2015}
Foug{\`e}res, A.-L., de~Haan, L., and Mercadier, C. (2015).
\newblock Bias correction in multivariate extremes.
\newblock {\em Ann. Stat.}, 43(2):903--934.

\bibitem[Friedman et~al., 2008]{FHT2008}
Friedman, J., Hastie, T., and Tibshirani, R. (2008).
\newblock Sparse inverse covariance estimation with the graphical lasso.
\newblock {\em Biostatistics}, 9(3):432--441.

\bibitem[Gissibl and Kl{\"u}ppelberg, 2018]{gis2018}
Gissibl, N. and Kl{\"u}ppelberg, C. (2018).
\newblock Max-linear models on directed acyclic graphs.
\newblock {\em Bernoulli}, 24:2693--2720.

\bibitem[Gissibl et~al., 2021]{gissibl2021identifiability}
Gissibl, N., Kl{\"u}ppelberg, C., and Lauritzen, S. (2021).
\newblock Identifiability and estimation of recursive max-linear models.
\newblock {\em Scandinavian Journal of Statistics}, 48(1):188--211.

\bibitem[Gnecco et~al., 2021]{gne2021}
Gnecco, N., Meinshausen, N., Peters, J., and Engelke, S. (2021).
\newblock {Causal discovery in heavy-tailed models}.
\newblock {\em Ann. Stat.}, 49(3):1755--1778.

\bibitem[Goix et~al., 2015]{GSC15}
Goix, N., Sabourin, A., and Cl{\'e}men{\c c}on, S. (2015).
\newblock Learning the dependence structure of rare events: a non-asymptotic study.
\newblock In {\em Conf. Learn. Theory}, pages 843--860. PMLR.

\bibitem[Gudendorf and Segers, 2010]{seg2010}
Gudendorf, G. and Segers, J. (2010).
\newblock Extreme-value copulas.
\newblock In {\em Copula Theory and Its Applications}, pages 127--145. Springer.

\bibitem[Hentschel et~al., 2024]{H21}
Hentschel, M., Engelke, S., and Segers, J. (2024).
\newblock Statistical inference for {H\"usler--Reiss} graphical models through matrix completions.
\newblock {\em Journal of the American Statistical Association}, 0(0):1--13.

\bibitem[Hu et~al., 2023]{hu2023modeling}
Hu, S., Peng, Z., and Segers, J. (2023).
\newblock Modeling multivariate extreme value distributions via markov trees.
\newblock {\em Scandinavian Journal of Statistics}.

\bibitem[Huang, 1992]{H1992}
Huang, X. (1992).
\newblock {\em Statistics of bivariate extreme value theory}.
\newblock PhD thesis, Erasmus University Rotterdam.

\bibitem[H{\"u}sler and Reiss, 1989]{HR1989}
H{\"u}sler, J. and Reiss, R.-D. (1989).
\newblock Maxima of normal random vectors: between independence and complete dependence.
\newblock {\em Stat. Probab. Lett.}, 7(4):283--286.

\bibitem[Joe, 2006]{joe2006}
Joe, H. (2006).
\newblock Generating random correlation matrices based on partial correlations.
\newblock {\em J. Multivariate Anal.}, 97(10):2177--2189.

\bibitem[Kabluchko et~al., 2009]{KSH2009}
Kabluchko, Z., Schlather, M., and de~Haan, L. (2009).
\newblock Stationary max-stable fields associated to negative definite functions.
\newblock {\em Ann. Prob.}, 37(5):2042--2065.

\bibitem[Keef et~al., 2013]{kee2013}
Keef, C., Tawn, J.~A., and Lamb, R. (2013).
\newblock Estimating the probability of widespread flood events.
\newblock {\em Environmetrics}, 24:13--21.

\bibitem[Kl{\"u}ppelberg and Krali, 2021]{kluppelberg2021estimating}
Kl{\"u}ppelberg, C. and Krali, M. (2021).
\newblock Estimating an extreme bayesian network via scalings.
\newblock {\em Journal of Multivariate Analysis}, 181:104672.

\bibitem[Kl{\"u}ppelberg and Lauritzen, 2019]{klu2019}
Kl{\"u}ppelberg, C. and Lauritzen, S. (2019).
\newblock {\em Bayesian networks for max-linear models}, pages 79--97.
\newblock Springer International Publishing, Cham.

\bibitem[Koltchinskii, 2006]{K06}
Koltchinskii, V. (2006).
\newblock Local rademacher complexities and oracle inequalities in risk minimization.
\newblock {\em Ann. Stat.}, 34(6):2593--2656.

\bibitem[Krali et~al., 2023]{krali2023heavy}
Krali, M., Davison, A.~C., and Klüppelberg, C. (2023).
\newblock Heavy-tailed max-linear structural equation models in networks with hidden nodes.

\bibitem[Lalancette, 2023]{lalancette2023pairwise}
Lalancette, M. (2023).
\newblock On pairwise interaction multivariate {P}areto models.
\newblock {\em Stat}, 12:Paper No. e613, 10.

\bibitem[Lalancette et~al., 2021]{LEV21}
Lalancette, M., Engelke, S., and Volgushev, S. (2021).
\newblock {Rank-based estimation under asymptotic dependence and independence, with applications to spatial extremes}.
\newblock {\em Ann. Stat.}, 49(5):2552--2576.

\bibitem[Lauritzen, 1996]{L96}
Lauritzen, S. (1996).
\newblock {\em Graphical models}.
\newblock Clarendon Press.

\bibitem[Lederer and Oesting, 2024]{lederer2023extremes}
Lederer, J. and Oesting, M. (2024).
\newblock Extremes in high dimensions: Methods and scalable algorithms.

\bibitem[Lhaut et~al., 2022]{LSS2021}
Lhaut, S., Sabourin, A., and Segers, J. (2022).
\newblock Uniform concentration bounds for frequencies of rare events.
\newblock {\em Statist. Probab. Lett.}, 189:Paper No. 109610, 7.

\bibitem[Lindgren et~al., 2011]{lin2011}
Lindgren, F., Rue, H., and Lindström, J. (2011).
\newblock An explicit link between gaussian fields and gaussian markov random fields: The stochastic partial differential equation approach.
\newblock {\em Journal of the Royal Statistical Society Series B: Statistical Methodology}, 73(4):423--498.

\bibitem[Liu et~al., 2012]{liu2012high}
Liu, H., Han, F., Yuan, M., Lafferty, J., and Wasserman, L. (2012).
\newblock High-dimensional semiparametric gaussian copula graphical models.
\newblock {\em Ann. Stat.}, 40(4):2293--2326.

\bibitem[Liu et~al., 2011]{LXGGLW2011}
Liu, H., Xu, M., Gu, H., Gupta, A., Lafferty, J., and Wasserman, L. (2011).
\newblock Forest density estimation.
\newblock {\em J. Mach. Learn. Res.}, 12:907--951.

\bibitem[Loh and Wainwright, 2013]{loh2013}
Loh, P.-L. and Wainwright, M.~J. (2013).
\newblock Structure estimation for discrete graphical models: Generalized covariance matrices and their inverses.
\newblock {\em Ann. Stat.}, 41:3022--3049.

\bibitem[Loh and Wainwright, 2017]{loh2017support}
Loh, P.-L. and Wainwright, M.~J. (2017).
\newblock Support recovery without incoherence: a case for nonconvex regularization.
\newblock {\em Ann. Statist.}, 45(6):2455--2482.

\bibitem[Massart, 2000]{mass2000}
Massart, P. (2000).
\newblock About the constants in talagrand's concentration inequalities for empirical processes.
\newblock {\em Ann. Probab.}, pages 863--884.

\bibitem[Meinshausen and B{\"u}hlmann, 2006]{MB06}
Meinshausen, N. and B{\"u}hlmann, P. (2006).
\newblock High-dimensional graphs and variable selection with the lasso.
\newblock {\em Ann. Stat.}, 34(3):1436--1462.

\bibitem[Mhalla et~al., 2020]{mha2020}
Mhalla, L., Chavez-Demoulin, V., and Dupuis, D.~J. (2020).
\newblock Causal mechanism of extreme river discharges in the upper {D}anube basin network.
\newblock {\em J. R. Stat. Soc. Ser. C. Appl. Stat.}, 69(4):741--764.

\bibitem[Papastathopoulos and Strokorb, 2016]{papastathopoulos2016conditional}
Papastathopoulos, I. and Strokorb, K. (2016).
\newblock Conditional independence among max-stable laws.
\newblock {\em Stat. Probab. Lett.}, 108:9--15.

\bibitem[Radulovi{\'c} et~al., 2017]{RWZ17}
Radulovi{\'c}, D., Wegkamp, M., and Zhao, Y. (2017).
\newblock Weak convergence of empirical copula processes indexed by functions.
\newblock {\em Bernoulli}, 23(4B):3346--3384.

\bibitem[Ravikumar et~al., 2011]{RWRY11}
Ravikumar, P., Wainwright, M.~J., Raskutti, G., and Yu, B. (2011).
\newblock High-dimensional covariance estimation by minimizing $l_1$-penalized log-determinant divergence.
\newblock {\em Electron. J. Stat.}, 5:935--980.

\bibitem[Rootz{\'e}n et~al., 2018]{roo2018}
Rootz{\'e}n, H., Segers, J., and Wadsworth, J.~L. (2018).
\newblock Multivariate peaks over thresholds models.
\newblock {\em Extremes}, 21(1):115--145.

\bibitem[Rootz{\'e}n and Tajvidi, 2006]{roo2006}
Rootz{\'e}n, H. and Tajvidi, N. (2006).
\newblock Multivariate generalized {P}areto distributions.
\newblock {\em Bernoulli}, 12:917--930.

\bibitem[R{\"o}ttger et~al., 2023]{rottger2023total}
R{\"o}ttger, F., Engelke, S., and Zwiernik, P. (2023).
\newblock Total positivity in multivariate extremes.
\newblock {\em The Annals of Statistics}, 51(3):962--1004.

\bibitem[R{\"o}ttger and Schmitz, 2022]{rottger2022local}
R{\"o}ttger, F. and Schmitz, Q. (2022).
\newblock On the local metric property in multivariate extremes.
\newblock {\em arXiv preprint arXiv:2212.10350}.

\bibitem[Röttger et~al., 2023]{rottger2023parametric}
Röttger, F., Coons, J.~I., and Grosdos, A. (2023).
\newblock Parametric and nonparametric symmetries in graphical models for extremes.

\bibitem[Segers, 2020]{segers2020}
Segers, J. (2020).
\newblock One- versus multi-component regular variation and extremes of {Markov} trees.
\newblock {\em Adv. Appl. Probab.}, 52:855--878.

\bibitem[Soh and Tatikonda, 2014]{NIPS2014_2bd7f907}
Soh, D.~W. and Tatikonda, S.~C. (2014).
\newblock Testing unfaithful gaussian graphical models.
\newblock In Ghahramani, Z., Welling, M., Cortes, C., Lawrence, N., and Weinberger, K., editors, {\em Advances in Neural Information Processing Systems}, volume~27. Curran Associates, Inc.

\bibitem[Tawn, 1990]{T1990}
Tawn, J.~A. (1990).
\newblock {Modelling multivariate extreme value distributions}.
\newblock {\em Biometrika}, 77:245--253.

\bibitem[Thibaud et~al., 2013]{thibaud_davison}
Thibaud, E., Mutzner, R., and Davison, A.~C. (2013).
\newblock Threshold modeling of extreme spatial rainfall.
\newblock {\em Water Resources Research}, 49(8):4633--4644.

\bibitem[Tibshirani, 2013]{T13}
Tibshirani, R.~J. (2013).
\newblock The lasso problem and uniqueness.
\newblock {\em Electron. J. Stat.}, 7:1456--1490.

\bibitem[Tran et~al., 2024]{tra2021}
Tran, N.~M., Buck, J., and Kl{\"u}ppelberg, C. (2024).
\newblock Estimating a directed tree for extremes.
\newblock {\em J. R. Stat. Soc. Ser. B. Stat. Methodol.}
\newblock In press.

\bibitem[van~der Vaart and Wellner, 1996]{VW1996}
van~der Vaart, A.~W. and Wellner, J.~A. (1996).
\newblock {\em Weak convergence and empirical processes with applications to statistics}.
\newblock Springer.

\bibitem[Wan and Zhou, 2025]{wan2023graphical}
Wan, P. and Zhou, C. (2025).
\newblock Graphical lasso for extremes.

\bibitem[Wang et~al., 2009]{WLL09}
Wang, H., Li, B., and Leng, C. (2009).
\newblock Shrinkage tuning parameter selection with a diverging number of parameters.
\newblock {\em J. R. Stat. Soc. Ser. B. Stat. Methodol.}, 71(3):671--683.

\bibitem[Wang and Allen, 2023]{wang2023thresholded}
Wang, M. and Allen, G.~I. (2023).
\newblock Thresholded graphical lasso adjusts for latent variables.
\newblock {\em Biometrika}, 110(3):681--697.

\bibitem[Ying et~al., 2021]{ying2021minimax}
Ying, J., Cardoso, J.~M., and Palomar, D. (2021).
\newblock Minimax estimation of {L}aplacian constrained precision matrices.
\newblock In {\em Int. Conf. Artif. Intell. Stat.}, pages 3736--3744. PMLR.

\bibitem[Yuan and Lin, 2007]{YL2007}
Yuan, M. and Lin, Y. (2007).
\newblock Model selection and estimation in the gaussian graphical model.
\newblock {\em Biometrika}, 94(1):19--35.

\bibitem[Zhao and Yu, 2006]{ZY06}
Zhao, P. and Yu, B. (2006).
\newblock On model selection consistency of lasso.
\newblock {\em J. Mach. Learn. Res.}, 7:2541--2563.

\bibitem[Zhou, 2010]{zhou2009banks}
Zhou, C. (2010).
\newblock Are banks too big to fail? {M}easuring systemic importance of financial institutions.
\newblock {\em Int. J. Cent. Bank.}, 6:205--250.

\bibitem[Zou, 2006]{zou2006adaptive}
Zou, H. (2006).
\newblock The adaptive lasso and its oracle properties.
\newblock {\em Journal of the American statistical association}, 101(476):1418--1429.

\end{thebibliography}
\end{document}